\providecommand \@dotsep{5} \def\listtodoname{List of Todos} \def\listoftodos{\@starttoc{tdo}\listtodoname} \makeatother 
\patchcmd{\@startsection}{\@afterindenttrue}{\@afterindentfalse}{}{}             
\patchcmd{\part}{\bfseries}{\bfseries\LARGE}{}{}
\patchcmd{\section}{\scshape}{\bfseries}{}{}\renewcommand{\@secnumfont}{\bfseries} 
\patchcmd{\@settitle}{\uppercasenonmath\@title}{\large}{}{}
\patchcmd{\@setauthors}{\MakeUppercase}{}{}{}
\theoremstyle{plain}
\newtheorem{thm}{Theorem}[section] 
\newaliascnt{lemma}{thm}\newtheorem{lemma}[lemma]{Lemma}\aliascntresetthe{lemma}
\newaliascnt{cor}{thm}\newtheorem{cor}[cor]{Corollary}\aliascntresetthe{cor}
\newaliascnt{prop}{thm}\newtheorem{prop}[prop]{Proposition}\aliascntresetthe{prop}
\newtheorem*{prop*}{Proposition}
\newtheorem*{con*}{Conjecture}
\newtheorem*{thm*}{Theorem}
\newtheorem*{lem*}{Lemma}
\newtheorem*{cor*}{Corollary}
\def\equationautorefname~#1\null{Equation~(#1)\null}
\theoremstyle{definition}
\newaliascnt{df}{thm}\newtheorem{df}[df]{Definition}\aliascntresetthe{df}
\newaliascnt{rem}{thm}\newtheorem{rem}[rem]{Remark}\aliascntresetthe{rem}
\newaliascnt{ex}{thm}\newtheorem{ex}[ex]{Example}\aliascntresetthe{ex}
\newaliascnt{conj}{thm}\newtheorem{conj}[conj]{Conjecture}\aliascntresetthe{conj}
\newaliascnt{problem}{thm}\aliascntresetthe{problem}
\newtheorem{question}[thm]{Question}
\newtheorem*{df*}{Definition}
\newtheorem*{ex*}{Example}
\newtheorem*{rem*}{Remark}
\theoremstyle{remark}
\DeclareRobustCommand{\gobblefour}[5]{}    
\DeclareSymbolFont{sfoperators}{OT1}{bch}{m}{n} \DeclareSymbolFontAlphabet{\mathsf}{sfoperators} \makeatletter\def\operator@font{\mathgroup\symsfoperators}\makeatother 
\DeclareSymbolFont{cmletters}{OML}{cmm}{m}{it}              
\DeclareSymbolFont{cmsymbols}{OMS}{cmsy}{m}{n}
\DeclareSymbolFont{cmlargesymbols}{OMX}{cmex}{m}{n}
\DeclareMathSymbol{\myjmath}{\mathord}{cmletters}{"7C}     \let\jmath\myjmath 
\DeclareMathSymbol{\myamalg}{\mathbin}{cmsymbols}{"71}     
\DeclareMathSymbol{\mycoprod}{\mathop}{cmlargesymbols}{"60}\let\coprod\mycoprod
\DeclareMathSymbol{\myalpha}{\mathord}{cmletters}{"0B}     \let\alpha\myalpha 
\DeclareMathSymbol{\mybeta}{\mathord}{cmletters}{"0C}      \let\beta\mybeta
\DeclareMathSymbol{\mygamma}{\mathord}{cmletters}{"0D}     \let\gamma\mygamma
\DeclareMathSymbol{\mydelta}{\mathord}{cmletters}{"0E}     \let\delta\mydelta
\DeclareMathSymbol{\myepsilon}{\mathord}{cmletters}{"0F}   \let\epsilon\myepsilon
\DeclareMathSymbol{\myzeta}{\mathord}{cmletters}{"10}      \let\zeta\myzeta
\DeclareMathSymbol{\myeta}{\mathord}{cmletters}{"11}       \let\eta\myeta
\DeclareMathSymbol{\mytheta}{\mathord}{cmletters}{"12}     \let\theta\mytheta
\DeclareMathSymbol{\myiota}{\mathord}{cmletters}{"13}      \let\iota\myiota
\DeclareMathSymbol{\mykappa}{\mathord}{cmletters}{"14}     \let\kappa\mykappa
\DeclareMathSymbol{\mylambda}{\mathord}{cmletters}{"15}    \let\lambda\mylambda
\DeclareMathSymbol{\mymu}{\mathord}{cmletters}{"16}        \let\mu\mymu
\DeclareMathSymbol{\mynu}{\mathord}{cmletters}{"17}        \let\nu\mynu
\DeclareMathSymbol{\myxi}{\mathord}{cmletters}{"18}        \let\xi\myxi
\DeclareMathSymbol{\mypi}{\mathord}{cmletters}{"19}        \let\pi\mypi
\DeclareMathSymbol{\myrho}{\mathord}{cmletters}{"1A}       \let\rho\myrho
\DeclareMathSymbol{\mysigma}{\mathord}{cmletters}{"1B}     \let\sigma\mysigma
\DeclareMathSymbol{\mytau}{\mathord}{cmletters}{"1C}       \let\tau\mytau
\DeclareMathSymbol{\myupsilon}{\mathord}{cmletters}{"1D}   \let\upsilon\myupsilon
\DeclareMathSymbol{\myphi}{\mathord}{cmletters}{"1E}       \let\phi\myphi
\DeclareMathSymbol{\mychi}{\mathord}{cmletters}{"1F}       \let\chi\mychi
\DeclareMathSymbol{\mypsi}{\mathord}{cmletters}{"20}       \let\psi\mypsi
\DeclareMathSymbol{\myomega}{\mathord}{cmletters}{"21}     \let\omega\myomega
\DeclareMathSymbol{\myvarepsilon}{\mathord}{cmletters}{"22}\let\varepsilon\myvarepsilon
\DeclareMathSymbol{\myvartheta}{\mathord}{cmletters}{"23}  \let\vartheta\myvartheta
\DeclareMathSymbol{\myvarpi}{\mathord}{cmletters}{"24}     \let\varpi\myvarpi
\DeclareMathSymbol{\myvarrho}{\mathord}{cmletters}{"25}    \let\varrho\myvarrho
\DeclareMathSymbol{\myvarsigma}{\mathord}{cmletters}{"26}  \let\varsigma\myvarsigma
\DeclareMathSymbol{\myvarphi}{\mathord}{cmletters}{"27}    \let\varphi\myvarphi
\DeclareMathOperator{\upN}{N}   
\DeclareMathOperator{\upL}{L}   
\DeclareMathOperator{\upH}{H}
\DeclareMathOperator{\upB}{B} 
\DeclareMathOperator{\upO}{O}   
\DeclareMathOperator{\upS}{S}   
\DeclareMathOperator{\upR}{R}   
\DeclareMathOperator{\ulineL}{\underline{L}}   
\DeclareMathOperator{\ulineS}{\underline{S}}   
\DeclareMathOperator{\ulineDr}{\underline{Dr}}   
\DeclareMathOperator{\Gr}{Gr}
\DeclareMathOperator{\PolyGr}{PolyGr}
\DeclareMathOperator{\ulineGr}{\underline{Gr}}
\DeclareMathOperator{\BP}{BP}
\DeclareMathOperator{\Dr}{Dr}
\DeclareMathOperator{\Hom}{Hom}
\DeclareMathOperator{\supp}{supp}
\DeclareMathOperator{\res}{res}
\newcommand\C{{\mathbb C}}
\newcommand\D{{\mathbb D}}
\newcommand\F{{\mathbb F}}
\newcommand\G{{\mathbb G}}
\renewcommand\H{{\mathbb H}}
\newcommand\N{{\mathbb N}}
\renewcommand\P{{\mathbb P}}
\newcommand\Q{{\mathbb Q}}
\newcommand\R{{\mathbb R}}
\newcommand\T{{\mathbb T}}
\newcommand\U{{\mathbb U}}
\newcommand\Z{{\mathbb Z}}
\newcommand\cA{{\mathcal A}}
\newcommand\cB{{\mathcal B}}
\newcommand\cF{{\mathcal F}}
\newcommand\cM{{\mathcal M}}
\newcommand\cT{{\mathcal T}}
\newcommand\fm{{\mathfrak m}}
\newcommand\fo{{\mathfrak o}}
\newcommand\Funpm{{\F_1^\pm}}
\newcommand\HC{\textnormal{HC}}
\newcommand\im{\textup{im}}
\newcommand\trop{\textup{trop}}
\newcommand\an{\textup{an}}
\newcommand\simp{\textup{simp}}
\renewcommand\max{\textup{max}}
\renewcommand{\min}{\textup{min}}
\renewcommand\geq{\geqslant}
\renewcommand\leq{\leqslant}
\newcommand{\sqfree}{{\scalebox{0.6}{$\boxtimes$}}}
\newcommand{\norm}[1]{|#1|}
\newcommand{\past}[2]{#1\!\sslash\!#2}
\renewcommand{\smallsetminus}{\backslash}
\renewcommand\emptyset\varnothing
\title{Lorentzian polynomials and matroids over triangular hyperfields\\[10pt] \normalsize Part 1: Topological aspects}
\author{Matthew Baker}
\address{\rm Matthew Baker, Georgia Institute of Technology}
\email{mbaker@math.gatech.edu}
\author{June Huh}
\address{\rm June Huh, Princeton University and Korea Institute for Advanced Study}
\email{huh@princeton.edu}
\author{Mario Kummer}
\address{\rm Mario Kummer, Technische Universit\"at Dresden}
\email{mario.kummer@tu-dresden.de}
\author{Oliver Lorscheid}
\address{\rm Oliver Lorscheid, University of Groningen}
\email{o.lorscheid@rug.nl}
\begin{document}

\begin{abstract}
Lorentzian polynomials serve as a bridge between continuous and discrete convexity, connecting analysis and combinatorics.
In this article, we study the topology of the space $\P\upL_J$ of Lorentzian polynomials on $J$ modulo $\mathbb{R}_{>0}$, which is nonempty if and only if $J$ is the set of bases of a polymatroid. We prove that $\P\upL_J$ is a manifold with boundary of dimension equal to the Tutte rank of $J$, and more precisely, that it is homeomorphic to a closed Euclidean ball with the Dressian of $J$ removed from its boundary. Furthermore, we show that $\P\upL_J$ is homeomorphic to the thin Schubert cell $\Gr_J(\T_q)$ of $J$ over the triangular hyperfield $\T_q$, introduced by Viro in the context of tropical geometry and Maslov dequantization, for any positive real number $q$. This identification enables us to apply the representation theory of polymatroids developed in the companion paper \cite{BHKL0}, as well as earlier work by the first and fourth authors on foundations of matroids, to give a simple explicit description of $\P\upL_J$ up to homeomorphism in several key cases. 
Our results show that $\P\upL_J$ always admits a compactification homeomorphic to a closed Euclidean ball. They can also be used to answer a question of Brändén in the negative by showing that the closure of $\P\upL_J$ within the space of all polynomials modulo $\mathbb{R}_{>0}$ is not homeomorphic to a closed Euclidean ball in general. In addition, we introduce the Hausdorff compactification of the space of rescaling classes of Lorentzian polynomials and show that the Chow quotient of a complex Grassmannian maps naturally to this compactification. This provides a geometric framework that connects the asymptotic structure of the space of Lorentzian polynomials with classical constructions in algebraic geometry.
\end{abstract}

\maketitle

\begin{small} \tableofcontents \end{small}


\section{Introduction} 
In \cite{Branden-Huh20}, Br\"and\'en and the second author introduced the notion of {\em Lorentzian polynomials} and used it to establish the log-concavity of various sequences of combinatorial origin. 
Lorentzian polynomials serve as a bridge between continuous and discrete convexity, connecting analysis and combinatorics.
Among their many applications to combinatorics, Lorentzian polynomials were used by Br\"and\'en--Huh \cite{Branden-Huh20}, and independently Anari et al. \cite{Anari-et-al-3}, to prove the following conjecture of Mason from \cite{Mason72}:
\begin{quote}
\emph{If $I_k(M)$ is the number of independent sets of size $k$ of a matroid $M$ on $[n]$, then the sequence $I_k(M)/\binom{n}{k}$ is log-concave in $k$.}
\end{quote}

If we assume the support of the polynomials in question to be {\em square-free}\footnote{The \emph{support} of a polynomial $f$ is the set of monomials appearing in $f$ with nonzero coefficients. The support is \emph{square-free} if every monomial in it is square-free. Polynomials with square-free support are also known in the literature as {\em multi-affine} polynomials. Throughout this introduction, we focus on the case of square-free support for expositional simplicity.}, 
then Lorentzian polynomials admit a simple characterization: 
a nonzero homogeneous polynomial $f$ in $n$ variables with nonnegative coefficients and square-free support is {\em Lorentzian} if and only if $\log(f)$ is a concave function on the positive orthant $\R_{>0}^n$. In general, a homogeneous polynomial $f$ with nonnegative coefficients is Lorentzian if and only if the partial derivative $\partial^\alpha f$ is identically zero or log-concave on $\mathbb{R}^n_{>0}$ for all $\alpha \in \mathbb{Z}^n_{\ge 0}$. See \Cref{sec:lorentziandefinitions} for an extended discussion in the general setting.

As with stable polynomials (which Lorentzian polynomials generalize), the support of a square-free Lorentzian polynomial can be identified with the set of bases of a matroid.
We denote by $\P\upL_M$ the projectivization of the space of Lorentzian polynomials whose support is a given matroid $M$.
Our primary goal in this paper is to study the topology of the space $\P\upL_M$, and to relate it to representations of matroids over \emph{tracts} (in the sense of \cite{Baker-Bowler19}).

There are some hints in the literature of a connection between Lorentzian polynomials and matroid representations. For example, part of the initial motivation for Br\"and\'en and Huh to develop the theory of Lorentzian polynomials was the fact that stable polynomials are not as closely connected to combinatorics as one might hope: the Fano matroid $F_7$, for example, is not the support of any stable polynomial, whereas every matroid turns out to be the support of a Lorentzian polynomial. The proof that $F_7$ doesn’t support any stable polynomials uses the fact that it is binary (i.e., representable over the field $\F_2$), but it was not clear how to generalize such observations beyond the binary case. The results of this paper provide a natural framework for thinking about such questions by establishing a homeomorphism between $\P\upL_M$ and the space $\Gr_M(\T_1)$ of representations of $M$ over the \emph{triangular hyperfield} $\T_1$ introduced by Viro \cite{Viro10}. 

The homeomorphism between $\P\upL_M$ and $\Gr_M(\T_1)$ is rather unexpected (it came as a surprise to us, in any case), and it allows us to draw highly non-trivial conclusions about the topology of $\P\upL_M$, such as assertion (2) below, which it's hard to imagine proving directly from the definition of Lorentzian polynomials.

The main results of this paper include the following:

(1) For every matroid $M$ of rank $r$ on $n$ elements, the space $\P\upL_M$ is a manifold with boundary which is homeomorphic to $\Gr_M(\T_1)$. In particular, the dimension of $\P\upL_M$ is equal to the rank of the Tutte group of $M$ (as defined by Dress and Wenzel in \cite{Dress-Wenzel89}).

(2) For certain classes of matroids, we can describe the space $\P\upL_M$ explicitly using the theory of matroid representations over tracts.
For example, if $M$ is ternary (i.e., representable over the field $\F_3$), we use the homeomorphism in (1) and the results of \cite{Baker-Lorscheid20} to completely characterize the possible homeomorphism types for $\P\upL_M$, generalizing the much simpler classification (by Br\"and\'en and Gonz{\'a}lez d'Le{\'o}n \cite{Branden-deLeon10}) for binary matroids.

(3) Br\"and\'en asked if a certain natural compactification of $\P\upL_M$ is homeomorphic to a ball. We show that the answer to this question is \emph{no}. Our explicit counterexample is constructed by computing the Euler characteristic of $\P\upL_M$ for a specific matroid $M$ and showing that it is not equal to 1. The techniques which we develop to compute such Euler characteristics should be of independent interest.

(4) On the other hand, we show that a \emph{different} natural compactification of $\P\upL_M$ \emph{is} homeomorphic to a ball. More precisely, we show that the space $\P\upL_M$ can be compactified to a closed Euclidean ball by adding a point for each ray contained in the local Dressian $\Dr_M$ of $M$.

(5)      Our results also imply the formula
\[\sum_M \chi(\Dr_M)=1, \]
where the sum is over all matroids of rank $d$ on $n$ elements and $\chi(\Dr_M)$ is the Euler characteristic of $\Dr_M$. We do not know how to prove this purely combinatorial statement without the detour into the theory of Lorentzian polynomials.

\subsection{Background: topology of the space of Lorentzian polynomials}

We now give a more detailed overview of the contents of this paper.

Let $\upH(d,n)$ be the vector space of homogeneous polynomials of degree $d$ with real coefficients in variables $x_1,\ldots,x_n$, and let  $\upL(d,n)$ be the set of Lorentzian polynomials in $\upH(d,n)$. We use $\upH(d,n)_\sqfree$ and $\upL(d,n)_\sqfree$ to denote, respectively, the subsets of $\upH(d,n)$ and $\upL(d,n)$ consisting of polynomials with square-free support. For a subset $S$ of $[n]$, we write $x^S$ for the square-free monomial $\prod_{i \in S} x_i$. Thus, $\upH(d,n)_\sqfree$ is the span of $x^S$, where $S$ ranges over all $d$-element subsets of $[n]$.
We denote by $\P\upH(d,n)$ the projectivization of the vector space $\upH(d,n)$, and by $\P X$ the image of $X\smallsetminus 0$ in $\P\upH(d,n)$ for a subset $X$ of $\upH(d,n)$.
A fundamental result on the space of Lorentzian polynomials is the following statement due to Br\"and\'en \cite{Branden21}:

\begin{thm}\label{thm:petterballness}
    The spaces $\P\upL(d,n)$ and $\P\upL(d,n)_\sqfree$ are  homeomorphic to closed Euclidean balls of dimensions $\dim\P\upH(d,n)$ and $\dim\P\upH(d,n)_\sqfree$, respectively.
\end{thm}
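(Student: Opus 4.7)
The plan is to realize $\P\upL(d,n)$ and $\P\upL(d,n)_\sqfree$ as compact subsets of affine space which deformation retract onto a canonical interior point, and then to upgrade contractibility to a homeomorphism with a closed ball via radial projection combined with a manifold-with-boundary analysis. I would prove the square-free case first and then adapt to the general case.

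First I would normalize: represent $\P\upL(d,n)_\sqfree$ by the affine slice of polynomials whose coefficients sum to $1$. Since Lorentzian polynomials have nonnegative coefficients, this slice meets every projective class, and it realizes $\P\upL(d,n)_\sqfree$ as a compact subset $K$ of an affine hyperplane in $\upH(d,n)_\sqfree$. The main step is to show that $K$ is star-shaped with respect to the canonical point $f_0 := \tfrac{1}{n^d}(x_1+\cdots+x_n)^d$: for every $f\in\upL(d,n)_\sqfree$ and every $t\in[0,1]$, the polynomial $(1-t)f+tf_0$ should again be Lorentzian. By the Hessian characterization of Lorentzianness applied to each degree-$2$ partial derivative $\partial^\alpha f$, this reduces to the matrix statement that adding a nonnegative multiple of the all-ones matrix $\mathbf{1}\mathbf{1}^T$ to a symmetric matrix with nonnegative entries and at most one positive eigenvalue preserves both properties. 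This in turn follows from Weyl's inequality: a positive semidefinite rank-one perturbation can only raise the single positive eigenvalue and cannot create a second one. The check passes uniformly across all $\alpha$, establishing star-shapedness of $K$.

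To lift star-shapedness to a homeomorphism with a ball, I would argue that $K$ is a topological manifold with boundary and that radial projection from $f_0$ identifies it with a closed Euclidean ball of the expected dimension. Manifold structure at interior points follows from the defining Hessian inequalities being strict there, cutting out a locally Euclidean set of full dimension; the boundary is the locus where some such eigenvalue degenerates, and uniqueness of the exit point along each ray from $f_0$ follows from closedness of $\upL(d,n)_\sqfree$ together with star-shapedness. The general case of $\P\upL(d,n)$ follows by an analogous argument: the polarization operator $\Pi$ embeds $\upH(d,n)$ into $\upH(d,nd)_\sqfree$ and identifies $\upL(d,n)$ with the fixed locus of a product of symmetric groups acting on $\upL(d,nd)_\sqfree$; alternatively one runs the Hessian perturbation argument directly with multiplicities, using as center the degree-$d$ polynomial $(x_1+\cdots+x_n)^d/n^d$.

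The main obstacle I expect is not the star-shapedness, which is a local linear-algebra calculation, but rather the manifold-with-boundary structure. The boundary of $\upL(d,n)_\sqfree$ carries a rich stratification indexed by polymatroid supports, and one must verify that these strata assemble into a topological sphere rather than an exotic space, with no failures of local Euclidean structure at corners where multiple strata meet. Ruling out such pathologies is where I expect the technical heart of the argument to lie, and will likely require a careful analysis of how the Hessian rank conditions degenerate simultaneously, ultimately drawing on the combinatorial structure of polymatroid bases and the convex geometry of their support polytopes.
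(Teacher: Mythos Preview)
The proposal has a fatal gap: the linear star-shapedness claim is false, and the Weyl-inequality justification for it is incorrect.

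Concretely, take $n=d=2$, $f=x_1^2$, and $f_0=(x_1+x_2)^2$. Both are Lorentzian (their Hessians have rank one). But the midpoint $\tfrac12 f+\tfrac12 f_0 = x_1^2+x_1x_2+\tfrac12 x_2^2$ has Hessian $\begin{psmallmatrix}2&1\\1&1\end{psmallmatrix}$ with determinant $1>0$ and positive trace, hence two positive eigenvalues; it is not Lorentzian. So $\upL(d,n)$ is not star-shaped at $f_0$ in the linear sense you use, and the same example shows the Weyl step is wrong: adding a rank-one positive semidefinite matrix \emph{can} create a second positive eigenvalue (Cauchy interlacing only gives $\mu_2\le\lambda_1$, which may well be positive). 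There is also a secondary problem in the square-free case: your center $f_0=(x_1+\cdots+x_n)^d$ is not square-free for $d\ge 2$, so it does not even lie in $\upH(d,n)_\sqfree$; replacing it by $e_d$ does not rescue the argument, since the Hessians of $\partial^\alpha e_d$ are not rank-one positive semidefinite.

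The paper does not prove this theorem; it is quoted from Br\"and\'en \cite{Branden21}. Br\"and\'en's argument (and the related arguments in \Cref{sec:lortopo} of this paper for the strata $\P\upL_J$) do \emph{not} use linear convex interpolation. The paper's own machinery works in \emph{logarithmic} coordinates: one takes coefficient-wise logarithms and shows strong star-shapedness there, so that the relevant deformation is $c_\alpha\mapsto c_\alpha^{t}$ (raising each coefficient to a power), which does preserve the Lorentzian property by \cite{Branden-Huh20}. That multiplicative contraction, not additive interpolation, is what makes the argument go through. If you want to repair your outline, the first step is to abandon linear star-shapedness and work instead with this power map (or with Br\"and\'en's explicit homeomorphism), and then carry out the manifold-with-boundary analysis in those coordinates.
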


Our main goal is to relate spaces of Lorentzian polynomials to various Grassmannians. A first instance of such a connection goes back to \cite{Choe-Oxley-Sokal-Wagner04}:

\begin{ex}
    Let $(p_S)$ be the Pl\"ucker coordinates of a linear subspace of $\C^n$ of dimension $d$, where $S$ ranges over all $d$-element subsets of $[n]$. Then the polynomial $f=\sum_{S} |p_S|^2 \, x^S$ is stable \cite{Choe-Oxley-Sokal-Wagner04}*{Theorem 8.1}, and hence Lorentzian \cite{Branden-Huh20}*{Proposition 2.2}. 
    Therefore, we obtain a continuous map
    \begin{equation}\label{eq:mapfromgrassmannian}
        \Gr(d,n)(\C)\longrightarrow\P\upL(d,n)_\sqfree, \qquad (p_S) \longmapsto \sum_{S} |p_S|^2 \, x^S.
    \end{equation}
    We study this map in detail in \Cref{sec:smalluniform}. For example, when $d=2$ and $n=4$, we show that the map sends $\Gr(2,4)(\R)$ to the boundary of $\P\upL(2,4)_\sqfree$, and the induced map
    \begin{equation*}
        \Gr(2,4)(\R)\longrightarrow \partial\P\upL(2,4)_\sqfree
    \end{equation*}
    is the quotient of $\Gr(2,4)(\R)$ by the action of $\{\pm1\}^4$. By \Cref{thm:petterballness}, the right-hand side is homeomorphic to the four dimensional sphere. 
\end{ex}

\subsection{Thin Schubert cells}

Let $\cB(M)$ be a collection of $d$-element subsets of $[n]$, viewed as a collection of degree $d$ square-free monomials in $x_1,\ldots,x_n$. 
We set
\[
\upH_M\coloneq\big\{f \in \upH(d,n)\mid \text{the support of $f$ is $\cB(M)$}\big\} \ \ \text{and} \ \ \upL_M\coloneq \upL(d,n)\cap\upH_M.
\]
One of the basic results proved in \cite{Branden-Huh20} hints at a surprising link between the continuous and the discrete world: The set $\upL_M$ is nonempty if and only if $\cB(M)$ is the set of bases of a {\em matroid} of rank $d$ on $[n]$.\footnote{A matroid $M$ on $[n]$ is given by a nonempty collection $\cB(M)$ of subsets of $[n]$ satisfying the {\em symmetric exchange property}: For any $B_1,B_2 \in \cB(M)$ and any $b_1 \in B_1 \smallsetminus B_2$, there is $b_2 \in B_2 \smallsetminus B_1$ such that both $B_1 \smallsetminus \{b_1\} \cup \{ b_2 \}$ and $B_2 \smallsetminus \{b_2\} \cup \{ b_1 \}$ belong to $\cB(M)$. Members of the collection $\cB(M)$ are called {\em bases} of the matroid $M$. The symmetric exchange property implies that any two bases have the same cardinality, called the {\em rank} of the matroid. For general introduction to matroids, we refer to \cite{OxleyMatroidTheory}.}

We regard $\P\upL_M$ as the analogue of a \emph{thin Schubert cell} (or \emph{matroid stratum}) in $\P\upL(d,n)_\sqfree$. While a thin Schubert cell in a Grassmannian over a field can be empty, depending on whether or not the matroid is representable over that field, every matroid occurs as the support of some Lorentzian polynomial. In fact, the  generating polynomial $\sum_{B \in \cB(M)} x^B$ is a Lorentzian polynomial if and only if  $\cB(M)$ is the set of bases of a matroid \cite{Branden-Huh20}*{Theorem 3.10}. Our first result is the following:

\begin{thm}[\Cref{cor:lorismanifoldwithboundary}]
    For every matroid $M$, the space $\P\upL_M$ is a manifold with boundary.
\end{thm}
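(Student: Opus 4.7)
The plan is to deduce this corollary from the main structural theorem of the paper (announced in the abstract), which gives the explicit homeomorphism $\P\upL_M \cong \overline{B} \smallsetminus \Dr(M)$, where $\overline{B}$ is a closed Euclidean ball and the Dressian $\Dr(M)$ is a closed subset of the boundary sphere $\partial \overline{B}$. With this description in hand, the corollary becomes a purely topological observation, so my plan is to invoke the structural theorem as a black box and then verify the topological step carefully.

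The key topological fact is that if $X$ is a manifold with boundary and $K \subset X$ is closed and contained in $\partial X$, then $X \smallsetminus K$ is open in $X$ and hence inherits a manifold-with-boundary structure, with interior equal to $\mathrm{int}(X)$ and boundary equal to $\partial X \smallsetminus K$. Applied to $X = \overline{B}$ and $K = \Dr(M)$, and transported along the homeomorphism with $\P\upL_M$, this yields the corollary. The only point that must be checked is that $\Dr(M)$ is closed in $\overline{B}$. This follows because $\Dr(M)$ is a tropical prevariety---cut out in $\partial\overline{B}$ by the tropicalizations of the three-term Pl\"ucker relations---so it is closed in the boundary sphere and therefore in $\overline{B}$.

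The main obstacle is of course the structural theorem itself. My plan for that would be to pass through the triangular hyperfields $\T_q$ of Viro. For each $q>0$, one establishes a natural homeomorphism $\P\upL_M \cong \Gr_M(\T_q)$ between the Lorentzian stratum and the thin Schubert cell of $M$ over $\T_q$, translating the Lorentzian signature inequalities into the hyperoperation axioms of $\T_q$. Then, using the polymatroid representation theory developed in the companion paper \cite{BHKL0}, one parametrizes $\Gr_M(\T_q)$ explicitly as a semi-algebraic subset of Euclidean space. A suitable compactification of this family as $q \to 0$---the Maslov dequantization limit, in which $\T_q$ degenerates to the tropical hyperfield---is what exhibits $\P\upL_M$ as a Euclidean ball with the Dressian removed from its boundary. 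The delicate part would be ensuring that this compactification is genuinely a ball (rather than some more exotic contractible space), which I would expect to require a careful combinatorial argument, likely an induction over matroid minors using the restriction-deletion compatibility of the representation theory.
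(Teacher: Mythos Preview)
Your deduction of the corollary from the structural theorem is correct, and this is essentially what the paper does. In fact the paper's proof of \Cref{cor:lorismanifoldwithboundary} is even shorter than yours: it cites directly the fact that $\log(\P\upL_J)$ is \emph{strongly star-shaped} in $V_J/\R\mathbf{1}$ (\Cref{thm:logljstronglystarshaped}) together with the elementary observation that every strongly star-shaped set is a manifold with boundary (\Cref{cor: manifoldwithboundary}), bypassing the explicit ball-minus-Dressian description. But your route through \Cref{thm:Alor} is a harmless repackaging of the same ingredients.

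Where your proposal goes wrong is in the sketched proof of the structural theorem itself. You propose to first establish a \emph{direct} homeomorphism $\P\upL_M\cong\Gr_M(\T_q)$ by ``translating the Lorentzian signature inequalities into the hyperoperation axioms of $\T_q$'', and then to read off the ball structure from the $\T_q$-side via a Maslov dequantization limit and an induction on minors. The paper does none of this, and no such direct translation is known: the Lorentzian condition involves eigenvalue signatures of Hessians of all partial derivatives, whereas the $\T_q$-Pl\"ucker relations are polygon inequalities on products of coefficients; these are genuinely different systems of inequalities on $\R^J_{>0}$. In the paper the homeomorphism $\P\upL_M\cong\Gr_M(\T_q)$ (\Cref{cor:allarehomeo}) is a \emph{consequence} of the structural theorem, obtained only after proving independently that both spaces are homeomorphic to the same $\upB\smallsetminus(\Dr_J\cap\partial\upB)$, not an input to it.

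The paper's actual argument for the structural theorem is a direct attack on $\log(\upL_J)$: one shows it is strongly star-shaped in $V_J$ with respect to a suitable center $\log(\upN_t(f_J))$. The key reduction is that being Lorentzian is tested on all degree-two partial derivatives $\partial^\alpha f$, so by \Cref{lemma: starcrit1} it suffices to treat the degree-two case. There the paper introduces a \emph{simplification} procedure (\Cref{thm: simplification}) that collapses parallel classes and reduces to checking that a certain explicit Hessian has full rank at the center, which is done by hand (\Cref{thm: strongstardeg2}). The identification of the missing boundary directions with the Dressian then follows from \Cref{lem: diraremconvex}. There is no dequantization limit and no minor induction.
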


This should be contrasted with the various \emph{universality theorems} for thin Schubert cells in Grassmannians over a field, which roughly state that such thin Schubert cells can exhibit arbitrary singularities \cites{Lafforgue03,Vakil06,Lee-Vakil13}.
In fact, we can explicitly describe $\P\upL_M$ up to homeomorphism. In order to formulate the result, we recall from \cite{Branden-Huh20} the connection between Lorentzian polynomials and the \emph{Dressian}, the piecewise-linear space of all rank $d$ valuated matroids on $[n]$ which plays the role of the Grassmannian in tropical geometry.

\begin{ex}\label{ex:dressianinlorentzian}
    Let $M$ be a matroid of rank $d$ on $[n]$ with the set of bases $\cB(M)$.  A function $\nu\colon \cB(M)\to\R$ is a \emph{valuated matroid} if  for any $B_1,B_2 \in \cB(M)$ and any $b_1 \in B_1 \smallsetminus B_2$, there is $b_2 \in B_2 \smallsetminus B_1$ such that  \[
    \nu(B_1)+\nu(B_2)\leq\nu(B_1 \smallsetminus \{b_1\} \cup \{b_2\} )+\nu(B_2 \smallsetminus \{b_2\} \cup \{b_1\}).
    \]
    The \emph{Dressian} $\Dr_M$ of $M$ is the set of all valuated matroids $\nu\colon \cB(M)\to\R$ modulo addition of constant functions on $\cB(M)$.
    If $\nu\in\Dr_M$, then $f_\nu \coloneq \sum_{B\in\cB(M)} e^{-\nu(B)}\, x^B$ is a Lorentzian polynomial \cite{Branden-Huh20}*{Theorem 3.14}. Therefore, we obtain a continuous injective map
    \begin{equation}\label{eq:expdressian}
        \exp\colon\Dr_M\longrightarrow\P\upL_M, \qquad \nu \longmapsto f_\nu.
    \end{equation}
\end{ex}

Clearly, we have $t\nu\in\Dr_M$ for any $t>0$ and any $\nu\in\Dr_M$. Moreover, if $\nu$ is not constant, then the limit of $f_{t\nu}$ as $t$ goes to infinity is a Lorentzian polynomial with support strictly smaller than $\cB(M)$, and hence it is not an element of $\P\upL_M$. In a sense, this the only obstruction to $\P\upL_M$ being compact:

\begin{thm}[\Cref{thm:Alor}]
    The space $\P\upL_M$ can be compactified to a closed Euclidean ball by adding a point for each ray contained in the Dressian $\Dr_M$.
\end{thm}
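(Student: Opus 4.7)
The plan is to parameterize $\P\upL_M$ by logarithmic coordinates, identify the Dressian as the asymptotic cone of its image in a real vector space, and then compactify by the standard procedure of adding one point at infinity per ray of the recession cone. For $f = \sum_{B \in \cB(M)} c_B x^B \in \upL_M$ the coefficients are positive, so the map $\log \colon \P\upL_M \to \R^{\cB(M)}/\R$ sending $f \mapsto (-\log c_B)_B$ modulo constants is a continuous injection. By the previous theorem, its image is a manifold with boundary of dimension equal to $\dim \P\upL_M$.

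The heart of the argument is to show that the recession cone of $\log(\P\upL_M)$ in $\R^{\cB(M)}/\R$ is precisely $\Dr_M$. For the inclusion $\Dr_M \subseteq$ recession cone, I would invoke \Cref{ex:dressianinlorentzian}: each $\nu \in \Dr_M$ gives an exponential curve $t \mapsto f_{t\nu} = \sum e^{-t\nu(B)} x^B$ lying in $\P\upL_M$ for all $t \geq 0$, so the ray $\{t\nu : t \geq 0\}$ is an asymptotic direction of $\log(\P\upL_M)$. For the reverse inclusion, I expect a Maslov-dequantization-type argument: under the rescaling $\log f \mapsto t^{-1} \log f$ as $t \to \infty$, the Lorentzian inequalities (log-concavity of partial derivatives on $\R_{>0}^n$) degenerate to the valuated matroid exchange inequalities defining $\Dr_M$, so no other asymptotic directions are possible. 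This dequantization should be cleanly encoded through the homeomorphism $\P\upL_M \cong \Gr_M(\T_q)$ advertised in the abstract, where the limit $q \to \infty$ of the triangular hyperfield $\T_q$ is the tropical hyperfield $\T$, and $\Gr_M(\T) = \Dr_M$.

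With the recession cone identified, define $\overline{\P\upL_M} = \P\upL_M \sqcup R$, where $R$ is the set of rays of $\Dr_M$, with the topology in which $f_n \to r \in R$ iff $\log f_n$ escapes to infinity along the direction of $r$ (after normalization). This space is compact and Hausdorff by construction. To identify $\overline{\P\upL_M}$ with a closed Euclidean ball of the correct dimension, one would combine: (i) contractibility, via a deformation retraction of $\P\upL_M$ toward the generating polynomial $\sum_B x^B$, extended continuously to the added points at infinity; (ii) the fact that the boundary of $\overline{\P\upL_M}$ is a topological sphere, obtained by gluing the ordinary boundary of $\P\upL_M$ to the compactified Dressian along the appropriate stratification of matroid minors. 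The main obstacle is (ii): verifying that the boundary is indeed spherical requires a precise combinatorial description of how each ray in $\Dr_M$ attaches to $\partial \P\upL_M$, where the Lorentzian curve $f_{t\nu}$ converges in the smaller-support stratum of $\overline{\P\upL(d,n)_\sqfree}$. I expect this gluing analysis to rely on the representation theory of polymatroids from the companion paper \cite{BHKL0}, together with induction on the lattice of flats of $M$ to handle each codimension-one face of the Dressian fan.
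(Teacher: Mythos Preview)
Your outline has the right starting move (pass to logarithmic coordinates and identify the Dressian as the set of asymptotic directions) but has a genuine gap at the crucial step: you have no mechanism for showing that the one-point-per-ray compactification is actually a ball. Your proposed route---prove contractibility and then verify that the glued boundary is a sphere via an inductive stratification argument over flats---is neither carried out nor realistic; for a general closed subset of a vector space with a prescribed ``recession cone,'' adding a point for each asymptotic ray need not produce a ball, and the boundary need not be a sphere. You correctly flag this as ``the main obstacle,'' but you do not overcome it.

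The paper's argument avoids this obstacle entirely by proving a stronger structural fact: after passing to $V_J/\R\mathbf{1}$, the set $\log(\P\upL_J)$ is \emph{strongly star-shaped} with respect to a carefully chosen center $\log(\upN_t(f_J))$ (\Cref{thm:logljstronglystarshaped}). Strong star-shapedness means that every open segment from the center to a boundary point lies in the interior; the elementary \Cref{cor: ballminusdirections} then gives an explicit homeomorphism from any such set to $\upB\smallsetminus S$, where $S\subseteq\partial\upB$ is the set of directions along which the full ray from the center stays in the set. \Cref{lem: diraremconvex} identifies $S$ with $\Dr_J\cap\partial\upB$, and the ball compactification follows immediately by filling $S$ back in. No gluing, no induction on flats, no sphere recognition is needed.

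Two further points. First, the center cannot be taken at the origin (i.e., at the basis generating polynomial $f_J$): as noted in \Cref{subsection: strong star-shapedness in degree 2}, the origin need not lie in the interior of $\log(\upL_J)$ relative to $V_J$, so one must shift to $\log(\upN_t(f_J))$ for $t>0$, and the proof that this works goes through a nontrivial degree-two reduction and eigenvalue analysis (\Cref{thm: strongstardeg2}, \Cref{thm: simplification}). Second, your dequantization remark has the limits reversed: in the paper's conventions $\T_q\to\T_0$ (the tropical hyperfield) as $q\to 0$, not $q\to\infty$; the limit $q\to\infty$ gives $\T_\infty$, which is the ambient linear space $V_J$, not the Dressian.
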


We also compute the dimension of $\P\upL_M$ in terms of $M$. An obvious upper bound is the ambient dimension $|\cB(M)|-1$, which is not tight in general. The following constraint is analogous to the corresponding statement of Br\"and\'en on stable polynomials \cite{Branden07}*{Lemma 6.1}:

\begin{lemma}[\Cref{lemma: petterlem}]\label{lem:tinfinityconditions}
Let $f=\sum_B p_B \, x^B$ be a degree $d$ Lorentzian polynomial with square-free support.
Let $S$ be a set of cardinality $d-2$ in $[n]$, and let $i,j,k,l$ be pairwise distinct elements of $[n]$ not in $S$. If one of the three terms
    \begin{equation}\label{eq:tinfinityconditions}
        p_{S\cup\{i,j\}}p_{S\cup\{k,l\}},\,\, p_{S\cup\{i,k\}}p_{S\cup\{j,l\}},\,\, p_{S\cup\{i,l\}}p_{S\cup\{j,k\}}
    \end{equation}
    is zero, then the other two are equal.
\end{lemma}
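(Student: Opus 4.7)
The plan is to reduce the inequality to a statement about the Hessian of a degree-two partial derivative of $f$. Since $|S|=d-2$ and $f$ has square-free support, the polynomial
\[
g \;\coloneq\; \partial^S f \;=\; \sum_{\{a,b\}\subseteq [n]\smallsetminus S} p_{S\cup\{a,b\}}\,x_a x_b
\]
is a quadratic in the variables indexed by $[n]\smallsetminus S$. The class of Lorentzian polynomials is closed under partial differentiation, so $g$ is itself Lorentzian of degree two; in particular, the Hessian $H(g)$ is a symmetric matrix with nonnegative entries and at most one positive eigenvalue. Square-freeness of the support of $f$ forces the diagonal of $H(g)$ to vanish.

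I would then pass to the $4\times 4$ principal submatrix $M$ of $H(g)$ indexed by $\{i,j,k,l\}$, whose off-diagonal entries are $a_{pq}\coloneq p_{S\cup\{p,q\}}$. By Cauchy interlacing, $M$ still has at most one positive eigenvalue, and its trace remains $0$. Ordering the eigenvalues as $\lambda_1\geq\lambda_2\geq\lambda_3\geq\lambda_4$, the conditions $\lambda_2\leq 0$ and $\sum_i\lambda_i=0$ split into two cases: either $\lambda_1=0$ (so all $\lambda_i=0$), or $\lambda_1>0$ and the remaining three are nonpositive with strictly negative sum. In either case, $\det M=\lambda_1\lambda_2\lambda_3\lambda_4\leq 0$.

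The lemma now follows from a direct expansion of the determinant. A symmetric $4\times 4$ matrix with vanishing diagonal entries satisfies the Cayley--Menger-type identity
\[
\det M \;=\; u^2+v^2+w^2 - 2(uv+vw+uw),
\]
where $u=a_{ij}a_{kl}$, $v=a_{ik}a_{jl}$, $w=a_{il}a_{jk}$ are precisely the three products appearing in \eqref{eq:tinfinityconditions}. Combining with $\det M\leq 0$ yields $u^2+v^2+w^2\leq 2(uv+vw+uw)$; if for instance $u=0$, this degenerates to $(v-w)^2\leq 0$, forcing $v=w$, and the other two cases are symmetric. I do not anticipate a real obstacle: the Lorentzian hypothesis enters only through the spectral condition on $H(g)$, and the only mildly nonroutine step is the closed-form expansion of $\det M$, which is an elementary computation paralleling Br\"and\'en's analogous lemma \cite{Branden07}*{Lemma 6.1} for stable polynomials.
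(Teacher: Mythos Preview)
Your proof is correct and takes a somewhat different route from the paper. The paper's argument is a black-box reduction: it observes that the multi-affine part of $\partial^S f$ is Lorentzian of degree two, notes that degree-two Lorentzian polynomials are real stable, and then invokes \cite{Branden07}*{Lemma~6.1} directly. Your approach instead unfolds the relevant spectral content: the Hessian of $\partial^S f$ has at most one positive eigenvalue and zero diagonal (by square-freeness), Cauchy interlacing passes both properties to the $4\times 4$ principal submatrix, and the explicit identity $\det M = u^2+v^2+w^2-2(uv+vw+uw)$ finishes the job. Your route is more self-contained---it avoids the detour through stability and makes transparent that only the one-positive-eigenvalue condition is being used---while the paper's route is shorter on the page but leans on an external lemma. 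In effect, you have reproved the square-free case of Br\"and\'en's lemma in the Lorentzian setting via an elementary determinant computation.
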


Note that the three terms in \Cref{eq:tinfinityconditions} correspond to the monomial terms in the \emph{three-term Pl\"ucker relations} for Grassmannians, which shows immediately that \Cref{lem:tinfinityconditions} holds for polynomials in the image of the map from \Cref{eq:mapfromgrassmannian}.
We also note that the conditions from \Cref{lem:tinfinityconditions} only depend on the support of $f$, and they give binomial equations on $\upH_M$ for each matroid $M$. Hence the image of the solution set to these equations in $\upH_M$ under the coefficient-wise ``logarithm of the absolute value'' map
\begin{equation*}
    \log|\cdot|\colon\upH_M\longrightarrow\R^{\cB(M)}  
\end{equation*}
is a linear subspace $V_M$, whose dimension is an upper bound on the dimension of $\upL_M$. Composing the map $\exp$ from \Cref{eq:expdressian} with $\log|\cdot|$, we may consider $\Dr_M$ as a subset of $V_M/\mathbf{1}$, where $\mathbf{1}$ is the all-ones vector. In this way we obtain the following more precise description of $\P\upL_M$:

\begin{thm}[\Cref{thm:Alor}]\label{thm:topologystratum}
    Let $\upB\subseteq V_M/\mathbf{1}$ be the closed unit ball around the origin with respect to a norm on $V_M/\mathbf{1}$. Then $\P\upL_M$ is homeomorphic to the space $\upB\smallsetminus(\partial \upB \cap \Dr_M)$. In particular, the dimension of $\P\upL_M$ is equal to $\dim(V_M)-1$.
\end{thm}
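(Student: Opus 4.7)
The plan is to construct a homeomorphism via coefficient-wise logarithms and to match it with the compactification from \Cref{thm:Alor}, using the identification $\P\upL_M \cong \Gr_J(\T_q)$ with the thin Schubert cell over the triangular hyperfield to control the limit behavior.

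First, I would define the map
\[
\Phi\colon \P\upL_M \longrightarrow V_M/\mathbf{1}, \qquad \Big[\sum_B p_B\, x^B\Big] \longmapsto \big[(\log p_B)_{B\in\cB(M)}\big].
\]
Lorentzian polynomials in $\upL_M$ have strictly positive coefficients, and by \Cref{lemma: petterlem} these coefficients satisfy the binomial identities whose logs cut out $V_M$; hence $\Phi$ is well-defined and continuous. Injectivity is immediate: scaling $f$ by $c>0$ translates log-coordinates by $\log(c)\mathbf{1}$. Fixing a norm on $V_M/\mathbf{1}$ and post-composing with the radial homeomorphism $\Psi\colon v\mapsto v/(1+\|v\|)$ onto the open ball $\mathrm{int}(\upB)$ gives a continuous injection $\Psi\circ\Phi\colon \P\upL_M\to\upB$.

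Second, I would extend $\Psi\circ\Phi$ to the Dressian-ray compactification $\overline{\P\upL_M}$ from \Cref{thm:Alor} by sending the ray $\R_{\geq 0}\cdot\nu$ with $\nu\in\Dr_M\smallsetminus\{0\}$ to the radial projection of $-\nu$ on $\partial\upB$ (the sign chosen to match the convention of \Cref{eq:expdressian}, where $f_{t\nu}$ has log-coordinates $-t\nu$). The central claim is that the extension is a continuous bijection $\overline{\P\upL_M}\to\upB$: a sequence $[f_n]\in\P\upL_M$ with $\Phi([f_n])$ leaving every bounded region of $V_M/\mathbf{1}$ converges in $\overline{\P\upL_M}$ if and only if the asymptotic direction lies on a Dressian ray, so that the Dressian rays correspond exactly to the boundary points $\partial\upB\cap\Dr_M$. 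Invoking compact-to-Hausdorff for continuous bijections, this extension is a homeomorphism, and restricting to $\P\upL_M$ yields $\P\upL_M\cong\upB\smallsetminus(\partial\upB\cap\Dr_M)$. The dimension formula $\dim\P\upL_M=\dim V_M-1$ follows at once.

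The main obstacle is the \emph{only if} direction in Step 2: that any asymptotic direction of an unbounded sequence in $\Phi(\P\upL_M)$ must lie in $\Dr_M$, that is, that the Lorentzian constraints propagate to the tropical exchange relations defining $\Dr_M$. The cleanest route uses the identification $\P\upL_M\cong\Gr_J(\T_q)$: the $\T_q$-matroid structure dequantizes ($\T_q\rightsquigarrow\T_\infty$) to an honest valuated matroid as the log-coordinates blow up, and the limit direction is forced to satisfy the valuated exchange inequality. Matching this degeneration with the radial compactification of $V_M/\mathbf{1}$ then reduces to careful bookkeeping about how the hyperaddition in $\T_q$ specializes to the tropical sum.
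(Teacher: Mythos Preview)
Your proposed map $\Psi\circ\Phi$ with $\Psi(v)=v/(1+\|v\|)$ does not give the homeomorphism you want. The map $\Psi$ is a homeomorphism of $V_M/\mathbf{1}$ onto the \emph{open} ball $\mathrm{int}(\upB)$, so the image of $\P\upL_M$ lies entirely in $\mathrm{int}(\upB)$ and never touches $\partial\upB$. Adding the Dressian rays as boundary points only contributes $\partial\upB\cap\Dr_M$, so your extended map misses $\partial\upB\smallsetminus\Dr_M$ altogether and cannot be a bijection onto $\upB$; the claim ``the extension is a continuous bijection $\overline{\P\upL_M}\to\upB$'' is false as stated. In addition, invoking the identification $\P\upL_M\cong\Gr_J(\T_q)$ is circular: that homeomorphism (\Cref{cor:allarehomeo}) is deduced \emph{from} \Cref{thm:Alor} together with \Cref{thm:ATq}, both of which rest on the same structural input you are trying to establish. (Also, the dequantization you want is $\T_q\rightsquigarrow\T_0$, not $\T_\infty$.)

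What the paper actually does is replace the naive radial contraction by the gauge map of \Cref{cor: ballminusdirections}, namely $x\mapsto x/(1-\psi(x)+\|x\|)$ where $\psi(x)=\inf\{t>0:x_*+ (x-x_*)/t\in X\}$. This map sends a closed set $X$ homeomorphically onto $\upB\smallsetminus S(x_*,X)$, but only once one knows that $(x_*,X,V_M/\mathbf{1})$ is \emph{strongly} star-shaped: every point strictly between $x_*$ and $x\in X$ lies in the interior of $X$. Proving this for $X=\log(\P\upL_M)$ is the technical heart of the argument (\Cref{thm:logljstronglystarshaped}), and it is done with respect to the basepoint $\log(\upN_t(f_M))$ for $t>0$, \emph{not} the origin; star-shapedness at the origin follows from \cite{Branden-Huh20}*{Proposition 3.25}, but strong star-shapedness fails there in general. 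The proof reduces via \Cref{lemma: starcrit1} to degree two, where simplification (\Cref{thm: simplification}) and an explicit Hessian computation (\Cref{thm: strongstardeg2}) give the interior condition. The identification of the set of directions $S(x_*,X)$ with $\Dr_M\cap\partial\upB$ is then \Cref{lem: diraremconvex}, whose ``only if'' direction uses \cite{Branden-Huh20}*{Theorem 3.20} and Tarski's transfer principle rather than any $\T_q$-argument. None of this machinery appears in your outline, and without strong star-shapedness the gauge map is not even continuous.
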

All results outlined here can be straight-forwardly extended to the general case of Lorentzian polynomials with a support that is not necessarily square-free, 
see \Cref{sec:lortopo}.

In the next subsection of this introduction, we explain a conceptual interpretation and common framework for the space $V_M$, its dimension, and the map from \Cref{eq:expdressian}.

\subsection{Grassmannians over triangular hyperfields}
In order to make precise the intuition that the space of Lorentzian polynomials is like a Grassmannian, we recall from \cite{Baker-Bowler19} that there is a natural way to generalize the notion of a field by relaxing the requirement that addition is a binary operation; such generalized fields are called {\em tracts}.

The definition of a tract will be given in \Cref{sec:tractdef}. For now it is enough to know that a tract $F$ consists of a multiplicative abelian group $F^\times$, whose neutral element is denoted by $1$, together with the {\em null set} of ``additive relations'' of the form $a_1 + \cdots + a_k=0$ with $a_i\in F=F^\times\cup\{0\}$ and a unique element $-1\in F$ which satisfies $1+(-1)=0$. 

Given nonnegative integers $d \le n$ and a tract $F$, we define the $F$-{\em Grassmannian} $\Gr(d,n)(F)$ as the set of solutions in $F^{\binom{n}{d}}/F^\times$ to the \emph{Pl{\"u}cker equations}: For any set $S_1$ of cardinality $d-1$ in $[n]$ and any set $S_2$ of cardinality $d+1$ in $[n]$, we have
\[
\sum_{x \in S_2 \smallsetminus S_1} \textrm{sign}(x) p_{S_1 \cup \{x\}}p_{S_2 \smallsetminus \{x\}}=0, 
\]
where $\textrm{sign}(x)=\pm 1$ is determined by the parity of $\#\{x<s \in S_1\}+\#\{x<s \in S_2\}$, see \Cref{subsec:polymatroid}.
When $F=K$ is a field, this recovers the usual Grassmannian $\Gr(d,n)(K)$ over $K$, parametrizing $d$-dimensional subspaces of $K^n$. Similarly, for a matroid $M$ of degree $d$ on $[n]$, one defines the \emph{thin Schubert cell} $\Gr_M(F)$ to be the subset of $\Gr(d,n)(F)$ consisting of points with support $\cB(M)$, which is naturally embedded into $(F^\times)^{\cB(M)}/F^\times$. 
If the tract $F$ carries a topology, this induces a natural topology on $\Gr(d,n)(F)$ and $\Gr_M(F)$, see \cite{BHKL0}*{Section~12} and \cite{Baker-Jin-Lorscheid24}.

As an algebraic framework for Maslov dequantization, Viro defines in \cites{Viro10,Viro11} a family $\T_q$ of tracts indexed by a positive real number $q$, called {\em triangular hyperfields}. See \Cref{subsubsection: Maslov dequantization} for their relation to amoebas and tropicalizations. The multiplicative group $\T_q^\times$ of $\T_q$ is $(\R_{> 0}, \cdot)$ for all $q > 0$, and for $a_1,\ldots,a_k\geq0$, we have  $a_1 + \cdots + a_k = 0$ in $\T_q$ if and only if $a_1^{1/q},\ldots,a_k^{1/q}$ form the side lengths of a (possibly degenerate) convex $k$-gon. So, for example, $a+b+c = 0$ in $\T_1$ if and only if $a,b,c$ form the side lengths of a possibly degenerate Euclidean triangle (hence the name ``triangular hyperfield''), and, up to a positive multiple, $\Gr(2,4)(\T_1)$ is
 the set of nonnegative real numbers $(p_{12},p_{13},p_{14},p_{23},p_{24},p_{34})$  such that
\[
p_{ij}p_{kl}+ p_{ik}p_{jl} \ge p_{il}p_{jk} \ \  \text{for any $i,j,k,l$.}
\]

This family of tracts has two limit objects $\T_0$ (the \emph{tropical hyperfield}) and $\T_\infty$ (the \emph{degenerate triangular hyperfield}), whose ground sets are also $\R_{\geq0}$ with the usual multiplicative structure. The additive relations are given as follows:
\begin{enumerate}[(1)]\itemsep 5pt
\item $a_1 + \cdots + a_k = 0$ holds in $\T_0$ if and only if it holds in all  $\T_q$ with $q>0$.
\item $a_1 + \cdots + a_k = 0$ holds in $\T_\infty$ if and only if it holds in some $\T_q$ with $q>0$.
\end{enumerate}
See \Cref{lemma:zeroinfdesc} for alternative descriptions of the null sets of $\T_0$ and $\T_{\infty}$. 
The topology on $\T_q$ is defined as the Euclidean topology on $\R_{\geq0}$.

In the previous subsection, we have already seen the Grassmannians over $\T_0$ and $\T_\infty$ in disguise. Namely, the coordinate-wise logarithm map
$\R^{\cB(M)}_{>0}/\R_{>0}\to\R^{\cB(M)}/\R\mathbf{1}$ takes $\Gr_M(\T_0)$ to $-\Dr_M$ and $\Gr_M(\T_\infty)$ to the vector space $V_M/\R\mathbf{1}$. The statements of \Cref{ex:dressianinlorentzian} and \Cref{lem:tinfinityconditions} can be rephrased as follows:

\begin{thm}
    For every matroid $M$, there are natural inclusions
    \begin{equation} \label{eq:easysandwichthm}
        \Gr_M(\T_0)\hookrightarrow\P\upL_M\hookrightarrow\Gr_M(\T_\infty).
    \end{equation}
\end{thm}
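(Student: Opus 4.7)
The plan is to realize both inclusions as tautological maps on coefficient vectors, after identifying $\Gr_M(\T_0)$ and $\Gr_M(\T_\infty)$ with explicit subsets of $\R^{\cB(M)}/\R\mathbf{1}$ via coordinate-wise logarithm, and then verifying the defining conditions using the earlier results of the introduction.

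For the left inclusion, I unwind the Pl\"ucker relations over $\T_0$. Since $\T_0^\times = \R_{>0}$ and a sum vanishes in $\T_0$ iff the minimum of the logarithms of the nonzero summands is attained at least twice, the substitution $\nu(B) = -\log p_B$ transforms each Pl\"ucker equation over $\T_0$ into a valuated matroid axiom. Hence coordinate-wise logarithm gives a homeomorphism $\Gr_M(\T_0) \cong -\Dr_M$. Composing with the exponential map of \Cref{ex:dressianinlorentzian} yields the continuous injection $\Gr_M(\T_0) \hookrightarrow \P\upL_M$, $(p_B)_B \mapsto \sum_B p_B\,x^B$, whose image lies in $\P\upL_M$ by \cite{Branden-Huh20}*{Theorem 3.14}.

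For the right inclusion, the candidate map $f = \sum_B p_B\,x^B \mapsto (p_B)_B \in (\T_\infty^\times)^{\cB(M)}/\T_\infty^\times$ is continuous and tautologically injective modulo scaling, so the content is the verification of the Pl\"ucker equations over $\T_\infty$. I would use the description that $a_1 + \cdots + a_k = 0$ in $\T_\infty$ iff the positive $a_i^{1/q}$ form the side lengths of a convex polygon for some $q > 0$: letting $q \to \infty$, every positive $a_i^{1/q}$ tends to $1$, so this null condition reduces to ``the number of nonzero summands is $0$, at least $3$, or equals $2$ with the two nonzero values equal.'' For a Pl\"ucker relation indexed by $(S_1,S_2)$, the matroid symmetric exchange axiom applied to $\cB(M) = \supp(f)$ rules out the case of exactly one nonzero product; and for three-term Pl\"ucker relations, the case of exactly two nonzero products is precisely \Cref{lem:tinfinityconditions}.

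The main obstacle is the two-nonzero-product case in Pl\"ucker relations with more than three terms, where \Cref{lem:tinfinityconditions} does not directly apply. I would handle this by the standard matroidal reduction from arbitrary Pl\"ucker relations to three-term ones, analogous to the classical fact that the three-term tropical Pl\"ucker axioms imply the full valuated matroid axioms; the corresponding statement for the hyperfield $\T_\infty$ is part of the framework established in the companion paper \cite{BHKL0}. Both inclusions are evidently continuous and injective, and they factor through $\Gr_M(\T_q)$ for any $q > 0$ via the hyperfield morphisms $\T_0 \to \T_q \to \T_\infty$, consistently with the forthcoming identification of $\P\upL_M$ with every intermediate $\Gr_M(\T_q)$.
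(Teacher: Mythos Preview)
Your proposal is correct and follows essentially the same route as the paper: the theorem is presented there as a direct restatement of \Cref{ex:dressianinlorentzian} (left inclusion) and \Cref{lem:tinfinityconditions} (right inclusion into the weak $\T_\infty$-representation space). The upgrade from three-term to all Pl\"ucker relations over $\T_\infty$ that you correctly flag as the main obstacle is handled exactly as you propose, via the equality $\upR^{\rm w}_J(\T_\infty)=\upR_J(\T_\infty)$ from the companion paper (see \Cref{rem: excellent tracts} and \Cref{rem: Lorentzian polynomials satisfy all degenerate Plucker relations}); note also that your separate invocation of symmetric exchange for the one-nonzero case in three-term relations is already subsumed by the statement of \Cref{lem:tinfinityconditions}.
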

The following theorem allows us to apply the theory of matroids over tracts to study the topology of $\P\upL_M$. We will explain some consequences in the next subsection.
\begin{thm}[\Cref{cor:allarehomeo}]\label{thm:lortqhomeo}
    For every positive real number $q$ and every matroid $M$, the spaces $\P\upL_M$ and $\Gr_M(\T_q)$ are homeomorphic.
\end{thm}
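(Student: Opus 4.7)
My plan has two stages: first reduce the problem to a single value of $q$ via a universal power-map argument, and then identify $\P\upL_M$ with $\Gr_M(\T_2)$ as subsets of $\R_{>0}^{\cB(M)}/\R_{>0}$ using the triangle-inequality characterization of Lorentzian polynomials.

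\emph{Step 1: Power maps.} For any $q, q' > 0$, define the coordinate-wise power map
\[
\phi_{q,q'}\colon(p_B)_{B\in\cB(M)}\longmapsto(p_B^{q'/q})_{B\in\cB(M)},
\]
which is a continuous bijection of $\R_{>0}^{\cB(M)}/\R_{>0}$ with continuous inverse $\phi_{q',q}$. For any Plücker relation and its associated products $a_i = p_{B_i}p_{B_i'}$, one checks that $(a_i^{q'/q})^{1/q'} = a_i^{1/q}$; hence the polygon condition defining $\Gr_M(\T_{q'})$ on the image coincides with the one defining $\Gr_M(\T_q)$ on the source. Thus $\phi_{q,q'}$ restricts to a homeomorphism $\Gr_M(\T_q) \to \Gr_M(\T_{q'})$, and it suffices to handle a single value of $q$, which I take to be $q = 2$.

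\emph{Step 2: The case $q = 2$.} A positive-coefficient polynomial $f = \sum_B p_B x^B$ with matroidal support $\cB(M)$ is Lorentzian if and only if, for every $(d-2)$-subset $S\subseteq[n]$ and every 4-element subset $\{i,j,k,l\}$ disjoint from $S$, the three quantities
\[
\sqrt{p_{S\cup\{i,j\}}p_{S\cup\{k,l\}}},\ \ \sqrt{p_{S\cup\{i,k\}}p_{S\cup\{j,l\}}},\ \ \sqrt{p_{S\cup\{i,l\}}p_{S\cup\{j,k\}}}
\]
form the side lengths of a (possibly degenerate) triangle. This is exactly the three-term polygon condition defining $\Gr_M(\T_2)$, while the higher-term polygon conditions follow from the three-term ones by elementary inequalities (a $k$-gon inequality is weaker than the triangle inequality on any sub-triple containing the maximum). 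So $\P\upL_M = \Gr_M(\T_2)$ on the nose, and composing with $\phi_{q,2}$ gives the stated homeomorphism.

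\emph{Main obstacle.} The decisive input is the ``Lorentzian iff $4$-subset triangle inequality'' characterization used in Step 2. The necessity direction follows from the Cayley--Menger-type identity
\[
\det H = -(A+B+C)(-A+B+C)(A-B+C)(A+B-C)
\]
for the $4\times 4$ principal Hessian minor $H$ of $\partial^S f$ with $A^2 = p_{S\cup\{i,j\}}p_{S\cup\{k,l\}}$ and cyclic, combined with the Lorentzian signature constraint; Perron--Frobenius together with the matroid support structure rules out the spurious $(3\,\textup{pos},1\,\textup{neg})$ signature so that $\det H \le 0$ is equivalent to Lorentzianness of $H$. The sufficiency direction requires propagating these local 4-subset inequalities up to the full ``at most one positive eigenvalue'' condition on the entire $(d-2)$-fold partial-derivative Hessian, and simultaneously verifying that Lemma~\ref{lem:tinfinityconditions}'s binomial equations capture the degenerate triangle case. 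This step uses the matroid structure in an essential way, and is exactly what the polymatroid representation theory developed in the companion paper \cite{BHKL0} is designed to supply.
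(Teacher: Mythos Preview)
Your Step~1 is fine: the coordinate-wise power map does give a homeomorphism $\Gr_M(\T_q)\cong\Gr_M(\T_{q'})$ for any $q,q'>0$, and this is standard (cf.\ Remark~\ref{rem:scaling}).

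Step~2, however, rests on two claims that are not established. First, the assertion that higher-term polygon conditions follow from the three-term ones conflates two different things: the \emph{three-term Pl\"ucker relations} (which define $\Gr^{\rm w}_M$) and the \emph{sub-triples of a higher Pl\"ucker relation}. These are not the same sets of triples, so your argument does not show $\Gr^{\rm w}_M(\T_2)=\Gr_M(\T_2)$; in fact the paper explicitly records (Remark~\ref{rem: excellent tracts}) that weak and strong representations over $\T_q$ differ for $0<q<\infty$. Second, and more seriously, the core equivalence ``$f$ is Lorentzian $\iff$ the three-term $\T_2$ triangle inequalities hold'' is neither proved here nor in \cite{BHKL0}. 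The necessity direction (Lorentzian $\Rightarrow$ $\T_2$-representation) is mentioned only as a forward reference to the sequel~\cite{BHKL2}, and the sufficiency direction---propagating the $4\times 4$ signature conditions up to the full Hessian---is not claimed anywhere. Your Cayley--Menger computation handles $U_{2,4}$ correctly, but already for $U_{2,5}$ one must show that five local $4\times 4$ constraints force the $5\times 5$ Hessian to have Lorentzian signature, and Cauchy interlacing alone does not give this. The companion paper \cite{BHKL0} develops the representation-theoretic framework but does not supply this particular spectral implication.

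The paper's proof is entirely different and does \emph{not} assert that $\P\upL_M$ and $\Gr_M(\T_q)$ coincide as subsets of $\R_{>0}^{\cB(M)}/\R_{>0}$. Instead, working in logarithmic coordinates inside the ambient linear space $V_M=\log\Gr_M(\T_\infty)$, it shows separately that $\log\Gr_M(\T_q)$ is strongly star-shaped with respect to the origin (Proposition~\ref{prop:repstar}) and that $\log\upL_M$ is strongly star-shaped with respect to $\log \upN(f_M)$ (Theorem~\ref{thm:logljstronglystarshaped}, which requires the degree-two simplification theory of \S\ref{subsection: strong star-shapedness in degree 2}). The crucial observation is that both sets have the \emph{same} set of limiting directions, namely the Dressian $\Dr_M$ (Lemma~\ref{lem: diraremconvex} and the definition of $\T_0$). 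Corollary~\ref{cor: ballminusdirections} then identifies each space with the common model $\upB\smallsetminus(\Dr_M\cap\partial\upB)$, whence the homeomorphism.
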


We have $\P\upL(d,n)_\sqfree  =  \coprod_M \P\upL_M $ and $\Gr(d,n)(\T_q)  =  \coprod_M \Gr_M(\T_q)$,
where the unions are over all matroids of rank $d$ on $[n]$. 
In light of \Cref{thm:lortqhomeo}, we make the following conjecture.

\begin{conj} \label{conj:PLGrTqhomeo}
    The spaces $\P\upL(d,n)_\sqfree$ and $\Gr(d,n)(\T_q)$ are homeomorphic to each other for all $0<q<\infty$.
\end{conj}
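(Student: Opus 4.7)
The natural strategy is to upgrade the stratum-wise homeomorphisms $\P\upL_M \cong \Gr_M(\T_q)$ of \Cref{thm:lortqhomeo} to a single continuous bijection $\Phi_q \colon \P\upL(d,n)_\sqfree \to \Gr(d,n)(\T_q)$ and then invoke compactness. Both sides are compact Hausdorff: the domain is a closed Euclidean ball by \Cref{thm:petterballness}, and the target is a closed subset of the compact simplex $\R_{\geq 0}^{\binom{n}{d}}/\R_{>0}$. Thus a continuous bijection is automatically a homeomorphism. The two spaces carry compatible stratifications by matroids of rank $d$ on $[n]$, namely $\P\upL(d,n)_\sqfree = \coprod_M \P\upL_M$ and $\Gr(d,n)(\T_q) = \coprod_M \Gr_M(\T_q)$, so it suffices to produce a $\Phi_q$ that preserves the stratification and restricts to a bijection on each stratum.

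The first concrete task is to produce $\Phi_q$ explicitly. A natural candidate is the coefficient-wise map $\bigl[\sum_B p_B \, x^B\bigr] \longmapsto \bigl[(\phi_q(p_B))_B\bigr]$ for some continuous increasing $\phi_q \colon \R_{\geq 0} \to \R_{\geq 0}$ with $\phi_q(0) = 0$, where $\phi_q$ is chosen to convert the Lorentzian inequalities (log-concavity of $f$ and its partial derivatives) into the polygon inequalities cutting out $\Gr(d,n)(\T_q)$. For $U_{2,4}$, the Lorentzian locus is exactly a triangle-inequality condition on products of coefficients, matching the $\T_1$-Plücker condition precisely; this suggests that an analogous $\phi_q$ should govern the general case, ideally extractable from the proof of \Cref{thm:lortqhomeo}. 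Once such $\phi_q$ is identified, continuity of $\Phi_q$ is immediate, $\Phi_q$ preserves the matroid stratification since $\phi_q(p) = 0$ iff $p = 0$, and stratum-wise bijectivity reduces to checking that $\Phi_q|_{\P\upL_M}$ coincides with the homeomorphism of \Cref{thm:lortqhomeo}.

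\textbf{Main obstacle.} The crux is producing a single closed-form $\Phi_q$ that both takes values in $\Gr(d,n)(\T_q)$ on every Lorentzian polynomial and realizes the stratum-wise homeomorphism of \Cref{thm:lortqhomeo} uniformly in $M$. The naive choice $p \mapsto p^{1/q}$ already fails the first requirement: $t \mapsto t^{1/q}$ does not preserve polygon inequalities when $q > 1$, so Lorentzian coefficients need not land in $\T_q$-Plücker tuples under a simple power. If no elementary coefficient-wise formula suffices, the fallback is to glue the stratum-wise homeomorphisms inductively along the closure order of matroid strata, using \Cref{thm:Alor} to describe each closure as a closed ball with a piece of its Dressian boundary removed, and then extending homeomorphisms continuously across codimension-one strata. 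The technical heart of that fallback is a controlled extension lemma ensuring that limits of Lorentzian coefficients degenerating to $\P\upL_M$ correspond, under the stratum-wise homeomorphisms, to limits of $\T_q$-Plücker coordinates degenerating into the closure of $\Gr_M(\T_q)$.
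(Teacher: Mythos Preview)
This statement is a \emph{conjecture} in the paper and is not proved there; the paper only records that the stratum-wise pieces match up to homeomorphism (\Cref{thm:lortqhomeo}) and hence that the two sides have equal Euler characteristic. So there is no proof to compare your proposal against, and the question is whether your outline actually closes the gap. It does not.

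Your first route, a coefficient-wise map $p_B\mapsto\phi_q(p_B)$, cannot realize the stratum homeomorphisms of \Cref{thm:lortqhomeo}. Those are obtained (see \Cref{thm:ATq} and \Cref{thm:Alor}) by composing two radial retractions in log-coordinates: one sending $\log(\P\upL_M)$ onto $\upB\smallsetminus(\Dr_M\cap\partial\upB)$ and one sending that set onto $\log(\Gr_M(\T_q))$. The composite is a radial rescaling whose factor depends on the \emph{direction} in $V_M/\R\mathbf{1}$, not a function applied to each coordinate separately. Your motivating claim that the Lorentzian locus for $U_{2,4}$ coincides with the $\T_1$-Pl\"ucker locus is also false: for $(p_{12},p_{13},p_{14},p_{23},p_{24},p_{34})=(2,1,1,1,1,2)$ the Hessian has eigenvalues $4,0,-2,-2$, so the polynomial is Lorentzian, yet $p_{12}p_{34}=4>2=p_{13}p_{24}+p_{14}p_{23}$ violates the $\T_1$ triangle inequality.

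Your fallback, gluing the stratum-wise homeomorphisms along the closure order of matroid strata, is precisely the open content of the conjecture. The homeomorphisms supplied by \Cref{cor:allarehomeo} are non-canonical (they depend on the choice of norm on $V_M/\R\mathbf{1}$), and nothing in the paper guarantees that the homeomorphism chosen on $\P\upL_M$ extends continuously, as one degenerates to an initial matroid $M'$, to the homeomorphism already fixed on $\P\upL_{M'}$. The ``controlled extension lemma'' you flag as the technical heart is exactly what is missing, and the paper leaves it open.
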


By \Cref{thm:petterballness}, this would also imply that $\Gr(d,n)(\T_q)$ is homeomorphic to a ball. \Cref{thm:lortqhomeo} shows that it has the correct Euler characteristic.

The dimension of $\P\upL_M$ has a natural interpretation resulting from the theory of Grassmannians over tracts as well. Namely, for every matroid $M$, there is a tract $T_M$, called the \emph{universal tract of $M$}, that represents the thin Schubert cell $\Gr_M$, considered as a functor from the category of tracts to the category of sets \cite{BHKL0}*{Proposition~5.2}. Its multiplicative group $T_M^\times$, called the \emph{Tutte group of $M$}, is finitely generated. The \emph{Tutte rank of $M$} is the free rank of $T_M^\times$.
\begin{thm}[\Cref{thm:ATq}]
    The dimension of $\P\upL_M$ is equal to the Tutte rank of $M$.
\end{thm}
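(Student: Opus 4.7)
The plan is to read off the dimension of $\P\upL_M$ from \Cref{thm:topologystratum} and then to identify this dimension with the free rank of the Tutte group via the universal property of $T_M$ from \cite{BHKL0}.

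First I would observe that \Cref{thm:topologystratum} gives $\dim\P\upL_M=\dim V_M-1$, while the discussion preceding \Cref{thm:lortqhomeo} identifies $\Gr_M(\T_\infty)$ with the quotient $V_M/\R\mathbf{1}$ under the coordinatewise logarithm. Since the generating polynomial $\sum_{B\in\cB(M)}x^B$ is Lorentzian, the all-ones vector lies in $V_M$, so
\[
\dim\P\upL_M \;=\; \dim V_M-1 \;=\; \dim(V_M/\R\mathbf{1}) \;=\; \dim\Gr_M(\T_\infty),
\]
reducing the claim to showing that $\dim\Gr_M(\T_\infty)$ equals the Tutte rank of $M$.

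To compute the latter, I would invoke the universal property of $T_M$: the set $\Gr_M(\T_\infty)$ consists of tract morphisms $T_M\to\T_\infty$, i.e.\ group homomorphisms $\phi\colon T_M^\times\to\T_\infty^\times=\R_{>0}$ compatible with the null sets. By definition of $\T_\infty$, any sum $a_1+\cdots+a_k$ with $k\ge 3$ and all $a_i$ positive lies in the null set (for sufficiently large $q$, the numbers $a_i^{1/q}$ form the side lengths of a convex polygon), while a two-term null relation $a+b=0$ in $\T_\infty$ forces $a=b$. Among the Plücker relations defining $\Gr_M$, those that degenerate to equalities $p_B=p_{B'}$ after the vanishing of non-basis terms (cf.\ \Cref{lemma: petterlem}) are already built into the defining relations of the group $T_M^\times$, whereas all longer Plücker relations become vacuous after mapping to $\T_\infty$. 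Hence $\Gr_M(\T_\infty)=\Hom(T_M^\times,\R_{>0})$. Since $T_M^\times$ is finitely generated abelian and $\R_{>0}$ is divisible and torsion-free, every homomorphism kills the torsion part, so this hom-space is homeomorphic to $\R^r$, where $r$ is the free rank of $T_M^\times$, i.e.\ the Tutte rank of $M$. Combined with the first step, this gives $\dim\P\upL_M=r$.

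The main obstacle I foresee is in justifying the identification $\Gr_M(\T_\infty)=\Hom(T_M^\times,\R_{>0})$. This requires a careful reading of the presentation of $T_M$ in \cite{BHKL0}: one must verify (i) that every two-term degeneration of a Plücker relation on the bases of $M$ is indeed built into the multiplicative structure of the Tutte group, and (ii) that longer Plücker relations impose no further constraint on tract morphisms into $\T_\infty$. Once this is in place, the dimension count is immediate from the classification of homomorphisms from a finitely generated abelian group to $\R_{>0}$.
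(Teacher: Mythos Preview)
Your proposal is correct and follows essentially the same route as the paper: reduce to $\dim(V_M/\R\mathbf{1})$ via \Cref{thm:topologystratum}, and identify this with the free rank of $T_M^\times$ through $\Gr_M(\T_\infty)=\Hom(T_M,\T_\infty)$.

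The obstacle you flag at the end is simpler than you suggest, and the paper handles it without inspecting the presentation of $T_M$. The identification $\Hom(F,\T_\infty)=\Hom(F^\times,\R_{>0})$ holds for \emph{every} tract $F$, not just $T_M$: any null relation of length $\geq 3$ maps into $N_{\T_\infty}$ automatically by \Cref{lemma:zeroinfdesc}(2), and the only length-$2$ relations $a+(-a)\in N_F$ force $f(a)=f(-a)$, which follows from $f(a)^2=f(-a)^2$ in the torsion-free group $\R_{>0}$. This is exactly \Cref{rem:tinftyreps} (cf.\ \cite{BHKL0}*{Proposition 9.2}). So you need not verify anything about which degenerate Pl\"ucker relations are ``built into'' $T_M^\times$; the null-set structure of $\T_\infty$ alone makes every group homomorphism $T_M^\times\to\R_{>0}$ a tract morphism. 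The paper in fact \emph{defines} the Tutte rank $\tau(J)$ as $\dim(V_J/\R\mathbf{1})$ and cites \cite{BHKL0}*{Section 10.2} for the identification with the free rank of $T_J^\times$, so the statement becomes an immediate corollary of \Cref{thm:Alor}.
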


The results from this subsection, along with \Cref{conj:PLGrTqhomeo}, can be extended to the general case of Lorentzian polynomials with not necessarily square-free support. For this, one has to extend the theory of Grassmannians over tracts from matroids to polymatroids as in \cite{BHKL0}.

\subsection{Orbit spaces}
For every matroid $M$, there is an action of $\R_{>0}^n$ on $\upL_M$ defined by 
\begin{equation} \label{eq:action2}
(c_1,\ldots,c_n) \cdot f(x_1,\ldots,x_n) \coloneq f(c_1 x_1, \ldots, c_n x_n).
\end{equation}
We denote the quotient space by $\ulineL_M$. Similarly, the group $(F^\times)^n$ acts on $\Gr_M(F)$ for every tract $F$, and we denote the quotient space by $\ulineGr_M(F)$. After taking coefficient-wise logarithms, this action of $\R_{>0}^n$ corresponds to addition of elements from a certain linear subspace $W_M$ of $V_M$. We set $\ulineDr_M \coloneq \Dr_M/W_M$ and call this space the \emph{reduced Dressian} over $M$. 
We determine the homeomorphism types of the orbit spaces $\ulineL_M$ and $\ulineGr_M(\T_q)$:

\begin{thm}[Theorems \ref{thm:ATqreduced} and \ref{thm:Alorreduced}]\label{thm:topologyorbitstratum}
    Let $\upB\subseteq V_M/W_M$ be the closed unit ball around the origin with respect to a norm on $V_M/W_M$, and let $q$ be a positive real number. Then $\ulineL_M$ and $\ulineGr_M(\T_q)$ are both homeomorphic to the space $\upB\smallsetminus(\partial \upB \cap \ulineDr_M)$.
\end{thm}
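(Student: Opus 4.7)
The plan is to deduce \Cref{thm:topologyorbitstratum} by passing to $\R_{>0}^n$-orbit spaces in \Cref{thm:topologystratum} and \Cref{thm:lortqhomeo}. Since $\T_q^\times=\R_{>0}$, the homeomorphism $\P\upL_M\cong\Gr_M(\T_q)$ of \Cref{thm:lortqhomeo} intertwines the two $\R_{>0}^n$-actions---both simply rescale the $B$-coordinate by $\prod_{i\in B}c_i$---so it suffices to identify one of the two orbit spaces with the claimed model.

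On the ``log'' side used in \Cref{thm:topologystratum}, the action of $(c_1,\ldots,c_n)\in\R_{>0}^n$ on $f=\sum_B p_B\,x^B$ becomes translation of $(\log p_B)_B$ by the vector $w(c)\coloneq\bigl(\sum_{i\in B}\log c_i\bigr)_B\in V_M$. As $c$ varies these vectors fill out the subspace $W_M$, which contains $\mathbf{1}$ (since a uniform rescaling $c_i=t$ gives $dt\cdot\mathbf{1}$). Hence the $\R_{>0}^n$-action descends to translation by $W_M/\mathbf{1}$ on $V_M/\mathbf{1}$, with orbit space canonically $V_M/W_M$, and $\ulineDr_M=\Dr_M/W_M$ by definition.

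The key step is to refine \Cref{thm:topologystratum} so that its homeomorphism is equivariant under $W_M/\mathbf{1}$-translation. I would fix a linear splitting $V_M/\mathbf{1}\cong(W_M/\mathbf{1})\oplus(V_M/W_M)$ and equip $V_M/\mathbf{1}$ with a norm pulled back from a norm on $V_M/W_M$ along the projection to the second factor. Denoting by $\upB_0\subseteq V_M/\mathbf{1}$ the corresponding unit ball, the radial retraction used in the proof of \Cref{thm:topologystratum} then acts only on the $V_M/W_M$-component and commutes with translation by $W_M/\mathbf{1}$. Passing to the quotient carries $\upB_0\smallsetminus(\partial\upB_0\cap\Dr_M)$ homeomorphically onto $\upB\smallsetminus(\partial\upB\cap\ulineDr_M)$, where $\upB\subseteq V_M/W_M$ is the induced unit ball; the right-hand side is the claimed model and the left-hand side is $\ulineL_M\cong\ulineGr_M(\T_q)$.

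The main obstacle is verifying that this equivariant refinement actually goes through. The image of $\P\upL_M$ in $V_M/\mathbf{1}$ is manifestly $W_M/\mathbf{1}$-invariant, since $\R_{>0}^n$ preserves Lorentzianness, and $\Dr_M$ is likewise $W_M/\mathbf{1}$-invariant because valuated matroids are preserved by base rescaling. Together with the compatible choice of norm, these facts ensure that distinct rays in $\Dr_M$ project to distinct rays in $\ulineDr_M$ and that the radial retraction descends unambiguously to the quotient. Granted these compatibility checks, quotienting \Cref{thm:topologystratum} produces the desired homeomorphism for both $\ulineL_M$ and $\ulineGr_M(\T_q)$ simultaneously.
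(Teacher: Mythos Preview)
Your strategy—quotient the homeomorphisms already in hand by the $\R_{>0}^n$-action—is natural, but two of your key assertions do not hold as stated.

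First, the homeomorphism of \Cref{thm:lortqhomeo} is not an explicit equivariant map: it is obtained by showing that $\P\upL_M$ and $\Gr_M(\T_q)$ are \emph{separately} homeomorphic to the common model $\upB\smallsetminus(\partial\upB\cap\Dr_M)$ via the radial maps of \Cref{cor: ballminusdirections}. Those radial maps depend on the chosen norm and on the (different) gauge functions $\psi$ of the two sets, and there is no reason their composite should intertwine the $W_M/\mathbf{1}$-translations. So your reduction to a single orbit space is not justified.

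Second, the ``norm pulled back from $V_M/W_M$'' is only a seminorm on $V_M/\mathbf{1}$, so \Cref{thm:topologystratum} does not literally apply. More to the point, even with this seminorm the radial map $\phi(x)=x/(1-\psi(x)+\|x\|)$ scales the \emph{entire} vector $x$, including its $W_M/\mathbf{1}$-component; it does not ``act only on the $V_M/W_M$-component'' and it does not commute with translation by $W_M/\mathbf{1}$. What is true is that the scaling factor is $W_M/\mathbf{1}$-invariant (since both $\psi$ and the seminorm are), so $\phi$ \emph{descends} to the quotient. But checking that carefully amounts to redoing \Cref{cor: ballminusdirections} in $V_M/W_M$, which is precisely what you were trying to avoid. (Your remark that ``distinct rays in $\Dr_M$ project to distinct rays in $\ulineDr_M$'' is also false: the entire lineality space $W_M$ lies in $\Dr_M$ and collapses to the origin.)

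The paper sidesteps all of this by never quotienting a homeomorphism. It first records the general fact (\Cref{lemma: starshapedlineality2}) that if $(x_*,X,V)$ is strongly star-shaped and $X+W=X$, then $(\pi(x_*),\pi(X),V/W)$ is again strongly star-shaped, with set of directions read off from the $W$-invariant rays in $X$. Applying this with $W=W_M$ to the strongly star-shaped triples already established for $\log(\upL_M)$ and $\log(\upR_M(\T_q))$, and then invoking \Cref{cor: ballminusdirections} \emph{directly in} $V_M/W_M$, gives both halves of the theorem independently, with no equivariance argument needed.
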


For the relationship between \Cref{thm:Alor} and \Cref{thm:topologyorbitstratum}, see \Cref{prop:product}.

\begin{ex}
    For $M=U_{2,4}$, the reduced Dressian consists of three rays. Hence the space $\ulineL_M$ is homeomorphic to a closed disc with three points from the boundary removed, see \Cref{fig: Dressian and log-Lorentzian for U24}.
    For an arbitrary matroid $M$, the map from \Cref{eq:mapfromgrassmannian} induces a continuous map $\ulineGr_M(\C)\to\ulineL_M$. For $M=U_{2,4}$, this map is a 2-to-1 cover 
    that restricts to a homeomorphism $\ulineGr_M(\R)\to\partial\ulineL_M$, see \Cref{thm:maptoboundary}.
    \begin{figure}[ht]
 \includegraphics[scale=0.5]{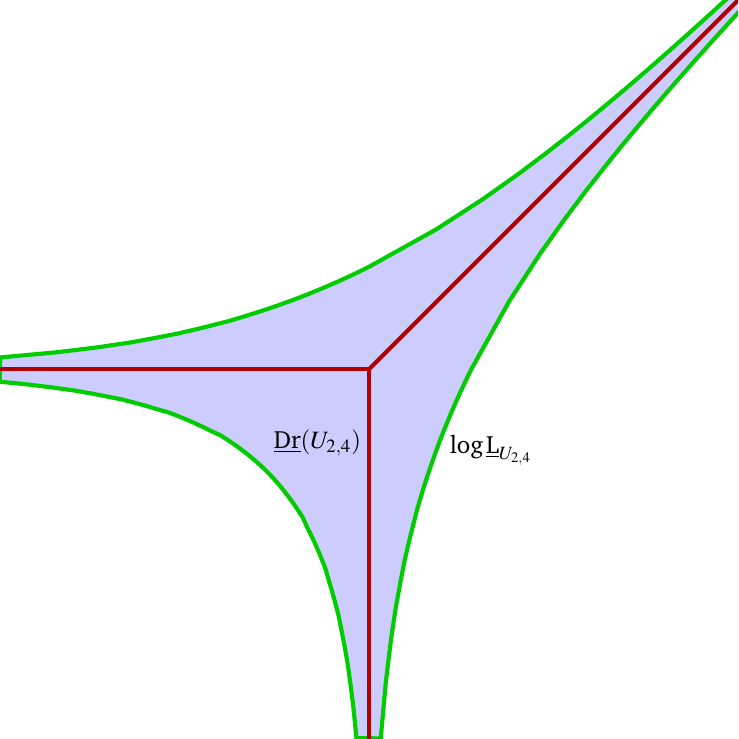}
 \caption{The picture shows the reduced Dressian $\ulineDr_M$ (red) inside the space $\ulineL_M$ (purple) of orbits of Lorentzian polynomials in logarithmic coordinates when $M$ is the uniform matroid $U_{2,4}$. The boundary (green) is the image of $\ulineGr_M(\R)$ under the map from \Cref{eq:mapfromgrassmannian}.}
 \label{fig: Dressian and log-Lorentzian for U24}
\end{figure}
\end{ex}

Using the theory of matroid representations over tracts, we derive several results on the topology of $\ulineGr_M(\T_q)$, and thus of $\ulineL_M$. 

\begin{ex}[\Cref{ex:finitefoundation}]
    If $M$ is a binary matroid, then $\ulineGr_M(\T_q)$ is a singleton for all $q\in[0,\infty]$. This implies that $\ulineL_M$ is also a singleton.
\end{ex}

\begin{ex}[\Cref{cor:ternary}]
    If $M$ is a ternary matroid, then $\ulineGr_M(\T_q)$ is homeomorphic to the product of finitely many half-open intervals and discs with three points removed from the boundary. By \Cref{thm:topologyorbitstratum}, the same is true for $\ulineL_M$.
\end{ex}

\begin{ex}[\Cref{cor:directlimit}]
    If $M$ is any matroid, then $\ulineGr_{M}(\T_q)$ is homeomorphic to the inverse
limit of a finite directed system of topological spaces, each of which is either a disc with three points removed from the boundary or a five-dimensional ball with a copy of the Petersen graph removed from the boundary. By \Cref{thm:topologyorbitstratum}, the same is true for $\ulineL_M$.
\end{ex}

\begin{ex}[\Cref{thm:betsyross}]
    The Betsy Ross matroid $M=B_{11}$ is the rank $3$ matroid on $11$ elements whose point-line arrangement is illustrated in \Cref{fig: Betsy-Ross}.
    There is an explicit homeomorphism $\ulineGr_{M}(\T_\infty)\to\R$ which maps $\ulineGr_{M}(\T_q)$ to the closed interval $[-q,q]$ for all $0\leq q<\infty$. It identifies the space $\ulineL_{M}$ with the closed interval $[-2,2]$, and under this identification the points of $[-2,2]$ which correspond to stable polynomials are precisely the two boundary points $\{-2,2\}$. We note that the space $\ulineGr_{M}(\T_0)$ is a singleton, while $\ulineGr_{M}(\T_\infty)$ is of positive dimension; this answers a question of Brandt--Speyer \cite{Brandt-Speyer22}*{Remark 3.2}.
\end{ex}
\begin{figure}[ht]
 \includegraphics[scale=0.5]{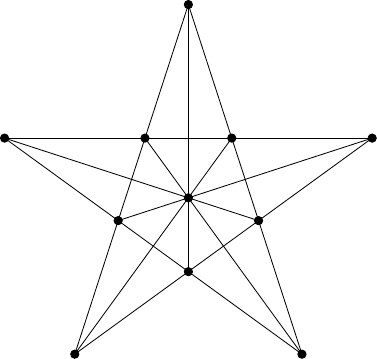}
 \caption{Point-line arrangement of the Betsy Ross matroid}
 \label{fig: Betsy-Ross}
\end{figure}

\subsection{Compactifications}\label{subsec:compactificationsintro} \Cref{thm:topologystratum} implies that the space $\P\upL_M$ admits a compactification by a closed Euclidean ball for every matroid $M$ of rank $d$ on $[n]$. It is interesting to compare this with another natural compactification, namely the closure $\overline{\P\upL}_M$ of $\P\upL_M$ in $\P\upH(d,n)$. Since $\P\upL(d,n)_\sqfree=\overline{\P\upL}_M$ when $M$ is the uniform matroid $U_{d,n}$, \Cref{thm:petterballness} implies that the space $\overline{\P\upL}_M$ is homeomorphic to a closed Euclidean ball, and in particular $\chi(\overline{\P\upL}_M)=1$, in this case. Our results imply that the same holds for another family of matroids. 

\begin{thm}[\Cref{prop:singletonthenball} and \ref{prop:rigideuler}]
    Let $M$ be a matroid $M$ of rank $d$ on $[n]$.
    \begin{enumerate}[(1)]\itemsep 5pt
        \item If $\ulineGr_M(\T_\infty)$ is a singleton (this happens, e.g., for binary matroids), then $\overline{\P\upL}_M$ is homeomorphic to a closed Euclidean ball.
        \item If $\ulineGr_M(\T_0)$ is a singleton, then the Euler characteristic of $\overline{\P\upL}_M$ is equal to one.
    \end{enumerate}
\end{thm}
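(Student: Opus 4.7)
For part (1), the first step is to translate the hypothesis into the language of \Cref{thm:topologyorbitstratum}. Since $\Gr_M(\T_\infty)$ sits in logarithmic coordinates as $V_M/\R\mathbf{1}$, quotienting by the torus action gives $\ulineGr_M(\T_\infty)\cong V_M/W_M$; the singleton hypothesis therefore forces $V_M=W_M$. By \Cref{thm:topologyorbitstratum}, $\ulineL_M$ is a single point, equivalently the group $T\coloneq\R_{>0}^n/\R_{>0}$ acts transitively on $\P\upL_M$. Pick any $f=\sum_{B\in\cB(M)}p_B\,x^B\in\upL_M$ with $p_B>0$; then $\P\upL_M$ is the single $T$-orbit of $[f]$ in $\P\upH(d,n)$, and $\overline{\P\upL}_M$ is the closure of this orbit. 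I would finish by invoking the classical moment map theorem for real projective toric varieties: the closure of the $T$-orbit of a positive point is homeomorphic, via the coefficient-wise logarithm, to the Newton polytope of $f$. For $f\in\upL_M$, this Newton polytope is the matroid base polytope $P_M$, which is a convex polytope and hence homeomorphic to a closed Euclidean ball.

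For part (2), the hypothesis $\ulineGr_M(\T_0)=\{*\}$ translates via the log map to $\ulineDr_M=\{0\}$, so $\partial\upB\cap\ulineDr_M=\emptyset$ and \Cref{thm:topologyorbitstratum} gives $\ulineL_M\cong\upB$, a closed Euclidean ball with $\chi(\ulineL_M)=1$. My plan is to extend the orbit map to a surjection $q\colon\overline{\P\upL}_M\to\overline{\ulineL}_M$ whose generic fibers are the $T$-orbit closures parametrized by $\ulineL_M$. By the same moment map argument used in Part~(1), each such fiber is homeomorphic to the matroid polytope $P_M$, hence has Euler characteristic $1$. Combining this with $\chi(\ulineL_M)=1$ via multiplicativity of the Euler characteristic should yield $\chi(\overline{\P\upL}_M)=\chi(\ulineL_M)\cdot\chi(P_M)=1$.

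The main obstacle is that $q$ is not a locally trivial fibration: the $T$-stabilizers jump along the boundary of $\overline{\P\upL}_M$, so multiplicativity of $\chi$ needs independent justification. My approach is to stratify $\overline{\P\upL}_M$ by $T$-orbit type---equivalently, by the face of $P_M$ spanned by the support of a boundary point---and apply additivity of the compactly supported Euler characteristic. The hypothesis $\ulineDr_M=\{0\}$ is what makes this clean: the only rays in $\Dr_M$ come from rescaling directions in $W_M/\R\mathbf{1}$, so every boundary stratum of $\overline{\P\upL}_M$ is indexed by a proper face of $P_M$ and no ``non-toric'' degenerations occur. With the combinatorics of the boundary controlled by the face lattice of $P_M$ and the base $\ulineL_M$ contractible, the stratum-wise Euler characteristics reassemble into the desired product, giving $\chi(\overline{\P\upL}_M)=1$.
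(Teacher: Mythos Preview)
Your argument for part (1) is correct and matches the paper's: the singleton hypothesis forces $V_M=W_M$, so $\P\upL_M$ is a single torus orbit, and the closure of a positive torus orbit in projective space is homeomorphic to its moment polytope (the paper cites Fulton \S4.2 for this).

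For part (2) your overall strategy---stratify $\overline{\P\upL}_M$ by support and use additivity of the compactly supported Euler characteristic---is the same as the paper's, and you correctly observe that rigidity forces every initial subset $J'$ of $M$ to correspond to a face of $P_M$. But the argument as written has a real gap: you never compute $\chi_*(\overline{\P\upL}_M\cap\P\upL_{J'})$ for a face $J'$. The fibration heuristic is misleading here. The generic fibers of $\P\upL_M\to\ulineL_M$ are open $T$-orbits, not copies of $P_M$; orbit \emph{closures} are homeomorphic to $P_M$, but distinct orbit closures overlap along boundary faces, so there is no product decomposition and no map $q\colon\overline{\P\upL}_M\to\ulineL_M$ to which multiplicativity could apply. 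For a proper face $J'$, the stratum $\overline{\P\upL}_M\cap\P\upL_{J'}$ is a subset of $\P\upL_{J'}$ whose quotient by $T$ you have no direct handle on; saying the pieces ``reassemble into the desired product'' is not a computation.

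The paper supplies exactly the missing ingredient. It shows that each stratum $\overline{\upL}_M\cap\upL_{J'}$ is star-shaped in logarithmic coordinates (\Cref{lemma:cellstarshaped}), and then identifies the set of rays it contains: when $\BP_{J'}$ is a face of $\BP_M$ and $M$ is rigid, the rays are precisely the linear space $W_{J'}$ (this is the content of \Cref{thm:eulerforrays} in the rigid case, which in turn rests on an extension lemma for M-convex functions, \Cref{lemma:rayinclosextends}). A general lemma (\Cref{lemma:eulerstar}) then gives $\chi_*$ of a definable star-shaped set in terms of its set of rays, yielding $\chi_*(\overline{\P\upL}_M\cap\P\upL_{J'})=(-1)^{\dim\BP_{J'}}$. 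Summing over faces gives $\sum_F(-1)^{\dim F}=\chi(P_M)=1$---a sum over the face lattice, not the product $\chi(\ulineL_M)\cdot\chi(P_M)$ you were aiming for. Your outline would be complete once you add this star-shapedness and ray analysis.
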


On the other hand, we find a matroid for which both parts of the theorem fail. This answers \cite{Branden21}*{Questions 5.1, 5.3} in the negative:

\begin{ex}[\Cref{ex:T11counterexample}]\label{ex:ellipticballness}
    The \emph{elliptic matroid} $M=\cT_{11}$ is the matroid of rank 3 on $[11]$ whose non-bases are all 3-element subsets $\{i,j,k\}\subseteq[11]$ such that $i+j+k$ is divisible by $11$. Using \Cref{thm:topologystratum}, we can show that
    the Euler characteristic of $\overline{\P\upL}_{M}$ is equal to $11$. In particular, the space $\overline{\P\upL}_{M}$ is not homeomorphic to a closed Euclidean ball.
\end{ex}

The following example answers another question by Br\"and\'en \cite{Branden21}*{page 7} in the negative:

\begin{ex}[\Cref{ex:betsyrosstable}]
    Let $M=B_{11}$ be the Betsy Ross matroid, and denote by $\upS_M$ the space of stable polynomials with support $M$. Then the Euler characteristic of the space $\overline{\P\upS}_{B_{11}}$ is equal to $17$. In particular, this space is not homeomorphic to a closed Euclidean ball.
\end{ex}

The close connection between spaces of Lorentzian polynomials and Dressians also allows us to deduce new statements about Dressians. For example, the following can be deduced from the corresponding statement about $\P\upL(d,n)_\sqfree$. 

\begin{thm}[\Cref{rem:eulerdressian}]\label{t0spaces}
         The Euler characteristic of $\Gr(d,n)(\T_0)$ is equal to $1$ for any $d \le n$.
\end{thm}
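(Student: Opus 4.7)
The plan is to compute $\chi(\Gr(d,n)(\T_0))$ via the matroidal stratification and match it, stratum-by-stratum, against the compactly supported Euler characteristics of the strata of the ball $\P\upL(d,n)_\sqfree$ from \Cref{thm:petterballness}. The key preliminary observations are that $\Gr(d,n)(\T_0)$ is compact---it is cut out by closed tropical Plücker conditions inside the compact simplex $(\R_{\geq 0}^{\binom{n}{d}}\smallsetminus\{0\})/\R_{>0}$---so ordinary and compactly supported Euler characteristics agree on it, and that both stratifications $\Gr(d,n)(\T_0)=\coprod_M\Gr_M(\T_0)$ and $\P\upL(d,n)_\sqfree=\coprod_M\P\upL_M$ are by locally closed pieces indexed by matroids $M$ of rank $d$ on $[n]$. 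Additivity of $\chi_c$ then gives
\[\chi(\Gr(d,n)(\T_0))=\sum_M\chi_c(\Gr_M(\T_0))\qquad\text{and}\qquad 1=\chi(\P\upL(d,n)_\sqfree)=\sum_M\chi_c(\P\upL_M),\]
and it suffices to match these sums termwise.

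To prove the stratum-wise identity $\chi_c(\Gr_M(\T_0))=\chi_c(\P\upL_M)$, I would introduce the spherical link $K_M\coloneq\partial\upB\cap\Dr_M$ inside $V_M/\mathbf{1}$, where $\upB$ is the unit ball appearing in \Cref{thm:topologystratum}. That theorem identifies $\P\upL_M$ with $\upB\smallsetminus K_M$; since $\upB$ is a compact contractible ball and $K_M\subseteq\partial\upB$ is a compact closed subset, additivity of $\chi_c$ on the decomposition $\upB=(\upB\smallsetminus K_M)\sqcup K_M$ yields $\chi_c(\P\upL_M)=1-\chi(K_M)$. On the tropical side, the coordinate-wise logarithm identifies $\Gr_M(\T_0)$ with $\Dr_M$, which is a polyhedral cone in $V_M/\mathbf{1}$ with apex $0$ and spherical link $K_M$; so $\Dr_M\smallsetminus\{0\}\cong K_M\times(0,\infty)$, and applying $\chi_c$-additivity with $\chi_c((0,\infty))=-1$ gives $\chi_c(\Gr_M(\T_0))=\chi_c(\{0\})-\chi(K_M)=1-\chi(K_M)$, matching the Lorentzian side.

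Summing over all $M$ now yields $\chi(\Gr(d,n)(\T_0))=\sum_M(1-\chi(K_M))=\sum_M\chi_c(\P\upL_M)=1$, invoking \Cref{thm:petterballness} at the last step. The only real technical preparation, which I expect to be routine, is the compactness of $\Gr(d,n)(\T_0)$ and the local closedness of its support stratification; both follow from writing the null-set condition of $\T_0$ as a closed polyhedral condition in projective coordinates. Conceptually, the argument reduces the apparently global statement to the single observation that on each matroid stratum the two Euler characteristics both collapse to the common quantity $1-\chi(K_M)$ attached to the link of $\Dr_M$, so that the total Euler characteristic inherits its value from Brändén's ball theorem.
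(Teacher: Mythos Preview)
Your proposal is correct and follows the same overall strategy as the paper: stratify both $\Gr(d,n)(\T_0)$ and $\P\upL(d,n)_\sqfree$ by matroids, show the stratum-wise identity $\chi_c(\Gr_M(\T_0))=\chi_c(\P\upL_M)$, sum, and invoke \Cref{thm:petterballness}.

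The one difference worth noting is how the stratum-wise identity is established. You invoke \Cref{thm:topologystratum} to get a homeomorphism $\P\upL_M\cong\upB\smallsetminus K_M$ and then read off $\chi_c=1-\chi(K_M)$ from the decomposition of the ball; this requires the full strong-star-shapedness argument of \autoref{sec:lortopo}. The paper instead proves a general lemma (\Cref{lemma:eulerstar}) that for any closed definable star-shaped set $X$ one has $\chi_*(X)=1-\chi_*(S(X))$, where $S(X)$ is the set of ray directions it contains, and then observes that $\log(\upL_J)$ and $\log(\upR_J(\T_0))$ are both star-shaped with the \emph{same} set of rays (by \Cref{thm:314325} and \Cref{lem: diraremconvex}). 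This only needs ordinary star-shapedness of $\log(\upL_J)$, which comes directly from the operator $R_p$ preserving Lorentzians for $0\le p\le1$, rather than the more delicate strong-star-shapedness. Both routes compute the common value $1-\chi(K_M)$, but the paper's is logically lighter at this particular step. Your handling of the $\T_0$ side via the cone structure $\Dr_M\smallsetminus\{0\}\cong K_M\times(0,\infty)$ is exactly what \Cref{lemma:eulerstar} specializes to when $X$ itself is a cone.
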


We do not know how to prove this combinatorial statement without the detour to Lorentzian polynomials.

The spaces $\ulineL_M$ can also be compactified in a natural way:

\begin{thm}[\Cref{cor:compactification}]
    Let $H$ be the space of compact subsets of $\overline{\P\upL}_M$ equipped with the topology induced by the Hausdorff metric. The map $\ulineL_M\to H$ that sends an orbit to its closure is a homeomorphism onto its image. In particular, the closure $\HC({\ulineL}_M)$ in $H$ of the image of this map is a compactification of $\ulineL_M$.
\end{thm}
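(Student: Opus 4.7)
The plan is to verify directly that $\phi\colon\ulineL_M\to H$, $[f]\mapsto\overline{\R_{>0}^n\cdot f}$, is a topological embedding, and then to deduce the compactification statement from compactness of $H$. Since $\overline{\P\upL}_M$ is a closed subset of the compact metric space $\P\upH(d,n)$, the hyperspace $H$ of compact subsets of $\overline{\P\upL}_M$ equipped with the Hausdorff metric is itself compact metric.

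For well-definedness and continuity of $\phi$, the orbit $O_f\coloneq\R_{>0}^n\cdot f\subseteq\P\upL_M$ clearly depends only on the class $[f]$, and since the quotient map $\upL_M\to\ulineL_M$ is continuous and open, it suffices to show that $f\mapsto\overline{O_f}$ is continuous as a map $\upL_M\to H$. In logarithmic coordinates each orbit $O_f$ is a translate of the linear subspace $W_M\subseteq V_M/\mathbf{1}$; one can compactify $W_M$ to a closed subset $\widehat{W}_M$ of the natural compactification of $V_M/\mathbf{1}$ provided by $\overline{\P\upL}_M$ so that the $\R_{>0}^n$-action extends to a continuous map $\widehat{W}_M\times\upL_M\to\overline{\P\upL}_M$ whose image for fixed $f$ is exactly $\overline{O_f}$, and Hausdorff continuity of $f\mapsto\overline{O_f}$ then follows from compactness of $\widehat{W}_M$ by a standard argument on continuous images.

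For injectivity and continuity of the inverse, the same logarithmic description shows that $O_f$ is closed in $\P\upL_M$, being the translate of a closed affine subspace; since $\P\upL_M$ is open in $\overline{\P\upL}_M$, one has $\overline{O_f}\cap\P\upL_M=O_f$, so $\overline{O_f}$ determines $[f]$, giving injectivity. If $\phi([f_n])\to\phi([f])$ in $H$ and one fixes any $p\in O_f\subseteq\P\upL_M$, Hausdorff convergence provides $p_n\in\overline{O_{f_n}}$ with $p_n\to p$; for $n$ large $p_n$ lies in the open subset $\P\upL_M$, hence $p_n\in\P\upL_M\cap\overline{O_{f_n}}=O_{f_n}$, and continuity of the quotient map $\P\upL_M\to\ulineL_M$ yields $[f_n]=[p_n]\to[p]=[f]$.

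The compactification statement then follows formally: $\phi$ is a dense topological embedding of $\ulineL_M$ into its closure $\HC(\ulineL_M)$ in the compact Hausdorff space $H$, hence $\HC(\ulineL_M)$ is a compactification. I expect the main obstacle to be in the continuity step: constructing $\widehat{W}_M$ and verifying that the $\R_{>0}^n$-action extends continuously with the correct orbit closures requires a careful analysis of how the logarithmic coordinates on $\P\upL_M$ interact with the boundary $\overline{\P\upL}_M\smallsetminus\P\upL_M$, where coefficients vanish in a prescribed combinatorial manner dictated by $M$.
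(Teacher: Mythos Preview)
Your outline for injectivity and continuity of the inverse is essentially the paper's argument: the key fact is that each $G$-orbit is closed in $\P\upL_M$, so that $\overline{O_f}\cap\P\upL_M=O_f$. The paper phrases the inverse continuity as openness of $e|_U$ onto its image (its \emph{Lemma: open}), but the content is the same sequential argument you give.

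The continuity step is where you leave a genuine gap, and you correctly flag it. Your proposed compactification $\widehat{W}_M$ is the right idea, but ``the natural compactification of $V_M/\mathbf{1}$ provided by $\overline{\P\upL}_M$'' is not a well-defined object, and the logarithmic coordinates blow up at the boundary, so the extension of the action is not automatic. The paper resolves this cleanly without logarithms: take $\overline{G}$ to be the orbit closure in $\P\R_{\geq0}[x_1,\dotsc,x_n]$ of the polynomial with support $J$ and all coefficients equal (this is the projective toric variety of the base polytope $\BP_J$, hence compact), and define
\[
\psi\colon\overline{G}\times K\longrightarrow X,\qquad (g,p)\longmapsto \text{(coefficient-wise product of $g$ and $p$, renormalized)},
\]
for any compact $K\subseteq U$. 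This map is visibly continuous, and the paper's explicit description of orbit closures (the closure of $Gp$ is the union of the orbits $Gp_F$ over all faces $F$ of $\BP_J$) shows that $\psi(\overline{G}\times\{p\})=\overline{Gp}$. Heine--Cantor then gives uniform continuity of $\psi$, which immediately yields Hausdorff continuity of $p\mapsto\overline{Gp}$ on $K$, and hence on $U$ by local compactness. This is exactly the construction your $\widehat{W}_M$ was groping for; working in the original (non-logarithmic) coordinates makes the extension to the boundary transparent, since coefficient-wise multiplication is globally polynomial.
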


For uniform matroids, we set $\HC({\ulineL}(d,n)_\sqfree)\coloneq \HC({\ulineL}_{U_{d,n}})$. This compactification interacts nicely with the \emph{Chow quotient} ${\Gr}(d,n)(\C)/\!\!/(\C^\times)^n$ of the complex Grassmannian, as introduced in \cite{chowquotientsI}. The Chow quotient is the compactification of $\ulineGr_{U_{d,n}}(\C)$ obtained as the closure of the image of the map from $\ulineGr_{U_{d,n}}(\C)$ to the Chow variety of $\Gr(d,n)(\C)$ that sends a torus orbit to the cycle corresponding to its closure. We will prove in \Cref{thm:chowhausdorffmap} that there is a continuous map
\begin{equation}\label{eq:chowmap}
    {\Gr}(d,n)(\C)/\!\!/(\C^\times)^n\longrightarrow\HC({\ulineL}(d,n)_\sqfree)
\end{equation}
which sends a cycle to the image of its underlying set under the map from \Cref{eq:mapfromgrassmannian}. By the same construction, we obtain a compactification $\HC(\ulineGr_M(\T_q))$ of $\ulineGr_M(\T_q)$.
\begin{ex}[\Cref{ssec:stablecurves}]
    The Chow quotient ${\Gr}(2,n)(\C)/\!\!/(\C^\times)^n$ is isomorphic to the Grothendieck--Knudson moduli space $\overline{\cM}_{0,n}$ of stable rational curves with $n$ marked points \cite{chowquotientsI}*{Chapter IV}. We examine the map
    \begin{equation*}
        \overline{\cM}_{0,n}\longrightarrow\HC({\ulineL}(2,n)_\sqfree)
    \end{equation*}
    from \Cref{eq:chowmap} for small $n$:
    \begin{enumerate}[(1)]\itemsep 5pt
        \item For $n<4$, both the source and target are a point.
        \item The space $\HC({\ulineL}(2,4)_\sqfree)$ is the disc obtained by adding the three missing points to the boundary of $\ulineL_{U_{2,4}}$, see \Cref{fig: Dressian and log-Lorentzian for U24}. The space $\overline{\cM}_{0,4}$ is the complex projective line and the map $\overline{\cM}_{0,4}\to\overline\HC({\ulineL}(2,4)_\sqfree)$ is the quotient by the action of complex conjugation.
        \item The image of the map $\overline{\cM}_{0,5}\to\HC({\ulineL}(2,5)_\sqfree)$ is the closure of the boundary $\partial\ulineL_{U_{2,5}}$ in $\HC({\ulineL}(2,5)_\sqfree)$, which we denote by $\partial\HC({\ulineL}(2,5)_\sqfree)$. Again, the map $\overline{\cM}_{0,5}\to\partial\HC({\ulineL}(2,5)_\sqfree)$ is the quotient by the action of complex conjugation on $\overline{\cM}_{0,5}$.
        \item For $n>5$, the map $\overline{\cM}_{0,n}\to\HC({\ulineL}(2,n)_\sqfree)$ is constant on complex conjugate pairs, but there are fibers that contain more than one such pair.
    \end{enumerate}
\end{ex}

Just as for the Chow quotient of the Grassmannian \cite{chowquotientsI}*{Section 1.2}, the points in $\HC({\ulineL}_M)$ and $\HC(\ulineGr_M(\T_q))$ can be characterized in terms of regular matroid subdivisions of the base polytope of $M$. Namely, every point in our compactification is the union of orbit closures, one for each maximal cell of a regular matroid subdivision, where points in $\ulineL_M$ and $\ulineGr_M(\T_q)$ correspond to the trivial subdivision. However, unlike the case of classical Chow quotients, there are points in our compactification corresponding to every such subdivision, see \Cref{thm:bondarypoints}. We end with a question on the topology of the spaces $\HC({\ulineL}_M)$ and $\HC(\ulineGr_M(\T_q))$.

\begin{question}
    Are the spaces $\HC({\ulineL}_M)$ and $\HC(\ulineGr_M(\T_q))$ homeomorphic to a closed Euclidean ball for every matroid $M$ and every $q>0$?
\end{question}

In \Cref{ssec:examplescompactification}, we provide positive evidence for a few matroids, including the matroid $M=\cT_{11}$ from \Cref{ex:ellipticballness}, for which the compactification $\overline{\P\upL}_M$ is ill-behaved.

\subsection{Acknowledgements} 
 Matt Baker was partially supported by NSF grant DMS-2154224 and a Simons Fellowship in Mathematics (1037306, Baker). June Huh was partially supported by the Oswald Veblen Fund and the Simons Investigator Grant. Mario Kummer was partially supported by DFG grant 502861109. Oliver Lorscheid was partially supported by NSF grant DMS-1926686 and by the Institute for Advanced Study.
 We thank Donggyu Kim, Matt Larson, and Curtis McMullen for helpful comments on a previous version of the manuscript.
 Mario Kummer would like to thank Georg Loho for helpful discussions on topics related to this paper.
 We thank the Korea Institute for Advanced Study in Seoul and the Institute for Advanced Study in Princeton for hosting us during our collaboration.

\section{Elementary properties of star-shaped sets}\label{sec:starshaped}
We fix a finite dimensional $\R$-vector space $V$ equipped with a norm $\|\cdot\|$. 

Recall that a triple $(x_*,X,V)$, where $X\subseteq V$ and $x_*\in X$, is called \emph{star-shaped} if for every $x\in X$ and every $t\in[0,1]$, the point $x_*+t\cdot (x-x_*)$ lies in $X$.
 
\begin{df}
 A triple $(x_*,X,V)$, where $X\subseteq V$ and $x_*\in X$, is called \emph{strongly star-shaped} if $X$ is closed in $V$ and if for every $x\in X$ and every $t\in[0,1)$, the point $x_*+t\cdot (x-x_*)$ lies in the interior of $X$ (with respect to the topology of $V$).
\end{df}

Note that $(x_*,X,V)$ being strongly star-shaped implies in particular that $x_*$ is in the interior of $X$. Our first goal is to show that if $(x_*,X,V)$ is strongly star-shaped, then it is homeomorphic to an open subset of a Euclidean ball.

\begin{lemma}\label{lemma: cont}
    Let $(x_*,X,V)$ be strongly star-shaped.
    The following map is continuous:
    \begin{equation*}
        \psi\colon V\longrightarrow\R_{\geq0},\qquad x\longmapsto\inf\big\{t>0\, \big| \, x_*+ \frac{x-x_*}{t}\in X\big\}.
    \end{equation*}
    Furthermore, we have for all $x\in X$:
    \begin{enumerate}[(1)]\itemsep 5pt
        \item $\psi(x)\leq1$.
        \item If $\psi(x)>0$, then $x_*+ \frac{x-x_*}{\psi(x)}\in X$.
        \item If $\psi(x)=0$, then $x_*+s\cdot (x-x_*)$ is in the interior of $X$ for all $s\geq0$.
    \end{enumerate}
\end{lemma}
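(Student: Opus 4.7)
The plan is to first exploit strong star-shapedness to pin down the shape of the set $A_x \coloneq \{t>0 \mid x_*+(x-x_*)/t\in X\}$ for each $x\in V$. Since $x_*\in\operatorname{int}(X)$, the set $A_x$ is nonempty (take $t$ large), so $\psi(x)$ is a well-defined nonnegative real number. Moreover, if $t_0\in A_x$ and $t>t_0$, then setting $y=x_*+(x-x_*)/t_0\in X$ and $r=t_0/t\in[0,1)$, strong star-shapedness forces $x_*+(x-x_*)/t=x_*+r(y-x_*)\in\operatorname{int}(X)$. Hence $A_x$ is an up-ray of the form $[\psi(x),\infty)$ or $(\psi(x),\infty)$, and every $t>\psi(x)$ satisfies $x_*+(x-x_*)/t\in\operatorname{int}(X)$. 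This yields two characterizations, valid for every $s>0$:
\[
\psi(x)\leq s \iff x_*+(x-x_*)/s\in X, \qquad \psi(x)<s \iff x_*+(x-x_*)/s\in\operatorname{int}(X).
\]

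With these in hand, continuity of $\psi$ will follow almost formally from continuity of the affine map $\varphi_s\colon x\mapsto x_*+(x-x_*)/s$. Fix $x_0\in V$ and $\varepsilon>0$. For upper semicontinuity, take $s=\psi(x_0)+\varepsilon>0$: the second equivalence places $x_0$ in the open set $\varphi_s^{-1}(\operatorname{int}(X))$, on which $\psi\leq s$. For lower semicontinuity when $\psi(x_0)>0$, choose $\varepsilon$ small and set $s=\psi(x_0)-\varepsilon>0$; then $x_*+(x_0-x_*)/s\notin X$, and closedness of $X$ makes $\varphi_s^{-1}(V\smallsetminus X)$ an open neighborhood of $x_0$ on which $s\notin A_x$, forcing $\psi\geq s$ by the up-ray structure. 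The case $\psi(x_0)=0$ needs only upper semicontinuity.

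Properties (1)--(3) for $x\in X$ fall out directly: (1) holds because $x\in X$ puts $1\in A_x$; (2) follows by taking a sequence $t_n\searrow\psi(x)>0$ in $A_x$, whose image $x_*+(x-x_*)/t_n$ converges to $x_*+(x-x_*)/\psi(x)$ and must lie in $X$ by closedness; (3) follows by choosing, given $s\geq 0$, some $t\in A_x$ with $st<1$ (possible since $\inf A_x=0$), setting $y\coloneq x_*+(x-x_*)/t\in X$, and invoking strong star-shapedness on $x_*+s(x-x_*)=x_*+st(y-x_*)$.

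The hardest step is the up-ray structure of $A_x$ in the first paragraph; this is precisely where the \emph{strong} in strong star-shapedness does its work. Without the interior condition one would only obtain $A_x\cap(t_0,\infty)\subseteq X$ rather than $\subseteq\operatorname{int}(X)$, the second characterization above would fail, and the argument would only yield semicontinuity rather than continuity.
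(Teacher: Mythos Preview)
Your proof is correct and follows essentially the same approach as the paper's: closedness of $X$ yields lower semicontinuity and the interior condition from strong star-shapedness yields upper semicontinuity, with (1)--(3) falling out the same way. Your packaging via the up-ray structure of $A_x$ and preimages under the affine maps $\varphi_s$ is a clean rephrasing of the paper's sequential $\liminf/\limsup$ argument; the one small wrinkle is that the equivalence $\psi(x)\le s\iff x_*+(x-x_*)/s\in X$ already uses closedness of $X$ (to rule out $A_x=(\psi(x),\infty)$), but you invoke exactly that in proving (2) and your continuity argument does not actually need that direction.
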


\begin{proof}    
    Without loss of generality, we can assume that $x_*=0$ is the origin.
    In order to prove that $\psi$ is continuous, let $(x_i)_{i\in\N}\subseteq V$ be a sequence converging to $x_0\in V$. We first let $t_0>\liminf_{i\to\infty}\psi(x_i)$. Then there is a subsequence $(y_i)_{i\in\N}$ of $(x_i)_{i\in\N}$ and a sequence $(t_i)_{i\in\N}\subseteq\R$ with $t_i> \psi(y_i)$ for all $i\in\N$ which converges to $t_0$. Then the sequence $(\frac{y_i}{t_i})_{i\in\N}$ is in $X$ and converges to $\frac{x_0}{t_0}$, which is therefore also in $X$. Thus $t_0\geq \psi(x_0)$, which implies $\liminf_{i\to\infty}\psi(x_i)\geq \psi(x_0)$.
    Next we prove that $\limsup_{i\to\infty}\psi(x_i)\leq \psi(x_0)$. Let $t_0>\psi(x_0)$. Then $\frac{x_0}{t_0}$ is in the interior of $X$. If $U\subseteq X$ is an open neighbourhood of $\frac{x_0}{t_0}$, then $U'=t_0\cdot U$ is an open neighbourhood of $x_0$ such that $\psi(x)\leq t_0$ for all $x\in U'$. This implies that $\limsup_{i\to\infty}\psi(x_i)\leq \psi(x_0)$.

    The first of the three additional statements is trivial, the second follows because $X$ is closed, and the third follows from the definition of a strongly star-shaped set.
\end{proof}

We denote by $S$ the unit sphere in $V$ around the origin. Let $(x_*,X,V)$ be strongly star-shaped and consider the following set of directions:
\begin{equation*}
    S(x_*,X)=\{x\in S\mid \psi(x_*+x)=0\}.
\end{equation*}

\begin{cor}\label{cor: ballminusdirections}
    Let $\upB\subseteq V$ be the closed unit ball. Then $X$ is homeomorphic to $\upB\smallsetminus S(x_*,X)$.
\end{cor}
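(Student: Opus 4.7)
The plan is to construct an explicit radial homeomorphism. Since the statement depends only on homeomorphism type, we translate and assume $x_* = 0$, so that $\upB$ is the closed unit ball around $x_*$. Directly from its definition, $\psi$ is then positively homogeneous of degree one: $\psi(\lambda v) = \lambda\,\psi(v)$ for $\lambda \geq 0$. Combining parts (1)--(3) of \Cref{lemma: cont} with the strongly star-shaped hypothesis yields the useful equivalence
\[
\psi(x) \leq 1 \quad\Longleftrightarrow\quad x \in X \qquad \text{for every } x \in V:
\]
indeed, if $\psi(x) = 0$ then some $x/t$ with $t < 1$ lies in $X$ by the definition of the infimum, and strong star-shapedness then places $x$ in the interior of $X$; the range $0 < \psi(x) \leq 1$ is handled via part (2). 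For each direction $u \in S$, the ray $\{s u : s \geq 0\}$ meets $X$ in a segment of length $1/\psi(u) \in (0,\infty]$, which is infinite precisely when $u \in S(x_*, X)$.

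The main obstacle is to find a single continuous formula that stretches the closed radial segment $[0, u] \subseteq \upB$ bijectively onto $\{s u : s \in [0, 1/\psi(u)]\}$ when $\psi(u) > 0$, and stretches the half-open segment $[0, u)$ onto the full ray $\{s u : s \geq 0\}$ when $\psi(u) = 0$, with continuous dependence on $u \in S$. Naive piecewise recipes (say, rescaling radially by $1/\psi(u)$ in the bounded regime and by $1/(1-\|v\|)$ in the unbounded one) fail to be continuous as $\psi(u_n) \to 0$. We therefore propose the uniform formula
\[
\Phi\colon \upB \smallsetminus S(x_*, X) \longrightarrow X, \qquad \Phi(v) = \frac{v}{1 - \|v\| + \psi(v)}, \qquad \Phi(0) = 0.
\]
The denominator vanishes exactly when $\|v\| = 1$ and $\psi(v) = 0$, i.e.\ on $S(x_*, X)$, so $\Phi$ is well-defined and continuous on its domain by \Cref{lemma: cont}. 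Homogeneity gives $\psi(\Phi(v)) = \psi(v)/(1 - \|v\| + \psi(v))$, and this is $\leq 1$ iff $\|v\| \leq 1$; hence $\Phi(v) \in X$ by the equivalence above.

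For the inverse, we set
\[
\Psi\colon X \longrightarrow \upB \smallsetminus S(x_*, X), \qquad \Psi(y) = \frac{y}{1 + \|y\| - \psi(y)}, \qquad \Psi(0) = 0,
\]
whose denominator is at least $1$ since $\psi(y) \leq 1$ on $X$, so $\Psi$ is continuous. A short check gives $\|\Psi(y)\| \leq 1$, and $\Psi(y) \in S(x_*, X)$ would force both $\psi(y) = 1$ (to make the norm one) and $\psi(\Psi(y)) = \psi(y)/(1 + \|y\| - \psi(y)) = 0$, which is impossible. Finally, setting $d = 1 - \|v\| + \psi(v)$ and using homogeneity yields $1 + \|\Phi(v)\| - \psi(\Phi(v)) = 1/d$, hence $\Psi(\Phi(v)) = v$; the composition in the other order is handled identically. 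Thus $\Phi$ is the desired homeomorphism.
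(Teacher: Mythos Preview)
Your proof is correct and uses exactly the same radial homeomorphism as the paper (your $\Phi,\Psi$ are the paper's $\varphi,\phi$), just with the verification details spelled out. One minor slip: the denominator $1+\|y\|-\psi(y)$ of $\Psi$ need not be at least $1$ (take $X=[-\tfrac12,\tfrac12]\subseteq\R$, $y=\tfrac12$), but it is always positive since $\psi(y)\leq 1$ and $\psi(0)=0$, which is all you need.
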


\begin{proof}
    After replacing $X$ by $X-x_*$ we can assume without loss of generality that $x_*$ is the origin.
    We consider the map
    \begin{equation*}
        \varphi\colon \upB\smallsetminus S(0,X)\longrightarrow X,\qquad x\longmapsto\frac{x}{1+\psi(x)-\|x\|},
    \end{equation*}
    which is continuous by \Cref{lemma: cont}. Its inverse is given by the continuous map
    \begin{equation*}
        \phi\colon X\longrightarrow \upB\smallsetminus S(0,X),\qquad x\longmapsto\frac{x}{1-\psi(x)+\|x\|}.\qedhere
    \end{equation*}
\end{proof}

\begin{cor}\label{cor: manifoldwithboundary}
    Every strongly star-shaped set is a topological manifold with boundary.
\end{cor}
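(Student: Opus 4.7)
The plan is to deduce this immediately from \Cref{cor: ballminusdirections}, which already provides a homeomorphism $X \cong \upB \smallsetminus S(x_*,X)$ with $\upB$ the closed unit ball in $V$, so it suffices to show that the right-hand side is a manifold with boundary.

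First I would observe that $S(x_*,X) \subseteq S = \partial\upB$, so removing $S(x_*,X)$ from $\upB$ only deletes points of the boundary sphere. Next I would check that $S(x_*,X)$ is in fact closed in $\upB$: by \Cref{lemma: cont} the function $\psi$ is continuous on $V$, so $\{x \in V \mid \psi(x_*+x)=0\}$ is closed, and intersecting with the compact sphere $S$ shows that $S(x_*,X)$ is closed in $V$, hence in $\upB$. Therefore $\upB \smallsetminus S(x_*,X)$ is an open subset of $\upB$.

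Finally, I would invoke the standard fact that an open subset of a topological manifold with boundary is again a topological manifold with boundary: interior points of $\upB$ retain Euclidean neighborhoods, and boundary points $x \in S \smallsetminus S(x_*,X)$ that survive in the open subset still admit a small half-ball chart (since they lie in the open set and the chart can be shrunk to avoid the deleted closed set $S(x_*,X)$). Since $\upB$ is itself a manifold with boundary of dimension $\dim V$, this completes the argument.

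There is no real obstacle here; the entire content has already been packaged into \Cref{cor: ballminusdirections} and \Cref{lemma: cont}, and the only remaining point is the routine observation that openness is preserved after deleting a closed piece of the boundary sphere.
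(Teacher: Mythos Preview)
Your proposal is correct and follows essentially the same approach as the paper: both use \Cref{lemma: cont} to conclude that $S(x_*,X)$ is closed, then invoke \Cref{cor: ballminusdirections} to identify $X$ with an open subset of the closed unit ball, which is therefore a manifold with boundary. Your version simply spells out in more detail why removing a closed subset of the boundary sphere leaves an open subset of $\upB$.
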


\begin{proof}
    Let $(x_*,X,V)$ be a strongly star-shaped. The set $S(x_*,X)$ is closed because $\psi$ is continuous. Therefore, \Cref{cor: ballminusdirections} shows that $X$ is homeomorphic to an open subset of the closed unit ball, and thus to a manifold with boundary.
\end{proof}

Finally,  we provide some criteria to show that a set is strongly star-shaped.

\begin{lemma}\label{lemma: starcrit1}
    Assume that $V$ is contained in a finite dimensional $\R$-vector space $U$. Let $U_i$ for $i\in[n]=\{1,\ldots,n\}$ be some further finite dimensional $\R$-vector spaces. Let $\pi_i\colon U\to U_i$ be linear maps and $V_i\subseteq U_i$ linear subspaces such that $V=\cap_{i=1}^n\pi_i^{-1}(V_i)$. Furthermore, let $X_i\subseteq V_i$ for all $i\in[n]$ be subsets and set $X=\cap_{i=1}^n\pi_i^{-1}(X_i)$. Finally, let $x_*\in V$ be such that $(\pi_i(x_*),X_i,V_i)$ is strongly star-shaped for all $i\in[n]$. Then $(x_*,X,V)$ is strongly star-shaped.
\end{lemma}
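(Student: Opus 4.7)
The plan is to verify directly the two defining conditions of strong star-shapedness for $(x_*,X,V)$: that $X$ is closed in $V$ and that every point on an open segment from $x_*$ to a point of $X$ lies in the interior of $X$.

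First, I would note that $X \subseteq V$: since $X_i \subseteq V_i$ for each $i$, we have $\pi_i^{-1}(X_i) \subseteq \pi_i^{-1}(V_i)$, so $X = \bigcap_i \pi_i^{-1}(X_i) \subseteq \bigcap_i \pi_i^{-1}(V_i) = V$. For closedness of $X$ in $V$, each $X_i$ is closed in $V_i$ by hypothesis, and the restriction $\pi_i|_V \colon V \to V_i$ is continuous, so $(\pi_i|_V)^{-1}(X_i)$ is closed in $V$; the intersection $X$ of these finitely many closed sets is closed in $V$.

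For the interior condition, fix $x \in X$ and $t \in [0,1)$, and set $y \coloneq x_* + t(x - x_*)$. Linearity of $\pi_i$ gives $\pi_i(y) = \pi_i(x_*) + t\bigl(\pi_i(x) - \pi_i(x_*)\bigr)$, and since $\pi_i(x) \in X_i$ with $(\pi_i(x_*), X_i, V_i)$ strongly star-shaped, $\pi_i(y)$ lies in the interior of $X_i$ relative to $V_i$. Choose an open neighborhood $W_i \subseteq V_i$ of $\pi_i(y)$ with $W_i \subseteq X_i$. Then
\begin{equation*}
    U \coloneq \bigcap_{i=1}^n (\pi_i|_V)^{-1}(W_i)
\end{equation*}
is an open neighborhood of $y$ in $V$ by continuity of each $\pi_i|_V$. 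For any $z \in U$ and each $i$, $\pi_i(z) \in W_i \subseteq X_i$, so $z \in \pi_i^{-1}(X_i)$; hence $z \in \bigcap_i \pi_i^{-1}(X_i) = X$. This shows $y$ lies in the interior of $X$ in $V$, completing the verification.

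There is no real obstacle here; the argument is a direct unwinding of the definitions, using only the continuity of the $\pi_i$ and the fact that finite intersections of open (resp.\ closed) sets are open (resp.\ closed). The one point worth flagging is that the ``interior'' in the hypothesis is taken relative to $V_i$, not $U_i$, so one must work with the restrictions $\pi_i|_V$ rather than the original maps $\pi_i \colon U \to U_i$; this is harmless since $\pi_i(V) \subseteq V_i$ by the hypothesis $V = \bigcap_i \pi_i^{-1}(V_i)$.
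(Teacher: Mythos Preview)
Your proof is correct and follows essentially the same approach as the paper: verify that each $\pi_i(y)$ lies in the interior of $X_i$ relative to $V_i$, then pull back to conclude $y$ lies in the interior of $X$ relative to $V$. You are more explicit than the paper about closedness and about constructing the open neighborhood; the only cosmetic issue is that you reuse the letter $U$ for your open neighborhood, which clashes with the ambient space $U$ in the statement.
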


\begin{proof}
  Let $x\in X$ and $t\in[0,1)$. Because $(\pi_i(x_*),X_i,V_i)$ is strongly star-shaped, it follows that $\pi_i(x_*+t\cdot (x-x_*))$ lies in the interior (relative to $V_i$) of $X_i$ for all $i\in[n]$. Since $V=\cap_{i=1}^n\pi_i^{-1}(V_i)$ and $X=\cap_{i=1}^n\pi_i^{-1}(X_i)$, this implies that $x_*+t\cdot (x-x_*)$ lies in the interior (relative to $V$) of $X$.
\end{proof}

\begin{lemma}\label{lemma: starcrit2}
    Consider the triple $(x_*,X,V)$ consisting of a closed subset $X\subseteq V$ which is star-shaped with respect to a point $x_*$ in the interior of $X$. Assume that for every $x\in X$, there exists $0<t_0<1$ such that for every $t\in(t_0,1)$, the point $x_*+t\cdot (x-x_*)$ lies in the interior of $X$. Then $(x_*,X,V)$ is strongly star-shaped.
\end{lemma}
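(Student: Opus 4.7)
After translating so that $x_* = 0$, the goal becomes: for every $x \in X$ and every $t \in [0,1)$, the point $tx$ belongs to the interior of $X$. The case $t = 0$ is handled immediately by the hypothesis that $0$ lies in the interior of $X$, so the real content is the case $t \in (0,1)$.

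The key idea will be to exploit the star-shaped property as a mechanism for transporting interior points toward the origin by radial scaling. Given $x \in X$ and $t \in (0,1)$, the hypothesis supplies a threshold $t_0 \in (0,1)$ such that $sx$ is interior to $X$ for every $s \in (t_0,1)$. I would pick any $s$ in the nonempty open interval $(\max(t_0,t),1)$, so that $sx$ is interior and $\lambda \coloneq t/s$ satisfies $\lambda \in (0,1)$. Choose an open ball $B(sx,\delta) \subseteq X$ with respect to the norm $\|\cdot\|$. Since $X$ is star-shaped at $0$, multiplication by $\lambda$ sends $X$ into itself, and in particular it sends $B(sx,\delta)$ into $X$. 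But $\lambda \cdot B(sx,\delta) = B(\lambda s x,\lambda\delta) = B(tx,\lambda\delta)$, which exhibits an open neighborhood of $tx$ inside $X$ and so places $tx$ in the interior of $X$.

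The one subtlety to watch is the choice $s > t$, which is what guarantees $\lambda < 1$; without this, the scaling by $\lambda$ would no longer correspond to the convex combination $\lambda(sx) + (1-\lambda)\cdot 0$ that the star-shaped property controls. Beyond this, I do not anticipate any real obstacle: the hypothesis furnishes the interior point from which to scale, and the star-shaped property furnishes the scaling map, so the proof reduces to the single construction above.
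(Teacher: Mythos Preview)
Your proof is correct and takes essentially the same approach as the paper's: both find a point $sx$ (with $s>t$) in the interior of $X$ using the hypothesis, then scale an open neighborhood of $sx$ toward the center by the factor $t/s\in(0,1)$, using star-shapedness to keep it inside $X$. The paper writes this with $c=s/t\geq 1$ and a general open neighborhood rather than a ball, but the argument is identical.
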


\begin{proof}
    Let $x\in X$ and $t \in [0,1)$. We need to prove that the point $x_1=x_*+t\cdot (x-x_*)$ lies in the interior of $X$. If $t=0$, then we are done because $x_*$ is in the interior of $X$. If $0<t<1$, then by assumption there exists $c\geq 1$ such that $x_c=x_*+c\cdot t\cdot (x-x_*)$ lies in the interior of $X$. Let $U\subseteq X$ be an open neighbourhood of $x_c$. Then the set
    \begin{equation*}
        \Big\{x_*+\frac{1}{c}(y-x_*)\mid y\in U\Big\}
    \end{equation*}
    is an open neighbourhood of $x_1$ which is contained in $X$ (because $X$ is star-shaped).
\end{proof}

\begin{lemma}\label{lemma: starshapedlineality}
    Let $X\subseteq V$ be a closed subset and $W\subseteq V$ a linear subspace such that $W+X=X$. For every $x_*\in X$ and $w\in W$,  $(x_*,X,V)$ is strongly star-shaped if and only if $(w+x_*,X,V)$ is strongly star-shaped.
\end{lemma}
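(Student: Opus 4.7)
The plan is to exploit the fact that the $W$-translation symmetry of $X$ automatically extends to the interior of $X$. The hypothesis $W+X=X$ says precisely that for each $v\in W$ the translation $\tau_v\colon y\mapsto y+v$ is a self-homeomorphism of $V$ carrying $X$ bijectively onto itself (its inverse $\tau_{-v}$ also preserves $X$). Consequently $\tau_v$ maps $\operatorname{int}(X)$ bijectively onto $\operatorname{int}(X)$, and in particular $w+x_*\in X$, so the triple $(w+x_*,X,V)$ is well-defined in the first place.

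Next, I would observe that swapping $x_*$ with $w+x_*$ and replacing $w$ by $-w$ interchanges the two sides of the equivalence, so it suffices to prove a single implication: assuming $(x_*,X,V)$ is strongly star-shaped, deduce the same at $w+x_*$. The key is the identity
\begin{equation*}
(w+x_*)+t\bigl(x-(w+x_*)\bigr)\;=\;(1-t)w+\bigl[x_*+t(x-x_*)\bigr],
\end{equation*}
valid for any $x\in X$ and $t\in[0,1)$. By the strong star-shaped hypothesis at $x_*$, the bracketed point lies in $\operatorname{int}(X)$; translating by $(1-t)w\in W$ preserves $\operatorname{int}(X)$ by the first paragraph, and so the full right-hand side lies in $\operatorname{int}(X)$, as required.

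I do not anticipate any substantive obstacle. The entire argument reduces to a one-line algebraic rearrangement plus the elementary observation that a translation action by homeomorphisms preserving $X$ also preserves $\operatorname{int}(X)$. The only detail worth verifying is that the residual translation vector $(1-t)w$ still lies in $W$, which is automatic because $W$ is a linear subspace.
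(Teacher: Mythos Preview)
Your proof is correct and is precisely the computation the paper has in mind when it writes ``This follows directly from the definitions.'' You have simply spelled out the one-line identity and the observation that translations by elements of $W$ preserve both $X$ and $\operatorname{int}(X)$.
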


\begin{proof}
    This follows directly from the definitions.
\end{proof}

\begin{lemma}\label{lemma: starshapedlineality2}
    Let $X\subseteq V$ be a closed subset and $W\subseteq V$ a linear subspace such that $W+X=X$, and consider the projection $\pi\colon V\to V/W$. We choose some norm on $V/W$. Let $x_*\in X$ be such that $(x_*,X,V)$ is strongly star-shaped. Then:
    \begin{enumerate}[(1)]\itemsep 5pt
        \item $(\pi(x_*),\pi(X),V/W)$ is strongly star-shaped.
        \item The set of directions $S(\pi(x_*),\pi(X))$ is equal to 
        \begin{equation*}
            \big\{\pi(x)\in S\subseteq V/W\mid \forall t\geq0: x_*+tx\in X\big\},
        \end{equation*}
        where $S$ is the unit sphere in $V/W$.
    \end{enumerate}
\end{lemma}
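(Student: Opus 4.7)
The plan is to handle the two parts in turn, exploiting at each step the saturation identity $\pi^{-1}(\pi(X)) = X$, which follows from $W + X = X$ together with the fact that the fibers of $\pi$ are the cosets of $W$.

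For part (1), first show that $\pi(X)$ is closed in $V/W$: since $V$ is finite-dimensional and $W \subseteq V$ is a linear subspace, $\pi$ is an open surjective linear map and hence a quotient map, so $\pi(X)$ is closed iff $\pi^{-1}(\pi(X)) = X$ is closed, which holds by the hypothesis that $(x_*, X, V)$ is strongly star-shaped. For the interior condition, given $y \in \pi(X)$ and $t \in [0, 1)$, lift $y$ to some $x \in X$; strong star-shapedness of $(x_*, X, V)$ yields an open neighborhood $U$ of $x_* + t(x - x_*)$ with $U \subseteq X$, and the push-forward $\pi(U)$ is an open neighborhood of $\pi(x_*) + t(y - \pi(x_*))$ lying inside $\pi(X)$ (using again that $\pi$ is open).

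For part (2), let $\bar\psi$ denote the function from \Cref{lemma: cont} attached to the strongly star-shaped triple $(\pi(x_*), \pi(X), V/W)$ furnished by part (1). The first step is to reinterpret the vanishing $\bar\psi(\pi(x_*) + y) = 0$ as the statement that the entire ray $\{\pi(x_*) + sy \mid s \geq 0\}$ lies in $\pi(X)$: unfolding the infimum in $\bar\psi$ gives the existence of arbitrarily large $s$ with $\pi(x_*) + sy \in \pi(X)$, and star-shapedness of $\pi(X)$ at $\pi(x_*)$ (which is part of strong star-shapedness) upgrades this to all $s \geq 0$. The second step pulls this back to $V$: for any lift $x \in V$ of $y$, the ray $\{x_* + sx \mid s \geq 0\}$ maps under $\pi$ onto $\{\pi(x_*) + sy \mid s \geq 0\}$, so the former lies in $X$ iff the latter lies in $\pi(X)$ by the saturation identity $\pi^{-1}(\pi(X)) = X$. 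This same identity shows that the condition $x_* + tx \in X$ for all $t \geq 0$ is independent of the chosen lift $x$ of $y$, so the displayed set is well-defined. Combining the two equivalences gives the claimed description of $S(\pi(x_*), \pi(X))$.

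Neither part presents a real obstacle: once the identification $\pi^{-1}(\pi(X)) = X$ is noted, closedness, openness of $\pi$, and transport of rays are essentially formal. The only point worth flagging is the passage from "arbitrarily large $s$" to "all $s \geq 0$" inside the characterization of $\bar\psi = 0$, which depends on star-shapedness of the downstairs set; this is supplied by part (1) and so does not need to be re-verified in part (2).
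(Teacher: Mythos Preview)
Your proof is correct and follows essentially the same approach as the paper: both arguments use $W+X=X$ (equivalently, $\pi^{-1}(\pi(X))=X$) to transfer closedness and the interior condition through the open map $\pi$ for part (1), and to pull the ray condition back from $V/W$ to $V$ for part (2). Your write-up is simply more explicit about the saturation identity and about upgrading ``arbitrarily large $s$'' to ``all $s\geq 0$'' via star-shapedness (which the paper handles by an implicit appeal to \Cref{lemma: cont}(3)).
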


\begin{proof}
    It follows from $W+X=X$ that $\pi(X)$ is closed. Let $x\in X$ and $0\leq t<1$. Then $x_*+t(x-x_*)$ is in the interior of $X$. Thus $\pi(x_*)+t(\pi(x)-\pi(x_*))$ is in the interior of $\pi(X)$, since $\pi$ is open. This establishes part (1).

    For part (2), the inclusion $\supset$ is clear. For the other inclusion, let $x\in V$ such that $\pi(x)\in S(\pi(x_*),\pi(X))$. Then for all $t\geq0$, there is $x_t\in X$ such that $\pi(x_*)+t\pi(x)=\pi(x_t)$. This is equivalent to $x_*+tx=x_t+w_t$ for some $w_t\in W=\ker(\pi)$. Because $W+X=X$, this implies the claim.
\end{proof}

\section{The topology of representation spaces over triangular hyperfields}
We first recall some definitions and results on polymatroid representations over tracts.
\subsection{Definition of tracts}\label{sec:tractdef}
A \emph{pointed monoid} is a multiplicatively written commutative monoid $F$ with unit $1$ and a distinguished element $0$ that satisfies $0\cdot a=0$ for all $a\in F$. The \emph{unit group of $F$} is the group
\[
 F^\times \ = \ \{a\in F\mid ab=1\text{ for some }b\in F\}
\]
of all invertible elements in $F$. A \emph{pointed group} is a pointed monoid $F$ such that $F^\times=F-\{0\}$. The \emph{ambient semiring} of a pointed group $F$ is the group semiring
\[
 F^+ \ = \ \N[F^\times].
\]
We denote its elements by $\sum n_a.a$ (where $n_a\in\N$ and $n_a=0$ for all but finitely many $a\in A$), $n_1.a_1+\dotsb+n_r.a_r$, or $\sum a_i$ (where $a$ appears $n_a$ times as a summand). The pointed group $F$ embeds into $F^+$ by sending $0$ to the additively neutral element of $F^+$ and $a\in F^\times$ to $a=1.a\in F^+$. An \emph{ideal of $F^+$} is a subset that contains $0$ and is closed under addition and under multiplication by elements of $F^+$.

\begin{df}
A \emph{tract}\footnote{What we call a tract in this text is, in the language of \cite{Baker-Lorscheid21b}, an \emph{ideal tract} or an \emph{idyll}.} is a pointed group $F$ together with an ideal $N_F$ of $F^+$, called the \emph{null set of $F$}, such that for every $a\in F$ there is a unique $b\in F$ with $a+b\in N_F$. 
A \emph{homomorphism of tracts} is a multiplicative map $f\colon F_1\to F_2$ with $f(0)=0$ and $f(1)=1$ such that the induced map $F_1^+\to F_2^+$ sends every element of $N_{F_1}$ to an element of $N_{F_2}$.
\end{df}
We write $-a$ for the unique element $b$ with $a+b\in N_F$ and call it the \emph{additive inverse of $a$}, and we use $a-b$ for $a+(-b)$. Typically, we denote a tract by $F$ and suppress its null set $N_F$ from the notation. 
\begin{ex}
    The most basic example of a tract is a field $F$, which is a pointed group with respect to multiplication, with null set $N_F$ consisting of all formal sums of elements that sum to zero in $F$.
\end{ex}

\subsection{Triangular hyperfields}\label{sec:triangbasics}
Viro introduces triangular hyperfields in \cites{Viro10,Viro11} as an algebraic framework for Maslov dequantization, which we review in \Cref{subsubsection: Maslov dequantization}. In this text, we consider triangular hyperfields (whose definition we give in \Cref{lemma: definition of triangular hyperfield}) as tracts. 

For $a,b\geq0$, we define
\begin{equation*}
    a\boxplus_q b = \big\{c\geq0\mid |a^{1/q}-b^{1/q}|\leq c^{1/q} \leq a^{1/q}+b^{1/q}\big\}.
\end{equation*}
In other words, $c\in a\boxplus_q b$ if and only if $a^{1/q}$, $b^{1/q}$ and $c^{1/q}$ are the side lengths of a Euclidean triangle.

If $A,B$ are subsets of $\R_{\geq0}$, we define
\begin{equation*}
    A\boxplus_q B = \bigcup_{a\in A,\, b\in B} a\boxplus_q b.
\end{equation*}

It was shown in \cite{Viro10}*{§5} that $\boxplus_q$, together with the usual multiplication on $\R_{\geq0}$, defines a so-called \emph{hyperfield}. We will not define hyperfields here, but only note the following consequence of the definition:
\begin{lemma}\label{lemma: definition of triangular hyperfield}
    The pointed group $\R_{\geq0}$, together with the null set
    \begin{equation*}
        N_{\T_q}=\big\{ 0 \big\} \cup \big\{\sum_{i=1}^n a_i \mid n\in\N_{\geq 2} \mathrm{ \; and \; } 0\in a_1\boxplus_q\cdots\boxplus_q a_n\big\},
    \end{equation*}
    is a tract, called the \emph{triangular hyperfield} $\T_q$.
\end{lemma}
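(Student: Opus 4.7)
The plan is to verify the tract axioms for $(\R_{\geq 0}, N_{\T_q})$ directly, leveraging the fact, already cited from Viro, that $(\R_{\geq 0}, \boxplus_q, \cdot)$ is a hyperfield. The pointed-group structure is immediate: the unit group of $\R_{\geq 0}$ under ordinary multiplication is $\R_{>0}$, and $0$ is absorbing, so $\R_{\geq 0}$ is a pointed group with ambient semiring $F^+=\N[\R_{>0}]$. The task reduces to checking (i) that $N_{\T_q}$ is an ideal of $F^+$, and (ii) that every $a\in\R_{\geq 0}$ has a unique additive inverse with respect to $N_{\T_q}$.

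For (i), membership of $0$ is built into the definition. Closure under addition would go as follows: by hyperfield associativity, the iterated hyper-sum $\boxplus_q(a_1,\ldots,a_n)$ is well-defined, and the hyperfield axiom that $0$ is the additive identity yields $0\boxplus_q 0=\{0\}$. Given $\sum_{i=1}^n a_i$ and $\sum_{j=1}^m b_j$ in $N_{\T_q}$, regrouping
\begin{equation*}
    \boxplus_q(a_1,\ldots,a_n,b_1,\ldots,b_m) \;=\; \bigl(\boxplus_q(a_1,\ldots,a_n)\bigr)\boxplus_q\bigl(\boxplus_q(b_1,\ldots,b_m)\bigr)
\end{equation*}
contains $0\boxplus_q 0=0$, so their sum in $F^+$ lies in $N_{\T_q}$. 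For closure under multiplication by a single element $c\in\R_{\geq 0}$, hyperfield distributivity gives $c\cdot(\boxplus_q a_i)=\boxplus_q(ca_i)$, and hence $0=c\cdot 0$ remains in the hyper-sum; if $c=0$ the product is the neutral element of $F^+$. Closure under multiplication by a general $c=\sum c_j\in F^+$ then follows by combining closure under single-element multiplication with closure under addition.

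For (ii), I would unpack the defining condition: $b\in\R_{\geq 0}$ satisfies $a+b\in N_{\T_q}$ iff $0\in a\boxplus_q b$, which by the definition of $\boxplus_q$ is the condition $|a^{1/q}-b^{1/q}|\leq 0\leq a^{1/q}+b^{1/q}$. The left inequality forces $a^{1/q}=b^{1/q}$, hence $b=a$ is the unique such element. In particular, every element of $\T_q$ is its own additive inverse.

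The main obstacle, such as it is, is minor: essentially all substantive content lives inside Viro's hyperfield theorem, and the remaining task is a mechanical translation of the hyperfield axioms into the tract formalism. The only point requiring care is to handle the iterated hyper-operation when verifying additive closure of $N_{\T_q}$ (which is where associativity of $\boxplus_q$ enters), and to ensure the $n\geq 2$ restriction in the definition of $N_{\T_q}$ is preserved under both ideal operations; the latter is automatic since addition of two such formal sums produces at least four summands, and multiplication by a nonzero element of $F^+$ preserves the number of summands.
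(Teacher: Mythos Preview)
Your proposal is correct and follows essentially the same approach as the paper: the paper's proof simply cites that hyperfields yield tracts (via \cite{Viro10}, \cite{Baker-Bowler19}, \cite{Baker-Lorscheid21b}) and remarks that direct verification is routine, and you have carried out exactly that routine verification. The only difference is that you have spelled out the details the paper leaves to the references.
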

\begin{proof}
    This follows from the fact that $\boxplus_q$, together with the usual multiplication on $\R_{\geq0}$, defines a hyperfield (cf.~\cite{Viro10}*{Section 5}, \cite{Baker-Bowler19}*{Section 2.3}, and \cite{Baker-Lorscheid21b}*{Theorem 2.21}). It can also be verified directly without any difficulties.
\end{proof}

In order to avoid confusion with the usual addition of nonnegative real numbers, we will from now on denote addition in the group semiring $\T_q^+$ by $+_q$. 

\begin{rem}\label{rem:scaling}
    For all $a\geq0$ and $q>0$, we have $a+_q a\in N_{\T_q}$. Furthermore,  we  have $a_1+_q\cdots +_q a_n\in N_{\T_q}$ if and only if $a_1^{1/q}+_1\cdots +_1 a_n^{1/q}\in N_{\T_1}$.
\end{rem}

We will also make use of the following description of elements of length larger than three in the null set of $\T_1$. Recall that a \emph{cyclic $n$-gon} is an $n$-gon whose vertices lie on a circle in the Euclidean plane.

\begin{lemma}\label{lemma: characterization of higher null sums in T1}
 Let $a_1,\dotsc,a_n\in\R_{\geq0}$. Then the following are equivalent:
 \begin{enumerate}[(1)]\itemsep 5pt
  \item\label{triangle1} $a_1+_1\cdots+_1a_n\in N_{\T_1}$;
  \item\label{triangle2} $a_i\leq \sum_{j \neq i} a_j$ for all $i=1,\dotsc,n$;
  \item\label{triangle3} $a_1,\dotsc,a_n$ are the side-lengths (proceeding clockwise from a fixed vertex) of a (possibly degenerate) $n$-gon in the Euclidean plane;
  \item\label{triangle4} $a_1,\dotsc,a_n$ are the side-lengths  (proceeding clockwise from a fixed vertex) of a (possibly degenerate) convex $n$-gon in the Euclidean plane;
  \item\label{triangle5} $a_1,\dotsc,a_n$ are the side-lengths  (proceeding clockwise from a fixed vertex) of a (possibly degenerate) cyclic $n$-gon in the Euclidean plane.
 \end{enumerate}
\end{lemma}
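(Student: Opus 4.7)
The plan is to establish the equivalences by proving the cycle $(5) \Rightarrow (4) \Rightarrow (3) \Rightarrow (2) \Rightarrow (5)$, together with $(1) \Leftrightarrow (2)$. The implications $(5) \Rightarrow (4) \Rightarrow (3)$ are immediate from the definitions: a cyclic polygon (vertices on a circle in cyclic order) is convex, and a convex polygon is a polygon. For $(3) \Rightarrow (2)$, I would invoke the iterated triangle inequality: given consecutive vertices $P_0,P_1,\dotsc,P_n=P_0$ with $|P_{k-1}P_k|=a_k$, the polygonal path from $P_{i-1}$ to $P_i$ that avoids the side $a_i$ has total length $\sum_{j\neq i}a_j$, which bounds the straight segment $|P_{i-1}P_i|=a_i$ from above.

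For $(1) \Leftrightarrow (2)$, I would unfold the definition of $N_{\T_1}$: condition $(1)$ says $0 \in a_1 \boxplus_1 \cdots \boxplus_1 a_n$. I would then establish by induction on $n$ the more general statement
\begin{equation*}
    c \in a_1 \boxplus_1 \cdots \boxplus_1 a_n \quad\Longleftrightarrow\quad (a_1,\dotsc,a_n,c) \text{ satisfy the polygon inequalities of part (2)}.
\end{equation*}
The base case $n=2$ is just the definition of $\boxplus_1$, namely $|a_1-a_2|\leq c\leq a_1+a_2$. For the inductive step, $c \in a_1 \boxplus_1 \cdots \boxplus_1 a_n$ holds iff some $c' \in a_1 \boxplus_1 \cdots \boxplus_1 a_{n-1}$ satisfies $c \in c' \boxplus_1 a_n$; then an elementary calculation, eliminating the auxiliary $c'$ by comparing the resulting system of upper and lower bounds (each extremum $\max$ being $\leq$ each $\min$), shows that such a $c'$ exists precisely when the polygon inequalities for $(a_1,\dotsc,a_n,c)$ hold. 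Specializing to $c=0$ yields $(1) \Leftrightarrow (2)$.

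The heart of the proof is $(2) \Rightarrow (5)$, which I would handle by an intermediate value argument on the circumradius. Reorder so that $a_n$ is the maximum. If $a_n = \sum_{i<n}a_i$, the vertices collapse onto a line segment, which I would treat as a degenerate cyclic polygon (a limit as $r \to \infty$ of genuine cyclic polygons of circumradius $r$), so assume the strict inequality $a_n < \sum_{i<n}a_i$. For $r \geq a_n/2$, set $\theta_i(r) = 2\arcsin(a_i/(2r))$. A cyclic polygon with sides $a_1,\dotsc,a_n$ inscribed in a circle of radius $r$ exists exactly when either
\begin{equation*}
    F_+(r) \coloneq \sum_{i=1}^n \theta_i(r) = 2\pi \quad\text{(center inside the polygon),}
\end{equation*}
or
\begin{equation*}
    F_-(r) \coloneq \theta_n(r) - \sum_{i<n}\theta_i(r) = 0 \quad\text{(center outside, separated from the polygon by side $a_n$);}
\end{equation*}
the second formula arises because in the center-outside configuration the arc opposite $a_n$ has central angle $2\pi - \theta_n(r)$ rather than $\theta_n(r)$. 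Both $F_\pm$ are continuous on $[a_n/2,\infty)$, with $F_+(r) \to 0$ as $r \to \infty$ and $F_+(a_n/2) \geq \pi$. If $F_+(a_n/2) \geq 2\pi$, the intermediate value theorem applied to $F_+$ produces the required $r$; otherwise $F_-(a_n/2) > 0$, while the Taylor expansion $\arcsin(x) = x + O(x^3)$ gives $F_-(r) = (a_n - \sum_{i<n}a_i)/r + O(r^{-3}) < 0$ for large $r$, so the intermediate value theorem applied to $F_-$ succeeds. The main obstacle is this $(2) \Rightarrow (5)$ step, specifically the geometric bookkeeping required to distinguish the center-inside and center-outside cases and to handle the degenerate boundary $a_n = \sum_{i<n}a_i$ by a limit; once these cases are organized and condition (2) is shown to force one of the two IVT configurations to succeed, the remaining work is routine.
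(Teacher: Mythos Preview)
Your proof is correct and follows essentially the same logical skeleton as the paper's: the chain $(5)\Rightarrow(4)\Rightarrow(3)\Rightarrow(2)$ is handled identically, and your inductive treatment of $(1)\Leftrightarrow(2)$ via the auxiliary variable $c'$ is the same ``fusion'' recursion the paper uses (they phrase it as $0\in a_1\boxplus\cdots\boxplus a_{n-2}\boxplus a$ and $0\in a_{n-1}\boxplus a_n\boxplus a$, and for the direction $(2)\Rightarrow(1)$ they find the splitting element $a$ geometrically as the length of a diagonal in the convex polygon guaranteed by $(4)$, whereas you find it by a direct max--min comparison of the inequality system).

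The one substantive difference is $(2)\Rightarrow(5)$. The paper simply cites Penner's theorem on cyclic polygons, while you supply a self-contained intermediate-value argument on the circumradius, splitting into the center-inside case $F_+(r)=2\pi$ and the center-outside case $F_-(r)=0$. Your argument is correct (the key observations that $F_+(a_n/2)\geq\pi$, that $F_+(a_n/2)<2\pi$ forces $F_-(a_n/2)>0$, and that the leading term of $F_-$ at infinity has the sign of $a_n-\sum_{i<n}a_i$ all check out), and it is in fact the standard proof behind the cited result. So your version is more self-contained, at the cost of the extra geometric bookkeeping you already flagged; the paper's version is shorter but relies on an external reference.
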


\begin{proof}
    It is clear that (\ref{triangle5}) implies (\ref{triangle4}), which implies (\ref{triangle3}). 
    And, since the shortest distance between two points is a straight line,    (\ref{triangle3}) implies (\ref{triangle2}). 
    Moreover, it is proved in \cite{Penner87}*{Theorem 6.2} (see also \cite{Schreiber93}) that (\ref{triangle5}) is equivalent to (\ref{triangle2}). Therefore (\ref{triangle2}), (\ref{triangle3}), (\ref{triangle4}), and (\ref{triangle5}) are all equivalent.
    (We note that one can show in a much simpler and more direct way that (\ref{triangle2}) implies (\ref{triangle3}), and with a bit of additional work that (\ref{triangle2}) implies (\ref{triangle4}).)
    
    So it suffices to show that (\ref{triangle1}) is equivalent to (\ref{triangle2}). For this, we use the elementary observation (cf.~\cite{Baker-Zhang23}*{Proposition 1.10 and Lemma 2.3}) that if $H$ is a hyperfield with hyperaddition $\boxplus$, we have $0\in a_1\boxplus \cdots\boxplus a_n$ iff there exists $a \in H$ such that $0 \in a_1 \boxplus a_2 \boxplus \cdots \boxplus a_{n-2} \boxplus a$ and $0 \in a_{n-1} \boxplus a_n \boxplus (-a)$.
    Since $-1 = 1$ in $\T_1$, it follows that $a_1+_1\cdots+_1a_n\in N_{\T_1}$ iff there exists $a \in \T_1$ such that $a_1 +_1 a_2 +_1 \cdots +_1 a_{n-2} +_1 a \in N_{\T_1}$ and $a_{n-1} +_1 a_n +_1 a \in N_{\T_1}$. (A tract with this property is called a \emph{fusion tract} in \cite{Baker-Zhang23}.)

    We know that (\ref{triangle1}) is equivalent to (\ref{triangle2}) when $n = 3$, so we may assume that $n \geq 4$. Assume  (\ref{triangle1}) holds. Then there exists $a \in \T_1$ such that $a_1 +_1 a_2 +_1 \cdots +_1 a_{n-2} +_1 a \in N_{\T_1}$ and $a_{n-1} +_1 a_n +_1 a \in N_{\T_1}$. By induction, we may assume that $a_i\leq \sum_{j \neq i, j \leq n-2} a_j + a$ for all $i=1,\dotsc,n-2$, and $a \leq a_{n-1} + a_n$, and therefore (\ref{triangle2}) holds. 
    
    Conversely, assume (\ref{triangle2}) holds. One can give an elementary direct algebraic proof of (\ref{triangle1}), but it is perhaps more enlightening to proceed as follows. Since (\ref{triangle2}) implies (\ref{triangle4}), there is a convex $n$-gon $P$ with side lengths $a_1,\dotsc,a_n$. 
    Then $a_{n-1}$ and $a_n$ are consecutive side lengths in $P$, and we draw a diagonal of $P$ of length $a$ which splits $P$ into a triangle with side lengths $a,a_{n-1}$, and $a_n$ and a convex $(n-1)$-gon $P'$ with side lengths $a_1,\ldots,a_{n-2}$ and $a$. Since these side lengths satisfy (\ref{triangle2}), it follows by induction that both $a_{n-1} +_1 a_n +_1 a$ and $a_1 +_1 a_2 +_1 \cdots +_1 a_{n-2} +_1 a$ belong to $N_{\T_1}$. Thus $a_1+_1\cdots+_1a_n\in N_{\T_1}$ as desired.   
\end{proof}

\begin{rem}
Penner also proves in \cite{Penner87}*{Theorem 6.2} that the cyclic polygon in (\ref{triangle5}) is \emph{unique} in the non-degenerate case where $a_i < \sum_{j \neq i} a_j$ for all $i=1,\dotsc,n$.
\end{rem}

\begin{lemma}\label{lemma:subadditive}
    Let $n\geq2$, $0\leq q<1$, and $x_1,\ldots,x_n>0$. Then
    \begin{equation*}
        (x_1+\cdots+x_n)^q < x_1^q+\cdots + x_n^q.
    \end{equation*}
\end{lemma}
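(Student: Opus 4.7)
The plan is to reduce the problem to a normalized form via the homogeneity of both sides of the inequality, and then exploit the elementary fact that $t^q > t$ whenever $0 < t < 1$ and $0 \leq q < 1$.

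First I would observe that both sides of the inequality are homogeneous of degree $q$ in $(x_1,\ldots,x_n)$. Therefore, by replacing each $x_i$ with $x_i/(x_1+\cdots+x_n)$, we may assume without loss of generality that $x_1 + \cdots + x_n = 1$. Under this normalization the claim reduces to showing that
\begin{equation*}
    1 \;<\; x_1^q + \cdots + x_n^q
\end{equation*}
whenever $x_1,\ldots,x_n>0$ sum to $1$ and $n\geq 2$. Crucially, the hypothesis $n\geq 2$ together with $x_i>0$ for all $i$ forces $0 < x_i < 1$ for each $i$, since each $x_i$ is strictly less than the total sum.

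Next I would split into two cases according to whether $q = 0$ or $0 < q < 1$. The case $q=0$ is immediate: $x_1^0 + \cdots + x_n^0 = n \geq 2 > 1$. For $0 < q < 1$ and $x_i \in (0,1)$, taking logarithms one has $\log x_i < 0$, and hence $q\log x_i > \log x_i$ since multiplying a negative number by $q<1$ increases it. Exponentiating yields $x_i^q > x_i$. Summing over $i$ gives
\begin{equation*}
    x_1^q + \cdots + x_n^q \;>\; x_1 + \cdots + x_n \;=\; 1,
\end{equation*}
which is the desired inequality.

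There is no real obstacle here; the only subtlety to mention is that strict positivity of every $x_i$ together with $n\geq 2$ is precisely what guarantees, after normalization, the strict bound $x_i < 1$ that makes the logarithmic comparison strict. Without either assumption the inequality would only hold weakly.
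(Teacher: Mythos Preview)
Your proof is correct and rests on the same elementary observation as the paper's: after dividing through by the total sum, each normalized term $t_i = x_i/\sum_j x_j$ lies in $(0,1)$, and $t_i^q > t_i$ (equivalently $t_i^{1-q} < 1$) for $0 \leq q < 1$. The paper first reduces to $n=2$ by induction and then carries out essentially this same computation in the form $(x_1+x_2)^q = \sum_i \bigl(\tfrac{x_i}{x_1+x_2}\bigr)^{1-q} x_i^q < \sum_i x_i^q$; your direct normalization for general $n$ is a slight streamlining that avoids the inductive step, but the core argument is identical.
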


\begin{proof}
    By induction, it suffices to prove the case $n=2$. We have
    \begin{multline*}
        (x_1+x_2)^q \ = \ \frac{x_1}{x_1+x_2}\cdot(x_1+x_2)^q \ + \ \frac{x_2}{x_1+x_2}\cdot(x_1+x_2)^q \\[5pt]
        = \ \Big(\frac{x_1}{x_1+x_2}\Big)^{1-q} \cdot x_1^q \ + \ \Big(\frac{x_2}{x_1+x_2}\Big)^{1-q} \cdot x_2^q \ < \ x_1^q+x_2^q,
    \end{multline*}
    where the last the inequality follows because $\frac{x_1}{x_1+x_2},\frac{x_2}{x_1+x_2}<1$.
\end{proof}

\begin{cor}\label{cor: inclusion of triangular hyperfields}
 Let $0<p\leq q<\infty$. Then  $N_{\T_{p}}\subseteq N_{\T_{q}}$ as subsets of $\N[\R_{>0}]$.
\end{cor}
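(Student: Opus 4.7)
The plan is to reduce the containment to a concrete inequality between power sums using \Cref{rem:scaling} and \Cref{lemma: characterization of higher null sums in T1}, and then to exploit \Cref{lemma:subadditive} applied to the exponent $r \coloneq p/q \in (0,1]$.

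To begin, I would let $\sigma = a_1 +_p \cdots +_p a_n$ be a non-zero element of $N_{\T_p} \cap \N[\R_{>0}]$; this forces $n \geq 2$ and all $a_j > 0$. The case $p = q$ is trivial, so I may assume $p < q$ (hence $r \in (0,1)$). Setting $b_j \coloneq a_j^{1/p}$, one has $a_j^{1/q} = b_j^r$. Applying \Cref{rem:scaling} to both $\T_p$ and $\T_q$, and then invoking \Cref{lemma: characterization of higher null sums in T1} for the auxiliary sums over $\T_1$, translates the hypothesis $\sigma \in N_{\T_p}$ into the polygon inequalities
$b_i \leq \sum_{j \neq i} b_j$ for all $i \in [n]$, and translates the goal $\sigma \in N_{\T_q}$ into $b_i^r \leq \sum_{j \neq i} b_j^r$ for all $i \in [n]$.

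The case $n = 2$ is immediate because the hypothesis forces $b_1 = b_2$, hence also $b_1^r = b_2^r$. For $n \geq 3$, I would combine monotonicity of $x \mapsto x^r$ on $\R_{\geq 0}$, which gives $b_i^r \leq \bigl(\sum_{j \neq i} b_j\bigr)^r$, with \Cref{lemma:subadditive} applied to the exponent $r \in (0,1)$ and to the $n-1 \geq 2$ positive summands $\{b_j\}_{j \neq i}$, yielding $\bigl(\sum_{j \neq i} b_j\bigr)^r < \sum_{j \neq i} b_j^r$. Chaining the two inequalities produces the desired estimate.

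I do not anticipate any substantive obstacle: once the characterization of null sums via polygon inequalities is in hand, the corollary reduces to (strict) concavity of $x \mapsto x^r$ for $r < 1$. The only bookkeeping is handling the low-arity cases $n = 1$ (impossible, since single positive elements are never null) and $n = 2$ (trivial, since the condition $a_1 = a_2$ is independent of the hyperfield parameter).
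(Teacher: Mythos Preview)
Your proposal is correct and follows essentially the same route as the paper's proof: both reduce via the substitution $b_j=a_j^{1/p}$ to the polygon inequalities of \Cref{lemma: characterization of higher null sums in T1}, and then apply \Cref{lemma:subadditive} with exponent $p/q\in(0,1)$. Your treatment is in fact slightly more careful, since you single out the $n=2$ case (where \Cref{lemma:subadditive} does not literally apply to a single summand on the right), whereas the paper glosses over this trivial case.
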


\begin{proof}
 The claim is clear for $p=q$, so we may assume that $p<q$. Consider $\sum a_i\in N_{\T_p}$, i.e.\ $a_j^{1/p}\leq\sum_{i\neq j} a_i^{1/p}$ for all $j$. After taking $p$-th powers of all $a_i$, we can assume that $p=1<q$, i.e., $a_j\leq\sum a_i$ and $0<1/q<1$. By \Cref{lemma:subadditive}, we have $a_j^{1/q}\leq\sum_{i\neq j} a_i^{1/q}$ for all $j$, which shows that $\sum a_i\in N_{\T_q}$, as claimed.
\end{proof}

We define the \emph{tropical hyperfield} as the tract $\T_0=\R_{\geq0}$ with null set $N_{\T_0}=\bigcap_{q>0}N_{\T_q}$ and the \emph{degenerate triangular hyperfield} as the tract $\T_\infty=\R_{\geq0}$ with null set $N_{\T_\infty}=\bigcup_{q>0}N_{\T_q}$. It follows formally from \Cref{cor: inclusion of triangular hyperfields} that both null sets satisfy the axioms of a tract.

\begin{lemma}\label{lemma:zeroinfdesc}
    Let $a_1,\ldots,a_n\geq0$ and let $q>0$. Then:
    \begin{enumerate}[(1)]\itemsep 5pt
        \item\label{nullset1} $\sum_{i=1}^n a_i\in N_{\T_0}$ if and only if the sum is identically zero or the maximum among $a_1,\dotsc,a_n$ appears at least twice.
        \item\label{nullset2} $\sum_{i=1}^n a_i\in N_{\T_\infty}$ if and only if the sum is identically zero, the maximum among $a_1,\dotsc,a_n$ appears at least twice, or at least three of the $a_i$ are nonzero.
    \end{enumerate}
\end{lemma}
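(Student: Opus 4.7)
The plan is to reduce both parts to the polygon-inequality characterization of $N_{\T_q}$ and then analyze the behavior of the exponents $1/q$ in the limits $q\to 0^+$ and $q\to\infty$. More precisely, combining \Cref{rem:scaling} with \Cref{lemma: characterization of higher null sums in T1} shows that, for every $q>0$, a nonzero formal sum $\sum_{i=1}^n a_i$ with $a_i\geq 0$ lies in $N_{\T_q}$ if and only if $a_j^{1/q}\leq \sum_{i\neq j}a_i^{1/q}$ for all $j$; letting $M=\max_i a_i$ and picking any index $j_0$ with $a_{j_0}=M$, this reduces to the single inequality $M^{1/q}\leq \sum_{i\neq j_0}a_i^{1/q}$.

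For part \eqref{nullset1}, I would argue both directions from this reformulation. If the sum vanishes identically, it is in every $N_{\T_q}$ by definition; if the maximum $M$ is achieved at two distinct indices $j_0\neq j_1$, then $M^{1/q}=a_{j_1}^{1/q}\leq \sum_{i\neq j_0}a_i^{1/q}$ for every $q>0$, and so $\sum a_i\in \bigcap_{q>0}N_{\T_q}=N_{\T_0}$. Conversely, assume the sum is nonzero and the maximum $M$ is attained uniquely at $j_0$. For each $i\neq j_0$ with $a_i>0$ we have $a_i<M$, hence $(a_i/M)^{1/q}\to 0$ as $q\to 0^+$. Therefore $\sum_{i\neq j_0}(a_i/M)^{1/q}\to 0$ while $(a_{j_0}/M)^{1/q}=1$, so for sufficiently small $q>0$ the polygon inequality fails and $\sum a_i\notin N_{\T_q}$, whence $\sum a_i\notin N_{\T_0}$.

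For part \eqref{nullset2}, the first two cases are immediate: both are already handled by part \eqref{nullset1}, so in particular they give membership in $N_{\T_q}$ for every $q>0$ and hence in $N_{\T_\infty}=\bigcup_{q>0}N_{\T_q}$. Suppose now that at least three $a_i$ are nonzero; I may assume the maximum $M$ is unique at $j_0$ (otherwise we are back in the previous case). As $q\to\infty$ we have $a_i^{1/q}\to 1$ for every $i$ with $a_i>0$, so $\sum_{i\neq j_0}a_i^{1/q}$ tends to the number of nonzero $a_i$ with $i\neq j_0$, which is at least $2$, while $a_{j_0}^{1/q}\to 1$. Hence for $q$ large enough the inequality $a_{j_0}^{1/q}\leq \sum_{i\neq j_0}a_i^{1/q}$ holds and $\sum a_i\in N_{\T_q}\subseteq N_{\T_\infty}$. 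For the converse, assume the sum is nonzero, the maximum is unique, and fewer than three $a_i$ are nonzero. Either only $a_{j_0}>0$, in which case $\sum_{i\neq j_0}a_i^{1/q}=0<a_{j_0}^{1/q}$, or exactly one other term $a_{j_1}$ is nonzero with $a_{j_1}<a_{j_0}$, in which case $a_{j_1}^{1/q}<a_{j_0}^{1/q}$ for every $q>0$. In both situations the inequality fails for every $q>0$, so $\sum a_i\notin N_{\T_\infty}$.

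No single step is genuinely difficult; the only mild subtlety is making sure the exponent limits are applied correctly and that the unique-maximum assumption in each converse reduction is justified by having already handled the tied-maximum case. Since all quantitative work is elementary asymptotics of $t\mapsto a^t$ as $t\to\infty$ (for part \eqref{nullset1}) and $t\to 0^+$ (for part \eqref{nullset2}), the argument is short and conceptually clean.
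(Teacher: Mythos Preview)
Your proof is correct and follows essentially the same approach as the paper: both arguments reduce to the polygon-inequality characterization of $N_{\T_q}$ from \Cref{lemma: characterization of higher null sums in T1} (via \Cref{rem:scaling}) and then analyze the ratios $(a_i/M)^{1/q}$ as $q\to 0^+$ or $q\to\infty$. The only cosmetic difference is that the paper writes down explicit thresholds ($<1/n$ and $\geq 1/2$) where you invoke limits, but this is the same idea.
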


\begin{proof}
 For part \eqref{nullset1}, assume that $a_1>a_2,\ldots,a_n$. Then $\frac{a_2}{a_1},\cdots,\frac{a_n}{a_1}<1$ and there exists $p>0$ small enough such that $\left(\frac{a_i}{a_1}\right)^{1/p}<\frac{1}{n}$ for all $i=2,\ldots,n$. It follows that $a_1^{1/p}>a_2^{1/p}+\cdots+a_n^{1/p}$, contradicting (1) for $q=1$ by \Cref{lemma: characterization of higher null sums in T1}. The other direction of \eqref{nullset1} follows directly from \Cref{lemma: characterization of higher null sums in T1}. 
 
 The ``only if'' direction of part \eqref{nullset2} is clear. For the other direction, it suffices to prove $a_1^{1/p}\leq a_2^{1/p}+\cdots+a_n^{1/p}$ for a suitable $p>0$. We can further assume that $a_1>a_2,\ldots,a_n$. Again, this implies $\frac{a_2}{a_1},\cdots,\frac{a_n}{a_1}<1$ and there exists $p>0$ large enough such that $\left(\frac{a_i}{a_1}\right)^{1/p}\geq\frac{1}{2}$ for all $i=2,\ldots,n$ with $a_i\neq0$. Since, by assumption, this is the case for at least two $i\in\{2,\ldots,n\}$, the claim follows.
\end{proof}

Finally, we study tract homomorphisms into triangular hyperfields. The situation is particularly simple for the degenerate triangular hyperfield $\T_\infty$:

\begin{ex}\label{rem:tinftyreps}
    Let $F$ be a tract and $f\colon F^\times\to\T_\infty^\times=\R_{>0}$ a group homomorphism. We claim that the induced map $F^+\to\T_\infty^+$ sends $N_F$ to $N_{\T_\infty}$. Indeed, by part \eqref{nullset2} of \Cref{lemma:zeroinfdesc}, this is true for all sums of length at least three in the null set of $F$. Thus it only remains to show that $f(a)=f(-a)$ for all $a\in F^\times$. Since $a^2=(-a)^2$ and thus $f(a)^2=f(-a)^2$, this follows from $\R_{>0}$ being torsion-free. Therefore, tract homomorphisms $F\to\T_\infty$ are canonically in bijection with group homomorphisms $F^\times\to\R_{>0}$, cf. \cite{BHKL0}*{Proposition~9.2}.
\end{ex}

Next we study tract homomorphisms $\C\to\T_q$.

\begin{lemma}\label{lemma: morphisms from c to tq}
For $t\geq0$, define $\psi_t\colon\C\to\R_{\geq0}$ by $x\mapsto |x|^t$. The following holds for all $q\geq0$:
\begin{enumerate}[(1)]\itemsep 5pt
    \item If $0\leq t\leq q$, then $\psi_t$ is a homomorphism of tracts $\C\to\T_q$. 
    \item Conversely, if $t>q$, then for all $a_1,a_2,a_3\in\R^\times$ with $a_1+a_2+a_3=0$, we have  $\psi_t(a_1)+_q\psi_t(a_2)+_q\psi_t(a_3)\not\in N_{\T_q}$. 
\end{enumerate}
\end{lemma}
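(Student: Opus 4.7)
The plan is to verify the tract homomorphism axioms for $\psi_t$. Multiplicativity $\psi_t(xy)=\psi_t(x)\psi_t(y)$, together with $\psi_t(0)=0$ and $\psi_t(1)=1$, follow immediately from $|xy|=|x|\cdot|y|$. The only nontrivial axiom is that the induced map $\C^+\to\T_q^+$ sends $N_{\C}$ into $N_{\T_q}$. Given a null relation $x_1+\cdots+x_n=0$ in $\C$, we may discard zero terms and assume $n\geq 2$ with all $x_i\neq 0$. The triangle inequality in $\C$ gives $|x_j|\leq\sum_{i\neq j}|x_i|$ for every $j$. When $q>0$, raising to the $(t/q)$-th power and using that $y\mapsto y^{t/q}$ is subadditive on $\R_{\geq 0}$ (which is \Cref{lemma:subadditive} for $t/q<1$, and trivial for $t/q=1$) yields $|x_j|^{t/q}\leq\sum_{i\neq j}|x_i|^{t/q}$. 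By \Cref{lemma: characterization of higher null sums in T1}, this is precisely the condition that $|x_1|^{t/q}+_1\cdots+_1|x_n|^{t/q}\in N_{\T_1}$, and \Cref{rem:scaling} then translates this to $|x_1|^t+_q\cdots+_q|x_n|^t\in N_{\T_q}$. When $q=0$, the hypothesis forces $t=0$, so all values $\psi_t(x_i)$ equal $1$; the $n\geq 2$ occurrences of the maximum $1$ place the sum in $N_{\T_0}$ by \Cref{lemma:zeroinfdesc}(1).

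\textbf{Part (2).} Suppose $t>q\geq 0$ and $a_1,a_2,a_3\in\R^\times$ with $a_1+a_2+a_3=0$. Since the $a_i$ are real and nonzero, a short case-check on their signs---exactly one of the three has sign opposite to the other two---shows that, after relabeling, $|a_i|=|a_j|+|a_k|$ with $|a_j|,|a_k|>0$. The plan is to deduce that the triangle inequality for $\T_q$ is strictly violated at index $i$. If $q>0$, the substitution $y_\ell=|a_\ell|^{t/q}$ converts \Cref{lemma:subadditive} (applied with exponent $q/t<1$ to $y_j,y_k$) into the strict superadditivity
\begin{equation*}
    \bigl(|a_j|+|a_k|\bigr)^{t/q} \ > \ |a_j|^{t/q}+|a_k|^{t/q},
\end{equation*}
so $|a_i|^{t/q}>|a_j|^{t/q}+|a_k|^{t/q}$, and the combination of \Cref{lemma: characterization of higher null sums in T1} with \Cref{rem:scaling} yields $\psi_t(a_1)+_q\psi_t(a_2)+_q\psi_t(a_3)\notin N_{\T_q}$. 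If instead $q=0$, then $t>0$ and the strict inequalities $|a_i|>|a_j|,|a_k|$ force $|a_i|^t>|a_j|^t,|a_k|^t$; the maximum is attained uniquely, so \Cref{lemma:zeroinfdesc}(1) rules out membership in $N_{\T_0}$.

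No serious obstacle is anticipated: all analytic content is packaged in \Cref{lemma:subadditive} and \Cref{lemma: characterization of higher null sums in T1}, the scaling identity in \Cref{rem:scaling} reduces statements about $\T_q$ for $q>0$ to statements about $\T_1$, and \Cref{lemma:zeroinfdesc} handles the degenerate case $q=0$. The only care needed is separating the cases $q>0$ and $q=0$ throughout, and---for part (2)---identifying the unique index $i$ at which the triangle inequality fails, which is a short sign analysis of $(a_1,a_2,a_3)$.
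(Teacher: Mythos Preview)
Your proof is correct and follows essentially the same approach as the paper: both parts hinge on the triangle inequality in $\C$ combined with \Cref{lemma:subadditive} and \Cref{lemma: characterization of higher null sums in T1}, with \Cref{rem:scaling} translating between $\T_1$ and $\T_q$. The only cosmetic differences are that you treat the case $q=0$ explicitly via \Cref{lemma:zeroinfdesc} (the paper dismisses it as clear), and in part~(2) you argue directly while the paper argues by contradiction---both route through the same application of \Cref{lemma:subadditive} with exponent $q/t<1$.
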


\begin{proof}
    The case $q=0$ is clear, so we may assume that $q>0$.
    It is clear that $\psi_t$ is a group homomorphism. Next, let $a_1,\ldots,a_n\in\C$ be such that $a_1+\cdots+a_n=0$. 
    By the triangle inequality, we have
    \[
        |a_1|\leq |a_2|+\cdots+|a_n|
        \Longrightarrow |a_1|^{t/q}\leq (|a_2|+\cdots+|a_n|)^{t/q}
        \leq |a_2|^{t/q}+\cdots+|a_n|^{t/q},
        \]
    where the last inequality follows from \Cref{lemma:subadditive}. By \Cref{lemma: characterization of higher null sums in T1}, this is exactly what we need to show for part (1). For part (2), assume without loss of generality $a_1>0$ and $a_2,a_3<0$. Assume that $\psi_t(a_1)+_q\psi_t(a_2)+_q\psi_t(a_3)\in N_{\T_q}$. This implies that $|a_1|^{t/q}\leq |a_2|^{t/q}+|a_3|^{t/q}$. This implies, by \Cref{lemma:subadditive}, that $a_1<a_2+a_3$ since $\frac{q}{t}<1$, which contradicts our assumption.
\end{proof}

\subsection{Polymatroid representations}\label{subsec:polymatroid}
There are several cryptomorphic definitions of an (integral) polymatroid \cites{Welsh, Murota03,Edmonds70}. For us, a polymatroid will always be given by its set of bases: A subset $J\subseteq\Delta_n^d=\{\alpha\in\N^n \mid \alpha_1+\dotsb+\alpha_n=d\}$ is the set of bases of a polymatroid if and only it is M-convex in the following sense.
\begin{df}
    A subset $J$ of $\Delta^d_n$ is \emph{M-convex} if $J \neq \emptyset$ and for all $\alpha,\beta\in J$ and every $i\in[n]$ with $\alpha_i<\beta_i$, there exists $j\in[n]$ such that $\alpha_j > \beta_j$ and $J$ contains both $\alpha+e_i-e_j$ and $\beta-e_i+e_j$. The \emph{rank} of the M-convex set $J$ is $r$.
\end{df}

\begin{ex}
    If we identify a subset of $[n]$ with its indicator function, regarded as an element of $\{0,1\}^n$, then the set of bases of a rank $d$ matroid on $[n]$ is an M-convex set in $\Delta_n^d$.
\end{ex}

Recall the following definitions from \cite{BHKL0}. We consider $\N^n$ as a partially ordered set with respect to the partial order $\alpha\leq\beta$ if and only if $\alpha_i\leq\beta_i$ for all $i\in[n]$.

\begin{df}
    Let $J\subseteq\Delta_n^d$ be an M-convex set. We define $\delta^-_J=\inf(J)\in\N^n$ to be the vector whose $i$-th coordinate is $\min\{\alpha_i\mid\alpha\in J\}$. Similarly, we define $\delta^+_J=\sup(J)\in\N^n$ to be the vector whose $i$-th coordinate is $\max\{\alpha_i\mid\alpha\in J\}$.
\end{df}

\begin{df}
    Let $F$ be a tract and $J\subseteq\Delta_n^d$ an M-convex set. 
    A function $\rho\colon\Delta^d_n\to F$ is a \emph{strong $F$-representation }of $J$ if its support is $J$ and if it satisfies the Pl\"ucker relations
\begin{equation}\label{eq:pluecker relations}
 \sum_{k=0}^s (-1)^{k+\epsilon(k)}\cdot \rho(\alpha +e_{i_0}+\dotsb \widehat{e_{i_k}}\dotsb+e_{i_s}) \cdot \rho(\alpha +e_{i_k}+e_{j_2}+\dotsb +e_{j_s})  \in  N_{F}
\end{equation}
for all $2\leq s\leq d$, all $\alpha\in\Delta^{d-s}_n$, all $1\leq i_0\leq \dotsc\leq i_s\leq n$ and all $1\leq j_2\leq\dotsc\leq j_s\leq n$ with 
\[
 \delta_J^- \ \leq \ \alpha \qquad \text{and} \qquad \alpha+e_{i_0}+\cdots+e_{i_s}+e_{j_2}+\cdots+e_{j_s} \ \leq \ \delta_J^+, 
\]
where $\epsilon(k)$ is the number of $k\in\{2,\ldots,s\}$ with $i_k<j_s$.

It is a \emph{weak $F$-representation} of $J$ if its support is $J$ and if it satisfies the $3$-term Pl\"ucker relations
\begin{multline*}
 \rho(\alpha+e_j+e_k) \cdot \rho(\alpha +e_i+e_l) \ - \ \rho(\alpha +e_i+e_k) \cdot \rho(\alpha +e_j+e_l) \\ + \ \rho(\alpha +e_i+e_j) \cdot \rho(\alpha +e_k+e_l)  \in  N_{F}
\end{multline*}
for all $\alpha\in\Delta^{d-2}_n$ and $1\leq i\leq j\leq k\leq l\leq n$  with $\delta_J^-\leq\alpha$ and $\alpha+e_i+e_j+e_k+e_l\leq\delta_J^+$.
\end{df}

\begin{rem}
    If $F$ is a field, then \Cref{eq:pluecker relations} describes the usual Pl\"ucker relations. If $F$ is \emph{idempotent}, i.e., $1+1,\ 1+1+1\in N_F$, then the signs can be ignored. For example the triangular hyperfields are all idempotent.
\end{rem}

\begin{df}[Representation spaces]
Let $F$ be a tract and $J\subseteq\Delta_n^d$ an M-convex set.
The \emph{(strong) representation space of $J$ over $F$} is defined as the set $\upR_J(F)$ of all strong $F$-representations of $J$. The \emph{weak representation space of $J$ over $F$} is the set $\upR_J^{\rm w}(F)$ of all weak $F$-representations of $J$. 
The group $\R_{>0}$ acts diagonally on the (weak and strong) representation space. The \emph{(strong) thin Schubert cell of $J$ over $F$} is $\Gr_J(F)=\upR_J(F)/\R_{>0}$. The \emph{weak thin Schubert cell of $J$ over $F$} is $\Gr^{\rm w}_J(F)=\upR^{\rm w}_J(F)/\R_{>0}$. 
\end{df}

\begin{rem}\label{rem:universal tract}
 Let $J\subseteq\Delta^d_n$ be an M-convex set. There exists a tract $T_J$, called the \emph{universal tract} of $J$, and, for every tract $F$, a bijection
 \[
   \Hom(T_J,  F) \ \stackrel\sim\longrightarrow \ \Gr_J(F)
 \]
   which is functorial in $F$, see \cite{BHKL0}*{Proposition~D}.
\end{rem}

\begin{rem}
    It follows from our considerations in \Cref{sec:triangbasics} that we have, for all $0\leq q_1\leq q_2\leq\infty$, the inclusion $\upR_J(\T_{q_1})\subseteq\upR_J(\T_{q_2})$, and similarly for weak representations and (weak) thin Schubert cells.
\end{rem}

\begin{rem}\label{rem: excellent tracts}
    If $F$ is a field or $F\in\{\T_0,\T_\infty\}$, then we have $\upR_J(F)=\upR_J^{\rm w}(F)$ by \cite{BHKL0}*{Theorem~J}.
    This is not the case for $0<q<\infty$, see \cite{Baker-Bowler19}*{Example 3.37}.
\end{rem}

Representations over $\T_0$ are essentially the same thing as M-convex functions.

\begin{df}
 A function $f\colon\Delta^d_n\to \R\cup\{\infty\}$ with non-trivial support $J=\{\alpha\in\Z^n\mid f(\alpha)\neq\infty\}$ is \emph{M-convex} if it satisfies the following \emph{exchange property}: for $\alpha,\beta\in J$ and $k\in[n]$ with $\alpha_k>\beta_k$, there is an $l\in[n]$ with $\alpha_l<\beta_l$ and
\begin{equation}\label{eq: Murata's exchange relations}
 f(\alpha) \ + \ f(\beta) \ \geq \ f(\alpha-e_k+e_l) \ +  f(\beta+e_k-e_l).
\end{equation}
\end{df}

\begin{lemma}[\cite{BHKL0}*{Proposition~4.11}]\label{lemma:mconvexist0}
    Let $J$ be an M-convex set and $\rho\colon\Delta^d_n\to\T_0=\R_{\geq0}$ a function with support $J$. Then $\rho$ is a (weak or strong) $\T_0$-representation of $J$ if and only if $f=-\log(\rho)$ is M-convex.
\end{lemma}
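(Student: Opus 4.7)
The plan is to reinterpret the weak $\T_0$-representation axiom via $-\log$ as the \emph{tropical Plücker relations} for $f$, and then to invoke the classical equivalence between those relations and M-convexity.

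Setting $f=-\log\rho$ with the convention $-\log 0=+\infty$, the support condition $\supp(\rho)=J$ becomes $\{f<\infty\}=J$. By \Cref{lemma:zeroinfdesc}\eqref{nullset1}, a sum $b_1+_0 b_2+_0 b_3$ lies in $N_{\T_0}$ if and only if either all $b_i$ vanish or the maximum of the $b_i$ is attained at least twice. Applying this to the three products appearing in the 3-term Plücker relation for $\rho$ at $(\alpha,i,j,k,l)$ shows that $\rho$ is a weak $\T_0$-representation of $J$ if and only if, for every admissible tuple, either all three of
\[
    F_1=f(\alpha+e_j+e_k)+f(\alpha+e_i+e_l),\ \ F_2=f(\alpha+e_i+e_k)+f(\alpha+e_j+e_l),\ \ F_3=f(\alpha+e_i+e_j)+f(\alpha+e_k+e_l)
\]
equal $+\infty$, or $\min(F_1,F_2,F_3)$ is attained at least twice.

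It then remains to show that $f$ satisfies this family of tropical 3-term relations if and only if it is M-convex. For the direction M-convex $\Rightarrow$ tropical 3-term, I would fix $\alpha$ and $i\leq j\leq k\leq l$ and apply the exchange property to whichever of the three pairs $(\alpha+e_j+e_k,\alpha+e_i+e_l)$, $(\alpha+e_i+e_k,\alpha+e_j+e_l)$, $(\alpha+e_i+e_j,\alpha+e_k+e_l)$ realizes $\min(F_1,F_2,F_3)$; the pair produced by the exchange then realizes the minimum among the remaining two sums. For the converse, I would deduce the general M-convex exchange property by induction on $m=\tfrac12\|\alpha-\beta\|_1$. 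The base case $m=1$ is trivial, and for $m\geq 2$ the tropical 3-term relation, applied to a carefully chosen $\gamma\in\Delta^{d-2}_n$ and indices extracted from the supports of $\alpha-\beta$ and $\beta-\alpha$, either delivers the desired exchange at the distinguished coordinate directly or produces an intermediate point in $J$ whose $\ell^1$-distance to $\beta$ is strictly smaller, to which the inductive hypothesis applies.

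The main obstacle is the bookkeeping in the inductive step for the converse: the 3-term relation acts only on pairs of the restricted form $(\gamma+e_p+e_q,\gamma+e_r+e_s)$, so one must argue that a reduction of $\alpha,\beta$ to such a configuration can always be arranged while preserving control over the distinguished coordinate appearing in the exchange axiom.
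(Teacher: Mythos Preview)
Your approach is correct and is precisely the standard route: translate the $\T_0$ null-set condition via $-\log$ into the tropical $3$-term Pl\"ucker relations (the minimum among $F_1,F_2,F_3$ is attained at least twice), and then identify that system with M-convexity of $f$. Note, however, that the paper does not prove this lemma at all---it simply cites the companion paper \cite{BHKL0}*{Proposition 4.10}. So there is no in-paper argument to compare against.

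That said, the equivalence you are sketching between the tropical $3$-term relations and the full M-convex exchange axiom is exactly Murota's local exchange characterization \cite{Murota03}*{Theorem 6.4}, which the present paper does invoke as a black box later (in the proof of \Cref{lemma:tqimpliesmconvex}). Your forward direction is fine as written. For the converse, the induction on $\tfrac12\|\alpha-\beta\|_1$ is the right strategy, but the ``bookkeeping'' you flag is genuinely the whole content of Murota's theorem; rather than redeveloping it, you may simply cite \cite{Murota03}*{Theorem 6.4} once you have reduced to the statement that $f$ satisfies \eqref{eq: Murota's 3-term relations} for all admissible $\alpha,i,j,k,l$. One small point to keep track of: the weak-representation definition restricts to $\delta_J^-\leq\alpha$ and $\alpha+e_i+e_j+e_k+e_l\leq\delta_J^+$, but outside this range all three products vanish automatically (since any $\alpha+e_p+e_q$ then lies outside $J$), so the tropical relation holds trivially and the restriction is harmless.
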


The set $\upR_J(F)$ is also invariant under rescaling by elements of $(F^\times)^{n+1}$, see \cite{BHKL0}*{Lemma~6.1}. In more detail, given an $F$-representation $\rho\colon\Delta^d_n\to F$ of $J$ and $t=(t_0,\dotsc,t_n)\in (F^\times)^{n+1}$, we define
\begin{equation}\label{eq:rescalingaction}
 (t.\rho)(\alpha) \ = \ \Big(t_0\cdot\prod_{i=1}^n \ t_{i}^{\alpha_i} \Big) \cdot \rho(\alpha).
\end{equation}
The same holds true for weak representations.

\begin{df}[Realization spaces]
    The \emph{realization space of $J$ over $F$} is defined as the quotient set $\ulineGr_J(F)=\upR_J(F)/(F^\times)^{n+1}$. Similarly, we define $\ulineGr^{\rm w}_J(F)=\upR^{\rm w}_J(F)/(F^\times)^{n+1}$.
\end{df}

\begin{rem}
 If the abelian group $F^\times$ is divisible, then every orbit under the action of $(F^\times)^{n+1}$ defined in \Cref{eq:rescalingaction} is also an orbit of $(F^\times)^{n}$ under the action
 \begin{equation*}\label{eq:rescalingaction2}
 (t.\rho)(\alpha) \ = \ \Big(\prod_{i=1}^n \ t_{i}^{\alpha_i} \Big) \cdot \rho(\alpha)
\end{equation*}
for $t=(t_1,\dotsc,t_n)\in (F^\times)^{n}$.
\end{rem}
The representation spaces $\upR_J(\T_q)$ and $\upR_J^{\rm w}(\T_q)$ can naturally be considered as subsets of $\R_{>0}^J$, and as such they inherit the Euclidean topology of $\R_{>0}^J$. We equip the (weak) Schubert cells as well as the (weak) realization spaces with the quotient topology. The goal of this section is to prove that these spaces are topological manifolds with boundary by employing the results from \Cref{sec:starshaped}.

Let $J\subseteq\Delta_n^d$ be an M-convex set. The coordinate-wise logarithm map
\begin{equation*}
    \log\colon\R_{>0}^J\to\R^J
\end{equation*}
is a homeomorphism. By the description in part \eqref{nullset2} of \Cref{lemma:zeroinfdesc}, it maps the set $\upR_J(\T_\infty)=\upR_J^{\rm w}(\T_\infty)$ (see \Cref{rem: excellent tracts} for this equality) to a linear subspace $V_J$ of $\R^J$.

\begin{prop}\label{prop:repstar}
    Let $0<q<\infty$, and let $T\subseteq V_J$ be either $\log(\upR_J(\T_q))$ or $\log(\upR_J^{\rm w}(\T_q))$. Then $(0,T,V_J)$ is strongly star-shaped.
\end{prop}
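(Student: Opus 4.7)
The plan is to transport the claim along the coordinatewise logarithm $\log\colon\R_{>0}^J\to\R^J$: a point $tx\in V_J$ for $x=\log\rho\in T$ corresponds to the pointwise $t$-th power $\rho^t$, and the analytic engine is \Cref{lemma:subadditive}, which makes raising to a power $t<1$ strictly subadditive on polygon inequalities.

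The first step is to check $0\in T$, which amounts to showing that the indicator $\mathbf{1}_J$ is a $\T_q$-representation. By \Cref{lemma:mconvexist0} and \Cref{rem: excellent tracts}, this is equivalent to the zero function on $J$ being M-convex, which is immediate from the M-convexity of $J$; the inclusion $N_{\T_0}\subseteq N_{\T_q}$ provided by \Cref{cor: inclusion of triangular hyperfields} then gives $\mathbf{1}_J\in\upR_J(\T_q)\subseteq\upR^{\rm w}_J(\T_q)$. Closedness of $T$ inside $V_J$ follows from the closedness of the polygon inequalities cutting out $T$.

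For the second step, fix $x=\log\rho\in T$ and $t\in[0,1]$. Each Plücker monomial is a product of two coordinates of $\rho$, so $c_k(\rho^t)=c_k(\rho)^t$. By \Cref{rem:scaling}, the condition $\sum_k c_k(\rho)\in N_{\T_q}$ is equivalent to the polygon inequalities $c_j(\rho)^{1/q}\leq\sum_{k\neq j}c_k(\rho)^{1/q}$; taking $t$-th powers and invoking \Cref{lemma:subadditive} yields
\[
c_j(\rho^t)^{1/q}\;\leq\;\Bigl(\sum_{k\neq j}c_k(\rho)^{1/q}\Bigr)^t\;\leq\;\sum_{k\neq j}c_k(\rho^t)^{1/q},
\]
so $\rho^t$ satisfies the same Plücker relation. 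The argument is identical for weak representations, using only the $3$-term Plücker relations, and shows $tx\in T$.

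To finish, I must show that $tx$ lies in the interior of $T$ in $V_J$ for $t\in[0,1)$. Perturbations inside $V_J$ correspond under $\exp$ to coordinatewise positive perturbations of $\rho^t$, so the set of nonzero terms in each Plücker relation is locally constant. If a Plücker relation has exactly two nonzero terms, it reduces to the linear equation $c_1=c_2$, which already defines $V_J$ as the $\T_\infty$-constraint by \Cref{lemma:zeroinfdesc}\eqref{nullset2}, and therefore holds identically under any perturbation in $V_J$. If it has three or more nonzero terms, the strict form of \Cref{lemma:subadditive} (which requires $t<1$) makes the polygon inequality $c_j(\rho^t)^{1/q}<\sum_{k\neq j}c_k(\rho^t)^{1/q}$ strict for every $j$, and this strictness is preserved under small perturbation. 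The main obstacle is this interior step, which rests on the trichotomy of Plücker relations by number of nonzero terms $m$ — the case $m=1$ is impossible since $\rho^t$ is itself in $T$, $m=2$ is absorbed into the very definition of $V_J$, and $m\geq 3$ is made strict by \Cref{lemma:subadditive}.
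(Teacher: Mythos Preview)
Your proof is correct and follows essentially the same route as the paper's: both arguments reduce to the trichotomy on the number of nonzero Pl\"ucker terms (the $m\leq 2$ cases being absorbed into the definition of $V_J$ via \Cref{lemma:zeroinfdesc}, and the $m\geq 3$ case made strict by \Cref{lemma:subadditive}), and both conclude openness from the resulting strict polygon inequalities. Your explicit verification that $0\in T$ and your intermediate step showing $tx\in T$ for all $t\in[0,1]$ are slightly more detailed than the paper, but the substance is the same.
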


\begin{proof}
    We first prove the case of strong representations.
    It is clear from the definitions that $\upR_J(\T_q)\subseteq\R^J_{>0}$, and hence $\log(\upR_J(\T_q))$ is closed. Let $\rho\in\upR_J(\T_q)$ and $0\leq t<1$. We have to show that 
    \begin{equation*}
    \rho_t\colon J\longrightarrow \R_{>0},\qquad \alpha\longmapsto\rho(\alpha)^t
    \end{equation*}
    is in the interior of $\upR_J(\T_q)$ relative to $\upR_J(\T_\infty)$. Let $s\leq r$, $\delta_J^-\leq\alpha\in\Delta^{r-s}_n$, $i_0,\ldots,i_s\in[n]$, and $j_2,\ldots,j_s\in[n]$. If in the corresponding Pl\"ucker relation at most two terms are nonzero, it is satisfied, as a relation over $\T_q$, by all elements of $\upR_J(\T_\infty)$ by \Cref{lemma:zeroinfdesc}. 
    
    Assume otherwise. We have, for all $0\leq l\leq s$,
    \begin{align*}
        &\rho_t(\alpha +e_{i_0}+\dotsb \widehat{e_{i_l}}\dotsb+e_{i_s})^{1/q} \cdot \rho_t(\alpha +e_{i_l}+e_{j_2}+\dotsb +e_{j_s})^{1/q}\\
        =&\rho(\alpha +e_{i_0}+\dotsb \widehat{e_{i_l}}\dotsb+e_{i_s})^{t/q} \cdot \rho(\alpha +e_{i_l}+e_{j_2}+\dotsb +e_{j_s})^{t/q}\\\leq & \left(\sum_{k=0,k\neq l}^s  \rho(\alpha +e_{i_0}+\dotsb \widehat{e_{i_k}}\dotsb+e_{i_s}) \cdot \rho(\alpha +e_{i_k}+e_{j_2}+\dotsb +e_{j_s})\right)^{t/q}\\
        <&\sum_{k=0,k\neq l}^s  \rho(\alpha +e_{i_0}+\dotsb \widehat{e_{i_k}}\dotsb+e_{i_s})^{t/q} \cdot \rho(\alpha +e_{i_k}+e_{j_2}+\dotsb +e_{j_s})^{t/q}\\
        =&\sum_{k=0,k\neq l}^s  \rho_t(\alpha +e_{i_0}+\dotsb \widehat{e_{i_k}}\dotsb+e_{i_s})^{1/q} \cdot \rho_t(\alpha +e_{i_k}+e_{j_2}+\dotsb +e_{j_s})^{1/q},
    \end{align*}
    where the first inequality holds because $\rho\in\upR_J(\T_q)$ and the second inequality follows from \Cref{lemma:subadditive}. Thus, every $\rho'\in\upR_J(\T_\infty)$ in the proximity of $\rho_t$ satisfies all Pl\"ucker relations over $\T_q$. This shows the claim for strong representations. The proof for weak representations is identical after restricting to the case $s=2$.
\end{proof}

The coordinate-wise logarithm map $\log\colon\R_{>0}^J\to\R^J$ induces a homeomorphism
\begin{equation*}
    \log\colon\R_{>0}^J/\R_{>0}\longrightarrow \R^J/\R\mathbf{1},
\end{equation*}
where $\mathbf{1}$ is the all-ones vector. The dimension $\tau(J)$ of $V_J/\R\mathbf{1}$ is called the \emph{Tutte rank}. By \cite{BHKL0}*{Section~10.2} this is the rank of the multiplicative group of the universal tract of $J$.
The image of $\Gr_J(\T_0)$ under this map is called the \emph{(local) Dressian} $\Dr_J$ of $J$. The orbit of the constant-one map $J\to\R_{>0}$ under the action from \Cref{eq:rescalingaction} is mapped by the coordinate-wise logarithm map $\log\colon\R_{>0}^J\to\R^J$ to a linear subspace $W_J$ of $V_J$, which is called the \emph{lineality space of $\Dr_J$}. We obtain an induced map
\begin{equation*}
    \log\colon\R_{>0}^J/\R^n_{>0}\longrightarrow\R^J/W_J
\end{equation*}
which satisfies $\log(\ulineGr_J(\T_q))=\log(\upR_J(\T_q)) / W_J$ for all $0\leq q\leq\infty$. We write $\ulineDr_J=\log(\ulineGr_J(\T_0))$, and call this set the \emph{reduced Dressian} of $J$.

\begin{thm}\label{thm:ATq}
    Let $\upB\subseteq V_J/\R\mathbf{1}$ be the unit ball with respect to some norm and let $X=\upB\smallsetminus(\Dr_J\cap\partial \upB)$. For $0<q<\infty$, the spaces $\Gr_J(\T_q)$ and $\Gr^{\rm w}_J(\T_q)$ are both homeomorphic to $X$. In particular, these spaces are topological manifolds with boundary which can be compactified to a closed ball  of dimension  $\tau(J)$.
\end{thm}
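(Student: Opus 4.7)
The plan is to transport the problem to the logarithmic linear model and apply the star-shaped machinery of \Cref{sec:starshaped}. The coordinate-wise logarithm is a homeomorphism $\log\colon\R^J_{>0}\to\R^J$ which carries $\upR_J(\T_q)$ and $\upR_J^{\rm w}(\T_q)$ into $V_J$ and, passing to the quotient by the diagonal $\R_{>0}$-action (which becomes translation by $\R\mathbf{1}$), induces homeomorphisms of $\Gr_J(\T_q)$ and $\Gr_J^{\rm w}(\T_q)$ onto $\log(\upR_J(\T_q))/\R\mathbf{1}$ and $\log(\upR_J^{\rm w}(\T_q))/\R\mathbf{1}$, respectively, viewed as subsets of $V_J/\R\mathbf{1}$. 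By \Cref{prop:repstar}, the triples $(0,\log(\upR_J(\T_q)),V_J)$ and $(0,\log(\upR_J^{\rm w}(\T_q)),V_J)$ are strongly star-shaped, and because the rescaling by $\R_{>0}$ preserves both (strong and weak) representation sets, \Cref{lemma: starshapedlineality2}(1) upgrades these to strongly star-shaped triples in $V_J/\R\mathbf{1}$.

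Apply \Cref{cor: ballminusdirections} to each quotient: we obtain a homeomorphism with $\upB\smallsetminus S$, where $S$ is the corresponding set of unbounded directions on $\partial\upB$. The heart of the proof is to identify $S$ with $\Dr_J\cap\partial\upB$ in both cases. By \Cref{lemma: starshapedlineality2}(2), a class $\pi(v)\in\partial\upB$ lies in $S$ iff $tv\in\log(\upR_J(\T_q))$ (resp.\ $\log(\upR_J^{\rm w}(\T_q))$) for all $t\geq 0$; writing $\rho=e^v$, this translates to $\rho^t$ being a strong (resp.\ weak) $\T_q$-representation for all $t>0$. By \Cref{rem:scaling}, $\rho^t\in\upR_J(\T_q)$ iff $\rho\in\upR_J(\T_{q/t})$, and similarly for the weak version. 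Letting $t$ range over $(0,\infty)$ and combining with the nested inclusions of \Cref{cor: inclusion of triangular hyperfields} and the definition of $\T_0$ as the intersection of all $\T_{q'}$ with $q'>0$, the condition becomes $\rho\in\upR_J(\T_0)$ (resp.\ $\rho\in\upR_J^{\rm w}(\T_0)$). Since $\upR_J^{\rm w}(\T_0)=\upR_J(\T_0)$ by \Cref{rem: excellent tracts}, both cases yield $S=\Dr_J\cap\partial\upB$, so $\Gr_J(\T_q)\cong\Gr_J^{\rm w}(\T_q)\cong\upB\smallsetminus(\Dr_J\cap\partial\upB)=X$.

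The ``manifold with boundary'' assertion is then immediate from \Cref{cor: manifoldwithboundary}, and the dimension equals $\dim(V_J/\R\mathbf{1})=\tau(J)$ by the very definition of the Tutte rank. The compactification is obtained tautologically by filling back in the deleted directions $\Dr_J\cap\partial\upB$ to recover $\upB$ itself, a closed Euclidean ball of dimension $\tau(J)$.

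The main technical hinge is the dilation-via-rescaling identity of \Cref{rem:scaling}, which converts the geometric ``all positive scalar multiples stay in'' condition into the algebraic ``is a $\T_{q'}$-representation for all $q'>0$'' condition, and so into membership in the Dressian. Once this translation is made, all remaining work is bookkeeping: verifying that the star-shaped framework applies, descending through the lineality $\R\mathbf{1}$, and invoking \Cref{rem: excellent tracts} to equate the weak and strong Dressians.
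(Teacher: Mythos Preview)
Your proof is correct and follows essentially the same route as the paper's: apply \Cref{prop:repstar}, descend through the lineality $\R\mathbf{1}$ via \Cref{lemma: starshapedlineality2}, and invoke \Cref{cor: ballminusdirections}. Your identification of the set of directions with $\Dr_J\cap\partial\upB$ via the rescaling identity $\rho^t\in\upR_J(\T_q)\iff\rho\in\upR_J(\T_{q/t})$ is a more explicit unpacking of what the paper compresses into ``it follows from the definition of $\T_0$'', and your appeal to \Cref{rem: excellent tracts} to handle the weak case is exactly the missing ingredient the paper leaves implicit.
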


\begin{proof}
    By \Cref{prop:repstar} and part (1) of \Cref{lemma: starshapedlineality2},  both $(0,\log(\Gr_J(\T_q)),V_J/\R\mathbf{1})$ and $(0,\log(\Gr^{\rm w}_J(\T_q)),V_J/\R\mathbf{1})$ are strongly star-shaped. It follows from the definition of $\T_0$ and part (2) of \Cref{lemma: starshapedlineality2} that $\Dr_J\cap\partial \upB$ is equal to $S(0,\log(\Gr_J(\T_q)))$ (resp. $S(0,\log(\Gr^{\rm w}_J(\T_q)))$). Thus the claim follows from \Cref{cor: ballminusdirections}.
\end{proof}

We have a similar result for the spaces $\ulineGr_J(\T_q)$ and $\ulineGr^{\rm w}_J(\T_q)$. The proof is the same as for \Cref{thm:ATq}.

\begin{thm}\label{thm:ATqreduced}
    Let $\upB\subseteq V_J/W_J$ be the unit ball with respect to some norm and let $X=\upB\smallsetminus(\ulineDr_J\cap\partial \upB)$. For $0<q<\infty$, the spaces $\ulineGr_J(\T_q)$ and $\ulineGr^{\rm w}_J(\T_q)$ are both homeomorphic to $X$. In particular, these spaces are topological manifolds with boundary which can be compactified to a closed Euclidean ball.
\end{thm}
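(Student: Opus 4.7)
The plan is to mimic the proof of \Cref{thm:ATq} essentially verbatim, replacing $\R\mathbf{1}$ by the larger lineality subspace $W_J \subseteq V_J$. The only new input needed beyond what was used for \Cref{thm:ATq} is that $W_J + \log(\upR_J(\T_q)) = \log(\upR_J(\T_q))$ and similarly for $\log(\upR_J^{\rm w}(\T_q))$. This is exactly the statement, in logarithmic coordinates, that the $(\T_q^\times)^n = \R_{>0}^n$-rescaling action of \Cref{eq:rescalingaction} preserves both $\upR_J(\T_q)$ and $\upR_J^{\rm w}(\T_q)$, which holds by \cite{BHKL0}*{Section 6.1} (and is immediate from the homogeneity of the Plücker and $3$-term Plücker relations). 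Given this, the projection $\pi\colon V_J \to V_J/W_J$ sends $\log(\upR_J(\T_q))$ onto $\log(\ulineGr_J(\T_q))$, and the quotient topology on $\ulineGr_J(\T_q) = \upR_J(\T_q)/\R_{>0}^n$ coincides with the subspace topology inherited from $V_J/W_J$ since $\log$ is a homeomorphism equivariant for the two actions.

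Having verified this, I would combine \Cref{prop:repstar} with part (1) of \Cref{lemma: starshapedlineality2} to conclude that both $(0, \log(\ulineGr_J(\T_q)), V_J/W_J)$ and its weak analogue are strongly star-shaped. To identify the set of directions $S(0, \cdot)$ with $\ulineDr_J \cap \partial \upB$, I apply part (2) of \Cref{lemma: starshapedlineality2}: the directions are precisely the projections $\pi(x)$ of unit vectors $x \in V_J$ for which the entire ray $\R_{\geq 0} \cdot x$ lies in $\log(\upR_J(\T_q))$. By \Cref{rem:scaling} combined with the definition $N_{\T_0} = \bigcap_{q > 0} N_{\T_q}$, such rays correspond bijectively to $\T_0$-representations of $J$; hence their images in $V_J/W_J$ fill out exactly $\ulineDr_J \cap \partial \upB$. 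The same argument works verbatim for weak representations, since $\upR_J(\T_0) = \upR_J^{\rm w}(\T_0)$ by \Cref{rem: excellent tracts}.

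\Cref{cor: ballminusdirections} then yields the homeomorphisms $\ulineGr_J(\T_q) \cong \ulineGr^{\rm w}_J(\T_q) \cong \upB \smallsetminus (\ulineDr_J \cap \partial \upB) = X$, and the ``in particular'' assertions follow from \Cref{cor: manifoldwithboundary} together with the fact that $\upB$ is itself a closed Euclidean ball into which $X$ embeds densely. I do not anticipate any real obstacle here — the argument is entirely parallel to that of \Cref{thm:ATq}. The only point demanding care is checking that the ray-structure identification survives the quotient by $W_J$ (rather than merely $\R\mathbf{1}$), but this follows automatically from the characterization in part (2) of \Cref{lemma: starshapedlineality2}, whose statement was already crafted to handle exactly this situation.
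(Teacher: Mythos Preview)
Your proposal is correct and follows essentially the same approach as the paper, which simply states that the proof is identical to that of \Cref{thm:ATq} with $W_J$ in place of $\R\mathbf{1}$. You have in fact spelled out more detail than the paper does, correctly identifying the one additional ingredient (invariance of $\log(\upR_J(\T_q))$ under $W_J$, coming from the rescaling action) and the role of \Cref{lemma: starshapedlineality2} in passing to the quotient.
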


\subsection{Foundations}\label{rem:foundation}
 Let $J\subseteq\Delta^d_n$ be an M-convex set. There exists a tract $F_J$, called the \emph{foundation} of $J$, such that for every tract $F$ there is a bijection
 \[
   \Hom(F_J,  F) \ \stackrel\sim\longrightarrow \ \ulineGr^{\rm w}_J(F)
 \]
   which is functorial in $F$; cf.~\cite{BHKL0}*{Section~6}. We call the image of the identity map $F_J\to F_J$ in $\ulineGr^{\rm w}_J(F_J)$ under this bijection the \emph{universal weak rescaling class} of $J$.
   \begin{ex}\label{ex:finitefoundation}
    The foundation $F_J$ of $J$ is \emph{finite} (in the sense that $F_J^\times$ is finite) if and only if $\ulineGr_J(\T_\infty)$ is a singleton. Indeed, the space $\ulineGr_J(\T_\infty)=\ulineGr^{\rm w}_J(\T_\infty)$ corresponds bijectively to the set of tract homomorphisms $F_J\to\T_\infty$, which by \Cref{rem:tinftyreps} corresponds bijectively to the set of group homomorphisms $F_J^\times\to\R_{>0}$. Since $F_J^\times$ is a finitely generated abelian group, there is a non-trivial group homomorphism $F_J^\times\to\R_{>0}$ if and only if $|F_J^\times|=\infty$. 
\end{ex}  

   The theory of foundations, as developed in \cites{Baker-Lorscheid20,Baker-Lorscheid-Zhang24}, gives us an explicit way of calculating $\ulineGr^w_J(\T_q)$, and hence (by \Cref{thm:ATqreduced}) $\ulineGr_J(\T_q)$.
   
 For $0< q<\infty$ and every tract $F$, we define on $\Hom(F,\T_q)$ the  compact-open topology, where we consider $F$ with the discrete topology and $\T_q=\R_{\geq0}$ with the Euclidean topology. We recall from  \cite{BHKL0}*{Section~12} the following facts:
 \begin{prop}
    Let $J$ be an M-convex set and let $F_J$ be its foundation. Then the bijection $\Hom(F_J,  \T_q) \ \stackrel\sim\longrightarrow \ \ulineGr^{\rm w}_J(\T_q)$ is a homeomorphism.
 \end{prop}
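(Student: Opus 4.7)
My plan is to prove continuity in each direction of the bijection $\Phi\colon \Hom(F_J,\T_q) \to \ulineGr^{\rm w}_J(\T_q)$ separately. Fix a representative $\rho^{\mathrm{univ}}\colon J \to F_J$ of the universal weak rescaling class, so that $\Phi$ sends $f$ to the rescaling class of $f\circ\rho^{\mathrm{univ}}$.

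For continuity of $\Phi$, note that since $F_J$ carries the discrete topology, compact subsets of $F_J$ are precisely the finite subsets, so the compact-open topology on $\Hom(F_J,\T_q)$ coincides with the topology of pointwise convergence. Because $J$ is finite, the evaluation map
\[
\Hom(F_J,\T_q) \ \longrightarrow \ \upR_J^{\rm w}(\T_q) \ \subseteq \ \R_{\geq 0}^J, \qquad f \longmapsto f\circ\rho^{\mathrm{univ}}
\]
is continuous. Composing with the continuous quotient $\upR_J^{\rm w}(\T_q)\to\ulineGr^{\rm w}_J(\T_q)$ yields the continuity of $\Phi$.

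For continuity of $\Phi^{-1}$, the main structural input from \cite{BHKL0} is that the Tutte group $F_J^\times$ is finitely generated by elements of the form $g = \prod_{\alpha\in J}\rho^{\mathrm{univ}}(\alpha)^{n_\alpha}$ whose exponent vector $(n_\alpha)$ satisfies $\sum_\alpha n_\alpha = 0$ and $\sum_\alpha n_\alpha\cdot\alpha = 0$ in $\Z^n$. For any weak representation $\rho\colon J\to\T_q$, the real number $\phi_g(\rho) = \prod_\alpha \rho(\alpha)^{n_\alpha}$ is well-defined and positive, depends continuously on $\rho$, and by virtue of the two exponent conditions is invariant under both the diagonal $\R_{>0}$-scaling and the $(\T_q^\times)^n$-rescaling action, and hence descends to a continuous function on $\ulineGr^{\rm w}_J(\T_q)$. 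The map $\Phi^{-1}$ sends $[\rho]$ to the unique tract homomorphism $F_J\to\T_q$ whose value on each such $g$ is $\phi_g(\rho)$. Continuity of $\Phi^{-1}$ then follows because the compact-open topology on $\Hom(F_J,\T_q)$ is pointwise convergence, and pointwise convergence of group homomorphisms $F_J^\times\to\R_{>0}$ is detected by convergence on any finite generating set (using that multiplication and inversion in $\R_{>0}$ are continuous).

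The principal obstacle is the structural input from the companion paper: one must extract from the explicit presentation of the foundation the fact that $F_J^\times$ admits a finite generating set consisting of rescaling-invariant Laurent monomials in $\rho^{\mathrm{univ}}$, and that any assignment of $\R_{>0}$-values to these generators which is compatible with the null-set relations of $F_J$ extends uniquely to a tract homomorphism $F_J\to\T_q$. Once this is granted, both continuity arguments above are essentially formal.
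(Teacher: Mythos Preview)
The paper does not give its own proof of this proposition; it is stated as a fact recalled from \cite{BHKL0}*{Section 12}. So there is no in-paper argument to compare against directly.

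Your approach is sound and is essentially the natural one. The forward direction is exactly right: with $F_J$ discrete, the compact-open topology is pointwise convergence, and evaluating at the finitely many $\rho^{\mathrm{univ}}(\alpha)$ gives a continuous lift to $\upR_J^{\rm w}(\T_q)$. For the inverse, your reduction to checking continuity on a finite generating set of $F_J^\times$, together with the observation that the generators can be taken to be rescaling-invariant Laurent monomials (so that they descend to continuous functions on the quotient $\ulineGr^{\rm w}_J(\T_q)$), is correct. One small terminological slip: you refer to $F_J^\times$ as the ``Tutte group'', but in this paper that name is reserved for $T_J^\times$, the unit group of the universal tract; the unit group of the foundation is a different (though related) finitely generated abelian group. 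The paper does confirm elsewhere that $F_J^\times$ is finitely generated, so your argument goes through. Your honest flagging of the needed structural input from \cite{BHKL0}---namely the explicit cross-ratio presentation of $F_J$---is appropriate, since that is indeed where the substance lies.
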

 \begin{prop}\label{prop:limitscolimits}
    If a tract $F$ is the colimit of a finite diagram of tracts $F_i$, $i\in I$, then the topological space $\Hom(F,\T_q)$ is the limit of the corresponding diagram of topological spaces $\Hom(F_i,\T_q)$, $i\in I$.
 \end{prop}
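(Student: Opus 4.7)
The plan is to promote the set-level bijection coming from the universal property of the colimit to a homeomorphism, and the bulk of the work will lie in verifying that the inverse direction is continuous.

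First I would invoke the universal property of the colimit in the category of tracts to produce a canonical bijection of sets
$$\Phi\colon\Hom(F,\T_q)\longrightarrow\lim_{i\in I}\Hom(F_i,\T_q),\qquad f\longmapsto(f\circ\iota_i)_{i\in I},$$
where $\iota_i\colon F_i\to F$ are the structure maps of the colimit. Since each $F_i$ is discrete, the compact-open topology on $\Hom(F_i,\T_q)$ agrees with the topology of pointwise convergence, so $\Hom(F_i,\T_q)$ sits as a subspace of $\T_q^{F_i}$ with the product topology; the finite limit on the right-hand side then inherits the subspace topology of $\prod_{i\in I}\Hom(F_i,\T_q)\subseteq\prod_{i\in I}\T_q^{F_i}$.

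Continuity of $\Phi$ is straightforward: fix $i\in I$ and $a\in F_i$; evaluating $\Phi(f)$ at $a$ in the $i$-th coordinate gives $f(\iota_i(a))$, which is continuous in $f$ by definition of pointwise convergence. The universal property of the product then assembles these coordinate-wise continuous maps into a continuous map $\Phi$ into the limit.

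The main obstacle is continuity of $\Phi^{-1}$. By the universal property of the product topology on $\T_q^F$, it suffices to show that for every $a\in F$, the evaluation map $\ev_a\colon\lim_{i}\Hom(F_i,\T_q)\to\T_q$ sending $(f_i)_{i\in I}$ to $f(a)$ (with $f$ the tract homomorphism produced from $(f_i)$ by the universal property) is continuous. If $a=0$ then $\ev_a$ is the constant map with value $0$, and otherwise $a\in F^\times$. Here I would appeal to the explicit construction of colimits in the category of tracts given in \cite{BHKL0}: the multiplicative group $F^\times$ of the colimit is generated by the images of the $F_i^\times$ under the $\iota_i$, so there is a finite factorization $a=\prod_k\iota_{i_k}(a_{i_k})$ with $a_{i_k}\in F_{i_k}^\times$. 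Consequently $f(a)=\prod_k f_{i_k}(a_{i_k})$, which is continuous in $(f_i)$ because each individual evaluation $(f_i)\mapsto f_{i_k}(a_{i_k})$ is a composition of continuous projections and because multiplication in $\T_q=\R_{\geq0}$ is continuous. Finiteness of the diagram plays a role only to ensure that the limit on the right is a finite limit, so that the subspace topology from the finite product behaves as expected; the generating factorizations themselves are automatically finite since each element of the colimit monoid is a finite word in the generators.
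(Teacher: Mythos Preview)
The paper does not supply its own proof of this proposition; it is simply recalled from the companion paper \cite{BHKL0}*{Section~12}. Your argument is correct and is the natural one: you promote the set-level bijection coming from the universal property of the colimit to a homeomorphism by identifying the compact-open topology (with discrete source) as the topology of pointwise convergence, and then you handle continuity of $\Phi^{-1}$ by writing each $a\in F^\times$ as a finite product of images $\iota_{i_k}(a_{i_k})$ and using continuity of multiplication in $\R_{\geq0}$.

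One minor remark: your closing comment that finiteness of the diagram is needed ``so that the subspace topology from the finite product behaves as expected'' is not quite right. Limits in the category of topological spaces are always computed as subspaces of the product, finite or not, and nothing in your argument actually uses finiteness of $I$; each factorization $a=\prod_k\iota_{i_k}(a_{i_k})$ is finite simply because elements of a group colimit are finite words in the generators. So your proof in fact establishes the statement for arbitrary diagrams, and the finiteness hypothesis in the proposition is not essential to the argument you give.
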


 Recall from \cite{BHKL0}*{Proposition~2.29} that every M-convex set $J$ can be decomposed (as a Cartesian product) into indecomposable M-convex sets, and that such a decomposition is unique up to permuting the factors. In particular, the number $c(J)$ of indecomposable components of $J$ is well-defined.
 
 \begin{prop}\label{prop:product}
    Let  $J\subseteq\Delta^d_n$ be an M-convex set with $c(J)$ indecomposable components and $s=n-c(J)$ (see \cite{BHKL0}*{Section~2.5.1}). Then there is a (non-canonical) homeomorphism \[\Gr^w_J(\T_q)\simeq\ulineGr_J^w(\T_q)\times\R^{s}.\]
 \end{prop}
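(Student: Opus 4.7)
My plan is to work in logarithmic coordinates. Set $R = \log(\upR^{\rm w}_J(\T_q)) \subseteq V_J$, so that $\Gr^{\rm w}_J(\T_q)$ corresponds to $R/\R\mathbf{1}$ while $\ulineGr^{\rm w}_J(\T_q)$ corresponds to $R/W_J$, and recall $\R\mathbf{1} \subseteq W_J$ by construction. The crucial observation is that the $(\T_q^\times)^n$-action preserves $\upR^{\rm w}_J(\T_q)$, so $W_J + R = R$; that is, $R$ is a union of $W_J$-cosets in $V_J$.

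Using this invariance, I would choose any linear complement $L$ to $W_J$ inside $V_J$, giving $V_J = L \oplus W_J$. The projection $\pi_L\colon V_J \to L$ along $W_J$ is continuous and constant on $W_J$-cosets, so its restriction to $R$ induces a continuous bijection $R/W_J \to R \cap L$ whose inverse $R \cap L \hookrightarrow R \twoheadrightarrow R/W_J$ is also continuous. Combined with the addition map $(l,w) \mapsto l+w$, this yields a homeomorphism $R \cong (R \cap L) \times W_J$, under which the first factor is identified with $\log \ulineGr^{\rm w}_J(\T_q)$. Since $\R\mathbf{1} \subseteq W_J$ acts only on the second factor, quotienting by $\R\mathbf{1}$ gives
\[
\Gr^{\rm w}_J(\T_q) \;\cong\; \ulineGr^{\rm w}_J(\T_q) \times (W_J/\R\mathbf{1}),
\]
and $W_J/\R\mathbf{1}$ is a finite-dimensional real vector space, hence homeomorphic to $\R^m$ for $m = \dim W_J - 1$.

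It remains to identify $m$ with $s = n - c(J)$. By definition, $W_J$ is the image of the linear map $\phi\colon\R^n\to\R^J$ given by $u\mapsto(\langle u,\alpha\rangle)_{\alpha\in J}$, whose rank equals the dimension of the linear span of $J$ inside $\R^n$. Decomposing $J = J_1\times\cdots\times J_{c(J)}$ via \cite{BHKL0}*{Proposition 2.29} and invoking the standard fact that the base polytope of an indecomposable polymatroid on $n_k$ variables has affine dimension $n_k - 1$, the affine span of $J$ has dimension $\sum_k(n_k-1) = n - c(J)$. Since the affine hyperplane $\{x \in \R^n : \sum_i x_i = d\}$ containing $J$ does not pass through the origin, the linear span has one greater dimension, so $\dim W_J = n-c(J)+1$ and hence $\dim W_J/\R\mathbf{1} = s$. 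The main technical input is the affine-dimension formula for indecomposable polymatroid base polytopes; once this is granted, the geometric splitting argument above is essentially formal.
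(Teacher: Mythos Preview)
The paper does not prove this proposition here; it is recalled as a fact from the companion paper \cite{BHKL0}*{Section 12}. Your argument is a correct, self-contained proof: the $W_J$-invariance of $R=\log\upR^{\rm w}_J(\T_q)$ lets you split $R\cong(R\cap L)\times W_J$ for any linear complement $L$ of $W_J$ in $V_J$, and since $\R\mathbf{1}\subseteq W_J$ the quotient by $\R\mathbf{1}$ affects only the second factor, yielding $\Gr^{\rm w}_J(\T_q)\cong\ulineGr^{\rm w}_J(\T_q)\times(W_J/\R\mathbf{1})$.

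Your dimension count is also correct, with one remark: the ``standard fact'' that an indecomposable polymatroid on $m$ elements has base polytope of affine dimension $m-1$ is precisely what the paper defers to \cite{BHKL0}*{Section 2.5.1}, so you are using the same external input as the paper. Once that is granted, the linear span of $J$ indeed has dimension $n-c(J)+1$ (assuming $d>0$, so that the affine hyperplane $\sum_i x_i=d$ misses the origin; the case $d=0$ is trivial), and $\dim(W_J/\R\mathbf{1})=s$ follows.
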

 We draw some consequences for the spaces $\ulineGr^{\rm w}_{J}(\T_q)$ from this. Note that by  \Cref{thm:ATq}, \Cref{thm:ATqreduced}, and \Cref{prop:product}, we immediately get corresponding statements for the spaces $\ulineGr_{J}(\T_q)$, $\Gr^w_{J}(\T_q)$, and $\Gr_{J}(\T_q)$:
 
 \begin{cor}\label{cor:combinatorialequivalent}
    If two M-convex sets $J_1$ and $J_2$ have the same foundation, then $\ulineGr^{\rm w}_{J_1}(\T_q)$ and $\ulineGr^{\rm w}_{J_2}(\T_q)$ are homeomorphic. In particular, this happens in the following cases:
    \begin{enumerate}[(1)]\itemsep 5pt
    \item $J_1$ and $J_2$ are combinatorially equivalent in the sense of \cite{BHKL0}*{Definition~2.23}, e.g., if $J_1$ and $J_2$ are matroids that are dual to each other.
    \item $J_1$ is a matroid and $J_2$ is obtained from $J_1$ by a segment-cosegment exchange in the sense of \cite{Baker-Lorscheid-Walsh-Zhang}.
    \end{enumerate}
 \end{cor}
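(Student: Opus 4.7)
The plan is to deduce the corollary as a direct consequence of the representability of the weak realization space by the foundation, combined with the fact that this representability is a homeomorphism (not just a bijection) when the target is $\T_q$.

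First I would observe the main implication. By the proposition preceding the corollary, for any M-convex set $J$ we have a homeomorphism
\[
\Hom(F_J,\T_q) \ \stackrel\sim\longrightarrow \ \ulineGr^{\rm w}_J(\T_q),
\]
where the left-hand side carries the compact-open topology (with $F_J$ discrete and $\T_q$ Euclidean). The compact-open topology on $\Hom(F_J,\T_q)$ depends only on the tract $F_J$ and the topological tract $\T_q$, so an isomorphism of tracts $F_{J_1} \simeq F_{J_2}$ induces a homeomorphism of Hom-spaces. Chaining these gives a homeomorphism $\ulineGr^{\rm w}_{J_1}(\T_q) \simeq \ulineGr^{\rm w}_{J_2}(\T_q)$, which proves the main assertion.

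For the two particular cases, the task reduces to citing that the foundation is invariant under the indicated operations. For case (1), the definition of combinatorial equivalence in \cite{BHKL0}*{Definition 2.23} is made precisely so that combinatorially equivalent M-convex sets share the same foundation; matroid duality is a special instance since the duality bijection between bases and their complements is a combinatorial equivalence. For case (2), I would invoke the relevant structural result from \cite{Baker-Lorscheid-Walsh-Zhang}, where it is shown that a segment-cosegment exchange leaves the foundation of a matroid unchanged. In both situations, $F_{J_1} \simeq F_{J_2}$, so the main assertion applies.

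There is essentially no obstacle here beyond the bookkeeping of citing the correct statements from the companion paper \cite{BHKL0} and from \cite{Baker-Lorscheid-Walsh-Zhang}. The only conceptual subtlety is that the proposition preceding the corollary upgrades the functorial bijection of \Cref{rem:foundation} to a homeomorphism for $F = \T_q$; it is this upgrade, rather than mere set-theoretic representability, that makes the argument work in the topological category.
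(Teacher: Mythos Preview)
Your proposal is correct and follows essentially the same approach as the paper: the main assertion is immediate from the preceding proposition identifying $\ulineGr^{\rm w}_J(\T_q)$ with $\Hom(F_J,\T_q)$ as topological spaces, and the two particular cases are handled by citing that the foundation is preserved under combinatorial equivalence (\cite{BHKL0}*{Corollary 7.5}) and under segment-cosegment exchange (\cite{Baker-Lorscheid-Walsh-Zhang}*{Theorem D}). The paper's proof is just a terser version of yours, giving only the citations for the two cases since the main implication is regarded as evident from the preceding proposition.
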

 
 \begin{proof}
    In \cite{BHKL0}*{Corollary~7.5} and \cite{Baker-Lorscheid-Walsh-Zhang}*{Theorem D}, it was shown that the foundations agree in these cases.
 \end{proof}
 
 \begin{thm}\label{cor:ternary}
    Let $M$ be a matroid that does not have a $U_{2,5}$ or $U_{3,5}$ minor. (This happens, for example, whenever $M$ is a ternary matroid.)
    Then $\ulineGr^{\rm w}_{M}(\T_q)$ is homeomorphic to the 
product of finitely many half-open intervals and discs with three points removed from the boundary. 
 \end{thm}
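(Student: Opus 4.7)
The plan is to identify $\ulineGr^{\rm w}_M(\T_q)$ with a topological Hom space, decompose the foundation $F_M$ into simple building blocks via the classification theorems cited in \Cref{rem:foundation}, and then evaluate each building block separately.

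First I would invoke the proposition preceding \Cref{prop:limitscolimits}, which gives a homeomorphism $\ulineGr^{\rm w}_M(\T_q) \cong \Hom(F_M,\T_q)$ where the right-hand side is equipped with the compact-open topology. Second, I would apply the classification of foundations of matroids with no $U_{2,5}$ or $U_{3,5}$ minor developed in \cite{Baker-Lorscheid-Walsh-Zhang} and recalled in the companion paper \cite{BHKL0}: namely, for such $M$, the foundation $F_M$ is a finite coproduct (in the category of tracts) of elementary pieces, each of which is isomorphic either to the foundation $F_{U_{2,4}}$ or to one of the rank-one foundations appearing in the list of building blocks (whose multiplicative groups are infinite cyclic modulo torsion, with a single defining hyperfield relation).

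Third, I would apply \Cref{prop:limitscolimits} to turn this coproduct decomposition of $F_M$ into a product decomposition on the Hom side: writing $F_M = F_1 \sqcup \cdots \sqcup F_k$, we obtain
\[
\Hom(F_M,\T_q) \ \cong \ \prod_{i=1}^{k} \Hom(F_i,\T_q).
\]
It then remains to determine the two possible factor types. For $F_i \cong F_{U_{2,4}}$, we have $\Hom(F_i,\T_q) = \ulineGr^{\rm w}_{U_{2,4}}(\T_q)$, and the computation carried out for \Cref{fig: Dressian and log-Lorentzian for U24} (combined with \Cref{thm:ATqreduced}) identifies this space as a closed disc with the three boundary points of $\ulineDr_{U_{2,4}}$ removed. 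For each rank-one building block $F_i$ with a single three-term defining relation, the data of a tract homomorphism $F_i \to \T_q$ is a single element of $\R_{>0}$ (the image of a generator of $F_i^\times$) subject to the triangle-type inequality in $\T_q$ coming from the defining relation; by the explicit description of $N_{\T_q}$ in \Cref{lemma: characterization of higher null sums in T1} and \Cref{rem:scaling}, this cuts out a half-open interval.

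The main obstacle is step two, the appeal to the classification of foundations for matroids excluding $U_{2,5}$ and $U_{3,5}$ minors, together with the explicit enumeration of the rank-one building blocks appearing there; both inputs rest on nontrivial matroid-theoretic arguments from \cite{Baker-Lorscheid-Walsh-Zhang} and \cite{BHKL0}. Once those are granted, steps one, three, and four are direct applications of results already established in \Cref{sec:triangbasics} and in this section (in particular \Cref{thm:ATqreduced}), and the final homeomorphism follows by assembling the factors.
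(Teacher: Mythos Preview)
Your approach is essentially the paper's: identify $\ulineGr^{\rm w}_M(\T_q)$ with $\Hom(F_M,\T_q)$, decompose $F_M$ as a finite coproduct via \cite{Baker-Lorscheid20}*{Theorem B}, apply \Cref{prop:limitscolimits} to turn this into a product of Hom spaces, and evaluate each factor. One correction to your enumeration: the full list of building blocks is $\{\F_2,\F_3,\D,\H,\U\}$, and three of these ($\F_2,\F_3,\H$) have \emph{finite} unit groups and hence contribute singletons by \Cref{ex:finitefoundation}, not half-open intervals; only $\D$ (with $\Z[\tfrac12]^\times$ of free rank one and the single relation $2-1-1$) yields a half-open interval, so the dichotomy should read ``$\U$, or $\D$, or finite foundation'' rather than ``$\U$ or rank-one''.
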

 
\begin{proof}
    We consider the five tracts $\mathbb{F}_2,\mathbb{F}_3,\mathbb{D},\mathbb{H},\mathbb{U}$ defined as follows. Each of these tracts is a \emph{partial field}. More precisely, the unit group is the unit group of a certain ring $R$ and the null set is the ideal generated by all formal sums of three elements $a_1,a_2,a_3\in R^\times$ that sum to zero in $R$. The rings in question are $\F_2, \F_3, \Z[\frac{1}{2}], \Z[\zeta_6]$, and $\Z[x,\frac{1}{x},\frac{1}{1-x}]$, where $\zeta_6\in\C$ is a primitive root of unity.
    It was shown in \cite{Baker-Lorscheid20}*{Theorem B} that the foundation of $M$ is the coproduct of finitely many tracts $F_1,\ldots,F_r$ from $\{\mathbb{F}_2,\mathbb{F}_3,\mathbb{D},\mathbb{H},\mathbb{U}\}$. For each of these five tracts, we determine the topological type of $\Hom(F,\T_q)$. 
    
    The unit group of each of the rings $\F_2, \F_3$, and $\Z[\zeta_6]$ is finite. Thus, by \Cref{ex:finitefoundation}, the space $\Hom(F,\T_q)$ is a singleton for $F\in\{\F_2,\F_3,\H\}$. The tract $\U$ is the foundation of the matroid $U_{2,4}$ \cite{Baker-Lorscheid-Zhang24}*{Section 3.2}, which by \Cref{thm:ATqreduced} shows that $\Hom(\U,\T_q)=\ulineGr_{U_{2,4}}(\T_q)$ is a disc with three points removed from the boundary. Finally, since $\Z[\frac{1}{2}]^\times$ is generated by $2$ and $-1$, and since $\R_{>0}$ is torsion-free, for every $x>0$ there is a unique group homomorphism $\varphi_x\colon\Z[\frac{1}{2}]^\times\to\R_{>0}$  with $\varphi_x(2)=x$. The null set of $\D$ is the ideal generated by $2+(-1)+(-1)$. Hence $\varphi_x\in\Hom(\D,\T_q)$ if and only if $x^{1/q}\leq2$. This identifies $\Hom(\D,\T_q)$ with the half-open interval $(0,2^q]$. The claim now follows from \Cref{prop:limitscolimits}.
 \end{proof}
 
  \begin{thm}\label{cor:directlimit}
    Let $M$ be a matroid. Then $\ulineGr^{\rm w}_{M}(\T_q)$ is homeomorphic to the inverse
limit of a finite  diagram of topological spaces, each of which is homeomorphic to a disc with three points removed from the boundary or a five-dimensional ball with a copy of the Petersen graph (\Cref{fig: Peterson graph}) removed from the boundary.
 \end{thm}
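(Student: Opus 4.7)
The plan is to reduce the computation of $\ulineGr^{\rm w}_M(\T_q) \cong \Hom(F_M,\T_q)$ to a small finite collection of ``building block'' matroids by invoking a structural description of the foundation $F_M$, and then to convert the resulting colimit presentation into the desired inverse limit of topological spaces via \Cref{prop:limitscolimits}.

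First, I would invoke the general structure theorem for foundations of matroids from the companion paper \cite{BHKL0}, extending \cite{Baker-Lorscheid20}*{Theorem B} beyond the ternary case, which expresses $F_M$ as a finite colimit, in the category of tracts, of foundations of ``local'' matroid pieces. The list of such pieces is the ternary list $\{\F_2,\F_3,\D,\H,\U\}$ from the proof of \Cref{cor:ternary}, augmented by the foundation of $U_{2,5}$ (equivalently $U_{3,5}$, whose foundations coincide by \Cref{cor:combinatorialequivalent}(1)). By \Cref{prop:limitscolimits}, applying $\Hom(-,\T_q)$ then converts this colimit of tracts into a finite inverse limit of topological spaces indexed by the dual diagram.

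Next, I would identify the topological type of each factor appearing in the limit. The rigid factors $\F_2,\F_3,\H$ contribute singleton spaces which are absorbed trivially, and the factor $\D$ contributes a half-open interval, which is realized as a degenerate disc with three boundary points removed. For $\Hom(\U,\T_q) \cong \ulineGr_{U_{2,4}}(\T_q)$, the proof of \Cref{cor:ternary} identifies this with a disc with three points removed from the boundary. For $\Hom(F_{U_{2,5}},\T_q) \cong \ulineGr_{U_{2,5}}(\T_q)$, I would apply \Cref{thm:ATqreduced}. A direct computation shows $V_{U_{2,5}} = \R^{10}$, since every $3$-term Plücker relation on $U_{2,5}$ has all three terms in the support and so holds automatically over $\T_\infty$ by \Cref{lemma:zeroinfdesc}(2); moreover $W_{U_{2,5}}$ is $5$-dimensional, since the rescaling map $\R^5 \to \R^{10}$, $s \mapsto (s_i+s_j)_{ij}$, is injective for $n \geq 3$. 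Hence the ambient ball $\upB \subseteq V_{U_{2,5}}/W_{U_{2,5}}$ is five-dimensional, and the intersection $\ulineDr_{U_{2,5}} \cap \partial\upB$ is the spherical link at the origin of the reduced Dressian of $U_{2,5}$, which by the classical Speyer--Sturmfels identification of the tropical Grassmannian $\Gr(2,5)(\T_0)$ modulo torus lineality is precisely the Petersen graph.

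The main obstacle is the structural decomposition of $F_M$ as a finite colimit involving the foundations of $U_{2,4}$, $U_{2,5}$, and $U_{3,5}$ for arbitrary $M$. For ternary $M$ this follows from the explicit classification in \cite{Baker-Lorscheid20}; the general case, incorporating $U_{2,5}$ and $U_{3,5}$ as additional generators along with the necessary non-trivial gluing morphisms between them and the ternary generators, would be imported from \cite{BHKL0}. The secondary step of identifying the spherical link of $\ulineDr_{U_{2,5}}$ with the Petersen graph, while requiring some care in dimension bookkeeping, is classical after Speyer--Sturmfels, and I do not anticipate substantial difficulties there.
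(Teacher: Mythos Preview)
Your overall strategy is exactly the paper's: express $F_M$ as a finite colimit of foundations of small matroids, apply \Cref{prop:limitscolimits}, and identify each factor. The error is in the structural input. The relevant theorem is \cite{Baker-Lorscheid-Zhang24}*{Theorem~B}, not a result from \cite{BHKL0}, and its building blocks are \emph{not} the ternary list $\{\F_2,\F_3,\D,\H,\U\}$ augmented by $F_{U_{2,5}}$. Rather, $F_M$ is a colimit over a diagram whose nodes are the foundations of the matroids $U_{2,4}$, $U_{2,5}$, $C_5$, $U_{2,4}\oplus U_{1,2}$, $F_7$, and their duals. One then checks that $\Hom(F_J,\T_q)$ is a singleton for $J=F_7$ (binary), is the disc with three boundary points removed for each of $U_{2,4}$, $C_5$, $U_{2,4}\oplus U_{1,2}$ (these share the relevant homeomorphism type by \cite{Baker-Lorscheid-Zhang24}*{Remark~5.2}), and is the five-ball with the Petersen graph removed for $U_{2,5}$. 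The last identification is as you outline, though the paper cites \cite{Maclagan-Sturmfels15}*{Example~4.3.2} rather than computing dimensions directly.

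The concrete failure in your version is the factor $\D$: it does not appear in the general structure theorem, and your attempt to absorb it is in any case incorrect --- a half-open interval is one-dimensional and is not homeomorphic to a two-dimensional disc with three boundary points removed, ``degenerate'' or otherwise. If $\D$ were genuinely present as a building block, the statement of the theorem as written would be false.
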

 
\begin{figure}[t]
 \includegraphics{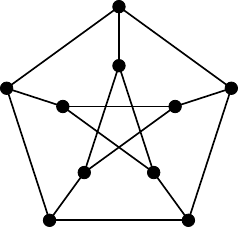}
 \caption{A $2$-dimensional projection of the Petersen graph}
 \label{fig: Peterson graph}
\end{figure}

 \begin{proof}
    It follows from \Cref{prop:limitscolimits} and \cite{Baker-Lorscheid-Zhang24}*{Theorem B} that $\ulineGr^{\rm w}_{J}(\T_q)$ is homeomorphic to the direct limit of a finite directed system of topological spaces $\ulineGr^w_J(\T_q)$, where $J$ is one of the  matroids $U_{2,4}, U_{2,5}, C_5, U_{2,4}\oplus U_{1,2},F_7$ or their duals. Here $C_5$ is a series extension of $U_{2,4}$ and $F_7$ is the Fano matroid. By \cite{Baker-Lorscheid-Zhang24}*{Remark 5.2}, the spaces $\ulineGr_J^w(\T_q)$ are all homeomorphic to each other for $J\in\{U_{2,4}, C_5, U_{2,4}\oplus U_{1,2}\}$, and thus homeomorphic to a disc with three points from the boundary removed by \Cref{thm:ATqreduced}. By \Cref{thm:ATqreduced} and \cite{Maclagan-Sturmfels15}*{Example 4.3.2} (together with the fact that the tropical Grassmannian equals the Dressian for $U_{2,n}$, see p.184 of \emph{loc.~cit.}), 
    the space $\ulineGr_{U_{2,5}}^w(\T_q)$ is homeomorphic to  a five-dimensional ball with a copy of the Petersen graph removed from the boundary. Finally, since $F_7$ is binary, the space $\ulineGr_{F_7}^w(\T_q)$ is a singleton, cf.~\Cref{ex:finitefoundation}.
 \end{proof}
 
 \begin{rem}
    The results in \cite{Baker-Lorscheid-Zhang24} give an explicit description of the directed system appearing in \Cref{cor:directlimit} in terms of certain embedded minors of the matroid $M$.
 \end{rem}

\subsection{Maslov dequantization}
\label{subsubsection: Maslov dequantization}
The realization of triangular hyperfields as tracts allows for the following reformulation of Maslov dequantization (cf.~Viro's papers \cites{Viro10,Viro11}). In a nutshell, the idea behind Maslov dequantization is to deform the usual addition $(a,b)\mapsto a+b$ of nonnegative real numbers via the rule
\[
 (a,b)\mapsto \big(a^{1/q} + b^{1/q} \big)^q,
\]
which in the limit as $q$ goes to zero becomes the tropical sum:
\[
 \ \lim\limits_{q\to0}\big(a^{1/q} + b^{1/q} \big)^q \ = \ \max\{a,b\}.
\]

In the language of tracts, Maslov dequantization is reflected in the fact that the tropical hyperfield $\T_0$ is the limit as $q \to 0$ of the triangular hyperfields $\T_q$ for $q>0$, in the sense that $N_{\T_0}=\bigcap_{q>0} N_{\T_q}$. This limiting process extends to $\T_q$-rational points of varieties, leading to a novel framework for amoebas and their ``tropical'' limit, as explained in the following.

Let $X$ be a complex variety embedded into the torus $(\C^\times)^n$. The \emph{$q$-amoeba of $X$} is the image $\cA_q(X)$ of $X$ under the map $(\C^\times)^n\to\R^n$ given by
\[
 (a_1,\dotsc,a_n) \ \longmapsto \ (\log|a_1|^{q},\dotsc,\log|a_n|^{q}). 
\]
The \emph{tropicalization of $X$} is the Hausdorff limit 
\[
 X^\trop \ \coloneq \ \cA_0(X) \ \coloneq \ \lim_{q\to 0} \ \cA_q(X)
\]
as a subset of $\R^n$.

These spaces are controlled by a certain tract $F=\past{\C(x_1,\dotsc,x_n)}{\pi^{-1}(I)}$ associated with $X\subseteq(\C^\times)^n$, which we define in the following. Let $R=\C[x_1^{\pm1},\dotsc,x_n^{\pm1}]$ be the ring of Laurent series over $\C$ and let $I\subseteq R$ be the vanishing ideal of $X$. As a pointed monoid, $F$ is the submonoid $\{\overline{ax^n}\mid a\in\C,\, n\in\Z^n\}$ of $R/I$, where $\overline p$ is the class of $p\in R$ in $R/I$. The inclusion $F\to R$ extends linearly to a semiring homomorphism $\pi\colon F^+\to R$, where $F^+=\N[\overline{ax^n}\mid a\in\C^\times,\, n\in\Z^n]$ is the ambient semiring of $F$. The null set of $F$ is the pullback $N_F=\pi^{-1}(I)$ of the vanishing ideal of $X$.

For the purposes of the following construction, we define a complex tract to be a tract $T$ together with a tract morphism $\alpha_T\colon \C\to T$. For example, $\T_q$ is a complex tract (for $q\geq0$) with respect to the map $z\mapsto|z|^q$, which is a tract morphism $\alpha_{\T_q}\colon\C\to\T_q$ since whenever $\sum a_i\in N_\C$ (i.e., $\sum a_i=0$ in $\C)$, the $\norm{a_i}$ form the side lengths of a (possibly degenerate) $n$-gon in the Euclidean plane, and thus $\sum\norm{a_i}^q\in N_{\T_q}$ by \Cref{lemma: characterization of higher null sums in T1}. A $\C$-linear map between complex tracts $S$ and $T$ is a tract morphism $f\colon S\to T$ such that $\alpha_T=f\circ\alpha_S$. We denote by $\Hom_\C(S,T)$ the set of $\C$-linear maps from $S$ to $T$. The natural inclusion $\alpha_F\colon\C\to \past{\C(x_1,\dotsc,x_n)}{\pi^{-1}(I)}=F$ provides $F$ with the structure of a complex tract. 

Since $F^\times$ is generated by $\overline{x_1},\dotsc,\overline{x_n}$, the evaluation map
 \[
  \begin{array}{cccl}
  \iota_{F,T}\colon & \Hom_\C(F,T) & \longrightarrow & (T^\times)^n \\[5pt]
              & {}[f\colon F\to T] & \longmapsto     & \big(f(x_1),\dotsc,f(x_n)\big)
  \end{array}             
 \]
is an injection for every complex tract $T$. Also, by construction, the image of the map $\iota_{F,\C}\colon\Hom_\C(F,\C)\to(\C^\times)^n$ is equal to $X$.

\begin{prop}\label{prop: amoebas and tropicalization as subset of rational point sets}
 Let $X\subseteq(\C^\times)^n$ be a subvariety. Let $\cA_q(X)\subseteq\R^n$ be its $q$-amoeba (for $q>0$) and let $\cA_0(X)=X^\trop\subseteq\R^n$ be its tropicalization. Let $F=\past{\C(x_1,\dotsc,x_n)}{\pi^{-1}(I)}$ be as above. Then $\cA_q(X)$ is contained in the image of the embedding
 \[
  \begin{array}{cccl}
  \log(\iota_{X,\T_q})\colon & \Hom_\C(F,\T_q) & \longrightarrow & \R^n \\[5pt]
              & {}[f\colon F\to\T_q] & \longmapsto     & \big(\log f(x_1),\dotsc,\log f(x_n)\big)
  \end{array}             
 \]
 for all $q\geq0$, and $X^\trop = \im\big(\log(\iota_{X,\T_0})\big)$ as subsets of $\R^n$.
\end{prop}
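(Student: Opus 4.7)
The strategy is to lift the (rescaled) absolute-value map through $\Hom_\C(F, \T_q)$ via an explicit evaluation construction, and then to identify the image at $q = 0$ by combining \Cref{lemma:zeroinfdesc}(1) with Kapranov's characterization of tropicalizations.

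Fix $q \geq 0$ and $a = (a_1, \ldots, a_n) \in X$. Define $f_a \colon F \to \T_q$ on monomial classes by $f_a(\overline{c x^m}) \coloneq |c \cdot a^m|^q$ for $c \in \C^\times$ and $m \in \Z^n$, and set $f_a(0) = 0$. Multiplicativity and $f_a(1) = 1$ are immediate, and $f_a \circ \alpha_F = \alpha_{\T_q}$, so $f_a$ is $\C$-linear. To see that it is a tract morphism, let $\sum_i \overline{c_i x^{m_i}} \in N_F = \pi^{-1}(I)$. Then $p \coloneq \sum_i c_i x^{m_i} \in I$, so $\sum_i c_i a^{m_i} = p(a) = 0$ in $\C$. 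By \Cref{lemma: morphisms from c to tq}(1) applied with $t = q$, the map $z \mapsto |z|^q$ is a tract morphism $\C \to \T_q$, and therefore $\sum_i |c_i a^{m_i}|^q \in N_{\T_q}$. A direct computation gives
\[
\log(\iota_{X, \T_q})(f_a) \ = \ \bigl(\log |a_1|^q, \ldots, \log |a_n|^q\bigr),
\]
which is precisely the image of $a$ in $\cA_q(X)$. This proves $\cA_q(X) \subseteq \im(\log(\iota_{X, \T_q}))$ for every $q > 0$; for $q = 0$ the corresponding inclusion $\cA_0(X) = X^\trop \subseteq \im(\log(\iota_{X, \T_0}))$ will follow from the equality proved next.

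For the equality at $q = 0$, I analyze an arbitrary $g \in \Hom_\C(F, \T_0)$. Since $\alpha_{\T_0}(c) = |c|^0 = 1$ for all $c \in \C^\times$, $\C$-linearity forces $g(\overline{c}) = 1$. Setting $v_i \coloneq \log g(\overline{x_i})$, multiplicativity yields $g(\overline{c x^m}) = e^{v \cdot m}$ for every $c \in \C^\times$ and $m \in \Z^n$. Hence the null-set condition on $g$ becomes: for every $\sum_i c_i x^{m_i} \in I$ with $c_i \in \C^\times$, one has $\sum_i e^{v \cdot m_i} \in N_{\T_0}$. By \Cref{lemma:zeroinfdesc}(1), this is equivalent to requiring that the maximum of $\{v \cdot m_i\}_i$ be attained at least twice. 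By the Kapranov fundamental theorem of tropical geometry, this condition, quantified over all $p = \sum_i c_i x^{m_i} \in I$, is precisely the defining condition for $v \in X^\trop$. Conversely, any such $v$ yields a well-defined $g_v \in \Hom_\C(F, \T_0)$ via $g_v(\overline{c x^m}) \coloneq e^{v \cdot m}$, so $\im(\log(\iota_{X, \T_0})) = X^\trop$.

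The essential technical inputs are already available: \Cref{lemma: morphisms from c to tq}(1) reduces the tract-morphism condition on $f_a$ to the classical vanishing $p(a) = 0$ in $\C$, and \Cref{lemma:zeroinfdesc}(1) translates the tropical null-set condition into the ``max attained twice'' statement. The only subtlety lies in checking that $f_a$ and $g_v$ are well-defined on $F$, which is immediate from the definition of $F$ as a pointed submonoid of $R/I$ together with the injectivity of $\iota_{F, T}$ noted in the excerpt (stemming from $F^\times$ being generated by $\overline{x_1}, \ldots, \overline{x_n}$). The main conceptual point is the appeal to Kapranov's theorem at $q = 0$; the rest is a direct unpacking of definitions, and the whole argument is uniform in $q$.
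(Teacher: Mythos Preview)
Your argument is essentially correct and follows the same overall arc as the paper's proof: for the inclusion you push points of $X$ forward along the tract morphism $\C\to\T_q$, and for the equality at $q=0$ you appeal to the fundamental theorem of tropical geometry. The difference is one of packaging. The paper phrases the first part purely functorially (the map $\C\to\T_q$ induces a commutative square, so the image of $X=\Hom_\C(F,\C)$ lands in $\Hom_\C(F,\T_q)$), whereas you write down the evaluation morphism $f_a$ by hand; these are the same map. For the second part, the paper chains together several cited results (Mikhalkin, Kapranov, Payne, and the identification of Kajiwara--Payne tropicalization with $\Hom_\C(F,\T_0)$ from \cite{Baker-Jin-Lorscheid24,Lorscheid22}), while you unpack the null-set condition for $\T_0$ directly via \Cref{lemma:zeroinfdesc}(1) and match it against the bend-locus description.

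One point deserves tightening. The paper \emph{defines} $X^\trop$ as the Hausdorff limit $\lim_{q\to0}\cA_q(X)$, not as the bend locus of $I^\trop$. Your appeal to ``Kapranov's theorem'' tacitly uses the equality of these two descriptions, which is precisely the content of Mikhalkin's result \cite{Mikhalkin04}*{Corollary 6.4} that the paper invokes. Once that identification is granted, your direct computation that $g\in\Hom_\C(F,\T_0)$ is determined by $v=(\log g(\overline{x_i}))_i$ and that the tract-morphism condition becomes ``the maximum in each $p\in I$ is attained twice'' is a clean replacement for the paper's chain of citations. A minor remark: the well-definedness of $g_v$ on $F\subseteq R/I$ (that $\overline{cx^m}=\overline{c'x^{m'}}$ forces $v\cdot m=v\cdot m'$) is not quite ``immediate''---it uses the bend-locus condition applied to the binomial $cx^m-c'x^{m'}\in I$---but this is exactly the check you have set up.
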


\begin{proof} 
 The inclusions $\iota_{F,T}$ are evidently functorial in $T$, and thus $\C\to\T_q$ induces the commutative diagram:
  \[
  \begin{tikzcd}[column sep=60pt]
   X \ = \ \Hom(F,\C) \ar[r,hook,"\iota_{F,\C}"] \ar[d]   & (\C^\times)^n \ar[d] \\
   \qquad \Hom(F,\T_q) \ar[r,hook,"\iota_{F,\T_q}"] & (\T_q^\times)^n \ar[r,"\log"',"\sim"''] & \R^n
  \end{tikzcd}
 \]
 Thus $\cA_q$, the image of $X\to\R^n$, is contained in the image of $\Hom(F,\T_q) \to\R^n$, as claimed.
 
 We turn to the claim that $X^\trop=\im\big(\log(\iota_{X,\T_0})\big)$. By \cite{Mikhalkin04}*{Corollary 6.4}, the Hausdorff limit $X^\trop=\lim_{q\to0}\cA_q(X)$ is equal to the bend locus of $I^\trop$ in $\R^n$, where $I^\trop$ is the tropicalization of $I$ with respect to the trivial absolute value $\norm\cdot_0:\C\to\R_{\geq0}$. By Kapranov's theorem (\cite{Maclagan-Sturmfels15}*{Theorem 3.1.3}) and a result of Payne \cite{Payne09}*{Proposition 2.2}, the bend locus of $I^\trop$ is equal to the image $\widetilde X^\trop$ of the Berkovich space $X^\an\subseteq\G_m^{n,\an}$ under the Payne map $\G_m^{n,\an}\to \R_{>0}^n$. As explained in \cite{Baker-Jin-Lorscheid24}*{Example 3.8} and \cite{Lorscheid22}*{Theorem 3.5}, the Kajiwara--Payne tropicalization $\widetilde X^\trop$ is equal to $\Hom_\C(F,\T_0)$, which concludes the proof.
\end{proof}

\begin{rem}
 In contrast to tropicalizations, the amoebas $\cA_q(X)$ (for $q>0$) are typically lower-dimensional subspaces of $\log\Hom_\C(F,\T_q)$. For example, we will see in \Cref{sec:smalluniform} that the image of the thin Schubert cell $X=\Gr_M(\C)$ for $M=U_{2,5}$ under the coordinate-wise squared absolute value map $\norm{\cdot}^2\colon X\to\R_{>0}^{10}/\R_{>0}$ is exactly the boundary $\partial\P \upL_M$ of the projective space $\P\upL_M$ of Lorentzian polynomials with support $M$ (\Cref{lemma:imageofsquaremap}.(3)). We will see in \Cref{subsec:conclusiontopo} that $\partial\P \upL_M$, and hence the amoebas $\cA_q(X)$ for $q>0$, all have dimension eight. 
 
On the other hand, $\log\Hom_\C(F,\T_q)$ is always star-shaped (with respect to the origin of $\R^{10}/\R$), which follows from \Cref{lemma:subadditive} by the same arguments as in the proof of \Cref{prop:repstar}. Thus $\log\Hom_\C(F,\T_2)$ contains the minimal star-shaped set containing $\cA_2(X)$. In the example $X=\Gr_{U_{2,5}}(\C)$, this shows that $\log\Hom_\C(F,\T_2)$ has nonempty interior in $\R^{10}/\R$, and thus has dimension nine.
\end{rem}

\begin{rem}[Difference between the tropicalized Grassmannian and the Dressian]
 The thin Schubert cell $X=\Gr_J(\C)$ embeds into the torus $(\C^\times)^J/\C^\times$ by means of its non-vanishing Pl\"ucker coordinates indexed by $J$. Let $F=\past{\C(x_\alpha\mid\alpha\in J)}{\pi^{-1}(I)}$ be as in \Cref{prop: amoebas and tropicalization as subset of rational point sets}. Then the tropicalization $\Gr_J(\C)^\trop$ is equal to the subset $\log\Hom_\C(F,\T_0)$ of $\R^J/\R$.
 
 As explained in \Cref{rem:universal tract}, we have $\Gr_J(\C)=\Hom_\C(F,\C)=\Hom(T_J,\C)$ for the universal tract $T_J$, which is, roughly speaking, the tract generated over $\Funpm$ by variables $x_\alpha$ for $\alpha\in J$ and whose null set is generated by the Pl\"ucker relations. The space $\log\Hom(T_J,\T_0)$ equals the Dressian $\Dr_J$ of $J$ as subsets of $\R^J$ (cf.\ the passage before \Cref{thm:ATq}). Under these identifications, the inclusion $\Gr_J(\C)^\trop\to\Dr_J$ corresponds to the map $\Hom_\C(F,\T_0)\to\Hom(T_J,\T_0)$ given by pre-composition with $T_J\to F$.
\end{rem}

\section{The topology of spaces of Lorentzian polynomials}\label{sec:lortopo}
\subsection{Definitions}\label{sec:lorentziandefinitions}
We denote by $\upH(d,n)$ the space of homogeneous polynomials of degree $d$ in $x_1,\ldots,x_n$ with nonnegative real coefficients.
Let $\mathring{\upL}^2_n$ be the space of homogeneous quadratic polynomials $f \in \upH(2,n)$ with strictly positive coefficients whose Hessian ${\mathcal H}_f$ has {\em Lorentzian signature}, that is, ${\mathcal H}_f$ has one positive eigenvalue and $n-1$ negative eigenvalues, counting multiplicities.
A {\em strictly Lorentzian polynomial} of degree $d$ in $x_1,\ldots,x_n$ is a  polynomial $f \in \upH(d,n)$ with strictly positive coefficients such that $\partial^\alpha f \in \mathring{\upL}^2_n$ for all $\alpha\in\Delta^{d-2}_n$. A {\em Lorentzian polynomial} is a limit of strictly Lorentzian polynomials. It was shown in \cite{Branden-Huh20} that a nonzero polynomial $f\in \upH(d,n)$ with nonnegative coefficients is Lorentzian if and only if it satisfies one of the following equivalent conditions:
\begin{enumerate}[(1)]\itemsep 5pt
\item The support of $f$ is an M-convex set and the Hessian of $\partial^\alpha f \in \upH(2,n)$ has at most one positive eigenvalue for all $\alpha\in\Delta^{d-2}_n$.
\item The partial derivative $\partial^\alpha f$ is identically zero or log-concave on $\mathbb{R}^n_{>0}$ for all $\alpha \in \mathbb{Z}^n_{\ge 0}$.
\end{enumerate}
For an M-convex set $J$, we denote by $\upL_J$ the set of Lorentzian polynomials with support $J$. 

\begin{ex}\label{ex:stable}
    A polynomial $f \in \R[x_1,\ldots,x_n]$ is called {\em (real) stable} if $f=0$ or $f(w) \neq 0$ for all $w=(w_1,\ldots,w_n) \in {\mathbb H}^n$, where ${\mathbb H} \coloneq \{ z \in \C \; | \; {\rm Im}(z)>0 \}$ is the complex upper half-plane. Homogeneous stable polynomials with nonnegative coefficients are Lorentzian by \cite{Branden-Huh20}*{Proposition 2.2}. Moreover, in degree two the notions of stable and Lorentzian coincide by \cite{Branden-Huh20}*{Lemma 2.5}. We denote the set of stable polynomials with nonnegative coefficients and support $J$ by $\upS_J$.
\end{ex}

The multiplicative group $\R_{>0}$ acts on $\upL_J$ by scalar multiplication, and we denote the quotient space by $\P\upL_J$. Similarly, the multiplicative group $\R^n_{>0}$ acts on $\upL_J$ by scaling each variable, and we denote by $\ulineL_J$ the quotient space. The same notation applies to spaces of stable polynomials.
If 
\[
f = \sum_{\alpha \in \Delta^d_n} c_\alpha \frac{x^\alpha}{\alpha !} \in \upH(d,n), 
\]
we define $\rho_f \colon \Delta^d_n \to \R_{\geq 0}$ by $\rho_f(\alpha)=c_\alpha$.

The space of Lorentzian polynomials with support $J$ can be related to representations of $J$ over $\T_0$ and $\T_\infty$ as follows.

\begin{lemma}\label{lemma: petterlem}
   If $f$ is a Lorentzian polynomial with support $J$, then $\rho_f\in\upR^{\rm w}_J(\T_\infty)$.
\end{lemma}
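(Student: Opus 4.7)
The plan is to verify the weak 3-term Plücker relations for $\rho_f$ over $\T_\infty$. Fix $\alpha \in \Delta^{d-2}_n$ with $\delta^-_J \leq \alpha$ and indices $1 \leq i \leq j \leq k \leq l \leq n$ with $\alpha + e_i + e_j + e_k + e_l \leq \delta^+_J$, and let $T_1, T_2, T_3$ denote the three Plücker products in the relation. By \Cref{lemma:zeroinfdesc}(2), the required relation $T_1 +_\infty T_2 +_\infty T_3 \in N_{\T_\infty}$ is equivalent to the statement that one cannot have exactly one of the three formal summands be nonzero, and that if exactly two summands are nonzero then they must be equal.

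The two ingredients I use are the M-convexity of $J$ and the fact that, for every such $\alpha$, the Hessian $H$ with entries $H_{ab} = \rho_f(\alpha + e_a + e_b)$ is a nonnegative symmetric matrix with at most one positive eigenvalue, directly from the definition of a Lorentzian polynomial. In the generic case $i < j < k < l$, to rule out ``exactly one nonzero'', say $T_1 = \rho_f(\alpha+e_j+e_k)\rho_f(\alpha+e_i+e_l) > 0$, I apply the M-convex exchange property to the bases $\alpha + e_j + e_k$ and $\alpha + e_i + e_l$ at transfer coordinate $j$: the only admissible exchange coordinate $u$ is $i$ or $l$, and in either case the resulting pair of bases witnesses $T_2 \neq 0$ or $T_3 \neq 0$, contradicting the assumption.

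The subtler step is to rule out ``exactly two nonzero and unequal''. Assume $T_3 = 0$, say because $H_{ij} = 0$ (the case $H_{kl} = 0$ is symmetric). By \cite{Branden-Huh20}*{Proposition 2.1}, every $2 \times 2$ principal minor of $H$ is nonpositive, so $H_{ii} H_{jj} \leq H_{ij}^2 = 0$ forces $H_{ii} = 0$ or $H_{jj} = 0$; take the latter. A nonnegative symmetric $3 \times 3$ matrix with at most one positive eigenvalue has nonnegative determinant (one nonnegative and two nonpositive eigenvalues), and applying this to the principal submatrices on $\{i,j,k\}$ and $\{i,j,l\}$ yields $H_{ii} H_{jk}^2 \leq 0$ and $H_{ii} H_{jl}^2 \leq 0$. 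Thus either $H_{ii} \neq 0$ and $H_{jk} = H_{jl} = 0$ (so $T_1 = T_2 = 0$ and all three Plücker terms vanish) or else $H_{ii} = 0$ too. In the latter sub-case the $4 \times 4$ principal submatrix $H'$ on $\{i,j,k,l\}$ has vanishing top-left $2 \times 2$ block, so a column-swap block computation gives $\det H' = (H_{ik} H_{jl} - H_{il} H_{jk})^2 = (T_2 - T_1)^2$; combining this with $\det H' \leq 0$ (a nonnegative $4 \times 4$ symmetric matrix with at most one positive eigenvalue has nonpositive determinant, by the signature count) forces $T_1 = T_2$.

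Finally, when some of the indices $i,j,k,l$ coincide, an elementary inspection shows that two of $T_1, T_2, T_3$ agree as formal quantities, so the only remaining obstruction is again ``exactly one formal summand nonzero'', which is handled by the same M-convex exchange argument as in the generic case. I expect the main obstacle to be the iterated minor bookkeeping in the subtle sub-case above: one must propagate the vanishing of $H_{ij}$ through the $2 \times 2$ and $3 \times 3$ principal minor constraints until the $4 \times 4$ Hessian submatrix collapses to block form whose determinant factors as the perfect square $(T_1 - T_2)^2$, at which point the Lorentzian signature bound $\det H' \leq 0$ collapses to precisely the equality of Plücker products needed to land in $N_{\T_\infty}$.
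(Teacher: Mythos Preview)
Your argument is correct. The case split via \Cref{lemma:zeroinfdesc}(2), the M-convex exchange to rule out a unique nonzero term, and the chain of principal-minor constraints (using Cauchy interlacing to propagate ``at most one positive eigenvalue'' to every principal submatrix) all go through; the block-triangular computation of $\det H'=(T_2-T_1)^2$ when the top-left $2\times2$ block vanishes is the clean endpoint.

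The paper's proof follows the same high-level outline---reduce to $\partial^\alpha f$ and verify the degenerate $3$-term Pl\"ucker relation---but outsources the linear-algebra core. It passes to the multi-affine part of $\partial^\alpha f$ (Lorentzian by \cite{Branden-Huh20}*{Corollary 3.5}), uses that degree-$2$ Lorentzian polynomials are stable, and then quotes \cite{Branden07}*{Lemma 6.1}, which is exactly the statement that if one of the three Pl\"ucker products vanishes for a multi-affine stable polynomial then the other two agree. Your route is more self-contained: you re-prove the content of Br\"and\'en's lemma directly from the Hessian signature, avoiding both the multi-affine reduction and the detour through stability. The trade-off is that you do more explicit minor bookkeeping, while the paper's version is a one-line citation once the reductions are in place. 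Your handling of coincident indices is also slightly more explicit than the paper's, which dispatches only the $i=j$ case and implicitly relies on the symmetry under permutations of $\{i,j,k,l\}$.
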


\begin{proof}
Let $J\subseteq\Delta^d_n$ be an M-convex set, let $f\in \upL_J$, and let $\rho=\rho_f$. Let $\alpha\in\Delta_n^{d-2}$ and choose $i,j,k,l\in[n]$ such that one of the three terms
    \begin{equation*}
        \rho(\alpha+e_i+e_j)\rho(\alpha+e_k+e_l),\rho(\alpha+e_i+e_k)\rho(\alpha+e_j+e_l),\rho(\alpha+e_i+e_l)\rho(\alpha+e_j+e_k)
    \end{equation*}
    is zero. Without loss of generality, we can assume that 
    \begin{equation*}
     \rho(\alpha+e_i+e_j)=0.   
    \end{equation*}
    To show that $\rho_f\in\upR^{\rm w}_J(\T_\infty)$, we need to prove that the other two terms are equal to one another. 
    If $i=j$, this is clear, and so we may assume that $i \neq j$.
    The multi-affine part of $\partial^\alpha f$ is Lorentzian, cf.~\cite{Branden-Huh20}*{Corollary 3.5}. Because Lorentzian polynomials of degree two are real stable, we can apply \cite{Branden07}*{Lemma 6.1} and the claim follows.
\end{proof}

\begin{rem}\label{rem: Lorentzian polynomials satisfy all degenerate Plucker relations}
 As a consequence of \Cref{lemma: petterlem} and the equality $\upR^{\rm w}_J(\T_\infty)=\upR_J(\T_\infty)$ (see \Cref{rem: excellent tracts}), if $f=\sum_{\alpha \in \Delta^d_n} c_\alpha \frac{x^\alpha}{\alpha !}\in\upL_J$ is Lorentzian then $\rho_f$ belongs to  $\upR_J(\T_\infty)$, which means that $f$ satisfies \emph{all} degenerate Pl\"ucker relations.
 More precisely,
 \[
  c_{\beta+e_{i_0}} \ \cdot \ c_{\gamma+e_{i_1}} \ = \ c_{\beta+e_{i_1}} \ \cdot \ c_{\gamma+e_{i_0}} 
 \]
 for all $\beta,\gamma\in\Delta^{d-1}_n$ such that
$c_{\beta+e_{i_0}+e_{i_1}-e_{k}} \cdot c_{\gamma+e_{k}} = 0$
whenever $k\in[n]$ and $\beta_k>\gamma_k$. 

 By \cite{Branden07}*{Lemma 6.1}, this fact is known for stable polynomials when $\beta-e_k=\gamma-e_l$ for some $k,l\in[n]$, i.e., stable polynomials satisfy the degenerate $3$-term Pl\"ucker relations. That Lorentzian polynomials, or even stable polynomials, satisfy all degenerate Pl\"ucker relations appears to be a new observation.
 
 An example of a degenerate Pl\"ucker relation with more than $3$ terms is 
 \[
  c_{123} \cdot c_{456} \ + \ c_{124} \cdot c_{356} \ + \ \underbrace{c_{134} \cdot c_{256}}_{=0} \ + \ \underbrace{c_{234} \cdot c_{156}}_{=0} \quad \in \quad N_{\T_\infty}
 \]
 for the matroid $J=\{\alpha\in \Delta^3_6 \mid \alpha_i\leq1\text{ for all $i\in[n]$ and }e_3+e_4\not\leq\alpha\}$, which is a parallel extension of $U_{3,5}$ with parallel elements $3$ and $4$. Consequently, we have $c_{123} \cdot c_{456} = c_{124} \cdot c_{356}$ for any Lorentzian polynomial $f=\sum_{\alpha \in \Delta^d_n} c_\alpha \frac{x^\alpha}{\alpha !}$ with support $J$.
\end{rem}

For $f=\sum_{\alpha \in \Delta^d_n} c_\alpha \frac{x^\alpha}{\alpha !}$ and $p>0$, we define
$R_p(f)=\sum_{\alpha \in \Delta^d_n} c^p_\alpha \frac{x^\alpha}{\alpha !}$.

\begin{thm}[\cite{Branden-Huh20}]\label{thm:314325}
     Let $f$ be a Lorentzian polynomial with support $J$. Then:
     \begin{enumerate}[(1)]\itemsep 5pt
        \item For every $0\leq p\leq 1$, the polynomial $R_p(f)$ is Lorentzian.
        \item $\rho_f$ is a $\T_0$-representation of $J$ if and only if the polynomial $R_p(f)$ is Lorentzian for all $p\geq0$ .
     \end{enumerate}
\end{thm}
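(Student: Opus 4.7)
The plan is to use the Hessian characterization of Lorentzianness: $f=\sum c_\alpha x^\alpha/\alpha!$ is Lorentzian iff its support $J$ is M-convex and, for every $\alpha\in\Delta^{d-2}_n$, the Hessian $\mathcal{H}_\alpha$ of the quadratic $\partial^\alpha f$ has nonnegative entries and at most one positive eigenvalue. Since $R_p$ does not change the support for $p>0$ (and at $p=0$ the generating polynomial $R_0(f)=\sum_{\alpha\in J}x^\alpha/\alpha!$ is Lorentzian precisely because $J$ is M-convex), and since $R_p$ commutes with partial differentiation in the sense that $\partial^\alpha R_p(f)=R_p(\partial^\alpha f)$, both parts of the theorem reduce to the degree-$2$ statement: in degree $2$ the Hessian of $R_p(\partial^\alpha f)$ is exactly the entrywise Hadamard $p$-th power of $\mathcal{H}_\alpha$.

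For part $(1)$, the key technical lemma is that, for a symmetric nonnegative matrix $H$ with at most one positive eigenvalue and $0\leq p\leq 1$, the Hadamard power $H^{\circ p}$ again has at most one positive eigenvalue. I would prove this via the Bernstein integral representation
\[
x^{p}\;=\;\frac{p}{\Gamma(1-p)}\int_0^\infty\frac{1-e^{-tx}}{t^{1+p}}\,dt\qquad(0<p<1),
\]
applied entrywise. By Perron--Frobenius one can write $H=vv^\top-A$ with $v$ a nonnegative eigenvector and $A$ positive semidefinite; combining the integral representation with Schur's Hadamard product theorem then expresses $H^{\circ p}$ as a positive superposition of matrices each having at most one positive eigenvalue.

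For the $(\Rightarrow)$ direction of part $(2)$, if $\rho_f\in\Gr_J(\T_0)$ then by \Cref{lemma:mconvexist0} the function $\nu\coloneq-\log\rho_f$ is M-convex on $J$, so $p\nu$ is M-convex for every $p\geq 0$, and $R_p(f)=\sum_{\alpha\in J}e^{-p\nu(\alpha)}x^\alpha/\alpha!$ is Lorentzian by the polymatroid extension of \Cref{ex:dressianinlorentzian}. Conversely, suppose $R_p(f)$ is Lorentzian for all $p\geq 0$ and fix $\alpha\in\Delta^{d-2}_n$ and $i,j,k,l\in[n]$. If one of the three products $\rho_f(\alpha+e_i+e_j)\rho_f(\alpha+e_k+e_l)$, $\rho_f(\alpha+e_i+e_k)\rho_f(\alpha+e_j+e_l)$, $\rho_f(\alpha+e_i+e_l)\rho_f(\alpha+e_j+e_k)$ vanishes, then \Cref{lemma: petterlem} forces the other two to be equal. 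Otherwise all three are positive, and the $4\times 4$ sub-Hessian of $\partial^\alpha R_p(f)$ on $\{i,j,k,l\}$ must have at most one positive eigenvalue for every $p\geq 0$; taking logarithms of the resulting polynomial inequalities and letting $p\to\infty$, the dominant tropical asymptotics force the maximum of the three products to be attained at least twice. This verifies the tropical 3-term Pl\"ucker relations for $\rho_f$, and by \Cref{rem: excellent tracts} we conclude $\rho_f\in\Gr_J(\T_0)$.

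The main obstacle is the Key Lemma in part $(1)$: making the Bernstein integral superposition compatible with the rank-one-plus-negative-PSD decomposition of $H$ via Schur's theorem, and handling the boundary cases $p\in\{0,1\}$ by continuity, requires care. A secondary delicate point in part $(2)$ is to extract the tropical Pl\"ucker relations only from the $p\to\infty$ asymptotics of the $4\times 4$ spectral conditions, rather than from any single such condition in isolation.
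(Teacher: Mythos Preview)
The paper does not prove this theorem; it simply cites \cite{Branden-Huh20}*{Proposition 3.25} for part~(1) and \cite{Branden-Huh20}*{Theorem 3.14} (combined with \Cref{lemma:mconvexist0}) for part~(2). Your sketch is therefore an attempt to reconstruct those proofs, and the overall strategy---reduce to degree two, analyze Hadamard powers of the Hessian, and extract the tropical Pl\"ucker relations from the $p\to\infty$ asymptotics---is exactly the Br\"and\'en--Huh approach.

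There is, however, a genuine gap in your proof of the Key Lemma for part~(1). The Bernstein representation gives
\[
H^{\circ p}=c_p\int_0^\infty\bigl(\mathbf{1}\mathbf{1}^\top-E(t)\bigr)\,t^{-1-p}\,dt,\qquad E(t)_{rs}=e^{-th_{rs}},
\]
and for each integrand to be negative semidefinite on a \emph{common} hyperplane (namely $\mathbf{1}^\perp$) you need $E(t)$ to be positive semidefinite for all $t\geq 0$. By Schoenberg's theorem this is equivalent to $H$ being \emph{conditionally negative semidefinite}, and this does \emph{not} follow from ``nonnegative entries with at most one positive eigenvalue''. For instance,
\[
H=\begin{pmatrix}0&1&\epsilon\\1&0&\epsilon\\\epsilon&\epsilon&0\end{pmatrix}
\]
has exactly one positive eigenvalue for small $\epsilon>0$, yet $x^\top Hx=2-8\epsilon>0$ for $x=(1,1,-2)$. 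Your Perron--Frobenius decomposition $H=vv^\top-A$ does not help here, since $A$ need not have nonnegative entries, so Schur's theorem does not apply to it. The fix is to first perform a diagonal congruence: since $(DHD)^{\circ p}=D^pH^{\circ p}D^p$, one may replace $H$ by $DHD$ for a positive diagonal $D$ chosen (via Sinkhorn when $H$ has positive entries, then a limiting argument) so that $\mathbf{1}$ is the Perron eigenvector. Then $H$ \emph{is} conditionally negative semidefinite, $E(t)$ is PSD, and each integrand---hence $H^{\circ p}$---is nonpositive on $\mathbf{1}^\perp$.

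A smaller point in part~(2)$(\Leftarrow)$: your $4\times4$ sub-Hessian argument handles only the case of four distinct indices. For a general M-convex $J$ the $3$-term Pl\"ucker relations over $\T_0$ with a repeated index (say $i=j$) are nontrivial---they amount to $\rho(\alpha+2e_i)\rho(\alpha+e_k+e_l)\leq\rho(\alpha+e_i+e_k)\rho(\alpha+e_i+e_l)$---and must be extracted from the $3\times3$ (and $2\times 2$) principal-minor asymptotics rather than the $4\times4$ ones. This is routine but should be stated.
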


\begin{proof}
    Part (1) is \cite{Branden-Huh20}*{Proposition 3.25}. Part (2) follows from \cite{Branden-Huh20}*{Theorem 3.14}, together with \Cref{lemma:mconvexist0}.
\end{proof}

\subsection{Simplification of Lorentzians in degree two}
We first recall the following property of M-convex sets $J\subseteq\Delta_n^2$.
    
\begin{lemma}\label{lemma: 2supp}
    Let $J\subseteq\Delta_n^2$ be an M-convex set. Let $V\subseteq[n]$ be the set of all $i\in[n]$ such that $e_i+e_j\in J$ for some $j\in [n]$, and let $E$ be the set of all two-element subsets $\{i,j\}\subseteq V$ such that $e_i+e_j\in J$. Finally, let $D$ the set of all $i\in J$ such that $2e_i\in J$. Then:
    \begin{enumerate}[(1)]\itemsep 5pt
        \item We have $e_i+e_j\in J$ for all $i\in D$ and $j\in V$.
        \item The graph $G=(V,E)$ is a complete multipartite graph where $V$ is partitioned into the equivalence classes of the equivalence relation $\sim$ generated by $i\sim j$ if $e_i+e_j\notin J$.
    \end{enumerate}
\end{lemma}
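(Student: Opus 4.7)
My plan is to derive both parts as consequences of the symmetric exchange property applied to carefully chosen pairs of elements of $J$. Part~(1) will follow from a single application of the exchange to $\alpha=2e_i$ and a witness $\beta=e_j+e_k\in J$ of $j\in V$. Part~(2) will amount to showing that the relation ``$i\approx j$ iff $i=j$ or $e_i+e_j\notin J$'' is already an equivalence relation on $V$; transitivity is the only nontrivial point, and I will prove it by contradiction, again from the exchange property, with part~(1) handling one degenerate subcase.

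For part~(1), I assume $i\neq j$ (the case $i=j$ is immediate from $i\in D$) and pick $k\in[n]$ witnessing $j\in V$. Applying the M-convex exchange to $\alpha=2e_i$ and $\beta=e_j+e_k$ at the coordinate $j$, where $\alpha_j=0<\beta_j$, I obtain an index $l$ with $\alpha_l>\beta_l$ such that $\alpha+e_j-e_l\in J$. Since $\alpha$ is supported only at $i$, this forces $l=i$; and since $i\neq j$ one quickly checks $\beta_i\leq 1<2=\alpha_i$, so the exchange produces $\alpha+e_j-e_i=e_i+e_j\in J$, as required.

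For part~(2), write $\approx$ for the reflexive symmetric relation on $V$ introduced above; then $\sim$ is the transitive closure of $\approx$, so it suffices to show that $\approx$ is already transitive, as this gives precisely the complete multipartite structure of $G$. Suppose for contradiction that distinct $i,j,k\in V$ satisfy $i\approx j$ and $j\approx k$ but $e_i+e_k\in J$, and pick $l\in[n]$ witnessing $j\in V$. If $l=j$, then $j\in D$, and part~(1) applied to $i\in V$ gives $e_i+e_j\in J$, contradicting $i\approx j$. If $l\in\{i,k\}$, the choice of $l$ itself contradicts $i\approx j$ or $j\approx k$. In the remaining case, I apply the exchange to $\alpha=e_i+e_k$ and $\beta=e_j+e_l$ at the coordinate $j$: the exchange index $m$ must satisfy $\alpha_m>0$, forcing $m\in\{i,k\}$, and $\alpha+e_j-e_m$ is then either $e_j+e_k$ or $e_i+e_j$, contradicting one of the two standing hypotheses.

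The argument is quite rigid because the supports of the elements to which M-convexity is applied contain at most two coordinates, so the exchange indices are essentially forced. The only place where care is required is in the transitivity argument for part~(2), where the various degenerate coincidences among $i,j,k,l$ must be exhausted; part~(1) conveniently disposes of the case $l=j$, and the remaining coincidences $l\in\{i,k\}$ immediately contradict the standing hypotheses, so there is no real obstacle once the case analysis is laid out.
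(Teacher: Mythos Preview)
Your proof is correct. Part~(1) is essentially the same exchange argument as in the paper (the paper separates out the case $k=i$ explicitly, but the content is identical). For part~(2), however, you take a genuinely different route: the paper observes that $E$ is the set of bases of a rank~$2$ matroid on $V$, hence a parallel extension of some $U_{2,r}$, and then cites \cite{Choe-Oxley-Sokal-Wagner04}*{Corollary~5.4} to conclude the complete multipartite structure. Your argument instead proves transitivity of the non-adjacency relation directly from a single application of the M-convex exchange to $\alpha=e_i+e_k$ and $\beta=e_j+e_l$, with part~(1) handling the boundary case $l=j$. This is more elementary and self-contained---no external citation or matroid-theoretic background is needed---while the paper's approach has the advantage of placing the result in a familiar matroid-theoretic context (parallel classes of a rank~$2$ matroid).
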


\begin{proof}
    Let $i\in V$ such that $2e_i\in J$ and let $j\in V$. Because $j\in V$, there exists $k\in V$ such that $e_j+e_k\in J$. If $k=i$, then we are done. Otherwise, it follows from M-convexity that $e_i+e_j\in J$. This proves part (1). For part (2), we note that $E$ is the set of bases of a rank $2$ matroid on the ground set $V$, which is a parallel extension of a uniform matroid $U_{2,r}$. The claim then follows from \cite{Choe-Oxley-Sokal-Wagner04}*{Corollary 5.4}, noting that the equivalence class of $i\in V$ is the flat (or parallel class) of $i$.
\end{proof}

\begin{df}\label{df: jbar}
    Let $J\subseteq\Delta_n^2$ be an M-convex set. We use the notation from \Cref{lemma: 2supp}. By part (2)  of \Cref{lemma: 2supp}, there is a partition 
        $V=\coprod_{i=1}^r V_i$ 
    such that $\{i,j\}$ is an edge of $G$ if and only if $i$ and $j$ are contained in two different $V_l$. For $i\in V$, we let $p(i)\in[r]$ be such that $i\in V_{p(i)}$. We then define
    \begin{equation*}
        J^{\rm simp}=\{2e_{p(i)}\mid i\in D\}\cup \{e_i+e_j\mid i,j\in[r]\textrm{ and }i\neq j\}\subseteq\Delta_r^2.
    \end{equation*}
\end{df}

\begin{thm}[Simplification]\label{thm: simplification}
    Let $J\subseteq\Delta_n^2$ be an M-convex set and $J^\simp$ as above.
    Let $f\in \upH(d,n)$ with $\supp(f)=J$ such that $\rho_f\in\upR_J(\T_\infty)$. Then:
    \begin{enumerate}[(1)]\itemsep 5pt
        \item     There exists a unique polynomial $g\in\upH(d,r)$ with $\supp(g) = J^{\rm simp}$ and unique $\lambda_1,\ldots,\lambda_n>0$ such that:
    \begin{enumerate}[(1)]\itemsep 5pt
        \item For all $j\in[r]$ we have $\sum_{i\in V_j}\lambda_i=1$.
            \item As polynomials in $H(d,n)$, we have
            \begin{equation*}
        f(x_1,\dotsc,x_n) \ = \ g\Big(\sum_{i\in V_1} \lambda_i x_i,\ldots,\sum_{i\in V_r} \lambda_i x_i\Big).
    \end{equation*}
    \end{enumerate}
    \item $f$ is Lorentzian if and only if $g$ is Lorentzian.
    \end{enumerate}
\end{thm}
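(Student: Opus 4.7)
The strategy is to extract the coefficients $\lambda_i$ and the polynomial $g$ from the three-term Pl\"ucker relations encoded in the hypothesis $\rho_f\in\upR_J(\T_\infty)$, and then to transport the Lorentzian condition of part (2) through the resulting linear substitution. Throughout, recall that the hypothesis forces $d=2$, and that over $\T_\infty$ the three-term Pl\"ucker relations reduce, by part (2) of \Cref{lemma:zeroinfdesc}, to the statement that whenever one of the three monomial terms vanishes, the remaining two are equal.

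For two distinct parts $V_j,V_{j'}$ of the multipartite decomposition from \Cref{lemma: 2supp} and any distinct $i_1,i_2\in V_j$, $i_1',i_2'\in V_{j'}$, the coefficients $\rho_f(e_{i_1}+e_{i_2})$ and $\rho_f(e_{i_1'}+e_{i_2'})$ vanish (no edges within a part), so the Pl\"ucker relation on $\{i_1,i_2,i_1',i_2'\}$ collapses to
\[
\rho_f(e_{i_1}+e_{i_1'})\,\rho_f(e_{i_2}+e_{i_2'}) \ = \ \rho_f(e_{i_1}+e_{i_2'})\,\rho_f(e_{i_2}+e_{i_1'}),
\]
showing that the positive matrix $\bigl(\rho_f(e_i+e_{i'})\bigr)_{i\in V_j,\, i'\in V_{j'}}$ has rank one. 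The same relation applied to $i_1\neq i_2\in V_{j_1}$, $i'\in V_{j_2}$, $i''\in V_{j_3}$ (three distinct parts) yields the cross-ratio identity $\rho_f(e_{i_1}+e_{i'})/\rho_f(e_{i_2}+e_{i'}) = \rho_f(e_{i_1}+e_{i''})/\rho_f(e_{i_2}+e_{i''})$, showing that the row factor of the rank-one decomposition on $V_j$ is independent, up to an overall positive constant, of the partner class.

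With these two identities in hand, I would define $\lambda_i$ (for $i\in V_j$) as the unique positive multiple of $\rho_f(e_i+e_{i'_0})$, for any fixed $i'_0$ outside $V_j$, satisfying $\sum_{i\in V_j}\lambda_i=1$, and then declare the coefficient of $y_jy_{j'}$ in $g$ for $j\neq j'$ to be $\rho_f(e_i+e_{i'})/(\lambda_i\lambda_{i'})$; this is well-defined by what precedes. Matching the $x_ix_{i'}$-coefficient for distinct $i,i'\in V_j$ in the expansion of $g(\sum_{k\in V_1}\lambda_kx_k,\dots,\sum_{k\in V_r}\lambda_kx_k)$ with that of $f$ forces the $y_j^2$-coefficient of $g$ to vanish whenever $|V_j|\geq 2$. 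For singleton classes $V_j=\{i\}$, one has $\lambda_i=1$, and the $y_j^2$-coefficient of $g$ is the $x_i^2$-coefficient of $f$, which is nonzero precisely when $i\in D$; part (1) of \Cref{lemma: 2supp} guarantees that $i\in D$ implies $V_{p(i)}=\{i\}$, so this is consistent with the definition of $J^\simp$. A direct expansion then verifies both $\supp(g)=J^\simp$ and the desired identity $f(x)=g(\sum_{k\in V_1}\lambda_kx_k,\dots,\sum_{k\in V_r}\lambda_kx_k)$. Uniqueness follows because any other realization $(\tilde g,\tilde\lambda)$ forces, via equality of cross-coefficients, that $\lambda_i/\tilde\lambda_i$ be constant on each $V_j$, and the normalization then pins this ratio to $1$; the degenerate case $r=1$ is trivial since M-convexity collapses $J$ to $\{2e_{i_0}\}$.

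For part (2), write $f(x)=g(Lx)$ for the linear surjection $L\colon\R^n\to\R^r$ given by $L(x)_j=\sum_{i\in V_j}\lambda_ix_i$, whose surjectivity is clear because its rows have disjoint supports and positive entries. Then $\mathcal H_f=L^\top\mathcal H_g L$, and choosing any linear complement of $\ker L$ identifies $\mathcal H_f$ on that complement with $\mathcal H_g$ while $\mathcal H_f$ vanishes on $\ker L$; consequently $\mathcal H_f$ and $\mathcal H_g$ have the same number of positive eigenvalues. Since $d=2$, a polynomial with M-convex support is Lorentzian iff its Hessian has at most one positive eigenvalue, and since $J^\simp$ is automatically M-convex as the bases of the simplification of the rank-$2$ matroid $J$, the equivalence $f\in\upL_J\Leftrightarrow g\in\upL_{J^\simp}$ follows. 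The main obstacle is the three-class consistency identity in the first step, as this is where the hypothesis $\rho_f\in\upR_J(\T_\infty)$ is genuinely invoked; additional bookkeeping is needed in the degenerate cases $|V_j|=1$ or small $r$, where the three-class Pl\"ucker relation becomes vacuous and the argument must be closed by the trivial rank-one structure of a single-row matrix together with the normalization of the $\lambda_i$.
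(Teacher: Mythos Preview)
Your argument for part (1) is correct and follows the same route as the paper: you extract the rank-one structure of the cross matrices $(\rho_f(e_i+e_{i'}))_{i\in V_j,\,i'\in V_{j'}}$ from the degenerate three-term Pl\"ucker relations, and then verify via a second application (to triples of classes) that the normalized row factors $\lambda_i$ are independent of the partner class. The paper organizes this by first restricting $f$ to pairs of classes $V_k\cup V_l$ and writing the restriction in product form with coefficients $\mu_i^{k,l}$, then proving $\mu_i^{k,l}=\mu_i^{k,l'}$; the content is the same.

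For part (2) you take a different route. You write $\mathcal H_f = L^\top \mathcal H_g L$ for the surjective linear map $L$, and conclude via Sylvester's law of inertia that $\mathcal H_f$ and $\mathcal H_g$ have the same number of positive eigenvalues; combined with the degree-$2$ characterization of Lorentzian polynomials, this gives the equivalence directly. The paper instead cites \cite{Branden-Huh20}*{Theorem 2.10} in both directions: the nonnegative linear substitution $y_j\mapsto\sum_{i\in V_j}\lambda_i x_i$ preserves Lorentzian to go from $g$ to $f$, and specializing all but one variable in each class to zero (then rescaling) preserves Lorentzian to go from $f$ to $g$. Your argument is self-contained and avoids the black-box citation, at the cost of being specific to degree~$2$ (which is all that is needed here); the paper's argument is shorter given the reference and would generalize unchanged if a higher-degree analogue were ever required. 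One terminological nit: $J$ is an M-convex set rather than a matroid (it may contain points $2e_i$), so calling $J^{\simp}$ ``the bases of the simplification of the rank-$2$ matroid $J$'' is slightly off, though M-convexity of $J^{\simp}$ is straightforward to check directly.
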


\begin{proof}
    Let $k,l\in[r]$ be two different indices, and let $f_{kl}$ the polynomial obtained from $f$ by setting all variables to zero that are not in $V_k\cup V_l$. We claim that there are $a_{kk},a_{ll}\geq0$, $a_{kl}>0$, and $\mu^{k,l}_i>0$, $i\in V_k\cup V_l$, such that
    \begin{equation*}
     \sum_{i\in V_k}\mu^{k,l}_i=\sum_{i\in V_l}\mu^{k,l}_i=1   
    \end{equation*}
    and
    \begin{equation}\label{eq: smallprod}
        f_{kl}=a_{kk}\cdot\sum_{i\in D\cap V_k}x_{i}^2+a_{ll}\cdot\sum_{i\in D\cap V_l}x_{i}^2+a_{kl}\cdot(\sum_{i\in V_k}\mu^{k,l}_i x_i)\cdot(\sum_{i\in V_l}\mu^{k,l}_i x_i).
    \end{equation}
    We further require $a_{kk}=0$ (resp. $a_{ll}=0$) if $D\cap V_k=\emptyset$ (resp. $D\cap V_l=\emptyset$). Then all $a_{kk},a_{ll}, a_{kl}, \mu^{k,l}_i$ are uniquely determined by \Cref{eq: smallprod} if they exist. It follows from part (1) of \Cref{lemma: 2supp} that $D\cap V_k\neq\emptyset$ implies $|V_k|=1$, and the same holds true for $V_l$. Therefore, the claim is clear when $D\cap(V_k\cup V_l)\neq\emptyset$, and for the remainder of the proof we assume that $D\cap(V_k\cup V_l)=\emptyset$. In this case, the claim is equivalent to the matrix $(\rho_f(e_i+e_j))_{i\in V_k, j\in V_l}$ having rank one. However, this is true because $\rho_f\in \upR_J(\T_\infty)$ implies that every $2\times2$ minor of this matrix vanishes.

    In order to prove part (1), it remains to show that for all pairwise different $k,l,l'\in[r]$ and $i\in V_k$, we have
        $\mu^{k,l}_i=\mu^{k,l'}_i$.
 To this end, let $i_1,i_2\in V_k$, $j_1\in V_l$, and $j_2\in V_{l'}$. Then
 \begin{equation*}
     \underbrace{\rho_f(e_{i_1}+e_{i_2})}_{=0}\cdot\rho_f(e_{j_1}+e_{j_2})=0.
 \end{equation*}
 Because $\rho_f$ is a representation over $\T_\infty$, this implies that
 \begin{equation*}
     \rho_f(e_{i_1}+e_{j_1})\rho_f(e_{i_2}+e_{j_2})=\rho_f(e_{i_1}+e_{j_2})\rho_f(e_{i_2}+e_{j_1}).
 \end{equation*}
 If we plug in the expressions from \Cref{eq: smallprod} for the coefficients of $f$, we obtain
    \begin{equation*}
        a_{kl}a_{kl'}\mu^{k,l}_{i_1}\mu_{j_1}^{k,l}\mu_{i_2}^{k,l'}\mu_{j_2}^{k,l'}=
        a_{kl}a_{kl'}\mu^{k,l'}_{i_1}\mu_{j_2}^{k,l'}\mu_{i_2}^{k,l}\mu_{j_1}^{k,l}
    \end{equation*}
    which shows that $\mu_{i_1}^{k,l}\mu_{i_2}^{k,l'}=\mu_{i_1}^{k,l'}\mu_{i_2}^{k,l}$. This means that the vectors $(\mu^{k,l}_i\mid i\in V_k)$ and $(\mu^{k,l'}_i\mid i\in V_k)$ are linearly dependent. Because both are in the standard simplex, these vectors must be equal. This implies the claim of part (1).

    For part (2), assume first that $g$ is Lorentzian. Then $f$ is Lorentzian by \cite{Branden-Huh20}*{Theorem 2.10}. Conversely, assume that $f$ is Lorentzian. We obtain $g$ by setting all but one variable from each group equal to zero and appropriately scaling the remaining one. This operation also preserves the Lorentzian property by \cite{Branden-Huh20}*{Theorem 2.10}.
\end{proof}

\begin{df}
    Given $f\in\upH(d,n)$ with $\supp(f)=J$ such that $\rho_f\in\upR_J(\T_\infty)$, we call the unique polynomial $g$ from \Cref{thm: simplification} the \emph{simplification} of $f$.
\end{df}

\begin{cor}\label{cor: biregular}
    Let $J\subseteq\Delta_n^2$ be an M-convex set. We use the notation from \Cref{df: jbar}.
    For $i\in[r]$, let $\Delta^\circ_{V_i}$ be the open standard simplex in $\R^{V_i}$, and consider the map
    \begin{equation*}
        \psi_J\colon\upR_{J^{\rm simp}}(\T_\infty)\times\Delta^\circ_{V_1}\times\cdots\times\Delta^\circ_{V_r}\longrightarrow \upR_J(\T_\infty),\qquad (\rho_g,\lambda_1,\ldots,\lambda_n)\longmapsto\rho_f
    \end{equation*}
    where $f=g(\sum_{i\in V_1} \lambda_i x_i,\ldots,\sum_{i\in V_r} \lambda_i x_i)$. Then:
    \begin{enumerate}[(1)]\itemsep 5pt
        \item $\psi_J$ is bijective.
        \item Both $\psi_J$ and $\psi_J^{-1}$ are regular in the sense that they are given by rational functions without poles on their domains.
        \item $\psi_J$ maps $\upL_{J^{\rm simp}}\times\Delta^\circ_{V_1}\times\cdots\times\Delta^\circ_{V_r}$ onto $\upL_{{J}}$.
    \end{enumerate}
\end{cor}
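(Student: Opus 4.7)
The plan is to observe that parts (1) and (3) are essentially immediate from \Cref{thm: simplification}, and then to produce explicit regular formulas for both $\psi_J$ and $\psi_J^{-1}$ to establish part (2). For (1), \Cref{thm: simplification}(1) asserts that for every $\rho_f\in\upR_J(\T_\infty)$ there is a unique tuple $(g,\lambda_1,\ldots,\lambda_n)$ satisfying the defining relation; one only needs to check that the resulting $g$ lies in $\upR_{J^\simp}(\T_\infty)$, which is automatic since $\supp(g)=J^\simp$ forces all relevant coefficients of $g$ to be nonzero, making the degenerate $3$-term Pl\"ucker relations of $J^\simp$ trivially satisfied. Part (3) is just a rephrasing of \Cref{thm: simplification}(2).

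The forward direction of (2) is transparent: expanding the substitution gives, for $i\in V_k$ and $j\in V_l$ with $k\neq l$,
\begin{equation*}
\rho_f(e_i+e_j) \ = \ \rho_g(e_k+e_l)\cdot\lambda_i\lambda_j,
\end{equation*}
and, for $k$ with $V_k=\{i\}$ and $2e_k\in J^\simp$, $\rho_f(2e_i)=\rho_g(2e_k)$. These are polynomial in the input data, so $\psi_J$ is regular.

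For the inverse, I would recover the data from $\rho_f$ as follows. For each $k\in[r]$ with $|V_k|>1$, pick any $l\neq k$ and any $j_0\in V_l$, and for $i\in V_k$ set
\begin{equation*}
\lambda_i \ = \ \frac{\rho_f(e_i+e_{j_0})}{\sum_{i'\in V_k}\rho_f(e_{i'}+e_{j_0})}.
\end{equation*}
The proof of \Cref{thm: simplification} shows that the matrix $(\rho_f(e_i+e_j))_{i\in V_k,\, j\in V_l}$ has rank one and factors as $\rho_g(e_k+e_l)(\mu_i^{k,l})_i(\mu_j^{k,l})_j^{\top}$ with the $\mu^{k,l}_i$ normalized to sum to $1$; the formula above computes exactly $\mu^{k,l}_i$, and the same proof shows $\mu^{k,l}_i$ is independent of the choice of $l$ and $j_0$ and equals $\lambda_i$. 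For singletons $V_k=\{i\}$ one sets $\lambda_i=1$. The coefficients of $g$ are then
\begin{equation*}
\rho_g(e_k+e_l) \ = \ \sum_{i\in V_k,\, j\in V_l}\rho_f(e_i+e_j) \qquad (k\neq l), \qquad \rho_g(2e_k) \ = \ \rho_f(2e_i) \text{ when } V_k=\{i\}.
\end{equation*}

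The principal obstacle, beyond assembling the formulas cleanly, is verifying that every denominator is strictly positive on $\upR_J(\T_\infty)$. This holds because each summand $\rho_f(e_{i'}+e_{j_0})$ in the denominator has $i'\in V_k$ and $j_0\in V_l$ with $k\neq l$, so $e_{i'}+e_{j_0}$ is an edge of the complete multipartite graph from \Cref{lemma: 2supp}(2) and hence lies in $J=\supp(\rho_f)$, making the coefficient positive. Consequently the inverse is expressed by rational functions without poles on the domain, completing the regularity of $\psi_J^{-1}$ and thus part (2).
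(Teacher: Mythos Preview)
Your proposal is correct and follows essentially the same approach as the paper: both deduce (1) and (3) from \Cref{thm: simplification} and verify (2) by exhibiting explicit rational formulas for $\psi_J^{-1}$. The paper phrases the inverse as ``recover $g$ by equating all variables from one group, then recover the $\lambda_i$ by dividing suitable coefficients of $f$ by a coefficient of $g$,'' which unwinds to precisely your formulas $\rho_g(e_k+e_l)=\sum_{i\in V_k,\,j\in V_l}\rho_f(e_i+e_j)$ and the normalized ratios for $\lambda_i$. Your extra remark that $\upR_{J^\simp}(\T_\infty)$ coincides with $\{\rho:\supp(\rho)=J^\simp\}$ (because every considered $3$-term relation for $J^\simp$ has all three products nonzero) is a detail the paper leaves implicit; it is what makes the codomain of the inverse correct. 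Both proofs tacitly use that $\psi_J$ actually lands in $\upR_J(\T_\infty)$, which follows from the identity $\rho_f(e_a+e_b)=\rho_g(e_{p(a)}+e_{p(b)})\lambda_a\lambda_b$ by a short case check on the degenerate relations for $J$.
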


\begin{proof}
    It is clear that $\psi_J$ is regular, and by \Cref{thm: simplification} it is bijective. This proves part (1) and for part (2) it remains to show that the inverse of $\psi_J$ is regular on $\upR_J(\T_\infty)$. This follows from the fact that $g$ can be recovered from $f$ by equating all variables from one group, and the $\lambda_i$ can be recovered by dividing suitable coefficients of $f$ by a corresponding coefficient of $g$. Part (3) is covered by \Cref{thm: simplification}.
\end{proof}

\begin{cor}\label{cor: fullrankinterior}
    Let $f$ be a Lorentzian polynomial of degree two and let $J=\supp(f)$. If the Hessian of the simplification of $f$ has full rank, then $f$ is in the interior of $\upL_J$ relative to $\upR_J(\T_\infty)$.
\end{cor}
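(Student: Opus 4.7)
The strategy is to transport the question along the regular homeomorphism $\psi_J$ from \Cref{cor: biregular} to the simplification $g$ of $f$, and then run an open-neighbourhood argument based on the continuity of eigenvalues.

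First I would invoke \Cref{cor: biregular}: since $\psi_J$ is a continuous bijection with continuous inverse (both being regular), it is a homeomorphism, and by part~(3) it identifies $\upL_{J^\simp}\times\Delta^\circ_{V_1}\times\cdots\times\Delta^\circ_{V_r}$ with $\upL_J$. Under this identification, $f$ corresponds to a tuple $(g,\lambda_1,\dotsc,\lambda_n)$ in which $g$ is the simplification of $f$ and each $\lambda_i$ lies strictly in the interior of its simplex factor. It therefore suffices to show that $g$ is in the interior of $\upL_{J^\simp}$ relative to $\upR_{J^\simp}(\T_\infty)$.

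Second, I would unpack the hypotheses on $g$: by \Cref{thm: simplification}, $g$ is Lorentzian of degree two with $\supp(g)=J^\simp$, so its coefficients on $J^\simp$ are strictly positive, and by assumption its Hessian has full rank. Since in degree two the Lorentzian condition forces at most one positive Hessian eigenvalue, full rank forces the Hessian of $g$ to have Lorentzian signature $(1,r-1)$. Both the strict positivity of the coefficients on $J^\simp$ and the eigenvalue signature depend continuously on the coefficients, so there is an open neighbourhood $U\subseteq\R_{>0}^{J^\simp}$ of $g$ on which these properties persist.

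Finally, take any $g'\in U\cap\upR_{J^\simp}(\T_\infty)$. By definition of the representation space, $\supp(g')=J^\simp$, and $J^\simp$ is M-convex (as the simplification of the M-convex set $J$). Combined with the inherited bound of at most one positive Hessian eigenvalue, the characterisation of degree-two Lorentzian polynomials recalled in \Cref{sec:lorentziandefinitions} (M-convex support together with the Hessian eigenvalue condition) yields $g'\in\upL_{J^\simp}$. Hence $g$ lies in the interior of $\upL_{J^\simp}$ relative to $\upR_{J^\simp}(\T_\infty)$, as required. The only subtle point is that the perturbation argument must be carried out relative to $\upR_{J^\simp}(\T_\infty)$ rather than in the full ambient space, but this causes no difficulty because every element of $\upR_{J^\simp}(\T_\infty)$ automatically has M-convex support $J^\simp$, so no Pl\"ucker relations need to be re-verified.
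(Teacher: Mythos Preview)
Your proof is correct and follows essentially the same route as the paper: reduce to the simplification via the homeomorphism $\psi_J$ of \Cref{cor: biregular}, and then observe that full rank of the Hessian forces Lorentzian signature $(1,r-1)$, which is an open condition. The paper compresses this last step into a single clause, while you spell out the eigenvalue continuity argument and the verification that perturbed polynomials retain M-convex support $J^\simp$; this extra detail is sound and does not deviate from the intended argument.
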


\begin{proof}
    If the Hessian of the simplification of $f$ has full rank, then it is in the interior of $\upL_{J^{\rm simp}}$. Hence the claim is implied by \Cref{cor: biregular}.
\end{proof}

We will later need the following partial converse of \Cref{cor: fullrankinterior}.

\begin{lemma}\label{lemma:singularboundary}
    If $f$ is in the interior of $\upL_{M}$ for $M=U_{2,n}$, then the Hessian of $f$ has full rank.
\end{lemma}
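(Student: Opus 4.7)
The plan is to prove the contrapositive: assuming $f \in \upL_M$ with $\supp(f) = J$ (the bases of $U_{2,n}$) has singular Hessian, I construct an arbitrarily small perturbation $f_\varepsilon \in \upR_J(\T_\infty)$ that is not Lorentzian. Write $f = \sum_{k<\ell} c_{k\ell}\, x_k x_\ell$ with all $c_{k\ell}>0$; the Hessian $\mathcal{H}_f$ is then a symmetric matrix with zero diagonal and strictly positive off-diagonal entries. The first key observation is that any nonzero $v \in \ker \mathcal{H}_f$ must have at least two nonzero coordinates: if $v = v_k e_k$ for some $k$, then for each $j \neq k$ we would have $(\mathcal{H}_f v)_j = c_{jk}v_k \neq 0$, a contradiction. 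Fix such a $v$ and indices $p \neq q$ with $v_p v_q \neq 0$. Since $\mathbf{1}^\top \mathcal{H}_f \mathbf{1} = 2\sum_{k<\ell} c_{k\ell} > 0$, the Hessian has a strictly positive eigenvalue, which is unique because $f$ is Lorentzian of degree $2$; denote it $\lambda_1$ and let $u$ be a corresponding unit eigenvector, automatically orthogonal to $v$.

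For the main step, consider $f_\varepsilon = f + \varepsilon\, x_p x_q$, which lies in $\upR_J(\T_\infty)$ for all sufficiently small $|\varepsilon|$ because the coefficient $c_{pq}+\varepsilon$ stays positive. Choose the sign of $\varepsilon$ so that $\varepsilon v_p v_q > 0$. The $2\times 2$ Gram matrix of $\mathcal{H}_{f_\varepsilon}$ in the basis $\{u,v\}$ has diagonal entries $\lambda_1 + 2\varepsilon u_p u_q$ and $2\varepsilon v_p v_q$, and off-diagonal entry $\varepsilon(u_p v_q + u_q v_p)$; its determinant is $2\varepsilon\lambda_1 v_p v_q + O(\varepsilon^2)$, which is strictly positive for small $|\varepsilon|>0$. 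Hence $\mathcal{H}_{f_\varepsilon}$ is positive definite on the two-dimensional subspace $\mathrm{span}(u,v)$, and by the Courant--Fischer min-max principle it has at least two positive eigenvalues. Thus $f_\varepsilon$ is not Lorentzian, contradicting the assumption that $f$ lies in the interior of $\upL_M$.

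The only delicate point is ensuring that the rank-one symmetric perturbation corresponding to $x_p x_q$ actually pushes the zero eigenvalue in the positive direction; this is controlled by the sign of $v^\top \mathcal{H}_{x_p x_q} v = 2 v_p v_q$, and the ``two nonzero coordinates'' observation is precisely what makes this sign freely adjustable while keeping the perturbation supported on $J$. Everything else is standard first-order perturbation theory combined with the min-max principle, and no input from the simplification machinery of \Cref{cor: biregular} is needed because for $M = U_{2,n}$ one has $J^{\mathrm{simp}} = J$.
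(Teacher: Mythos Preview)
Your proof is correct and follows the same strategy as the paper's: argue the contrapositive by perturbing the Hessian within the support and exhibiting a two-dimensional subspace on which the perturbed matrix is positive definite. The difference is only in the choice of perturbation and of the ``positive'' vector. The paper picks a third index $i\notin\{j,k\}$ and modifies \emph{two} off-diagonal entries, engineering the perturbation matrix $B$ so that the cross term $e^\top B v$ vanishes exactly for $e=e_i+e_k$; this makes the $2\times 2$ Gram matrix diagonal. You instead modify a single entry $(p,q)$ and use the actual positive eigenvector $u$, absorbing the nonzero cross term as an $O(\varepsilon^2)$ correction to the determinant. Your version is marginally leaner (one coefficient changes, and no third index is invoked), while the paper's avoids appealing to the positive eigenvector and keeps the computation purely algebraic.
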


\begin{proof}
    Let $f\in\upL_M$ be such that the Hessian $A$ of $f$ has rank less than $n$. Note that this implies $n>2$. Let $0\neq v\in\ker(A)$. Because every off-diagonal entry of $A$ is positive, there are at least two indices $j,k\in[n]$ such that $v_j$ and $v_k$ are nonzero. Because $n>2$, there exists $i\in[n]\smallsetminus\{j,k\}$. Now let $B=(b_{rs})_{r,s}$ be the symmetric $n\times n$ matrix such that $b_{ik}=b_{ki}=1$, $b_{jk}=b_{kj}=-\frac{v_i+v_k}{v_j}$, and all other entries are zero. Then 
    \begin{equation*}
        v^tBv=2\cdot\left(v_i v_k-v_j v_k \frac{v_i+v_k}{v_j}\right)=-2v_k^2.
    \end{equation*}
    Furthermore, letting $e=e_i+e_k$, we have $e^tBe=2$ and
    \begin{equation*}
        e^tBv=v^tBe=v_i+v_k-v_j \cdot \frac{v_i+v_k}{v_j}=0.
    \end{equation*}
    Finally, we compute
    \begin{equation*}
        (\lambda\cdot e +\mu\cdot v)^t (A-\epsilon\cdot B) (\lambda e+ \mu v)=\lambda^2(e^tAe-2\epsilon)+\mu^2\cdot \epsilon\cdot v_k^2.
    \end{equation*}
    Therefore, for small enough $\epsilon>0$, the matrix $A-\epsilon\cdot B$ is positive definite on the two dimensional span of $e$ and $v$. This shows that $f$ is not in the interior of $\upL_M$.
\end{proof}

\subsection{Strong star-shapedness in degree two}
\label{subsection: strong star-shapedness in degree 2}

For $J\subseteq\Delta_n^2$ an M-convex set, we now examine the image $\log(\upL_J)$ of $\upL_J$ under the coefficient-wise logarithm map. 
Note that we take coefficients with respect to the normalized monomial basis, i.e., 
\[
\log(\upL_J) = \Big\{ (\log c_\alpha)_{\alpha \in J} \; | \; \sum_{\alpha \in J} c_\alpha \frac{x^\alpha}{\alpha !} \in \upL_J \Big\} \subseteq \R^J.
\]

By \Cref{lemma: petterlem}, $\log(\upL_J)$ is contained in the linear space $V_J=\log(\upR_J(\T_\infty))$. Furthermore, it follows from \Cref{thm:314325} that 
$\log(\upL_J)$ is star-shaped with respect to the origin. However, in general the origin does not lie in the interior of $\log(\upL_J)$. In particular, the triple $(0,\log(\upL_J),V_J)$ is not strongly star-shaped in general. However, we will prove that $\log(\upL_J)$ is a strongly star-shaped subset of $V_J$ with respect to another point.

\begin{lemma}\label{lemma: diagonalsmaller}
    Let $A=(a_{ij})_{ij}$ be a symmetric $n\times n$ matrix with entries in $\R_{\geq0}$ which has exactly one positive eigenvalue. If $0\leq d_{i}\leq a_{ii}$ for all $i\in[n]$, then the matrix
    \begin{equation*}
        A'=A-\textnormal{Diag}(d_{1},\ldots,d_{n})
    \end{equation*}
    has exactly one positive eigenvalue or is the zero matrix.
\end{lemma}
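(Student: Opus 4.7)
The plan is to split the conclusion into two separate assertions: first that $A'$ has at most one positive eigenvalue, and then that if $A'$ has no positive eigenvalue it must be the zero matrix. The first part comes from monotonicity of eigenvalues under the Löwner order; the second is an elementary observation about nonnegative symmetric matrices that are negative semidefinite.

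For the first step, note that $D \coloneq \textnormal{Diag}(d_1,\ldots,d_n)$ is positive semidefinite since all $d_i \geq 0$, and $A' = A - D$, so $A' \preceq A$. Ordering eigenvalues in decreasing order $\lambda_1 \geq \lambda_2 \geq \cdots \geq \lambda_n$, Weyl's monotonicity gives $\lambda_k(A') \leq \lambda_k(A)$ for each $k$. By hypothesis $\lambda_k(A) \leq 0$ for $k \geq 2$, so $\lambda_k(A') \leq 0$ for $k \geq 2$, which means $A'$ has at most one positive eigenvalue.

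For the second step, suppose $A'$ has no positive eigenvalue, so $A'$ is negative semidefinite. The off-diagonal entries of $A'$ agree with those of $A$ and are therefore nonnegative, while the diagonal entries are $a_{ii} - d_i \geq 0$ by assumption. If some off-diagonal entry $a'_{ij}$ with $i \neq j$ were strictly positive, then the $2 \times 2$ principal submatrix
\[
\begin{pmatrix} a'_{ii} & a'_{ij} \\ a'_{ij} & a'_{jj} \end{pmatrix}
\]
would itself be negative semidefinite. Negative semidefiniteness forces $a'_{ii} + a'_{jj} \leq 0$ and $a'_{ii} a'_{jj} \geq (a'_{ij})^2 > 0$; combined with $a'_{ii}, a'_{jj} \geq 0$, this yields $a'_{ii} = a'_{jj} = 0$ and hence the determinant equals $-(a'_{ij})^2 < 0$, a contradiction. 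Therefore all off-diagonal entries of $A'$ vanish, so $A'$ is diagonal with nonnegative entries; being negative semidefinite, it is the zero matrix.

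The main potential obstacle is subtle: one might worry that a negative eigenvalue of $A$ could cross $0$ and \emph{become positive} after subtracting $D$, which would break the ``at most one positive eigenvalue'' conclusion. But Weyl's inequality for the Löwner order rules this out immediately — eigenvalues can only decrease when we subtract a PSD matrix — so the real content lies in the rigidity observation that a nonzero symmetric matrix with nonnegative entries cannot be negative semidefinite.
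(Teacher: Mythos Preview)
Your proof is correct and follows essentially the same approach as the paper. The paper argues the two points in the opposite order and more tersely: first, a nonzero symmetric matrix with nonnegative entries has at least one positive eigenvalue (the contrapositive of your second step); second, $A'$ cannot have more than one positive eigenvalue since otherwise $A=A'+\textnormal{Diag}(d_1,\dotsc,d_n)$ would as well (your Weyl monotonicity argument, stated from the other direction).
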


\begin{proof}
    Assume that the matrix $A'$ is not the zero matrix. Then $A'$ has at least one positive eigenvalue, because it has only nonnegative entries. It cannot have more than one positive eigenvalue, because then the same would be true for the matrix
    \begin{equation*}
        A=A'+{\textnormal{Diag}(d_{1},\ldots,d_{n})}
    \end{equation*}
    as ${\textnormal{Diag}(d_{1},\ldots,d_{n})}$ is positive semidefinite.
\end{proof}

\begin{thm}\label{thm: strongstardeg2}
    Let $J\subseteq\Delta_n^2$ be an M-convex set. For every $i\in[n]$, let $a_{ii}\in[0,1]$, and consider the polynomial
    \begin{equation*}
        f=\sum_{2e_i\in J} a_{ii}\cdot\frac{x_i^2}{2}+\sum_{e_i+e_j\in J,\,i\neq j} x_ix_j.
    \end{equation*}
    \begin{enumerate}[(1)]\itemsep 5pt
        \item The set $\log(\upL_J)$ is star-shaped with respect to $\log(f)$.
        \item If $a_{ii}<1$ for every $i\in[n]$, then the Hessian of the simplification of $f$ has full rank.
        \item If $a_{ii}<1$ for every $i\in[n]$, then $(\log(f),\log(\upL_J),V_J)$ is strongly star-shaped.
    \end{enumerate}
\end{thm}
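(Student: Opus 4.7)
The plan is to prove the three parts in sequence, with part (3) invoking the strongly-star-shaped criterion \Cref{lemma: starcrit2}. For any test polynomial $p\in\upL_J$ with coefficients $\rho_p(\alpha)=d_\alpha$, write $h_t$ for the interpolated polynomial with coefficients $\rho_f(\alpha)^{1-t}\,d_\alpha^{t}$, so that $\log(\rho_{h_t})$ traces the segment from $\log(f)$ to $\log(p)$ in $V_J$.

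For part (1), I will compare the Hessians of $h_t$ and $R_t(p)$: they agree off the diagonal, while the diagonal of $H(h_t)$ equals that of $H(R_t(p))$ multiplied by the factor $a_{ii}^{1-t}\leq 1$. Since $R_t(p)$ is Lorentzian by \Cref{thm:314325}(1), its Hessian has at most one positive eigenvalue, and \Cref{lemma: diagonalsmaller} transfers this bound to $H(h_t)$; combined with $\supp(h_t)=J$, this gives $h_t\in\upL_J$. For part (2), I will compute the simplification $g$ of $f$ provided by \Cref{thm: simplification}: the uniformity of the off-diagonal coefficients of $f$ forces $\lambda_i=1/|V_k|$ for $i\in V_k$, so that $H(g)$ has off-diagonal entry $\mu_k\mu_l$ (with $\mu_k=|V_k|$) and diagonal entry $a_{ii}$ (when $V_k=\{i\}$ and $2e_i\in J$) or $0$ otherwise. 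Writing $H(g)=\mu\mu^{\top}+\operatorname{Diag}(g_{kk}-\mu_k^2)$ with strictly negative diagonal correction (using $g_{kk}<1\leq\mu_k^2$), the matrix determinant lemma yields
\[
\det H(g)\;=\;\Big(\prod_{k=1}^{r}(g_{kk}-\mu_k^2)\Big)\Big(1-\sum_{k=1}^{r}\frac{\mu_k^2}{\mu_k^2-g_{kk}}\Big),
\]
whose second factor is at most $1-r$, hence nonzero for $r\geq 2$; the degenerate case $r=1$ forces $J=\{2e_{i_0}\}$ and is immediate.

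For part (3), I will apply \Cref{lemma: starcrit2} to $(\log(f),\log(\upL_J),V_J)$. The set $\log(\upL_J)$ is closed in $V_J$, star-shaped at $\log(f)$ by part (1), and contains $\log(f)$ as an interior point by part (2) combined with \Cref{cor: fullrankinterior}. The remaining hypothesis of \Cref{lemma: starcrit2} is that, for every $p\in\upL_J$, there exists $t_0<1$ such that $h_t$ lies in the interior of $\upL_J$ for all $t\in(t_0,1)$. I verify this by applying \Cref{thm: simplification} to $h_t$: the partition is the same as for $f$, and the simplification $\tilde h_t$ has Hessian with off-diagonal entries $\tilde p_{kl}^{t}\,s_k^{t}\,s_l^{t}$ and diagonal entries $a_{ii}^{1-t}\,\tilde p_{kk}^{t}$ (or $0$), where $\tilde\lambda_i$ and $\tilde p_{kl}$ come from the simplification of $p$ and $s_k^{t}=\sum_{i\in V_k}\tilde\lambda_i^{t}$. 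These are real-analytic functions of $t\in\R$, and at $t=0$ one checks directly that $H(\tilde h_0)$ equals the matrix $H(g)$ of part (2) (since $s_k^0=\mu_k$), which is nonsingular. Hence $t\mapsto\det H(\tilde h_t)$ is a nonzero real-analytic function on $\R$, its zeros are discrete, and the largest zero in $[0,1)$ (if any) provides the required $t_0$; \Cref{cor: fullrankinterior} then places $h_t$ in the interior of $\upL_J$ for $t\in(t_0,1)$.

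The main obstacle is the part (3) computation of the simplification of the interpolation $h_t$ together with the verification that its Hessian at $t=0$ coincides with the matrix $H(g)$ analyzed in part (2). Once this identification is made, real-analyticity upgrades the nonvanishing of $\det H(\tilde h_t)$ from a single point to an open interval near $t=1$, which is precisely what \Cref{lemma: starcrit2} requires to conclude strong star-shapedness.
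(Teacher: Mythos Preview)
Your proposal is correct and follows essentially the same route as the paper. Part (1) is identical. Part (2) differs only cosmetically: the paper rescales the simplified Hessian to the form $\mathbf{1}_r-D$ and then uses a congruence to $ww^\top-I_r$ and an eigenvalue count, whereas you keep the off-diagonal entries $\mu_k\mu_l$ and invoke the matrix determinant lemma; both yield nonsingularity for the same reason. Part (3) is again the same argument via \Cref{lemma: starcrit2}: the paper appeals to \Cref{cor: biregular} to assert that the determinant of the simplified Hessian of $h_t$ is analytic in $t$ and nonzero at $t=0$, while you compute the simplification of $h_t$ explicitly (correctly observing that the partition is determined by $J$, hence unchanged) and verify analyticity and the value at $t=0$ by hand. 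Your explicit computation is a useful sanity check but not a different idea.
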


\begin{proof}
    For part (1), we need to prove that for all $g\in\upL_J$ and $p\in[0,1]$ we have
    \begin{equation} \label{eq:logLJstarshaped}
        \log(f)+p\cdot(\log(g)-\log(f))\in\log(\upL_J).
    \end{equation}
    Let $(a_{ij})_{ij}$ and $(b_{ij})_{ij}$ be the Hessian matrices of $f$ and $g$, respectively. Then \eqref{eq:logLJstarshaped} is equivalent to the matrix
    \begin{equation*}
        \left(a_{ij}\cdot\left(\frac{b_{ij}}{a_{ij}}\right)^p\right)_{ij}=\left(b_{ij}^p\right)_{ij}-\textnormal{Diag}(d_{1},\ldots,d_{n})
    \end{equation*}
    having exactly one positive eigenvalue, where $d_i=b_{ii}^p\cdot(1-a_{ii}^{1-p})$. This follows from \Cref{lemma: diagonalsmaller} together with \cite{Branden-Huh20}*{Lemma 3.24}.

    For part (2), we observe that the Hessian of the simplification of $f$ has the form
    \begin{equation*}
        H=\textbf{1}_r-D,
    \end{equation*}
    where $\textbf{1}_r$ is the $r\times r$ all-ones matrix and $D$ is a diagonal matrix whose diagonal $v$ has strictly positive entries that are at most $1$. Furthermore, if $r=1$, then the entry of $v$ is strictly less than $1$. Letting $w$ be the vector whose $i$-th entry is $v_i^{-\frac{1}{2}}$, the matrix $H$ has the same rank as
    \begin{equation*}
        H'=w\cdot w^t- I_r.
    \end{equation*}
    The matrix $w\cdot w^t$ has only one nonzero eigenvalue, which is the norm of $w$. Because $0<v_i\leq 1$ and $0<v_1<1$ if $r=1$, the norm of $w$ is strictly larger than $1$, which implies that $H'$ has full rank.

    In order to prove part (3), we first note that $\log(\upL_J)$ is closed because $\upL_J$ is closed. By part (2) and \Cref{cor: fullrankinterior}, the point $\log(f)$ is in the interior of $\log(\upL_J)$ relative to $V_J$. In order to conclude that $(\log(f),\log(\upL_J))$ is strongly star-shaped as a subset of $V_J$, we would like to apply \Cref{lemma: starcrit2}. To this end, we need to prove that for every $g\in \upL_J$ there exists $0<p_0<1$ such that for every $p\in(p_0,1)$, the point 
    \begin{equation*}
     x_p=\log(f)+p\cdot (\log(g)-\log(f))   
    \end{equation*}
    lies in the interior of $\log(\upL_J)$ relative to $V_J$. We can write $x_p=\log(h_p)$ for some polynomial $h_p$. The determinant of the Hessian of the simplification of $h_p$ is an analytic function in $p$ by \Cref{cor: biregular}. It does not vanish for $p=0$ by part (2). Therefore, it has only isolated zeros. This implies that there exists $0<p_0<1$ such that for every $p\in(p_0,1)$, the Hessian of the simplification of $h_p$ has full rank. By \Cref{cor: fullrankinterior}, this implies the desired statement.
\end{proof}

\subsection{Conclusion about the topology}\label{subsec:conclusiontopo}
Now we are ready to complete the proof of \Cref{thm:topologystratum}.
Consider the linear operator $\upN$ on polynomials defined by the condition $\upN(x^\alpha)=\frac{x^\alpha}{\alpha!}$. (We call $N$ the \emph{normalization operator}.)
If a polynomial $f$ is Lorentzian, then $\upN(f)$ is also Lorentzian by \cite{Branden-Huh20}*{Corollary 3.7}. We will prove that $(\log(\upN(f_J)),\log(\upL_J),V_J)$ is strongly star shaped for every M-convex $J\subseteq\Delta^d_n$, where $V_J=\log(\upR_J(\T_\infty))$, as above, and $f_J=\sum_{\alpha\in J}\frac{x^\alpha}{\alpha!}$ is the (exponential) generating function of $J$, i.e., 
\begin{equation*}
        \upN(f_J)=\sum_{\alpha\in J}\frac{x^\alpha}{(\alpha!)^2}.
\end{equation*}

More generally, for any real number $t$, we write $\upN_t$ for the linear operator on polynomials defined by the condition
\begin{equation*}
    \upN_t(x^\alpha)=\frac{x^\alpha}{(\alpha!)^t}.
\end{equation*}
The operator $\upN_0$ is the identity and the operator $\upN_1$ is the normalization operator $\upN$. Note furthermore that $\upN_s\circ\upN_t=\upN_{s+t}$. The following lemma generalizes \cite{Branden-Huh20}*{Corollary 3.5} and \cite{Branden-Huh20}*{Corollary 3.7} by interpolating between them.

\begin{lemma}\label{Normalization}
The operator $\upN_t$ preserves the Lorentzian property for all $t \ge 0$.
\end{lemma}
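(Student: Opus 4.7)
The plan is to verify directly both defining conditions of Lorentzianness for $\upN_t(f)$: $M$-convexity of the support, and the ``at most one positive eigenvalue'' condition on the Hessian of every $(d{-}2)$-fold partial derivative. The first is immediate because for finite $t \geq 0$ the operator $\upN_t$ rescales coefficients by strictly positive constants, so $\supp(\upN_t(f)) = \supp(f)$ is already $M$-convex.

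For the Hessian condition, fix $\alpha \in \Delta_n^{d-2}$ and write $H$ and $H_t$ for the Hessians of $\partial^\alpha f$ and $\partial^\alpha \upN_t(f)$ respectively. Using $\partial_i\partial_j\partial^\alpha = \partial^{\alpha + e_i + e_j}$, a short computation gives
\[
H_{ij} \;=\; c_{\alpha+e_i+e_j}\cdot(\alpha+e_i+e_j)!, \qquad (H_t)_{ij} \;=\; H_{ij}\cdot W_{ij},
\]
where $W_{ij} = \bigl((\alpha+e_i+e_j)!\bigr)^{-t}$. So $H_t$ is the Hadamard product $H \circ W$, and everything reduces to a statement about this Hadamard product.

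The crucial algebraic step is to decompose $W$ as
\[
W \;=\; c\bigl(uu^\top - \operatorname{diag}(d_1,\dotsc,d_n)\bigr)
\]
with $c = (\alpha!)^{-t} > 0$, $u_i = (\alpha_i+1)^{-t}$, and $d_i = u_i\bigl(u_i - (\alpha_i+2)^{-t}\bigr)$. Since $\alpha_i+1 \leq \alpha_i+2$, the inequalities $0 \leq d_i \leq u_i^2$ hold for every $t \geq 0$. Substituting yields
\[
H_t \;=\; c\,\operatorname{diag}(u)\,H\,\operatorname{diag}(u) \;-\; c\operatorname{diag}\bigl(H_{11}d_1,\dotsc,H_{nn}d_n\bigr).
\]
The first summand is congruent to $H$ via the positive diagonal matrix $\operatorname{diag}(u)$, hence has the same signature as $H$ by Sylvester's law and in particular at most one positive eigenvalue, while the second is a diagonal positive semidefinite matrix whose entries $cH_{ii}d_i$ are bounded by the diagonal entries $cH_{ii}u_i^2$ of the first summand. \Cref{lemma: diagonalsmaller} then forces $H_t$ to have at most one positive eigenvalue, completing the argument; the degenerate case in which $H$ is already negative semidefinite is handled trivially since then both summands are as well.

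I expect the main obstacle to be producing the clean rank-one-minus-diagonal decomposition of $W$ and verifying the uniform bounds $0 \leq d_i \leq u_i^2$; these are the only places where the hypothesis $t \geq 0$ is actually used, and they pinpoint why the statement would fail for $t < 0$. Once the decomposition is in hand, the conclusion follows formally from \Cref{lemma: diagonalsmaller}, which was proved earlier precisely to handle this type of ``subtract a bounded positive semidefinite diagonal'' operation on matrices with a single positive eigenvalue.
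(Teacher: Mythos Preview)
Your argument is correct but takes a different route from the paper. The paper's proof is a single line: it observes that the symbol of $\upN_t$ is a Lorentzian polynomial in $2n$ variables, by a straightforward generalization of the argument for $\upN = \upN_1$ in \cite{Branden-Huh20}*{Corollary~3.7}; the general machinery of that paper then yields that $\upN_t$ preserves Lorentzianness. Your approach instead works directly at the level of Hessians, computing the Hadamard factor $W$, exhibiting the rank-one-minus-diagonal decomposition $W = c(uu^\top - \operatorname{diag}(d))$, and invoking \Cref{lemma: diagonalsmaller}.

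What you gain is a self-contained argument that avoids the symbol formalism and pinpoints exactly where $t\geq 0$ enters (the inequality $(\alpha_i+2)^{-t}\leq (\alpha_i+1)^{-t}$). It is worth noting that your computation is essentially the content of \Cref{lemma: deriofnfj}: after the diagonal change of variables $x_i\mapsto (1+\alpha_i)^t x_i$, the Hessian of $\partial^\alpha \upN_t(f)$ becomes a positive multiple of the Hessian of $\partial^\alpha f$ with its diagonal entries scaled down by the factors $\big(\tfrac{1+\alpha_i}{2+\alpha_i}\big)^t\in(0,1]$. The paper proves that lemma for a different purpose (establishing strong star-shapedness in \Cref{thm:logljstronglystarshaped}), so your argument could be viewed as reusing that computation to give an alternative proof of \Cref{Normalization}. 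The paper's symbol approach is shorter and more conceptual, but relies on external machinery; yours is more hands-on and dovetails with tools already developed in \Cref{subsection: strong star-shapedness in degree 2}.
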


\begin{proof}
The proof of \cite{Branden-Huh20}*{Corollary 3.7} admits a straightforward generalization and shows that the symbol of $\upN_t$ is a Lorentzian polynomial in $2n$ variables.
\end{proof}

We will need the following lemma.

\begin{lemma}\label{lemma: deriofnfj}
    Let $f=\sum_{\beta\in\Delta^d_n}\frac{c_\beta}{\beta!}\cdot x^\beta$, $a\in\Delta^{d-2}_n$, and $g=\partial^a \upN_t(f)$ for some $t\in\R$.
    Then
    \begin{align*}
        &g((1+a_1)^t\cdot x_1,\ldots,(1+a_n)^t\cdot x_n)\\=&\frac{1}{(a!)^t}\cdot\left(\sum_{i=1}^n c_{a+2e_i}\cdot\left(\frac{1+a_i}{2+a_i}\right)^t\cdot\frac{x_i^2}{2}+\sum_{1\leq i<j\leq n}c_{a+e_i+e_j}\cdot x_ix_j\right).
    \end{align*}
\end{lemma}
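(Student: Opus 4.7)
The plan is to prove this by a direct calculation, expanding $N_t(f)$, applying $\partial^a$, and substituting.

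First I would expand $N_t(f)$ in monomial coordinates. Since $N_t$ is linear with $N_t(x^\beta)=x^\beta/(\beta!)^t$, we get
\[
N_t(f) \;=\; \sum_{\beta\in\Delta^d_n} \frac{c_\beta}{(\beta!)^{1+t}}\, x^\beta.
\]
Next, applying $\partial^a$ to a monomial gives $\partial^a x^\beta = \frac{\beta!}{(\beta-a)!}x^{\beta-a}$ when $\beta\geq a$ and $0$ otherwise, so re-indexing by $\gamma = \beta - a\in\Delta^2_n$ yields
\[
g \;=\; \partial^a N_t(f) \;=\; \sum_{\gamma\in\Delta^2_n}\frac{c_{a+\gamma}}{\bigl((a+\gamma)!\bigr)^t\,\gamma!}\,x^\gamma.
\]

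Then I would split the sum according to whether $\gamma = 2e_i$ or $\gamma = e_i+e_j$ with $i<j$. In the diagonal case $\gamma = 2e_i$, one has $\gamma! = 2$ and $(a+\gamma)! = a!\,(a_i+1)(a_i+2)$; in the off-diagonal case, $\gamma!=1$ and $(a+\gamma)! = a!\,(a_i+1)(a_j+1)$. Thus
\[
g \;=\; \frac{1}{(a!)^t}\Biggl(\sum_{i=1}^n \frac{c_{a+2e_i}}{(a_i+1)^t(a_i+2)^t}\cdot\frac{x_i^2}{2} \;+\; \sum_{i<j}\frac{c_{a+e_i+e_j}}{(a_i+1)^t(a_j+1)^t}\,x_ix_j\Biggr).
\]

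Finally, I would substitute $x_i \mapsto (1+a_i)^t x_i$ and simplify. The off-diagonal term $x_ix_j$ picks up the factor $(1+a_i)^t(1+a_j)^t$, which exactly cancels the denominator, leaving $c_{a+e_i+e_j}\,x_ix_j$. The diagonal term $x_i^2$ picks up $(1+a_i)^{2t}$; dividing by $(a_i+1)^t(a_i+2)^t$ leaves $\bigl(\tfrac{1+a_i}{2+a_i}\bigr)^t$. Collecting yields the desired identity. The computation is routine; the only care needed is the bookkeeping of factorials and the cancellation after substitution, so there is no real obstacle.
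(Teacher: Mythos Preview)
Your proof is correct and is precisely the straightforward calculation the paper alludes to; the paper's own proof consists of the single sentence ``This is a straight-forward calculation,'' and you have carried it out in full with the correct bookkeeping of factorials and the expected cancellations after substitution.
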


\begin{proof}
    This is a straight-forward calculation.
\end{proof}

Next, we note that the linear subspace $W_J\subseteq\R^J$ spanned by the vectors $(\alpha_i)_{\alpha\in J}$ for $i\in[n]$ is contained in $V_J$. Adding a multiple of one of these vectors in log-coordinates corresponds to scaling one variable of the corresponding polynomial by a positive constant. As the latter preserves Lorentzians, we conclude that $W_J+\log(\upL_J)=\log(\upL_J)$.

\begin{thm}\label{thm:logljstronglystarshaped}
    Let $t>0$, let $J\subseteq\Delta^d_n$ be an M-convex set, and let $f_J$ be the generating function of $J$. Then $(\log(\upN_t(f_J)),\log(\upL_J),V_J)$ is strongly star-shaped. 
\end{thm}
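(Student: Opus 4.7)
The plan is to reduce the theorem to the degree-two case \Cref{thm: strongstardeg2}(3) by applying the intersection criterion \Cref{lemma: starcrit1} to the family of coordinate projections realizing partial differentiation. Writing any $f\in\upH(d,n)$ supported on $J$ as $f=\sum_{\beta\in J}c_\beta\,\frac{x^\beta}{\beta!}$, I identify the log-coefficient vector with a point of $\R^J$. For each $\alpha\in\Delta^{d-2}_n$, I set $J_\alpha=\{\gamma\in\Delta^2_n\mid \alpha+\gamma\in J\}$, which is again M-convex, and let $\pi_\alpha\colon\R^J\to\R^{J_\alpha}$ be the coordinate projection $(c_\beta)_{\beta\in J}\mapsto(c_{\alpha+\gamma})_{\gamma\in J_\alpha}$. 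Crucially, in normalized monomial coordinates, $\partial^\alpha$ acts as precisely this coordinate projection, so this choice is natural.

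The next step is to establish the two intersection identities
\[
  V_J \ = \ \bigcap_{\alpha\in\Delta^{d-2}_n}\pi_\alpha^{-1}(V_{J_\alpha})
  \qquad\text{and}\qquad
  \log(\upL_J) \ = \ \bigcap_{\alpha\in\Delta^{d-2}_n}\pi_\alpha^{-1}(\log(\upL_{J_\alpha})).
\]
For the first, I would observe that the degenerate $3$-term Pl\"ucker relations over $\T_\infty$ cutting out $V_J$ are parameterized by a choice of $\alpha\in\Delta^{d-2}_n$ together with four basis directions, so they split into the $3$-term relations cutting out each $V_{J_\alpha}$. For the second, I would use the characterization of the Lorentzian property from \Cref{sec:lorentziandefinitions}: $f$ with support $J$ is Lorentzian if and only if every $\partial^\alpha f$ is Lorentzian of degree two with support $J_\alpha$. (The cases $d\leq1$ are trivial, since then $V_J=\log(\upL_J)=\R^J$.) Given these identities, \Cref{lemma: starcrit1} reduces the theorem to showing, for each $\alpha\in\Delta^{d-2}_n$, that the triple $(\pi_\alpha(\log(\upN_t(f_J))),\log(\upL_{J_\alpha}),V_{J_\alpha})$ is strongly star-shaped.

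I would then unpack $\pi_\alpha(\log(\upN_t(f_J)))$ using \Cref{lemma: deriofnfj}. Setting $c_\beta=1$ for all $\beta\in J$, that lemma shows that, after the substitution $x_i\mapsto(1+\alpha_i)^t x_i$ and multiplication by the positive constant $(\alpha!)^t$, the polynomial $\partial^\alpha\upN_t(f_J)$ becomes
\[
  g \ \coloneq \ \sum_{\alpha+2e_i\in J}\Big(\tfrac{1+\alpha_i}{2+\alpha_i}\Big)^t\tfrac{x_i^2}{2} \ + \ \sum_{\alpha+e_i+e_j\in J,\,i<j} x_i x_j.
\]
For $t>0$, every diagonal coefficient $\bigl(\tfrac{1+\alpha_i}{2+\alpha_i}\bigr)^t$ lies strictly in $(0,1)$, which is exactly the hypothesis needed to apply \Cref{thm: strongstardeg2}(3) to $g$, yielding strong star-shapedness of $\log(\upL_{J_\alpha})$ in $V_{J_\alpha}$ based at $\log(g)$.

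Finally, the variable rescalings and the scalar factor that relate $g$ back to $\partial^\alpha\upN_t(f_J)$ correspond, in log-coordinates, to translation by a vector in the lineality space $W_{J_\alpha}\subseteq V_{J_\alpha}$ spanned by the vectors $(\gamma_i)_{\gamma\in J_\alpha}$ for $i\in[n]$; note that the all-ones vector $\mathbf{1}=\tfrac{1}{2}\sum_i(\gamma_i)_{\gamma\in J_\alpha}$ lies in $W_{J_\alpha}$ (because $|\gamma|=2$ for $\gamma\in J_\alpha$), which handles the scalar factor. Since both variable scaling and scalar multiplication preserve the Lorentzian property by \cite{Branden-Huh20}*{Theorem 2.10}, we have $W_{J_\alpha}+\log(\upL_{J_\alpha})=\log(\upL_{J_\alpha})$, so \Cref{lemma: starshapedlineality} transfers strong star-shapedness from $\log(g)$ to $\pi_\alpha(\log(\upN_t(f_J)))$, completing the reduction. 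The main bookkeeping obstacle I anticipate is the second intersection identity: one must verify that requiring $\pi_\alpha(f)\in\upL_{J_\alpha}$ for every $\alpha$ forces the support of $f$ to equal $J$ exactly (rather than merely being contained in $J$), which follows from the fact that every $\beta\in J$ dominates some $\alpha\in\Delta^{d-2}_n$.
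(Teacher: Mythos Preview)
Your proposal is correct and follows essentially the same argument as the paper: reduce via \Cref{lemma: starcrit1} along the projections $\pi_\alpha=\partial^\alpha$ to the degree-two M-convex sets $J_\alpha$, then invoke \Cref{lemma: deriofnfj} to identify $\partial^\alpha\upN_t(f_J)$ (up to a translation in $W_{J_\alpha}$) with a polynomial in the form required by \Cref{thm: strongstardeg2}(3), and finish with \Cref{lemma: starshapedlineality}. If anything, you spell out the two intersection identities and the support-bookkeeping more explicitly than the paper does.
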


\begin{proof}
    We will apply \Cref{lemma: starcrit1} to $(x_*,X)=(\log(\upN_t(f_J)),\log(\upL_J))$. To this end, we consider $V=V_J$ as a linear subspace of $U=\R^J$. For every $a\in\Delta^{d-2}_n$, we consider the M-convex set 
    \begin{equation*}
     J_a=\{\alpha\in\Delta^2_n\mid \alpha+a\in J\}.
    \end{equation*}
    We let $U_a=\R^{J_a}$ and $\pi_a=\partial^a\colon U\to U_a$. Then $V_a=V_{J_a}=\log(\upR_{J_a}(\T_\infty))\subseteq U_a$ and $X_a=\log(\upL_{J_a})$ satisfy the assumptions of \Cref{lemma: starcrit1}. Therefore, it suffices to show that 
    \begin{equation*}
    (\pi_a(x_*),X_a)=(\log(\partial^a \upN_t(f_J)), \log(\upL_{J_a}))    
    \end{equation*}
    is strongly star-shaped as a subset of $V_{J_a}$ for all $a\in\Delta^{d-2}_n$. By \Cref{lemma: deriofnfj} and \Cref{lemma: starshapedlineality}, this follows from part (3) of \Cref{thm: strongstardeg2}.
\end{proof}

\begin{rem}\label{remark stable not star shaped}
    \Cref{thm:logljstronglystarshaped} is in general not true for the space of stable polynomials $\upS_J$ with support $J$, even if $\upS_J$ is nonempty. For example, for the non-Fano matroid $M=F_7^-$, the basis generating polynomial $f_M$ is not stable, although $\upS_{M}\neq\emptyset$ \cite{Choe-Oxley-Sokal-Wagner04}*{Example 11.5}. In particular, the set $\log(\upS_M)$ is not even star-shaped with respect to $\log(f_M)=\log(\upN(f_M))$. In \Cref{thm:betsyross} we will consider a matroid $M$ for which $\upS_M$ is not even connected.
\end{rem}

\begin{lemma}\label{lem: diraremconvex}
    Let $J\subseteq\Delta^d_n$ be an M-convex set and let $f_J$ be the generating function of $J$. For $t\geq0$ and $\nu\in V_J\subseteq\R^J$, we have
    \begin{equation*}
        \log(\upN_t(f_J)) \ + \ s\cdot\nu \ \ \in \ \ \log(\upL_J)
    \end{equation*}
    for all $s\geq0$ if and only if $\nu\colon J\to\R$ is M-concave.
\end{lemma}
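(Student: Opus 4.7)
The plan is to rewrite the condition $\log(\upN_t(f_J))+s\nu\in\log(\upL_J)$ as the assertion that the polynomial
\[
    h_s \ \coloneq \ \upN_t(g_s), \qquad g_s \ \coloneq \ \sum_{\alpha\in J} e^{s\nu(\alpha)}\frac{x^\alpha}{\alpha!},
\]
lies in $\upL_J$. Indeed, the coefficient of $\frac{x^\alpha}{\alpha!}$ in $h_s$ is $\frac{e^{s\nu(\alpha)}}{(\alpha!)^t}=\exp\bigl(\log(\upN_t(f_J))(\alpha)+s\nu(\alpha)\bigr)$. Both directions of the equivalence will be handled by passing through \Cref{thm:314325}, which relates $\T_0$-representations to one-parameter families of Lorentzian polynomials via the $R_p$-operation.

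For the ``if'' direction, I would assume $\nu$ is M-concave. Scaling by any $s\geq 0$ preserves M-concavity, so $-s\nu$ is M-convex. By \Cref{lemma:mconvexist0} (combined with \Cref{rem: excellent tracts}), this means that $\alpha\mapsto e^{s\nu(\alpha)}$ is a $\T_0$-representation of $J$. Applying \Cref{thm:314325}(2) at $p=1$ yields that $g_s$ is Lorentzian, and then $h_s=\upN_t(g_s)$ is Lorentzian by \Cref{Normalization}.

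For the ``only if'' direction, I would fix $s>0$ and verify that $R_p(h_s)$ is Lorentzian for every $p\geq 0$, so as to apply \Cref{thm:314325}(2) to $h_s$. A direct coefficient computation yields the key identity
\[
    R_p(h_s) \ = \ \upN_{(p-1)t}(h_{ps}),
\]
valid for all $p\geq 0$. When $p\in[0,1]$, the Lorentzian property of $R_p(h_s)$ is immediate from \Cref{thm:314325}(1) applied to $h_s$; when $p\geq 1$, the exponent $(p-1)t$ is nonnegative, and $h_{ps}$ is Lorentzian by hypothesis, so \Cref{Normalization} gives the same conclusion. \Cref{thm:314325}(2) then shows that $\rho_{h_s}$ is a $\T_0$-representation, and \Cref{lemma:mconvexist0} gives that the function $\alpha\mapsto s\nu(\alpha)-t\log(\alpha!)$ is M-concave.

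To conclude, since M-concavity is preserved under positive scaling, the function $\nu-\tfrac{t}{s}\log(\alpha!)$ is M-concave for every $s>0$. As $s\to\infty$ these functions converge pointwise on the finite set $J$ to $\nu$; since the exchange condition defining M-concavity involves only finitely many candidate indices $l$, a pigeonhole argument along any sequence $s_n\to\infty$ shows that pointwise limits of M-concave functions on $J$ are M-concave, whence $\nu$ itself is M-concave. The principal technical step is the identity $R_p(h_s)=\upN_{(p-1)t}(h_{ps})$, which for $p>1$ (where \Cref{thm:314325}(1) is not available) reinterprets $p$-th-power rescaling of coefficients as a composition involving a further normalization operator, thereby reducing matters to the preservation result \Cref{Normalization}.
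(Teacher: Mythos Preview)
Your proof is correct, and for the ``if'' direction it matches the paper's argument. For the ``only if'' direction, however, you take a genuinely different route.

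The paper appeals to \cite{Branden-Huh20}*{Theorem 3.20}, which asserts that a Lorentzian polynomial over a valued real closed field has M-convex coefficient valuation, together with Tarski's transfer principle. Roughly, one views the one-parameter family $s\mapsto h_s$ as a single Lorentzian polynomial over a real closed field carrying a valuation detecting the growth rate in $s$, and reads off directly that $-\nu$ is M-convex. This is short but imports nontrivial model-theoretic machinery and an external result.

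Your argument stays entirely within the tools already recalled in the paper. The identity $R_p(h_s)=\upN_{(p-1)t}(h_{ps})$ is the clever step: it lets you extend $R_p$-preservation from the range $p\in[0,1]$ (covered by \Cref{thm:314325}(1)) to all $p\geq 1$, by recasting the $p$-th power operation as a further normalization of a polynomial $h_{ps}$ that is Lorentzian by hypothesis. You then invoke \Cref{thm:314325}(2) and \Cref{lemma:mconvexist0} to get M-concavity of $s\nu-t\log(\alpha!)$, and finish with a pointwise limit as $s\to\infty$. The pigeonhole argument for the limit is sound since $J$ is finite and the set of admissible exchange indices $l$ for each triple $(\alpha,\beta,k)$ is finite. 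Your approach is longer but more elementary, avoiding both Tarski's principle and the deeper \cite{Branden-Huh20}*{Theorem 3.20}; the paper's approach is terser but relies on that external input.
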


\begin{proof}
    The ``if'' direction follows from \Cref{thm:314325} and the fact that the operator $\upN_t$ preserves Lorentzians (\Cref{Normalization}). The ``only if'' direction is implied by \cite{Branden-Huh20}*{Theorem 3.20} and Tarski's principle which says that a first-order sentence in the language of ordered fields holds in a given real closed field if and only if it holds in $\R$ \cite{Prestel84}*{Section 5}.
\end{proof}

\begin{thm}\label{thm:Alor}
    Let $\upB\subseteq V_J/\R\mathbf{1}$ be the unit ball with respect to some norm, and let $X=\upB\smallsetminus(\Dr_J\cap\partial B)$. Then the space $\P\upL_J$ is homeomorphic to $X$.
\end{thm}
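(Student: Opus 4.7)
The plan is to mirror the proof of \Cref{thm:ATq}, packaging the strong star-shapedness result of \Cref{thm:logljstronglystarshaped} together with \Cref{lem: diraremconvex} and the general star-shaped machinery of \Cref{sec:starshaped}.

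First I set up the quotient. Since scalar multiplication by positive reals preserves Lorentzian polynomials, we have $\R\mathbf{1}\subseteq V_J$ and $\R\mathbf{1}+\log(\upL_J)=\log(\upL_J)$. Therefore the coefficient-wise logarithm induces a homeomorphism $\P\upL_J\cong\pi(\log(\upL_J))$, where $\pi\colon V_J\to V_J/\R\mathbf{1}$ is the projection, so it suffices to show $\pi(\log(\upL_J))\cong X$.

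Applying \Cref{thm:logljstronglystarshaped} with $t=1$, the triple $(\log(\upN_1(f_J)),\log(\upL_J),V_J)$ is strongly star-shaped, and hence by \Cref{lemma: starshapedlineality2}(1) the quotient triple $(\pi(\log(\upN_1(f_J))),\pi(\log(\upL_J)),V_J/\R\mathbf{1})$ is also strongly star-shaped. By \Cref{lemma: starshapedlineality2}(2), the associated set of directions on $\partial B$ equals
\[
\big\{\pi(\nu)\in\partial B \,\big|\, \log(\upN_1(f_J))+s\nu\in\log(\upL_J)\text{ for all }s\geq0\big\}.
\]
By \Cref{lem: diraremconvex}, this condition on $\nu$ is exactly M-concavity of $\nu$, and by \Cref{lemma:mconvexist0} together with the definition of $\Dr_J$ as the image of $\Gr_J(\T_0)$ under the coordinate-wise logarithm, the resulting set of classes $\pi(\nu)$ is precisely $\Dr_J\cap\partial B$. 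Applying \Cref{cor: ballminusdirections} now yields a homeomorphism $\pi(\log(\upL_J))\cong B\setminus(\Dr_J\cap\partial B)=X$, as desired.

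The main work has already been done in \Cref{thm:logljstronglystarshaped} (strong star-shapedness of $\log(\upL_J)$ around the normalized generating function) and \Cref{lem: diraremconvex} (tight characterization of the unbounded rays through $\log(\upN_1(f_J))$ that remain inside $\log(\upL_J)$), so what is left is essentially formal: take the quotient by the scaling direction $\R\mathbf{1}$ and identify the ``directions at infinity'' with the Dressian. The one point requiring attention is that both halves of \Cref{lem: diraremconvex} are needed; without the ``only if'' direction one would obtain merely an inclusion $\Dr_J\cap\partial B\subseteq S(\pi(\log(\upN_1(f_J))),\pi(\log(\upL_J)))$ rather than equality.
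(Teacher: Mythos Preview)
Your proof is correct and follows essentially the same route as the paper: pass to the quotient by $\R\mathbf{1}$, invoke \Cref{thm:logljstronglystarshaped} and \Cref{lemma: starshapedlineality2} to get strong star-shapedness downstairs, use \Cref{lem: diraremconvex} (both directions) to identify the set of directions with $\Dr_J\cap\partial\upB$, and conclude via \Cref{cor: ballminusdirections}. The only difference is cosmetic: you spell out the identification of M-concave functions with $\Dr_J$ via \Cref{lemma:mconvexist0}, which the paper leaves implicit.
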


\begin{proof}
    Let $t>0$. By \Cref{thm:logljstronglystarshaped} and part (1) of \Cref{lemma: starshapedlineality2},  it follows that 
    \begin{equation*}
(\log(\upN_t(f_J)),\log(\P\upL_J),V_J/\R\mathbf{1})
    \end{equation*}
    is strongly star-shaped. It follows from \Cref{lem: diraremconvex} and part (2) of \Cref{lemma: starshapedlineality2} that $\Dr_J\cap\partial B$ is equal to $S(\log(\upN_t(f_J)),\log(\P\upL_J))$. 
    Thus the claim follows from \Cref{cor: ballminusdirections}.
\end{proof}

\begin{cor}\label{cor:lorismanifoldwithboundary}
    For an M-convex set $J$, the space $\P\upL_J$ is a manifold with boundary.
\end{cor}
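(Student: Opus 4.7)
The plan is to deduce this corollary essentially for free from Theorem~\ref{thm:Alor} combined with the star-shaped machinery developed in Section~\ref{sec:starshaped}. By Theorem~\ref{thm:Alor}, $\P\upL_J$ is homeomorphic to $X=\upB\smallsetminus(\Dr_J\cap\partial \upB)$, where $\upB$ is the closed unit ball in $V_J/\R\mathbf{1}$. Since $\upB$ is itself a topological manifold with boundary, it suffices to observe that removing a closed subset of the boundary $\partial \upB$ from $\upB$ leaves a space that is still a manifold with boundary: interior points of $\upB$ retain their Euclidean neighborhoods, and any boundary point of $\upB$ that survives in $X$ still admits a half-space neighborhood inside $X$ (by intersecting its half-space neighborhood in $\upB$ with the open set $X$).

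The only verification needed is that $\Dr_J\cap\partial \upB$ is indeed closed in $\upB$, which is already implicit in the proof of Theorem~\ref{thm:Alor}. More precisely, applying Lemma~\ref{lemma: starshapedlineality2}(2) to the strongly star-shaped triple from Theorem~\ref{thm:logljstronglystarshaped} identifies $\Dr_J\cap\partial \upB$ with the zero-set $S(\log(\upN_t(f_J)),\log(\P\upL_J))$ of the continuous function $\psi$ from Lemma~\ref{lemma: cont}, hence it is closed.

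An alternative and slightly more self-contained route is to bypass Theorem~\ref{thm:Alor} and invoke Corollary~\ref{cor: manifoldwithboundary} directly. By Theorem~\ref{thm:logljstronglystarshaped}, the triple $(\log(\upN_t(f_J)),\log(\upL_J),V_J)$ is strongly star-shaped. Since $W_J+\log(\upL_J)=\log(\upL_J)$ and $\R\mathbf{1}\subseteq W_J\subseteq V_J$, Lemma~\ref{lemma: starshapedlineality2}(1) yields that the quotient triple $(\log(\upN_t(f_J))+\R\mathbf{1},\log(\P\upL_J),V_J/\R\mathbf{1})$ is strongly star-shaped. Applying Corollary~\ref{cor: manifoldwithboundary} then shows that $\log(\P\upL_J)$ is a topological manifold with boundary, and transporting along the homeomorphism given by the coefficient-wise logarithm map gives the conclusion for $\P\upL_J$.

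I do not anticipate a genuine obstacle here, as all the work has already been done: Theorem~\ref{thm:logljstronglystarshaped} provides the strongly star-shaped input, the passage-to-quotient lemma (Lemma~\ref{lemma: starshapedlineality2}) handles the projective quotient, and Corollary~\ref{cor: manifoldwithboundary} packages up the manifold-with-boundary conclusion for any strongly star-shaped set. The mildly delicate point, should one want to write the proof out in detail, is choosing which of the two presentations above is cleaner — the one that invokes Theorem~\ref{thm:Alor} as a black box, or the one that reapplies the star-shaped quotient machinery in parallel; I would opt for the former since Theorem~\ref{thm:Alor} is stated immediately before and its conclusion already packages the desired homeomorphism type.
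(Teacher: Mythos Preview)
Your proposal is correct, and your second (``alternative'') route is exactly the paper's proof: the paper simply cites Corollary~\ref{cor: manifoldwithboundary} and Theorem~\ref{thm:logljstronglystarshaped}, with the passage to the quotient by $\R\mathbf{1}$ via Lemma~\ref{lemma: starshapedlineality2}(1) left implicit since it was just carried out in the proof of Theorem~\ref{thm:Alor} immediately above. Your first route through Theorem~\ref{thm:Alor} is also valid but mildly circuitous, since the verification that $\upB\smallsetminus(\text{closed subset of }\partial\upB)$ is a manifold with boundary is precisely what Corollary~\ref{cor: manifoldwithboundary} already packages.
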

\begin{proof}
    This follows from \Cref{cor: manifoldwithboundary} and \Cref{thm:logljstronglystarshaped}.
\end{proof}

\begin{cor}\label{cor:allarehomeo}
    The spaces $\P\upL_J$, $\Gr_J(\T_q)$, and $\Gr^{\rm w}_J(\T_q)$ are all homeomorphic to each other.
\end{cor}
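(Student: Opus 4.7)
The plan is to deduce \Cref{cor:allarehomeo} as an essentially formal consequence of the two preceding theorems, which together pin down all three spaces up to homeomorphism to a single explicit topological model. Concretely, \Cref{thm:ATq} identifies both $\Gr_J(\T_q)$ and $\Gr^{\rm w}_J(\T_q)$ with the model space $X = \upB \smallsetminus (\Dr_J \cap \partial \upB)$ inside $V_J/\R\mathbf{1}$, while \Cref{thm:Alor} identifies $\P\upL_J$ with a model space of the same form. Composing these homeomorphisms, and then inverting, gives the desired chain
\[
  \P\upL_J \;\cong\; X \;\cong\; \Gr_J(\T_q) \;\cong\; \Gr^{\rm w}_J(\T_q).
\]

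The one point that should be addressed carefully is that \Cref{thm:ATq} and \Cref{thm:Alor} are each stated with respect to ``some norm'' on the common ambient space $V_J/\R\mathbf{1}$, and a priori these norms might not be the same. To handle this, I would note that $V_J/\R\mathbf{1}$ is finite-dimensional, so any two norms $\|\cdot\|_1$ and $\|\cdot\|_2$ are equivalent; the radial map $x \mapsto (\|x\|_1 / \|x\|_2)\,x$ is a homeomorphism of $V_J/\R\mathbf{1}$ onto itself that carries $\upB_1$ to $\upB_2$ and $\partial\upB_1$ to $\partial\upB_2$ while fixing rays through the origin setwise. Since the Dressian $\Dr_J$ is a cone from the origin (indeed, $t\nu \in \Dr_J$ for all $t > 0$ and $\nu \in \Dr_J$, as noted in the discussion surrounding \Cref{ex:dressianinlorentzian}), this radial homeomorphism identifies $\Dr_J \cap \partial\upB_1$ with $\Dr_J \cap \partial\upB_2$, and hence the two model spaces $X_1$ and $X_2$ are homeomorphic. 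Thus the choice of norm is immaterial to the homeomorphism type of $X$.

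No serious obstacle is expected here; the content of the corollary is already packaged inside \Cref{thm:Alor} and \Cref{thm:ATq}, both of which rest on the strong star-shapedness machinery of \Cref{sec:starshaped} together with \Cref{prop:repstar} and \Cref{thm:logljstronglystarshaped}. If one wanted a slightly more conceptual formulation, one could observe that the composite homeomorphism $\Gr_J(\T_q) \cong \P\upL_J$ arises from identifying both sides, in their respective log-coordinates, as the same strongly star-shaped subset of $V_J/\R\mathbf{1}$ and then applying \Cref{cor: ballminusdirections} on each side. I would end by remarking that the dimension computation $\dim \P\upL_J = \tau(J)$ from \Cref{thm:ATq} transfers automatically across this homeomorphism.
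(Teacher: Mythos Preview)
Your proposal is correct and matches the paper's approach exactly: the paper's proof consists of the single sentence ``This is \Cref{thm:Alor} together with \Cref{thm:ATq}.'' Your additional remark that the homeomorphism type of $X$ is independent of the choice of norm (via the radial map and the conical structure of $\Dr_J$) is a valid elaboration of a point the paper leaves implicit.
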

\begin{proof}
    This is \Cref{thm:Alor} together with \Cref{thm:ATq}.
\end{proof}

By the  same argument, we obtain the corresponding results for orbit spaces.

\begin{thm}\label{thm:Alorreduced}
    Let $\upB\subseteq V_J/W_J$ be the unit ball with respect to some norm and let $X=\upB\smallsetminus(\ulineDr_J\cap\partial \upB)$. Then the space $\ulineL_J$ is homeomorphic to $X$.
\end{thm}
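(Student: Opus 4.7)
The plan is to mirror the proof of \Cref{thm:Alor}, replacing the one-dimensional lineality $\R\mathbf{1}$ by the full rescaling lineality $W_J$. The setup is already in place: as noted just before \Cref{thm:logljstronglystarshaped}, adding a vector of the form $(\alpha_i)_{\alpha\in J}\in W_J$ in log-coordinates corresponds to scaling the variable $x_i$ by a positive constant, and this operation preserves the Lorentzian property. Hence $W_J+\log(\upL_J)=\log(\upL_J)$, and moreover under the logarithm map $\log\colon \R^J_{>0}/\R^n_{>0}\to \R^J/W_J$ we may identify $\log(\ulineL_J)$ with $\log(\upL_J)/W_J$.

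Fix $t>0$. By \Cref{thm:logljstronglystarshaped}, the triple $(\log(\upN_t(f_J)),\log(\upL_J),V_J)$ is strongly star-shaped. Applying part (1) of \Cref{lemma: starshapedlineality2} with the linear subspace $W=W_J\subseteq V_J$, the image triple
\[
\bigl(\log(\upN_t(f_J))+W_J,\ \log(\ulineL_J),\ V_J/W_J\bigr)
\]
is strongly star-shaped as well. In particular it is closed, star-shaped from a point in the interior, and the ``opening up'' interior condition of strong star-shapedness passes through the quotient projection, which is open.

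It remains to identify the set of escape directions. By part (2) of \Cref{lemma: starshapedlineality2}, $S(\log(\upN_t(f_J))+W_J,\log(\ulineL_J))$ equals the image in $V_J/W_J$ of those $\nu\in V_J$ for which $\log(\upN_t(f_J))+s\nu\in\log(\upL_J)$ for every $s\geq 0$. By \Cref{lem: diraremconvex}, these are precisely the M-concave functions $\nu\colon J\to\R$ lying in $V_J$, whose image in $V_J/W_J$ is $\ulineDr_J$ by definition. Intersecting with the unit sphere $\partial\upB$ gives $\ulineDr_J\cap\partial\upB$. Then \Cref{cor: ballminusdirections} yields a homeomorphism $\ulineL_J\simeq \upB\smallsetminus(\ulineDr_J\cap\partial\upB)=X$.

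There is no new obstacle here beyond what was overcome for \Cref{thm:Alor}: the nontrivial analytic content (strong star-shapedness with respect to $\log(\upN_t(f_J))$, and the characterization of rays landing forever in $\log(\upL_J)$ via M-concavity) was already established in \Cref{thm:logljstronglystarshaped} and \Cref{lem: diraremconvex}. The only point requiring explicit mention is that $\R\mathbf{1}\subseteq W_J$, which holds because for $c_1=\cdots=c_n=c$ the rescaling action produces the vector $\log(c)\cdot d\cdot\mathbf{1}$ in log-coordinates; this compatibility is what allows one to regard the reduced Dressian $\ulineDr_J$ as the further quotient of $\Dr_J$ by $W_J/\R\mathbf{1}$ and ensures everything descends cleanly to the quotient $V_J/W_J$.
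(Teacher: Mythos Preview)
Your proof is correct and follows exactly the approach the paper intends: the paper itself states only ``By the same argument, we obtain the corresponding results for orbit spaces,'' and your write-up fills in precisely that argument by replacing $\R\mathbf{1}$ with $W_J$, invoking \Cref{thm:logljstronglystarshaped}, \Cref{lemma: starshapedlineality2}, \Cref{lem: diraremconvex}, and \Cref{cor: ballminusdirections} in the same order as in the proof of \Cref{thm:Alor}.
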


\begin{cor}
    The spaces $\ulineL_J$, $\ulineGr_J(\T_q)$, and $\ulineGr^{\rm w}_J(\T_q)$ are all homeomorphic to each other.
\end{cor}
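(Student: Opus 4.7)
The plan is to apply the two reduced-space theorems stated immediately above the corollary in tandem, exactly as in the proof of \Cref{cor:allarehomeo}. Concretely, pick any norm on the finite-dimensional real vector space $V_J/W_J$ and let $\upB$ denote the resulting closed unit ball around the origin, and set $X = \upB \smallsetminus (\ulineDr_J \cap \partial \upB)$. By \Cref{thm:ATqreduced}, for each $0 < q < \infty$ there are homeomorphisms
\[
\ulineGr_J(\T_q) \ \stackrel{\sim}{\longrightarrow} \ X \ \stackrel{\sim}{\longleftarrow} \ \ulineGr^{\rm w}_J(\T_q),
\]
and by \Cref{thm:Alorreduced} there is also a homeomorphism $\ulineL_J \stackrel{\sim}{\to} X$. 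Composing these homeomorphisms in the obvious way yields the desired homeomorphisms among the three spaces.

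The only subtlety worth flagging is that both \Cref{thm:ATqreduced} and \Cref{thm:Alorreduced} are stated with respect to an unspecified norm on $V_J/W_J$, so one should note (as is standard) that any two norms on a finite-dimensional real vector space are equivalent, so the resulting reference space $X$ is well defined up to homeomorphism and can in particular be chosen to be the same across both invocations. No substantial new work is required: the heavy lifting, namely the strong star-shapedness of $\log(\upL_J)$ in $V_J$ proved via \Cref{thm:logljstronglystarshaped} together with the identification of the boundary directions via \Cref{lem: diraremconvex}, and the analogous facts for the Grassmannians over $\T_q$ coming from \Cref{prop:repstar}, has already been carried out in establishing the two theorems being invoked. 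Hence there is no real obstacle; this is a purely formal consequence.
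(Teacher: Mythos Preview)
Your proof is correct and matches the paper's approach: the corollary is stated immediately after \Cref{thm:Alorreduced} with no separate proof, the implicit argument being precisely the combination of \Cref{thm:ATqreduced} and \Cref{thm:Alorreduced}, just as \Cref{cor:allarehomeo} was obtained from \Cref{thm:ATq} and \Cref{thm:Alor}. Your remark about norm equivalence is a harmless clarification but not strictly needed, since each theorem already asserts the conclusion for an arbitrary norm.
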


The preceding results allow us to transfer all the results on the topology of $\ulineGr^{\rm w}_J(\T_q)$ and $\Gr^{\rm w}_J(\T_q)$ from \Cref{rem:foundation} to $\ulineL_J$ and $\P\upL_J$. 

\section{Some detailed examples}
\subsection{Small uniform matroids}\label{sec:smalluniform}
In this section we fix a matroid $M$ on $[n]$. The map
\begin{equation*}
    \C \longrightarrow \R_{\geq0}, \qquad
    z \longmapsto |z|^2, 
\end{equation*}
defines a homomorphism of tracts from the field $\C$ to $\T_2$ by \Cref{lemma: morphisms from c to tq}. From this, we obtain a (continuous) map $\upR_M(\C)\to\upR_M(\T_2)$. Our goal is to prove the following:

\begin{thm}\label{thm:maptoboundary}
    The image of the map $\upR_M(\C)\to\upR_M(\T_2)$ is contained in $\upL_M$. In fact, every polynomial in the image is stable. Moreover, we have:
    \begin{enumerate}[(1)]\itemsep 5pt
        \item If $M$ has a $U_{2,4}$ minor, then the image of $\upR_M(\R)$ is contained in $\partial\upL_M$.
        \item If $M$ has a $U_{2,5}$ minor, then the image of $\upR_M(\C)$ is contained in $\partial\upL_M$.
        \newcounter{enumTemp}
    \setcounter{enumTemp}{\theenumi}
    \end{enumerate}
    (Here $\partial\upL_M$ is the boundary of $\upL_M$ relative to $\upR_M(\T_\infty)$.)
    \begin{enumerate}[(1)]\itemsep 5pt
        \setcounter{enumi}{\theenumTemp}
        \item If $M=U_{2,4}$, then the induced map
        \begin{equation*}
            \ulineGr_M(\R)\longrightarrow\partial\ulineL_M
        \end{equation*}
        is a homeomorphism.
        \item The map $\Gr(2,4)(\R)\to\partial\P\upL(2,4)_\sqfree$ is the quotient by the multiplicative group $\{-1,1\}^4$ acting on $\Gr(2,4)(\R)$ via rescaling.
    \end{enumerate}
\end{thm}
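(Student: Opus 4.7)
The plan is to handle the unquantified assertions first, then parts (1)--(4) in order. The first sentences of the statement (image in $\upL_M$, image consisting of stable polynomials) follow directly from results already available: by \Cref{lemma: morphisms from c to tq}(1) applied with $t=q=2$, the map $z \mapsto |z|^2$ is a tract homomorphism $\C \to \T_2$, inducing a continuous map $\upR_M(\C) \to \upR_M(\T_2)$, and the image polynomial $f = \sum_{S} |p_S|^2\, x^S$ is stable by \cite{Choe-Oxley-Sokal-Wagner04}*{Theorem 8.1} (recalled around \Cref{eq:mapfromgrassmannian}), hence Lorentzian by \Cref{ex:stable}.

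For part (1), I would first reduce to $M = U_{2,4}$ by a minor argument: taking appropriate partial derivatives and setting the deleted variables to zero sends $\upL_M$ to $\upL_N$ for a $U_{2,4}$-minor $N$, and by continuity of the restriction on $\upR_M(\T_\infty)$ this operation maps interior points of $\upL_M$ to interior points of $\upL_N$. It therefore suffices to treat $M = U_{2,4}$. There, setting $q_{ij}=p_{ij}^2$ and $g = \sum_{i<j} q_{ij}\, x_i x_j$, squaring the real Plücker relation $p_{12}p_{34}-p_{13}p_{24}+p_{14}p_{23}=0$ yields the degenerate Heron identity
\[
(q_{13}q_{24}-q_{12}q_{34}-q_{14}q_{23})^2 \;=\; 4\,q_{12}q_{34}\,q_{14}q_{23},
\]
so that $\sqrt{q_{12}q_{34}},\sqrt{q_{13}q_{24}},\sqrt{q_{14}q_{23}}$ are the sides of a degenerate triangle. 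A direct computation shows the $4\times 4$ Hessian $\mathcal{H}_g$ (zero diagonal, off-diagonal entries $q_{ij}$) is consequently singular, so $g \in \partial \upL_{U_{2,4}}$ by \Cref{lemma:singularboundary}.

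For part (2), the same minor reduction gives $M=U_{2,5}$. Here Plücker coordinates are complex and no single three-term relation automatically forces degeneracy, so a more delicate argument is required. My plan is to exhibit, for every complex representation, a degree-$2$ partial derivative whose simplified Hessian is singular — either by exploiting the ten complex Plücker relations of $U_{2,5}$ to extract a Cayley--Menger-type obstruction on the squared moduli $|p_{ij}|^2$, or by isolating a specific combination of relations which always forces a degenerate triangle inequality of the kind appearing in part (1). Applying \Cref{lemma:singularboundary} (combined with the interior criterion \Cref{cor: fullrankinterior}) then places the image in $\partial \upL_{U_{2,5}}$. This step I expect to be the main obstacle, since the obstruction must arise from the global compatibility of all ten complex three-term relations under the squared-modulus map rather than from a single identity.

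For parts (3) and (4), both $\ulineGr_{U_{2,4}}(\R)$ and $\partial \ulineL_{U_{2,4}}$ are one-dimensional: the former by $\dim\Gr(2,4)(\R)-\dim((\R^\times)^4/\R^\times)=4-3=1$, and the latter since $\ulineL_{U_{2,4}}$ is a disc with three boundary points removed as in \Cref{fig: Dressian and log-Lorentzian for U24}, whose topological boundary is three open arcs. The map in part (3) is continuous, lands in $\partial\ulineL_{U_{2,4}}$ by part (1), is surjective (the three arcs correspond to the three sign patterns of the real Plücker relation), and is injective modulo $(\R^\times)^4$: if $p_{ij}^2=(p'_{ij})^2$ for all $i<j$, then $p'_{ij}/p_{ij}\in\{\pm 1\}$ wherever nonzero, and substitution into the Plücker relation forces these signs to factor as $\varepsilon_i\varepsilon_j$ with $\varepsilon\in\{\pm 1\}^4$, which are absorbed by the torus action. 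A continuous bijection between two open one-manifolds with matching numbers of connected components is a homeomorphism, giving (3). Part (4) is then the same sign-tracking argument performed \emph{before} quotienting by the torus: the $\{\pm 1\}^4$-action by diagonal change of basis of $\R^4$ acts on Plücker coordinates as $p_{ij}\mapsto\varepsilon_i\varepsilon_j p_{ij}$, preserves squared moduli, and the preceding injectivity argument shows the fibers of $\Gr(2,4)(\R)\to\partial \P\upL(2,4)_\sqfree$ are exactly these orbits.
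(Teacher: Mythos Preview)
Your treatment of the unquantified assertion and of part (1) in the special case $M=U_{2,4}$ is fine: the Heron-type identity you write down is exactly the vanishing of $\det\mathcal H_g$, so \Cref{lemma:singularboundary} applies. The minor reduction you invoke (``interior maps to interior'') is morally correct but deserves one more sentence: it works because the minor map $V_M\to V_N$ is a surjective linear map in log-coordinates, hence open. The paper sidesteps this by recasting ``$f$ is a boundary point'' as ``$(1+\epsilon)\log f\notin\log\upL_M$ for all $\epsilon>0$'' via strong star-shapedness (\Cref{thm:logljstronglystarshaped}), which is cleaner.

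The real gap is part (2). Your plan (``exhibit a Cayley--Menger-type obstruction from the ten complex Pl\"ucker relations'') is not a proof, and there is no obvious single identity among the $|p_{ij}|^2$ analogous to the real case. The paper's route is entirely different and conceptual: via the Cauchy--Binet identity (\Cref{cor:cauchybinet}) and \Cref{lemma:imageiffdetrep}, a rank-two Lorentzian $f$ lies in the image of $\upR_M(\C)$ iff $f=\det(\sum x_iA_i)$ with $A_i$ positive semidefinite Hermitian $2\times 2$; after a linear change of variables this becomes $\det$-representability of the Lorentz form $x_1^2-x_2^2-\cdots-x_r^2$ by Hermitian $2\times 2$ matrices, possible iff $r\le 4$ since such matrices span a four-dimensional real space (\Cref{lemma:detreplor}, \Cref{prop:imagedeg2}). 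Thus every image polynomial has Hessian rank $\le 4$; for $U_{2,5}$ this forces singularity, and \Cref{lemma:singularboundary} finishes. This dimension argument is the missing idea.

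The same determinantal characterization is what makes surjectivity in part (3) work: a boundary point of $\upL_{U_{2,4}}$ has singular Hessian (\Cref{cor: fullrankinterior}), hence rank $\le 3$, hence lies in the image of $\upR_{U_{2,4}}(\R)$ by \Cref{prop:imagedeg2}. Your assertion that ``the three arcs correspond to the three sign patterns'' is not a proof of surjectivity. For injectivity, your sign argument almost works but is slightly off: the Pl\"ucker relation only forces $s_{12}s_{34}=s_{13}s_{24}=s_{14}s_{23}\in\{\pm1\}$, and the $-1$ case is \emph{not} of the form $\varepsilon_i\varepsilon_j$; you must also use that $p$ and $-p$ are the same projective point. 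The paper instead proves injectivity via the uniqueness of determinantal representations up to $\mathrm{GL}_2(\R)$-conjugation (\Cref{cor:oneorbitreal}), which yields $v_i=\pm v_i'$ directly. Finally, your sketch for part (4) ignores the boundary strata $\Gr_M(\R)$ with $M\ne U_{2,4}$; the paper handles these uniformly by a stabilizer computation in terms of connected components of $M$.
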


\begin{rem}
    By \cite{Branden21}, the space $\P\upL(2,4)_\sqfree$ is homeomorphic to a five-dimensional ball. Therefore, part (4) of \Cref{thm:maptoboundary} implies that the quotient space
    \begin{equation*}
        \Gr(2,4)(\R)/\{-1,1\}^4
    \end{equation*}
    is homeomorphic to a four-dimensional sphere. This non-obvious fact was independently proven in \cite{Buchstaber-Terzic}.
\end{rem}

\begin{rem}
    Let $\pi\colon\Gr^+(2,4)(\R)\to\Gr(2,4)(\R)$ be the universal cover. It is well known that $\Gr^+(2,4)(\R)$ is homeomorphic to $S^2\times S^2$ and $\pi$ is the quotient map by the action of $(t,t)$ on $S^2\times S^2$, where $t\colon S^2\to S^2$ is the antipodal map. In order to describe the map from part (4) of \Cref{thm:maptoboundary}, for $i=1,2,3$, let $h_i\colon S^2 \to S^2$ be the reflection along the hyperplane $x_i=0$, where we think of $S^2$ being embedded in the standard way into $\R^3$, and let $g_i=(h_i, h_i)$ be the diagonal action on $S^2\times S^2$. Furthermore, let $g_4$ exchange the two copies of $S^2\times S^2$. Then $g_1, ..., g_4$ generate a group $K$ isomorphic to $\{-1,1\}^4$ that acts on $S^2\times S^2$. The map $\Gr^+(2,4)(\R)\to\partial\P\upL(2,4)_\sqfree$, obtained by precomposing $\pi$ with the map from  part (4) of \Cref{thm:maptoboundary} is the quotient map $S^2\times S^2 \to (S^2\times S^2) / K$. Since the componentwise antipodal map $(t, t)$ is in $K$, it factors through the map $(S^2\times S^2) / (t,t) \to (S^2\times S^2) / K$, which is the map  from  part (4) of \Cref{thm:maptoboundary}. As in the previous remark, we obtain as a corollary the non-obvious statement that $(S^2\times S^2)/K$ is homeomorphic to $S^4$.
\end{rem}

The proof of \Cref{thm:maptoboundary} requires some preparation.

\begin{lemma}\label{lemma:propermap}
    Let $K$ be $\R$ or $\C$. The map $\upR_M(K)\to\upR_M(\T_2)$ is proper.
\end{lemma}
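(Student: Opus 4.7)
Writing $J = \cB(M)$, the map $\phi\colon \upR_M(K) \to \upR_M(\T_2)$ is the coordinatewise map $\rho \mapsto \bigl(|\rho(B)|^2\bigr)_{B \in J}$, and it is continuous because $z \mapsto |z|^2$ is continuous on $K$. I view $\upR_M(K)$ as a subset of $(K^\times)^J$ and $\upR_M(\T_2)$ as a subset of $\R_{>0}^J$, each equipped with its Euclidean subspace topology. The strategy is to verify properness directly by showing that the preimage of every compact set is closed inside a suitable compact subset of $K^J$.

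Let $C \subseteq \upR_M(\T_2)$ be compact. Since $C$ is a compact subset of $\R_{>0}^J$, each coordinate projection $\pi_B(C)$ is a compact subset of $\R_{>0}$, hence contained in a closed interval $[\epsilon_B, N_B]$ with $\epsilon_B > 0$. Consequently,
\[
 \phi^{-1}(C) \ \subseteq \ A \ \coloneq \ \prod_{B \in J} \bigl\{ z \in K : \sqrt{\epsilon_B} \leq |z| \leq \sqrt{N_B}\bigr\},
\]
and $A$ is a compact subset of $K^J$ (each factor is either a closed annulus in $\C$ or a union of two closed intervals in $\R$) lying entirely inside $(K^\times)^J$ because $\epsilon_B > 0$. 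The space $\upR_M(K)$ is cut out of $(K^\times)^J$ by the Plücker polynomial equations, so $\upR_M(K) \cap A$ is closed in $A$. Since $\phi$ is continuous and $C$ is closed in $\upR_M(\T_2)$, the set $\phi^{-1}(C)$ is closed in $\upR_M(K) \cap A$, and therefore closed in the compact set $A$; hence $\phi^{-1}(C)$ is compact.

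\textbf{Main obstacle.} There is no substantial obstacle: the argument reduces to two elementary observations, namely that $z \mapsto |z|^2$ is proper when restricted to the complement of any neighborhood of $0$ in $K$, and that compactness of $C$ in $\R_{>0}^J$ forces every coordinate to be bounded away from zero. The only point requiring a moment's care is that the preimage lies entirely in the nonvanishing locus $(K^\times)^J$, where $\upR_M(K)$ is genuinely closed as a Plücker variety.
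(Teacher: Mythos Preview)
Your proof is correct and takes essentially the same approach as the paper's: both exploit that the coordinatewise squared-absolute-value map $K^{\cB}\to\R^{\cB}$ is proper, that the Pl\"ucker locus is closed, and that compact subsets of $\R_{>0}^{\cB}$ have coordinates bounded away from zero. The paper phrases this more abstractly (start with the proper map $K^{\cB}\to\R^{\cB}$, restrict to the preimage of the nonvanishing locus, then to the closed Pl\"ucker subvariety), while you verify directly that preimages of compacts are compact, but the content is the same.
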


\begin{proof}
    Let $\cB$ be the set of bases of $M$. The map $K^{\cB}\to\R^{\cB}$ defined by taking the square of the absolute value of each coordinate is clearly proper. The restriction to the preimage of the set where all coordinates are nonzero remains proper. The same is then true for the restriction to the closed subset of points satisfying the Pl\"ucker relations.
\end{proof}

\begin{cor}\label{cor:mapisclosed}
    Let $K$ be $\R$ or $\C$. The map $\ulineGr_M(K)\to\ulineGr_M(\T_2)$ is closed.
\end{cor}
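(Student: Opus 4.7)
My plan is to deduce closedness on orbit spaces from properness of the lift together with the fact that the $\T_2^\times$-action is a quotient of the $K^\times$-action.

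First I would set up notation: write $\varphi\colon\upR_M(K)\to\upR_M(\T_2)$ for the coordinate-wise squared absolute value map, and $\pi_K\colon\upR_M(K)\to\ulineGr_M(K)$ and $\pi_{\T_2}\colon\upR_M(\T_2)\to\ulineGr_M(\T_2)$ for the quotient maps by the torus actions. Denote the induced map on orbit spaces by $\bar\varphi$. By \Cref{lemma:propermap}, $\varphi$ is proper. Both $\upR_M(K)$ and $\upR_M(\T_2)$ are locally closed subsets of the finite-dimensional Euclidean spaces $K^{\cB(M)}$ and $\R^{\cB(M)}$, and in particular are Hausdorff and locally compact (in fact metrizable), so a proper continuous map between them is closed.

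Next, let $C\subseteq\ulineGr_M(K)$ be closed. Then $\tilde C=\pi_K^{-1}(C)$ is closed in $\upR_M(K)$ and $(K^\times)^n$-invariant, so by the previous paragraph $\varphi(\tilde C)$ is closed in $\upR_M(\T_2)$. The key compatibility I would then verify is that $\varphi(\tilde C)$ is $\R_{>0}^n$-invariant: the group homomorphism $(K^\times)^n\to\R_{>0}^n$ sending $(t_1,\dotsc,t_n)$ to $(|t_1|^2,\dotsc,|t_n|^2)$ is surjective, and by the definition of the rescaling action in \Cref{eq:rescalingaction} one has $\varphi(t.\rho)=(|t|^2).\varphi(\rho)$, so any element of $\R_{>0}^n\cdot\varphi(\tilde C)$ lies in $\varphi((K^\times)^n\cdot\tilde C)=\varphi(\tilde C)$.

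Finally, invariance gives $\pi_{\T_2}^{-1}(\bar\varphi(C))=\R_{>0}^n\cdot\varphi(\tilde C)=\varphi(\tilde C)$, which is closed, so $\bar\varphi(C)$ is closed by the definition of the quotient topology. I do not anticipate a genuine obstacle here; the only slightly delicate point is checking that the ambient spaces are regular enough for ``proper implies closed,'' which is handled by embedding them into Euclidean space, and then bookkeeping the compatibility between the two torus actions.
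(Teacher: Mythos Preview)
Your proof is correct and follows essentially the same approach as the paper's: lift a closed set to the representation space, push it forward by the proper map $\varphi$, and descend via the quotient topology. You make explicit two points the paper leaves implicit, namely that proper implies closed in this locally compact Hausdorff setting, and that $\varphi(\tilde C)$ is $\R_{>0}^n$-saturated because $(K^\times)^n\to\R_{>0}^n$ is surjective and compatible with $\varphi$; the paper simply asserts that $\varphi(A')$ equals the full preimage $B'=\pi_{\T_2}^{-1}(B)$ and that $B'$ is closed by \Cref{lemma:propermap}.
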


\begin{proof}
    Let $A\subseteq\ulineGr_M(K)$ be a closed subset and let $B\subseteq\ulineGr_M(\T_2)$ be its image. Then the preimage $A'$ of $A$ under the map $\upR_M(K)\to\ulineGr_M(K)$ is sent by the map $\upR_M(K)\to\upR_M(\T_2)$ to the preimage $B'$ of $B$ under the map $\upR_M(\T_2)\to\ulineGr_M(\T_2)$. The set $A'$ is closed, as the preimage of a closed set, and $B'$ is closed by \Cref{lemma:propermap}. Finally, because $\ulineGr_M(\T_2)$ carries the quotient topology of $\upR_M(\T_2)$, we see that $B$ is closed.
\end{proof}

\begin{lemma}\label{lemma:detrepstable}
    Let $0\neq f\in\R[x_1,\ldots,x_n]$ be a homogeneous polynomial of degree $d$ with nonnegative coefficients.
    \begin{enumerate}[(1)]\itemsep 5pt
        \item If there exist positive semidefinite Hermitian $d\times d$ matrices $A_1,\ldots,A_n$ such that
        \begin{equation*}
            f=\det(x_1A_1+\cdots+x_nA_n),
        \end{equation*}
        then $f$ is stable.
        \item If $f$ is stable, then $f$ is Lorentzian.
    \end{enumerate}
\end{lemma}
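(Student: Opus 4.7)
The plan is to handle the two parts separately; each is a short, classical argument. Part (2) is essentially immediate from \Cref{ex:stable}, which already records that any nonzero homogeneous polynomial with nonnegative coefficients that is stable in the sense of being non-vanishing on $\H^n$ is Lorentzian (by \cite{Branden-Huh20}*{Proposition 2.2}). So the proof of (2) reduces to a direct citation.

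For part (1), I would use the standard positive-semidefinite pencil argument. Fix $w=(w_1,\dotsc,w_n)\in\H^n$ and write $w_k=a_k+ib_k$ with $b_k>0$. Set $M(w)=\sum_k w_kA_k=N+iB$, where $N=\sum_k a_kA_k$ is Hermitian and $B=\sum_k b_kA_k$ is a sum of positive semidefinite Hermitian matrices with strictly positive coefficients. Suppose $v\in\ker M(w)$. Then $v^*M(w)v=0$, and the imaginary part of this scalar is
\[
v^*Bv \ = \ \sum_{k=1}^n b_k\,v^*A_kv \ = \ 0.
\]
Since $b_k>0$ and $v^*A_kv\geq 0$ for all $k$, each summand vanishes, so $v^*A_kv=0$ for every $k$. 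Writing $A_k=(A_k^{1/2})^2$ gives $\|A_k^{1/2}v\|^2=0$, so $A_k^{1/2}v=0$ and hence $A_kv=0$ for every $k$.

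Therefore $v$ lies in $\bigcap_{k=1}^n\ker A_k$. But any vector in this common kernel is annihilated by $\sum_k x_kA_k$ for every $x$, so if the intersection were nontrivial the matrix pencil would be singular for all $x$ and $f$ would vanish identically, contradicting $f\neq 0$. Hence $v=0$, showing $M(w)$ is invertible and $f(w)=\det M(w)\neq 0$. This holds for all $w\in\H^n$, so $f$ is stable, and then (2) promotes stability to the Lorentzian property.

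There is no real obstacle: the only point that requires a moment of care is the passage from $v^*A_kv=0$ to $A_kv=0$, which uses precisely that each $A_k$ is positive semidefinite (not merely Hermitian), and the observation that a nontrivial common kernel of the $A_k$ would force $f\equiv 0$.
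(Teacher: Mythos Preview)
Your proof is correct. The paper itself does not give an argument but simply cites \cite{Branden07}*{Lemma 4.1} for part (1) and \cite{Branden-Huh20}*{Proposition 2.2} for part (2); the direct positive-semidefinite pencil argument you supply for (1) is exactly the standard proof underlying that citation, so the approaches coincide in substance.
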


\begin{proof}
    Part (1) is \cite{Branden07}*{Lemma 4.1} and part (2) is \cite{Branden-Huh20}*{Proposition 2.2}.
\end{proof}

\begin{lemma}\label{cor:cauchybinet}
    Let $A$ be a complex $d\times n$ matrix with columns $v_1,\ldots,v_n\in\C^d$, let $X$ be the $n\times n$ diagonal matrix with diagonal entries $x_1,\ldots,x_n$, and let $f=\det(AXA^*)$. Here $B^*$ denotes the conjugate transpose of a matrix $B$.
    \begin{enumerate}[(1)]\itemsep 5pt
        \item We have the following equalities:
        \begin{equation*}
          f=\det\left(x_1\cdot v_1\cdot v_1^*+\cdots+x_n\cdot v_n\cdot v_n^*\right)=\sum_{S\in\binom{[n]}{d}} |A(S)|^2 \cdot \prod_{i\in S} x_i  
        \end{equation*}
        where $A(S)$ denotes the determinant of the $d\times d$ submatrix of $A$ whose columns are indexed by $S$.
        \item The polynomial $f$ is stable (and hence Lorentzian).
    \end{enumerate}
    In particular, the image of the map 
     $\upR_M(\C)\to\upR_M(\T_2)$    
    is contained in $\upS_M\subseteq\upL_M$.
\end{lemma}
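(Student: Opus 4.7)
The plan is to verify the two claimed equalities in part (1) by a direct matrix computation followed by the Cauchy--Binet formula, to deduce the stability in part (2) from \Cref{lemma:detrepstable}(1) using the fact that each $v_i v_i^*$ is Hermitian positive semidefinite, and finally to obtain the last assertion by combining these with the description of elements of $\upR_M(\C)$ via $d \times n$ matrices.

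For the first equality in part (1), observe that since $X$ is diagonal with entries $x_1,\dotsc,x_n$, the $j$-th column of $AX$ is $x_j v_j$. Hence
\begin{equation*}
    AXA^* \ = \ \sum_{j=1}^n (x_j v_j) v_j^* \ = \ \sum_{j=1}^n x_j \cdot v_j v_j^*,
\end{equation*}
which gives the first equality. For the second equality, I apply the Cauchy--Binet formula to the product $(AX) \cdot A^*$ of the $d\times n$ and $n \times d$ matrices:
\begin{equation*}
    \det(AXA^*) \ = \ \sum_{S \in \binom{[n]}{d}} \det((AX)_S) \cdot \det((A^*)_S).
\end{equation*}
Since $(AX)_S$ has columns $x_j v_j$ for $j \in S$, we have $\det((AX)_S) = \big(\prod_{j \in S} x_j\big) \cdot A(S)$. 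Moreover $(A^*)_S$ is the conjugate transpose of $A_S$, so $\det((A^*)_S) = \overline{A(S)}$, and the claimed formula follows.

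For part (2), each matrix $v_j v_j^*$ is Hermitian and positive semidefinite, and $f = \det(\sum_j x_j \cdot v_j v_j^*)$ by part (1). Hence \Cref{lemma:detrepstable}(1) applies and shows that $f$ is stable; stability then implies the Lorentzian property by \Cref{lemma:detrepstable}(2).

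For the final assertion, I recall that a point of $\upR_M(\C)$ corresponds to the tuple of Pl\"ucker coordinates $(A(S))_{S \in \binom{[n]}{d}}$ of a $d$-dimensional subspace of $\C^n$ represented by a matrix $A \in \Mat_{d \times n}(\C)$, with $A(S) \neq 0$ precisely when $S \in \cB(M)$. By \Cref{lemma: morphisms from c to tq}, the map $\upR_M(\C) \to \upR_M(\T_2)$ is induced by $z \mapsto |z|^2$, so it sends such a representation to the function $S \mapsto |A(S)|^2$. By part (1), this function is exactly $\rho_f$ for $f = \det(AXA^*)$, and by part (2) the polynomial $f$ is stable with support $\cB(M)$, so its class lies in $\upS_M \subseteq \upL_M$. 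I do not anticipate any serious obstacle; the only point requiring some care is keeping track of the conjugation in the Cauchy--Binet step.
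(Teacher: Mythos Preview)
Your proof is correct and follows essentially the same approach as the paper: the first equality by direct expansion, the second by Cauchy--Binet, and part (2) by applying \Cref{lemma:detrepstable} to the positive semidefinite Hermitian matrices $v_i v_i^*$. Your write-up simply spells out the details that the paper leaves implicit.
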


\begin{proof}
    The first equality in part (1) is straight-forward and the second one follows from the Cauchy--Binet formula. Now part (2) follows from \Cref{lemma:detrepstable} applied to the positive semidefinite Hermitian $d\times d$ matrices $v_i\cdot v_i^*$.
\end{proof}

The proof of \Cref{cor:cauchybinet} gives a more precise description of the image of the map $\upR_M(\C)\to\upL_M$.

\begin{lemma}\label{lemma:imageiffdetrep}
    A Lorentzian polynomial $f\in\upL_M$ is in the image of $\upR_M(\C)\to\upL_M$ if and only if there are positive semidefinite Hermitian $d\times d$ matrices $A_1,\ldots,A_n$ such that $A_1+\cdots+A_n$ is positive definite and
    \begin{equation}\label{eq:detrep2}
        f=\det(x_1A_1+\cdots+x_nA_n).
    \end{equation}
    It is in the image of $\upR_M(\R)\to\upL_M$ if and only if these matrices can be chosen to be real. 
\end{lemma}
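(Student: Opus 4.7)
Both directions of the equivalence proceed via the classical Cauchy--Binet formula, with the reverse direction requiring a rank-reduction argument; the forward direction is essentially contained in \Cref{cor:cauchybinet}. Given $(p_S)_{S\in\cB(M)} \in \upR_M(\C)$ with $f = \sum_{S\in\cB(M)} |p_S|^2 x^S$, I would choose a $d\times n$ complex matrix $V$ with columns $v_1,\dotsc,v_n$ whose maximal minors realize the Pl\"ucker coordinates, $V(S) = p_S$; such $V$ exists since Pl\"ucker coordinates satisfying the Pl\"ucker relations come from a $d$-dimensional subspace of $\C^n$. Setting $A_i = v_i v_i^*$ produces PSD Hermitian matrices of rank at most one, with $\sum_i A_i = VV^*$ positive definite because $V$ has rank $d$ (at least one $p_S$ is nonzero). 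The Cauchy--Binet identity from \Cref{cor:cauchybinet} then yields $\det(\sum_i x_i A_i) = f$. For the real case, when $f$ comes from $\upR_M(\R)$, the matrix $V$ can be chosen real.

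For the converse, the key step is to show that each $A_i$ must have rank at most one. Once this is in place, one writes $A_i = v_i v_i^*$ with $v_i \in \C^d$, forms $V = [v_1,\dotsc,v_n]$, and applies Cauchy--Binet to obtain $f = \sum_S |V(S)|^2 x^S$. Since $VV^* = \sum_i A_i$ is positive definite, $V$ has rank $d$, so the Pl\"ucker coordinates $p_S = V(S)$ define an element of $\upR_M(\C)$ with support $\cB(M)$ that maps to $f$. For real symmetric $A_i$, one obtains real vectors $v_i$ and hence an element of $\upR_M(\R)$.

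To prove the rank-one claim, which is the main obstacle, I would write each $A_i = B_i B_i^*$ with $B_i$ of size $d\times r_i$, where $r_i = \rk(A_i)$, and concatenate to form $B = [B_1 \mid \cdots \mid B_n]$ of size $d \times N$, $N = \sum_i r_i$. Let $\sigma\colon[N] \to [n]$ be the block-index map. The generalized Cauchy--Binet formula yields
\begin{equation*}
    \det\Big(\sum_i x_i A_i\Big) = \sum_{S\in\binom{[N]}{d}} |B(S)|^2 \, x^{\sigma(S)},
\end{equation*}
where $\sigma(S) = \sum_{s\in S} e_{\sigma(s)} \in \N^n$. If some $A_i$ had rank at least two, $B_i$ would contain two linearly independent columns; since $BB^* = \sum_i A_i$ is positive definite, the columns of $B$ span $\C^d$, so by the Steinitz exchange lemma these two columns extend to a $d$-subset $S \subseteq [N]$ with $B(S) \neq 0$. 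Then $\sigma(S)_i \geq 2$, so the coefficient of $x^{\sigma(S)}$ in $f$ is strictly positive, contradicting $\supp(f) = \cB(M) \subseteq \{0,1\}^n$.
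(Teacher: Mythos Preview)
Your proof is correct and follows the same overall structure as the paper's: the forward direction via Cauchy--Binet (\Cref{cor:cauchybinet}), and the reverse direction by first reducing each $A_i$ to rank at most one, then writing $A_i=v_iv_i^*$ and applying Cauchy--Binet again. The one substantive difference is in how the rank-one claim is established: the paper simply cites \cite{Branden11}*{page 1207} for the fact that a determinantal representation of a multi-affine polynomial forces each $A_i$ to have rank one, whereas you supply a direct argument via the block decomposition $A_i=B_iB_i^*$, the generalized Cauchy--Binet expansion $\det(BX_\sigma B^*)=\sum_S|B(S)|^2\,x^{\sigma(S)}$, and Steinitz exchange. Your argument is self-contained and elementary, which is a mild advantage; the paper's citation is shorter but requires the reader to consult an external source.
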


\begin{proof}
    The `only if' direction for both claims follows from part (1) of \Cref{cor:cauchybinet}. For the `if' direction, we first observe that the matrices $A_i$ must have rank one by \cite{Branden11}*{page 1207} (since $f$ is multi-affine). Thus, we can write $A_i=v_i\cdot v_i^*$ for suitable $v_i\in\C^d$ (resp. $v_i\in\R^d$ in the real case). The coefficients of $f$ are given by the square of the absolute value of the maximal minors of the matrix whose columns are $v_1,\ldots,v_n$, which proves the claim.
\end{proof}

Whether a multi-affine polynomial $f$ has a representation as in \Cref{eq:detrep2} was characterized in \cite{Kummer-Plaumann-Vinzant15} for real matrices and in \cite{Ahmadieh-Vinzant24} for Hermitian matrices. Namely, $f$ has a representation of this form if and only if the polynomial
\begin{equation*}
    \frac{\partial f}{\partial x_i}\cdot\frac{\partial f}{\partial x_j}-f\cdot \frac{\partial^2 f}{\partial x_i \partial x_j}
\end{equation*}
is a (Hermitian) square for all $i,j\in[n]$. We will derive another characterization when $f$ has degree two.

\begin{lemma}\label{lemma:detreplor}
    Consider $r\geq1$ and the polynomial
    \begin{equation*}
        f=x_1^2-(x_2^2+\cdots+x_r^2).
    \end{equation*}
    There are Hermitian $2\times2$ matrices $A_1,\ldots,A_r$ such that
    \begin{equation}
        f=\det(x_1A_1+\cdots+x_rA_r)
    \end{equation}
    if and only if $r\leq4$. These matrices can be chosen to be real if and only if $r\leq3$. Furthermore, whenever such matrices exist, they can be chosen in such a way that $A_1$ is the identity matrix.
\end{lemma}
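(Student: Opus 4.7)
The plan is to reduce everything to a statement about orthonormal sets in low-dimensional Euclidean spaces.

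First I would establish the normalization $A_1 = I$. Evaluating the desired equation $f=\det(x_1A_1+\cdots+x_rA_r)$ at $(1,0,\dots,0)$ gives $\det(A_1)=f(1,0,\dots,0)=1$. A $2\times 2$ Hermitian matrix with positive determinant has two real eigenvalues of the same sign and hence is definite. If $A_1$ happens to be negative definite, I replace each $A_i$ with $-A_i$; since we are in even size the determinantal identity is preserved, and $A_1$ becomes positive definite. Writing $A_1 = P^\ast P$ with $P$ invertible (and $\lvert\det P\rvert^2 = \det A_1 = 1$), the substitution $A_i \rightsquigarrow (P^\ast)^{-1}A_i P^{-1}$ keeps Hermiticity, preserves the determinantal identity, and sends $A_1$ to $I$. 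In the real case $P$ is real, so real symmetry is preserved.

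With $A_1=I$, set $M=x_2A_2+\cdots+x_rA_r$. The $2\times 2$ identity $\det(x_1 I+M)=x_1^2+x_1\,\mathrm{tr}(M)+\det(M)$ reduces the problem to finding Hermitian (resp.\ real symmetric) matrices $A_2,\dots,A_r$ with $\mathrm{tr}(A_i)=0$ and $\det(M)=-(x_2^2+\cdots+x_r^2)$ identically. Next I would identify the relevant spaces: the space $\mathcal{T}_\C$ of traceless Hermitian $2\times 2$ matrices is a $3$-dimensional real vector space on which the quadratic form $A\mapsto -\det(A)$ is positive definite and, via the Pauli basis $\sigma_z,\sigma_x,\sigma_y$, is isometric to the standard Euclidean $\R^3$. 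The subspace $\mathcal{T}_\R$ of traceless real symmetric $2\times 2$ matrices is $2$-dimensional with orthonormal basis $\sigma_z,\sigma_x$, isometric to $\R^2$.

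Under this identification, the matrices $A_2,\dots,A_r$ correspond to vectors $v_2,\dots,v_r$ in $\R^3$ (resp.\ $\R^2$), and the condition $-\det(M)=x_2^2+\cdots+x_r^2$ becomes the identity $\lVert x_2 v_2+\cdots+x_r v_r\rVert^2=x_2^2+\cdots+x_r^2$ of quadratic forms in $x_2,\dots,x_r$. Polarizing, this is equivalent to $\langle v_i,v_j\rangle=\delta_{ij}$, i.e.\ the $v_i$ form an orthonormal set. An orthonormal set in $\R^3$ has cardinality at most $3$, and in $\R^2$ at most $2$; this forces $r-1\leq 3$ in the Hermitian case and $r-1\leq 2$ in the real case, giving the stated necessity. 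Conversely, for $r\leq 4$ (resp.\ $r\leq 3$) one takes $v_2,\dots,v_r$ to be the first $r-1$ standard basis vectors, which pull back to $\sigma_z,\sigma_x,\sigma_y$ (resp.\ $\sigma_z,\sigma_x$); a direct computation then gives $\det(x_1 I+x_2\sigma_z+x_3\sigma_x+x_4\sigma_y)=x_1^2-x_2^2-x_3^2-x_4^2$ and its real analogue, completing the existence part.

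No step looks genuinely hard: the only delicate point is the normalization of $A_1$, where one must verify that passing to $-A_i$ preserves Hermiticity, positive definiteness of $A_1$, and the determinantal identity (which it does because the determinant is homogeneous of even degree $2$ and Hermiticity is closed under negation). Once this is dispatched, the whole statement becomes the elementary fact that $\R^d$ admits at most $d$ orthonormal vectors, applied with $d=3$ and $d=2$.
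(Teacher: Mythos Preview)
Your proof is correct and complete. The paper takes a slightly different route on the non-existence side: rather than normalizing $A_1$ to the identity and then characterizing $A_2,\dots,A_r$ as an orthonormal frame in the $3$-dimensional (resp.\ $2$-dimensional) space of traceless Hermitian (resp.\ real symmetric) $2\times 2$ matrices, the paper does a direct dimension count on the full $4$-dimensional (resp.\ $3$-dimensional) space of Hermitian (resp.\ real symmetric) $2\times 2$ matrices. If $r>4$ (resp.\ $r>3$), linear dependence yields a nonzero $\lambda\in\R^r$ with $A(\lambda)=0$; then the gradient of $\det(A(x))$ vanishes at $\lambda$, contradicting the fact that $x_1^2-(x_2^2+\cdots+x_r^2)$ has nonvanishing gradient away from the origin. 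The existence arguments in both proofs are the same Pauli identity. Your approach buys a structural characterization of all normalized solutions as orthonormal frames, which is more than the lemma requires; the paper's argument is a touch quicker since it sidesteps the normalization for the non-existence direction and handles the ``Furthermore'' clause simply by observing that the explicit Pauli identity already has $A_1=I$.
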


\begin{proof}
    In both cases, the `if' direction can be deduced from the following identity:
    \begin{equation}\label{eq:detlor4}
        x_1^2-x_2^2-x_3^2-x_4^2=\det\begin{pmatrix}
            x_1+x_2& x_3+ i x_4\\
            x_3-i x_4 & x_1-x_2
        \end{pmatrix}.
    \end{equation}
    Conversely, assume there are Hermitian (resp. real symmetric) $2\times 2$ matrices $A_1,\ldots,A_r$ with $f=\det(A(x))$ where $A(x)=x_1A_1+\cdots+x_rA_r$. If $r>4$ (resp. $r>3$), then, for dimension reasons, there exists $0\neq\lambda\in\R^r$ such that $A(\lambda)=0$. Then the gradient of $\det(A(x))$ vanishes at $\lambda$, which contradicts the assumption that $f=\det(A(x))$  because the gradient of $f=x_1^2-(x_2^2+\cdots+x_r^2)$ does not vanish at nonzero points in $\R^r$.
\end{proof}

\begin{prop}\label{prop:imagedeg2}
    Assume that the rank of $M$ is two.
    A Lorentzian polynomial $f\in\upL_M$ is in the image of $\upR_M(\C)\to\upL_M$ if and only if its Hessian has rank at most four. It is in the image of $\upR_M(\R)\to\upL_M$ if and only if its Hessian has rank at most three. 
\end{prop}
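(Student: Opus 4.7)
The plan is to use \Cref{lemma:imageiffdetrep}, which reduces membership of $f$ in the image of $\upR_M(\C)\to\upL_M$ (resp.\ $\upR_M(\R)\to\upL_M$) to the existence of a determinantal representation $f=\det(A(x))$ with $A(x)=\sum_i x_i A_i$, where each $A_i$ is a PSD Hermitian (resp.\ real symmetric) $2\times 2$ matrix. Since $f$ is multi-affine, the coefficient of $x_i^2$ in $\det(A(x))$ equals $\det(A_i)$ and must vanish, so every $A_i$ has rank at most one.

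For the ``only if'' direction, write $A(x)=\begin{pmatrix}a(x)&b(x)\\\overline{b(x)}&c(x)\end{pmatrix}$, where $a,c$ are real linear forms with nonnegative coefficients (the diagonal entries of each $A_i$) and $b=b_1+ib_2$ with $b_1,b_2$ real linear forms. Then
\[
f \ = \ ac-|b|^2 \ = \ \tfrac{1}{4}(a+c)^2 - \tfrac{1}{4}(a-c)^2 - b_1^2 - b_2^2
\]
exhibits $f$ as a signed sum of at most four real squares, so the Hessian of $f$ has rank at most four. In the real case $b_2=0$, and the Hessian has rank at most three.

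For the ``if'' direction, I first reduce to $M=U_{2,k}$ via simplification. By \Cref{lemma: petterlem}, $\rho_f\in\upR_M(\T_\infty)$, so \Cref{thm: simplification} yields a simplification $g=f^{\simp}$ on $U_{2,k}$ with positive weights $\lambda_i$. Any PSD representation $g=\det\bigl(\sum_j y_j B_j\bigr)$ pulls back to a PSD representation of $f$ with $A_i=\lambda_i B_{p(i)}$, and a direct computation shows the Hessian rank of $f$ equals that of $g$. Thus I may assume $M=U_{2,k}$ with $k\geq 2$, in which case the Hessian $H$ of $f$ has vanishing diagonal and strictly positive off-diagonal entries. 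By the Perron--Frobenius theorem, $H$ admits a strictly positive eigenvector $u$ attaining its unique positive eigenvalue $\mu>0$, and the spectral decomposition of $H$ yields
\[
f(x) \ = \ \ell_0(x)^2 - \ell_1(x)^2 - \cdots - \ell_{r-1}(x)^2,
\]
where $r$ is the Hessian rank, $\ell_0(x)=\sqrt{\mu/2}\,(u\cdot x)$ has strictly positive coefficients, and the $\ell_i$ for $i\geq 1$ are real linear forms.

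If $r\leq 4$ (resp.\ $r\leq 3$), I pad with zero forms and substitute $\ell_0,\ldots,\ell_3$ (resp.\ $\ell_0,\ell_1,\ell_2$) into the identity \Cref{eq:detlor4} (resp.\ its real $3$-variable analogue from the proof of \Cref{lemma:detreplor}). This produces a Hermitian (resp.\ real symmetric) pencil $A(x)=\sum_i x_i A_i$ with $\det A(x)=f(x)$. Each $A_i$ has trace $2\ell_0^{(i)}\geq 0$ by the positivity of $\ell_0$, while $\det A_i$ equals the coefficient of $x_i^2$ in $f$, which vanishes by multi-affinity. Hence $A_i$ has eigenvalues $0$ and $2\ell_0^{(i)}\geq 0$, making it PSD of rank at most one, and \Cref{lemma:imageiffdetrep} completes the proof. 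The main obstacle I foresee is arranging $\ell_0$ to have nonnegative coefficients, since without this the identity from \Cref{lemma:detreplor} only yields a Hermitian pencil with coefficient matrices $A_i$ not necessarily PSD. This positivity is precisely what Perron--Frobenius provides after reducing to $U_{2,k}$, where $H$ becomes irreducible with strictly positive off-diagonal entries.
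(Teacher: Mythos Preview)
Your proof is correct and takes a genuinely different route from the paper's.

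For the ``only if'' direction, the paper first diagonalizes $f$ by a linear change of coordinates $\psi$ with $\psi(e_1)=\mathbf{1}$, bringing it to the form $x_1^2-(x_2^2+\cdots+x_r^2)$, and then invokes \Cref{lemma:detreplor} (a dimension count) to conclude $r\le 4$ (resp.\ $r\le 3$). Your argument is more elementary: you simply expand the $2\times 2$ determinant as $ac-b_1^2-b_2^2=\tfrac14(a+c)^2-\tfrac14(a-c)^2-b_1^2-b_2^2$, directly bounding the Hessian rank. This bypasses \Cref{lemma:detreplor} entirely for this direction.

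For the ``if'' direction, the paper again uses the change of coordinates $\psi$, builds the representation of the diagonalized form via \Cref{lemma:detreplor}, and then verifies that each coefficient matrix $A(\psi^{-1}(e_i))$ is PSD by observing that the univariate polynomial $\det(tI_2+A(\psi^{-1}(e_i)))=f(t,\dots,t,t+1,t,\dots,t)$ has only nonpositive zeros. Your route---reducing to $U_{2,k}$ via simplification, applying Perron--Frobenius to secure a diagonalization $f=\ell_0^2-\ell_1^2-\cdots$ with $\ell_0$ having strictly positive coefficients, and then checking PSD via the trace/determinant observation---is equally valid and arguably more transparent at the PSD step. The Perron--Frobenius argument makes explicit why one can take the ``timelike'' direction to be positive on the standard basis, which in the paper's proof is implicit in the choice $\psi(e_1)=\mathbf{1}$ (using $f(\mathbf{1})>0$). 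Incidentally, the reduction to $U_{2,k}$ is not strictly necessary: for a loopless rank-$2$ matroid the Hessian's off-diagonal support is a complete multipartite graph with at least two parts, hence already irreducible.

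One small omission: \Cref{lemma:imageiffdetrep} also requires $\sum_i A_i$ to be positive definite, which you do not check. This is immediate, since $\sum_i A_i$ is PSD (as a sum of PSD matrices) and $\det\bigl(\sum_i A_i\bigr)=f(1,\dots,1)>0$; alternatively, a common kernel vector of all $A_i$ would force $f\equiv 0$.
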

    
\begin{proof}
   We prove the statement for $\upR_M(\C)\to\upL_M$; the statement about  $\upR_M(\R)\to\upL_M$ follows analogously. 
   
   Let $f\in\upL_M$. Because $f$ has degree two and nonnegative coefficients, and because its Hessian has exactly one positive eigenvalue, there is a linear change of coordinates $\psi\colon\R^n\to\R^n$ with $\psi(e_1)=\sum_{i=1}^ne_i$ such that
   \begin{equation*}
       f\circ\psi=x_1^2-(x_2^2+\cdots+x_r^2),
   \end{equation*}
   where $r$ is the rank of the Hessian of $f$. 
   
   Assume that $f$ is in the image $\upR_M(\C)\to\upL_M$. Then, by \Cref{lemma:imageiffdetrep}, there are Hermitian $2\times 2$ matrices $A_1,\ldots,A_n$ such that $f=\det (A(x))$, where $A(x)=x_1A_1+\cdots+x_nA_n$. This implies that
   \begin{equation*}
       x_1^2-(x_2^2+\cdots+x_r^2)=\det (A(\psi(x))).
   \end{equation*}
   \Cref{lemma:detreplor} now implies that $r\leq4$. 
   
   Conversely, assume that $r\leq4$. Then, by \Cref{lemma:detreplor}, there are Hermitian $2\times2$ matrices $A_1,\ldots,A_n$ such that $f\circ\psi=\det(A(x))$ and $A_1=I_2$ is the identity matrix. (Here we choose $A_{r+1}=\cdots=A_n=0$.) We have
   \begin{equation*}
       f=\det(A(\psi^{-1}(x)))
   \end{equation*}
   and $A(\psi^{-1}(\sum_{i=1}^ne_i))=I_2$ is positive definite. By \Cref{lemma:imageiffdetrep}, it remains to prove that $A(\psi^{-1}(e_i))$ is positive semidefinite for $i=1,\ldots,n$. This follows since
   \begin{equation*}
       \det(t\cdot I_2+A(\psi^{-1}(e_i)))=f(t,\ldots,t,\underbrace{t+1}_{i\textnormal{th position}},t,\ldots,t)
   \end{equation*}
   has only non-positive zeros (which follows from the fact that $f$ has nonnegative coefficients).
\end{proof}

\begin{lemma}\label{lemma:imageofsquaremap}
    Let $M$ be a matroid of rank two and let $n\in\N$ be maximal such that $M$ has an $U_{2,n}$-minor. 
    \begin{enumerate}[(1)]\itemsep 5pt
        \item If $n\leq3$, then the map $\upR_{M}(\R)\to\upL_{M}$ is surjective.
        \item If $n=4$, then the map $\upR_{M}(\C)\to\upL_{M}$ is surjective.
        \item If $n=5$, then the map $\upR_{M}(\C)\to\partial\upL_{M}$ is surjective.
    \end{enumerate}
\end{lemma}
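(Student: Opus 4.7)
My plan is to reduce everything to \Cref{prop:imagedeg2}, which for any rank-two matroid $M$ characterizes the image of $\upR_M(\C)\to\upL_M$ (respectively $\upR_M(\R)\to\upL_M$) as consisting of those $f\in\upL_M$ whose Hessian has rank at most $4$ (respectively at most $3$). Accordingly, the goal becomes to control $\mathrm{rank}\,\mathrm{Hess}(f)$ in terms of the combinatorics of $M$, which I will do by first applying the simplification procedure of \Cref{thm: simplification}. Let $V_1,\ldots,V_r$ denote the parallel classes of non-loop elements of $M$, so that $M^{\mathrm{simp}}=U_{2,r}$ and $r$ equals the maximum $n$ from the hypothesis of the lemma, since the largest $U_{2,k}$-minor of $M$ is obtained by deleting all loops and choosing one representative from each parallel class. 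By \Cref{cor: biregular}, every $f\in\upL_M$ factors uniquely as $f(x)=g(L(x))$ with $L(x)=\bigl(\sum_{i\in V_j}\lambda_i x_i\bigr)_{j=1}^{r}$, $\lambda_i>0$, $\sum_{i\in V_j}\lambda_i=1$, and $g\in\upL_{U_{2,r}}$; moreover this correspondence is a homeomorphism $\upL_M\cong\upL_{U_{2,r}}\times\Delta^\circ_{V_1}\times\cdots\times\Delta^\circ_{V_r}$ inside the ambient space $\upR_M(\T_\infty)$.

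Since the $n\times r$ matrix of $L$ has full column rank $r$, the chain rule gives $\mathrm{rank}\,\mathrm{Hess}(f)=\mathrm{rank}\,\mathrm{Hess}(g)\leq r$. Parts (1) and (2) will then follow immediately: when $n=r\leq 3$ the Hessian of $f$ has rank at most three, and \Cref{prop:imagedeg2} furnishes a real determinantal representation of $f$, i.e.\ an $\R$-representation of $M$ realizing $f$; when $n=r=4$ the Hessian has rank at most four, and \Cref{prop:imagedeg2} similarly produces a complex representation.

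For part (3), where $r=5$, I will use the simplification homeomorphism to identify $f\in\partial\upL_M$ with $g\in\partial\upL_{U_{2,5}}$, since the open simplex factors contribute no boundary. The next step is to observe that $\upL_{U_{2,5}}$ is closed in $\upR_{U_{2,5}}(\T_\infty)$ and that its interior consists precisely of polynomials whose Hessian has strict Lorentzian signature $(1,4)$, i.e.\ full rank $5$: one direction is \Cref{lemma:singularboundary}, and the other holds because strict Lorentzian signature is an open condition. Consequently $g\in\partial\upL_{U_{2,5}}$ iff $\mathrm{rank}\,\mathrm{Hess}(g)\leq 4$, iff $\mathrm{rank}\,\mathrm{Hess}(f)\leq 4$, which by \Cref{prop:imagedeg2} is exactly the condition for $f$ to lie in the image of $\upR_M(\C)\to\upL_M$. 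Reading this chain of equivalences in reverse also confirms that every polynomial in the image lies in $\partial\upL_M$, recovering \Cref{thm:maptoboundary}(2) in this case.

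I expect the only delicate step to be the boundary identification in part (3), which relies on the simplification map being a genuine homeomorphism rather than merely a bijection; this is guaranteed by the regularity of both $\psi_J$ and $\psi_J^{-1}$ in \Cref{cor: biregular}. Once \Cref{prop:imagedeg2} is invoked, everything else is routine bookkeeping with Hessian ranks.
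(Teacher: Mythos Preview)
Your argument is correct and follows the same route as the paper: reduce to \Cref{prop:imagedeg2} by bounding the Hessian rank via the simplification of \Cref{cor: biregular}, and in case~(3) use that a boundary point of $\upL_{U_{2,5}}$ must have a rank-deficient Hessian. The paper phrases this last step as a direct appeal to \Cref{cor: fullrankinterior}; your unpacking via the homeomorphism and the open-signature argument is equivalent, and your citation of \Cref{lemma:singularboundary} is only needed for the extra reverse inclusion you note at the end, not for the surjectivity claim of the lemma itself.
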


\begin{proof}
    Let $f$ be an element of the target of one of the maps in question. It follows from \Cref{cor: biregular} that  the Hessian of $f$ has rank at most three in case (1) and rank at most four in case (2). In case (3), it follows from \Cref{cor: biregular} and \Cref{cor: fullrankinterior} that the rank of the Hessian of $f$ is at most four. Now the claim follows from \Cref{prop:imagedeg2}.
\end{proof}

\begin{proof}[Proof of \Cref{thm:maptoboundary}(1)--(2)]
    We prove part (2); part (1) can be proved analogously. 
    Suppose $M$ has a $U_{2,5}$ minor, i.e., there are disjoint, possibly empty, subsets $X,Y\subseteq[n]$ such that $(M/X)\smallsetminus Y$ is isomorphic to $U_{2,5}$. Without loss of generality, we can assume that $[n]\smallsetminus(X\cup Y)=[5]$. We consider the commutative diagram
    \begin{equation}\label{eq:diagramminor}
        \begin{tikzcd}
            \upR_M(\C)\arrow{r}\arrow{d}&\upL_M\arrow{d}\\
            \upR_{U_{2,5}}(\C)\arrow{r}& \upL_{U_{2,5}}
        \end{tikzcd}
    \end{equation}
    where the horizontal arrows are the coefficient-wise absolute square maps $z\mapsto|z|^2$ and the vertical arrows correspond on the level of polynomials to
    \begin{equation*}
        h\mapsto\left(\prod_{i\in X}\frac{\partial}{\partial x_i} h\right)|_{x_j=0\textrm{ for }j\in Y}.
    \end{equation*}
    Let $f\in\upL_M$ be in the image of $\upR_{M}(\C)\to\upL_{M}$. It suffices to show that for every $\epsilon>0$, we have $(1+\epsilon)\cdot\log(f)\not\in\log(\upL_M)$. For this, it suffices to show that for every $\epsilon>0$, we have $(1+\epsilon)\cdot\log(g)\not\in\log(\upL_{U_{2,5}})$, where $g\in\upL_{U_{2,5}}$ is the image of $f$ under the map $\upL_M\to\upL_{U_{2,5}}$ from \eqref{eq:diagramminor}. 
    Because $g$ is in the image of the map $\upR_{U_{2,5}}(\C)\to\upL_{U_{2,5}}$ from \eqref{eq:diagramminor}, the Hessian of $g$ has rank at most four by \Cref{prop:imagedeg2}. Thus, \Cref{lemma:singularboundary} implies that $g$ is on the boundary of $\upL_{U_{2,5}}$ (inside $\upR_{U_{2,5}}(\T_\infty)$). By \Cref{thm:logljstronglystarshaped}, this shows that $(1+\epsilon)\cdot\log(g)\not\in\log(\upL_{U_{2,5}})$ for all $\epsilon>0$.
\end{proof}

We now prepare for the proof of parts (3) and (4) of \Cref{thm:maptoboundary}.

\begin{lemma}\label{lemma:complexslorbits}
    Let $f=x_1^2-x_2^2-x_3^2-x_4^2$ and
    \[
     A_1 \ = \ \big(\begin{smallmatrix} 1& 0\\ 0 & 1\end{smallmatrix}\big), \qquad A_2 \ = \ \big(\begin{smallmatrix} 1& 0\\ 0 & -1 \end{smallmatrix}\big), \qquad A_3 \ = \ \big(\begin{smallmatrix} 0& 1\\ 1 & 0 \end{smallmatrix}\big), \qquad A_4 \ = \ \big(\begin{smallmatrix} 0& i\\ -i & 0 \end{smallmatrix}\big).     
    \]
    The following holds true:
    \begin{enumerate}[(1)]\itemsep 5pt
        \item $f=\det(x_1A_1+x_2A_2+x_3A_3+x_4A_4)$.
        \item The set of tuples $(B_1,B_2,B_3,B_4)$ of Hermitian $2\times 2$ matrices with 
        \begin{equation*}
         f=\det(x_1B_1+x_2B_2+x_3B_3+x_4B_4) 
        \end{equation*}
        consists of exactly four orbits under the $\textnormal{SL}_2(\C)$-action of coordinate-wise conjugation, i.e., $(B_1,B_2,B_3,B_4)\mapsto(SB_1S^*,SB_2S^*,SB_3S^*,SB_4S^*)$ for $S\in\textnormal{SL}_2(\C)$. Again $S^*$ denotes the conjugate transpose of $S$.
        \item The following tuples are representatives of the four orbits:
        \begin{equation*}
            (A_1,A_2,A_3,A_4),\quad (-A_1,-A_2,-A_3,A_4),\quad (A_1,A_2,A_3,-A_4), \quad (-A_1,-A_2,-A_3,-A_4).
        \end{equation*}
    \end{enumerate}
\end{lemma}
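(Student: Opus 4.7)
For part (1), I would compute the $2\times 2$ determinant
\[
\det(x_1 A_1 + x_2 A_2 + x_3 A_3 + x_4 A_4) \ = \ \det\begin{pmatrix} x_1+x_2 & x_3 + ix_4 \\ x_3 - ix_4 & x_1 - x_2 \end{pmatrix} \ = \ x_1^2 - x_2^2 - x_3^2 - x_4^2,
\]
which is identical to \eqref{eq:detlor4}. That each of the four tuples listed in part~(3) is a Hermitian determinantal representation of $f$ is then immediate, since substituting $-x_i$ for $x_i$ leaves $f$ unchanged.

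For parts (2) and the remainder of (3), the plan is to invoke the classical identification of $\mathrm{SL}_2(\C)$ as the spin double cover of the restricted Lorentz group. Let $V$ denote the real $4$-dimensional vector space of Hermitian $2 \times 2$ matrices. Then $(A_1, A_2, A_3, A_4)$ is a basis of $V$ in which the quadratic form $B \mapsto \det B$ reads, by (1), as the Minkowski form $q = t^2 - x^2 - y^2 - z^2$. Hence giving a tuple $(B_1, \ldots, B_4) \in V^4$ with $\det(\sum x_i B_i) = f$ is the same as giving a linear isometry $\phi\colon (\R^4, f) \to (V, \det)$, and the set of such isometries is a torsor under $O(V, \det) \cong O(1,3)$.

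I would next verify that the conjugation map $\rho\colon \mathrm{SL}_2(\C) \to O(V, \det)$ defined by $\rho(S)(B) = SBS^*$ has kernel $\{\pm I\}$ and image $\mathrm{SO}^+(V, \det)$. The kernel is computed directly: $SIS^* = I$ forces $S$ unitary, $SA_2 S^* = A_2$ then forces $S$ diagonal, and $SA_3 S^* = A_3$ finally forces $S = \pm I$. For the image, since $\mathrm{SL}_2(\C)$ is connected the image lies in the identity component $\mathrm{SO}^+(V, \det)$; since the kernel is discrete and $\dim_\R \mathrm{SL}_2(\C) = 6 = \dim_\R \mathrm{SO}^+(V, \det)$, the map $\rho$ is open, so its image is an open connected subgroup of $\mathrm{SO}^+(V, \det)$, hence all of it. Consequently the $\mathrm{SL}_2(\C)$-orbits on the isometry torsor biject with the component group $O(1,3)/\mathrm{SO}^+(1,3)$, a Klein four-group parametrized by the pair $(\det\phi,\text{time-orientation of }\phi)$; this gives (2).

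To finish (3), I would tabulate these invariants for the four listed tuples. Read in the basis $A_1, A_2, A_3, A_4$, the corresponding isometries $\phi\colon \R^4 \to V$ are $I$, $\mathrm{diag}(-1,-1,-1,1)$, $\mathrm{diag}(1,1,1,-1)$, and $-I$, with determinants $+1, -1, -1, +1$ and time-orientations (the sign of the $A_1$-coefficient of $\phi(e_1) = B_1$, equivalently whether $B_1$ is positive or negative definite) $+, -, +, -$ respectively. All four elements of $\{\pm 1\}\times\{\pm 1\}$ thus appear, so the four tuples lie in four distinct orbits. The main conceptual obstacle is the identification $\im(\rho) = \mathrm{SO}^+(V, \det)$, but it is entirely classical; everything else is bookkeeping of signs.
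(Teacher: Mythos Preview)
Your proposal is correct and follows essentially the same route as the paper: identify determinantal representations with elements of the Lorentz group $O(1,3)$ via the basis $(A_1,\dots,A_4)$ of Hermitian $2\times 2$ matrices, use that the image of $\mathrm{SL}_2(\C)\to O(1,3)$ is the identity component $\mathrm{SO}^+(1,3)$ so that orbits correspond to the four components of $O(1,3)$, and distinguish the four listed tuples by the invariants (determinant, time-orientation), which the paper phrases as preservation of positive definiteness and positivity of the determinant. The only point you leave implicit that the paper makes explicit is why the $B_i$ must be linearly independent; this is what guarantees your map $\phi$ is an isometry rather than merely a form-preserving linear map, and it follows from the nondegeneracy of $f$ (or, as the paper argues, from the nonvanishing of $\nabla f$ away from the origin).
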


\begin{proof}
    Part (1) is a direct calculation, see also \Cref{eq:detlor4}. For part (2), we first observe that such a tuple $(B_1,B_2,B_3,B_4)$ is necessarily a basis of the real vector space $H$ of Hermitian $2\times 2$ matrices. Indeed, if not, there would be some $0\neq\lambda\in\R^4$ such that $B(\lambda)$ is the zero matrix, but then the gradient of $\det(B(x))$ at $\lambda$ would be zero, contradicting $f=\det(B(x))$. Therefore, there exists an automorphism of the vector space $H$ which maps $A_i$ to $B_i$. Because it preserves $f$, this automorphism is an element of the Lorentz group $\textnormal{O}(1,3)$. On the other hand, the action $Y\mapsto SYS^*$ for $S\in\textnormal{SL}_2(\C)$ defines a group homomorphism $\textnormal{SL}_2(\C)\to\textnormal{O}(1,3)$ whose image is the identity component $\textnormal{SO}^+(1,3)$ of $\textnormal{O}(1,3)$. Because $\textnormal{O}(1,3)$ has four connected components \cite{Hall03}*{page 15}, this shows that there are four orbits. For part (3), it remains to show that the four given matrices are in different orbits. This can be seen by observing that the $\textnormal{SL}_2(\C)$-action preserves the property of being positive definite, and that elements in the image of $\textnormal{SL}_2(\C)\to\textnormal{O}(1,3)$ have positive determinant.
\end{proof}

\begin{cor}\label{cor:twoorbitcomplex}
    The set of all tuples $(B_1,B_2,B_3,B_4)$ of Hermitian $2\times 2$ matrices such that $B_1$ is positive definite and
        \begin{equation*}
         \det(x_1B_1+x_2B_2+x_3B_3+x_4B_4) = \lambda\cdot(x_1^2-x_2^2-x_3^2-x_4^2)
        \end{equation*}
    for some $\lambda>0$ consists of exactly two orbits under the $\textnormal{GL}_2(\C)$-action of coordinate-wise conjugation. Complex conjugation exchanges these two orbits.
\end{cor}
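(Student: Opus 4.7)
The plan is to reduce to \Cref{lemma:complexslorbits} by slicing the $\textnormal{GL}_2(\C)$-action to the locus where $B_1 = I$. First, since $B_1$ is positive definite Hermitian, I would write $B_1 = SS^*$ for some $S \in \textnormal{GL}_2(\C)$; then acting by $S^{-1}$ produces a new tuple with first coordinate $I$, and evaluating the determinant identity at $(x_1,\ldots,x_4) = (1,0,0,0)$ forces the new scalar to be $\lambda = 1$. Thus every $\textnormal{GL}_2(\C)$-orbit meets the slice $\mathcal{S}$ of tuples with $B_1 = I$, and it suffices to count $\textnormal{GL}_2(\C)$-orbits meeting $\mathcal{S}$.

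Next I would identify which $S \in \textnormal{GL}_2(\C)$ send $\mathcal{S}$ to itself: the requirement $SIS^* = I$ is precisely $S \in \textnormal{U}(2)$. The central circle $\{e^{i\theta}I\}$ acts trivially by conjugation, so $\textnormal{U}(2)$-orbits on $\mathcal{S}$ coincide with $\textnormal{SU}(2)$-orbits, and since $\textnormal{U}(2) \cap \textnormal{SL}_2(\C) = \textnormal{SU}(2)$ these also coincide with the intersections of $\textnormal{SL}_2(\C)$-orbits in the ambient set with $\mathcal{S}$. By part (2) of \Cref{lemma:complexslorbits}, there are four such $\textnormal{SL}_2(\C)$-orbits; only the two with first-coordinate representative $A_1 = I$ can meet $\mathcal{S}$, since on the other two the first coordinate has the form $-SS^*$, which is negative definite and cannot equal $I$. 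This gives exactly two $\textnormal{GL}_2(\C)$-orbits in our set, with representatives $(A_1, A_2, A_3, A_4)$ and $(A_1, A_2, A_3, -A_4)$.

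For the complex conjugation claim, I would verify that $(B_i) \mapsto (\overline{B_i})$ preserves the set (conjugation preserves Hermiticity and positive definiteness and fixes $\lambda \in \R_{>0}$) and is $\textnormal{GL}_2(\C)$-equivariant via $S \mapsto \overline{S}$, so it descends to the orbit set. A direct computation from the explicit matrices in \Cref{lemma:complexslorbits} gives $\overline{A_i} = A_i$ for $i \leq 3$ and $\overline{A_4} = -A_4$; thus conjugation carries the first representative to the second, swapping the two orbits. The only subtle step is the identification of $\textnormal{U}(2)$-orbits on $\mathcal{S}$ with $\textnormal{SL}_2(\C)$-orbits of the ambient set, which follows cleanly once one observes that the central scalar circle acts trivially.
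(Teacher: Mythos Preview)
Your proof is correct and structurally close to the paper's: both reduce to \Cref{lemma:complexslorbits}, discard the two $\textnormal{SL}_2(\C)$-orbits whose first coordinate is negative definite, and verify directly that complex conjugation swaps the representatives $(A_1,A_2,A_3,\pm A_4)$.

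The one genuine difference is in how you establish that the two surviving representatives lie in \emph{distinct} $\textnormal{GL}_2(\C)$-orbits. You use a slice argument: on the locus $B_1=I$, the $\textnormal{GL}_2(\C)$-action restricts to $\textnormal{U}(2)$, whose orbits coincide with $\textnormal{SU}(2)$-orbits and hence with the intersections of the $\textnormal{SL}_2(\C)$-orbits from the lemma; distinctness then comes for free. The paper instead argues that $\textnormal{GL}_2(\C)$ is connected, so every map $Y\mapsto SYS^*$ on the $4$-dimensional real space of Hermitian matrices has positive determinant, whereas sending $(A_1,A_2,A_3,A_4)$ to $(A_1,A_2,A_3,-A_4)$ would require determinant $-1$. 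Your route is slightly more structural and avoids the connectedness/orientation step; the paper's is a one-line invariant. Both are clean.
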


\begin{proof}
    By \Cref{lemma:complexslorbits}, the set in question is the union of the $\textnormal{GL}_2(\C)$-orbits of the following pair of complex conjugated tuples:
    \begin{equation*}
            (A_1,A_2,A_3,A_4) \textnormal{ and } (A_1,A_2,A_3,-A_4).
        \end{equation*}
    Because $\textnormal{GL}_2(\C)$ is connected, for every $S\in\textnormal{GL}_2(\C)$ the automorphism $Y\mapsto SYS^*$ of the space of Hermitian matrices has positive determinant. This shows that these are two different orbits.
\end{proof}

\begin{lemma}\label{lemma:realslorbits}
    Let $f=x_1^2-x_2^2-x_3^2$ and
    \[
     A_1=\big(\begin{smallmatrix} 1& 0\\0 & 1\end{smallmatrix}\big), \qquad A_2=\big(\begin{smallmatrix} 1& 0\\ 0 & -1 \end{smallmatrix}\big), \qquad A_3=\big(\begin{smallmatrix} 0& 1\\ 1 & 0 \end{smallmatrix}\big).
    \]
    The following holds true:
    \begin{enumerate}[(1)]\itemsep 5pt
        \item $f=\det(x_1A_1+x_2A_2+x_3A_3)$.
        \item The set of tuples $(B_1,B_2,B_3)$ of real symmetric $2\times 2$ matrices with
        \begin{equation*}
         f=\det(x_1B_1+x_2B_2+x_3B_3  ) 
        \end{equation*}
        consists of exactly four orbits under the $\textnormal{SL}_2(\R)$-action of coordinate-wise conjugation.
        \item The following tuples are representatives of the four orbits:
        \begin{equation*}
            (A_1,A_2,A_3),\quad  (-A_1,-A_2,A_3),\quad  (A_1,A_2,-A_3), \quad (-A_1,-A_2,-A_3).
        \end{equation*}
    \end{enumerate}
\end{lemma}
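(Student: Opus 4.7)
The plan is to mirror, over the reals, the argument given for \Cref{lemma:complexslorbits}: the key substitutions are real symmetric matrices in place of Hermitian matrices, $\textnormal{SL}_2(\R)$ in place of $\textnormal{SL}_2(\C)$, and the Lorentz group $\textnormal{O}(1,2)$ in place of $\textnormal{O}(1,3)$. Part (1) follows from the direct computation $x_1A_1 + x_2A_2 + x_3A_3 = \bigl(\begin{smallmatrix}x_1+x_2 & x_3 \\ x_3 & x_1-x_2\end{smallmatrix}\bigr)$, whose determinant is $x_1^2 - x_2^2 - x_3^2 = f$.

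For part (2), I would let $S$ denote the $3$-dimensional $\R$-vector space of real symmetric $2\times 2$ matrices, equipped with the quadratic form $Q = -\det$ of signature $(1,2)$. First, as in the complex case, any tuple $(B_1,B_2,B_3)$ with $f = \det(x_1B_1+x_2B_2+x_3B_3)$ must be a basis of $S$: otherwise $B(\lambda) = 0$ for some nonzero $\lambda\in\R^3$, which would force $\nabla \det B(\lambda) = 0$, contradicting that $\nabla f$ is nonzero away from the origin. Consequently there is a unique $\R$-linear automorphism $T$ of $S$ with $T(A_i) = B_i$, and since $T$ preserves $\det$ it lies in $\textnormal{O}(1,2)$. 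The conjugation action $Y \mapsto SYS^T$ defines a continuous group homomorphism $\textnormal{SL}_2(\R) \to \textnormal{O}(1,2)$; since $\textnormal{SL}_2(\R)$ is connected of dimension $3$ with discrete kernel $\{\pm I\}$, its image is the identity component $\textnormal{SO}^+(1,2)$. Because $\textnormal{O}(1,2)$ has exactly four connected components, this gives at most four $\textnormal{SL}_2(\R)$-orbits.

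For part (3), I would verify that the four listed tuples lie in four distinct connected components of $\textnormal{O}(1,2)$, forcing them to lie in pairwise different orbits. The component of $T$ is determined by (i) the sign of $\det T$ as a linear map on $S$ and (ii) whether $T$ preserves the positive cone of positive-definite matrices. In the basis $(A_1,A_2,A_3)$ the tuple $(-A_1,-A_2,A_3)$ corresponds to $\textnormal{diag}(-1,-1,1)$ (determinant $+1$, but the positive-definite $A_1$ is sent to the negative-definite $-A_1$, so the positive cone is reversed); $(A_1,A_2,-A_3)$ corresponds to $\textnormal{diag}(1,1,-1)$ (determinant $-1$, positive cone preserved); and $(-A_1,-A_2,-A_3)$ corresponds to $-I$ on $S$ (determinant $-1$, positive cone reversed). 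Together with the identity $(A_1,A_2,A_3)$, these representatives exhaust the four components.

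The main step that requires care will be this final component analysis, in particular confirming that $\textnormal{SO}^+(1,2)$ is precisely the subgroup preserving both orientation on $S$ and the positive cone; everything else is a transparent real analogue of \Cref{lemma:complexslorbits}.
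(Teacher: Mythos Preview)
Your proposal is correct and follows essentially the same approach as the paper, which simply states that the result ``follows from an argument similar to the proof of the previous lemma'' (\Cref{lemma:complexslorbits}); you have spelled out precisely that real analogue, replacing Hermitian matrices, $\mathrm{SL}_2(\C)$, and $\mathrm{O}(1,3)$ by real symmetric matrices, $\mathrm{SL}_2(\R)$, and $\mathrm{O}(1,2)$. One inconsequential slip: it is $\det$, not $-\det$, that has signature $(1,2)$ in the basis $(A_1,A_2,A_3)$, but this does not affect the argument since the orthogonal group is the same.
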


\begin{proof}
    This follows from an argument similar to the proof of the previous lemma.
\end{proof}

\begin{cor}\label{cor:oneorbitreal}
    The set of all tuples $(B_1,B_2,B_3)$ of real symmetric $2\times 2$ matrices such that $B_1$ is positive definite and
        \begin{equation*}
         \det(x_1B_1+x_2B_2+x_3B_3) = \lambda\cdot(x_1^2-x_2^2-x_3^2)
        \end{equation*}
    for some $\lambda>0$ consists of exactly one orbit under the $\textnormal{GL}_2(\R)$-action  of coordinate-wise conjugation.
\end{cor}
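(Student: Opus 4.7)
The plan is to deduce this from \Cref{lemma:realslorbits} by reducing to the case $\lambda=1$ via scaling, then determining which of the four $\textnormal{SL}_2(\R)$-orbits survive after imposing positive definiteness of $B_1$, and finally observing that the extra freedom in $\textnormal{GL}_2(\R)$ (versus $\textnormal{SL}_2(\R)$) merges the surviving orbits into one.

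First, I would observe that for any $S\in\textnormal{GL}_2(\R)$ and any symmetric matrix $Y$, we have $\det(SYS^*)=\det(S)^2\det(Y)$. Hence if $(B_1,B_2,B_3)$ realizes the given determinantal identity with constant $\lambda>0$, then $(SB_1S^*,SB_2S^*,SB_3S^*)$ realizes it with constant $\det(S)^2\lambda$. Choosing $S=\mu\cdot I$ with $\mu>0$ satisfying $\mu^2=\lambda^{-1/2}$ (or any $S$ with $|\det S|=\lambda^{-1/2}$), we may assume without loss of generality that $\lambda=1$, placing us in the setting of \Cref{lemma:realslorbits}.

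By \Cref{lemma:realslorbits}, every such tuple is $\textnormal{SL}_2(\R)$-conjugate to one of the four representatives $(A_1,A_2,A_3)$, $(-A_1,-A_2,A_3)$, $(A_1,A_2,-A_3)$, $(-A_1,-A_2,-A_3)$. For a conjugate $SA_1S^*=\pm SS^*$ to be positive definite we need the $+$ sign, since $SS^*$ is positive definite whenever $S\in\textnormal{GL}_2(\R)$. This eliminates the two orbits with $-A_1$, leaving only the orbits of $(A_1,A_2,A_3)$ and $(A_1,A_2,-A_3)$.

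The final step is to exhibit an element of $\textnormal{GL}_2(\R)\setminus\textnormal{SL}_2(\R)$ conjugating one orbit representative to the other. Taking $S=\operatorname{diag}(1,-1)$, a direct computation gives $SA_1S^*=A_1$, $SA_2S^*=A_2$, and $SA_3S^*=-A_3$, so the two $\textnormal{SL}_2(\R)$-orbits merge into a single $\textnormal{GL}_2(\R)$-orbit, proving the corollary. The main subtle point to watch is that the $\textnormal{GL}_2(\R)$-action on tuples no longer preserves $\lambda$, so the reduction to $\lambda=1$ must be done \emph{inside} the $\textnormal{GL}_2(\R)$-orbit of the given tuple rather than as a separate normalization; this is what justifies comparing arbitrary $\lambda$-data to the $\lambda=1$ classification of \Cref{lemma:realslorbits}.
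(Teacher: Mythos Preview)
Your proof is correct and follows essentially the same approach as the paper: reduce to $\lambda=1$, use \Cref{lemma:realslorbits} to get four $\textnormal{SL}_2(\R)$-orbits, discard the two with $-A_1$ by positive definiteness, and merge the remaining two via conjugation by a reflection matrix. The paper's proof is terser (it absorbs the $\lambda$-reduction and the positive-definiteness filtering into a single citation of \Cref{lemma:realslorbits}) and uses $\operatorname{diag}(-1,1)$ rather than your $\operatorname{diag}(1,-1)$, but the argument is the same.
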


\begin{proof}
    By \Cref{lemma:realslorbits}, the set in question is the union of the $\textnormal{GL}_2(\R)$-orbits of the two tuples
    \begin{equation*}
            (A_1,A_2,A_3) \textnormal{ and } (A_1,A_2,-A_3).
        \end{equation*}
    The claim then follows from the identity
    \begin{equation*}
            (A_1,A_2,A_3) = \begin{pmatrix}
                -1&0\\0&1
            \end{pmatrix} (A_1,A_2,-A_3)
            \begin{pmatrix}
                -1&0\\0&1
            \end{pmatrix}.\qedhere
        \end{equation*}
\end{proof}

\begin{lemma}\label{lemma:rankatleastthree}
    Let $M=U_{2,n}$ for $n\geq3$, and let $f\in\upL_M$. The Hessian of $f$ has rank at least three.
\end{lemma}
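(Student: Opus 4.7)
The plan is as follows. Since $U_{2,n}$ is a simple matroid of rank two, the corresponding M-convex support $J \subseteq \Delta_n^2$ consists precisely of the vectors $e_i+e_j$ for $1\leq i<j\leq n$ (no $2e_i$ lies in $J$). Hence every $f \in \upL_M$ has the form $f = \sum_{i<j} c_{ij}\,x_i x_j$ with all $c_{ij}>0$, and its Hessian $A=(a_{ij})$ satisfies $a_{ii}=0$ and $a_{ij}=c_{ij}>0$ for $i\neq j$.

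First I would establish that $A$ has rank at least two. Since $f$ is a nonzero Lorentzian polynomial of degree two, $A$ has at most one positive eigenvalue. On the other hand, $A$ is a nonzero symmetric real matrix, so it admits a nonzero eigenvalue, and $\operatorname{tr}(A)=0$ forces the sum of the eigenvalues to vanish. These facts together imply exactly one strictly positive eigenvalue and at least one strictly negative eigenvalue, giving $\operatorname{rank}(A)\geq 2$.

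Next I would rule out $\operatorname{rank}(A)=2$ by contradiction. The spectral theorem, together with the signature computed above, allows us to write $A = uu^T - vv^T$ for linearly independent vectors $u,v\in\R^n$. The diagonal condition $a_{ii}=u_i^2-v_i^2=0$ gives $u_i^2=v_i^2$ for all $i$. Note that $u_i=0$ would force $v_i=0$, hence $a_{ij}=0$ for every $j\neq i$, contradicting $a_{ij}>0$; so $u_i\neq 0$ for every $i$, and we may write $v_i=\varepsilon_i u_i$ with $\varepsilon_i\in\{+1,-1\}$. The off-diagonal entries then become
\[
a_{ij} \ = \ u_i u_j\,(1-\varepsilon_i\varepsilon_j),
\]
which is strictly positive only if $\varepsilon_i\neq\varepsilon_j$. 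Thus the map $i\mapsto\varepsilon_i$ would have to be injective from $[n]$ into $\{+1,-1\}$, which is impossible once $n\geq 3$ by the pigeonhole principle.

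I do not anticipate a serious obstacle here: the signature argument is immediate from the Lorentzian condition plus $\operatorname{tr}(A)=0$, and the rank-two analysis reduces to the sign bookkeeping above. The only mildly delicate point is verifying that $u_i$ (equivalently $v_i$) cannot vanish, which is handled by the positivity of every off-diagonal coefficient $c_{ij}$.
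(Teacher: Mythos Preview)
Your proof is correct. The paper takes a much shorter route: after rescaling the first three variables so that the coefficients of $x_1x_2$, $x_1x_3$, $x_2x_3$ are all equal to $1$, the principal $3\times 3$ submatrix of the Hessian in rows and columns $1,2,3$ is
\[
\begin{pmatrix} 0 & 1 & 1 \\ 1 & 0 & 1 \\ 1 & 1 & 0 \end{pmatrix},
\]
which has determinant $2$; hence the Hessian has rank at least three. (Indeed, even without rescaling, the corresponding $3\times 3$ minor is $2c_{12}c_{13}c_{23}>0$.) Your argument instead analyzes the global structure of a putative rank-two matrix via the decomposition $A=uu^T-vv^T$, which is a genuinely different and more conceptual approach. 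It is worth noting that neither argument actually needs the Lorentzian hypothesis: the conclusion follows purely from the zero diagonal and strictly positive off-diagonal entries, and in your setup the cases of signature $(2,0)$ and $(0,2)$ are already excluded by $a_{ii}=0$ before invoking the eigenvalue bound. The paper's proof is quicker; yours makes the obstruction to rank two more transparent.
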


\begin{proof}
    After scaling the first three variables appropriately, we can assume that the cofficients of the monomials $x_ix_j$ for $1\leq i<j\leq 3$ are all one. Now the claim is apparent.
\end{proof}

\begin{proof}[Proof of \Cref{thm:maptoboundary}(3)]
    Let $M=U_{2,4}$.
    We first prove that the map $\ulineGr_M(\R)\to\partial\ulineL_M$ is surjective. For this, it suffices to prove that the map $\upR_M(\R)\to\partial\upL_M$ is surjective. To this end, let $f\in\partial\upL_M$. By \Cref{cor: fullrankinterior}, this implies that the rank of the Hessian of $f$ is at most three, and therefore $f$ is in the image of $\upR_M(\R)\to\partial\upL_M$ by \Cref{prop:imagedeg2}.

    As a next step, we prove injectivity of the map $\ulineGr_M(\R)\to\partial\ulineL_M$. To this end, let $R$ and $R'$ be real $2\times4$ matrices that both represent the same $f\in\partial\upL_M$. We have to show that, after scaling their columns by nonzero scalars, their rows span the same two dimensional subspace of $\R^4$.
    Let $v_1,v_2,v_3,v_4\in\R^2$ and $v_1',v_2',v_3',v_4'\in\R^2$ be the columns of $R$ and $R'$, respectively. Letting $A_i=v_i\cdot v_i^t$ and $A_i=v_i'\cdot v_i'^t$ for $i=1,2,3,4$, \Cref{cor:cauchybinet} shows that
    \begin{equation*}
        f=\det(x_1A_1+x_2A_2+x_3A_3+x_4A_4)=\det(x_1A_1'+x_2A_2'+x_3A_3'+x_4A_4').
    \end{equation*}
    By \Cref{cor: fullrankinterior} and \Cref{lemma:rankatleastthree}, the Hessian of $f$ has rank three. Thus, by a linear change of coordinates, one can bring $f$ to the form $x_1^2-x_2^2-x_3^2$. Therefore, it follows from \Cref{cor:oneorbitreal} that there exists $S\in\textnormal{GL}_2(\R)$ such that $A_i=SA_i'S^t$ for all $i=1,2,3,4$. Replacing $R'$ by $SR$ does not change the row span, so we can assume from now on that $v_iv_i^t=v_i'v_i'^t$ holds for $i=1,2,3,4$. This implies that $v_i=\pm v_i'$ for $i=1,2,3,4$ as desired.

    Therefore, the map $\ulineGr_M(\R)\to\partial\ulineL_M$ is continuous and bijective. Because it is also closed by \Cref{cor:mapisclosed}, it is a homeomorphism.
\end{proof}

\begin{proof}[Proof of \Cref{thm:maptoboundary}(4)]
    We denote by $G=(\R^\times)^4$ and $H=(\R^\times)^4$ the groups that act on $\Gr(2,4)(\R)$ and $\P\upL(2,4)_\sqfree$, respectively. We also write $f\colon\Gr(2,4)(\R)\to\partial\P\upL(2,4)_\sqfree$ and $\varphi\colon G\to H$ for the map that takes squares coordinate-wise. Finally, we let $K=\ker(\varphi)=\{-1,1\}^4$. 
    
    Since $\Gr(2,4)(\R)$ is compact, it suffices to show that the fibers of $f$ are exactly the orbits of the action of $K$ on $\Gr(2,4)(\R)$. By construction, we have $\varphi(g).f(x)=f(g.x)$ for all $g\in G$ and $x\in\Gr(2,4)(\R)$. This shows in particular that every fiber of $f$ is invariant under $K$. We also note that every fiber $f^{-1}(y)$ of $f$ is contained in an orbit of $G$; this follows from part (3) of \Cref{thm:maptoboundary} for $y\in\partial\P\upL_{U_{2,4}}$ and the fact that $G$ acts transitively on $\Gr_M(\R)$ for every other matroid $M$ of rank $2$ on $[4]$. 
    
    Next, we claim that it suffices to prove for all $x\in\Gr(2,4)(\R)$ that
    \begin{equation}\label{eq:stabandkerngeneratepreim}
        \varphi^{-1}(H_{f(x)})\subseteq K\cdot G_x.
\end{equation} In fact, if $f(x)=f(y)$ then there exists $g\in G$ with $y=g.x$ and thus $\varphi(g)\in H_{f(x)}$. By \Cref{eq:stabandkerngeneratepreim}, we can write $g=ab$ with $a\in K$ and $b\in G_x$. Thus, $y=g.x=a.(b.x)=a.x$, which proves the claim. 

In order to prove \Cref{eq:stabandkerngeneratepreim}, let $x\in\Gr_M(\R)$. We first note that $H_{f(x)}$ consists of all tuples for which all the entries corresponding to the same connected component of $M$ agree. Then $\varphi^{-1}(H_{f(x)})$ consists of all tuples for which all the entries corresponding to the same connected component of $M$ agree up to a sign. This proves \Cref{eq:stabandkerngeneratepreim}, and hence \Cref{thm:maptoboundary}(4).
\end{proof}

We conclude with a lemma for future reference.

\begin{lemma}\label{lemma:orbitfibers}
    Let $M$ be a matroid of rank two. Then the fiber over every element in the image of the map $\ulineGr_{M}(\C)\to\ulineL_{M}$ is an orbit under complex conjugation.
\end{lemma}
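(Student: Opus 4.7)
The plan is to exploit the determinantal description of the image provided by \Cref{lemma:imageiffdetrep}. Given $[R], [R'] \in \ulineGr_M(\C)$ mapping to the same $[f] \in \ulineL_M$, I would use the $(\C^\times)^n$-action on $\upR_M(\C)$ (which contains the $(\R_{>0})^n$-action responsible for the quotient $\ulineL_M = \upL_M/\R_{>0}^n$) to rescale the columns of $R'$ so that the associated polynomials agree exactly: $f = \det\bigl(\sum x_i A_i\bigr) = \det\bigl(\sum x_i A_i'\bigr)$, where $A_i = v_i v_i^*$ and $A_i' = v_i'(v_i')^*$ for $v_i, v_i' \in \C^2$ the columns of $R$ and $R'$. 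The key observation is that if $(A_i) = (S A_i' S^*)$ for some $S \in \textnormal{GL}_2(\C)$, then $v_i v_i^* = (Sv_i')(Sv_i')^*$ forces $v_i = e^{i\theta_i} S v_i'$ for some phases $\theta_i$, hence $R = S R' D$ for a unitary diagonal matrix $D$ and thus $[R] = [R']$ in $\ulineGr_M(\C)$; similarly, if $(A_i) = (S\overline{A_i'}S^*)$, one obtains $[R] = [\overline{R'}]$.

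Thus the lemma reduces to classifying Hermitian pencils $(A_i)$ realizing a fixed $f$ up to $\textnormal{GL}_2(\C)$-conjugation combined with entry-wise complex conjugation. By \Cref{prop:imagedeg2}, the Hessian of $f$ has rank at most four, and after a real linear substitution in the $x_i$ I would reduce to one of the normal forms $\lambda(x_1^2 - x_2^2 - x_3^2 - x_4^2)$, $\lambda(x_1^2 - x_2^2 - x_3^2)$, a product of two linear forms, or a perfect square. The rank-four case is then handled by \Cref{cor:twoorbitcomplex}, which produces exactly two orbits exchanged by complex conjugation. For the rank-three case, I would argue that every Hermitian tuple is $\textnormal{GL}_2(\C)$-conjugate to a real symmetric one: after normalizing $B_1 = I$, the traceless Hermitian matrices $B_2, B_3$ correspond, via the Pauli basis, to an orthonormal pair of common norm in $\R^3$, and the adjoint action of $\textnormal{SU}(2)/\{\pm I\} \cong \textnormal{SO}(3)$ is transitive on such pairs, so we can rotate to real symmetric representatives of the form $(\sigma_1, \sigma_3)$; \Cref{cor:oneorbitreal} then gives a single orbit. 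The remaining low-rank cases, where $f$ is a product of linear forms or a perfect square, are straightforward by inspection.

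The main obstacle I anticipate is the technical bookkeeping in the change of variables: after the real linear substitution $x = Ty$, the transformed pencil coefficients $B_j = \sum_i T_{ij} A_i$ must satisfy the positivity hypothesis of \Cref{cor:twoorbitcomplex}, namely that $B_1$ is positive definite. This can be arranged by choosing $y_1$ to be a timelike direction in the forward light cone of $f$; since $f$ has nonnegative coefficients, the all-ones vector lies in this cone, and one can take $y_1$ to be a strictly positive combination of the $x_i$, making $B_1 = \sum T_{i1} A_i$ a strictly positive combination of the $A_i$ and hence positive definite (because $M$ has rank two and the $v_i$ span $\C^2$). Once this reduction is in place, the orbit classifications above translate back to the original coordinates and yield the desired dichotomy $[R] = [R']$ or $[R] = [\overline{R'}]$.
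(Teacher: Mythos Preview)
Your proposal is correct and follows essentially the same route as the paper: reduce to classifying Hermitian pencils realizing a fixed quadratic form up to $\textnormal{GL}_2(\C)$-conjugation, normalize the Hessian, and invoke \Cref{cor:twoorbitcomplex} in rank four and \Cref{cor:oneorbitreal} in rank three. The paper organizes the case split slightly differently (by whether $f$ lies in the image of $\ulineGr_M(\R)$ rather than by rank, using \Cref{lemma:rankatleastthree} to rule out ranks below three once $M$ has a $U_{2,3}$-minor), and in the rank-three case it simply points back to the injectivity argument of \Cref{thm:maptoboundary}(3); your Pauli/$\textnormal{SO}(3)$ argument makes explicit the step that every Hermitian triple realizing $x_1^2-x_2^2-x_3^2$ is $\textnormal{GL}_2(\C)$-conjugate to a real symmetric one, which is the content the paper leaves implicit.
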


\begin{proof}
    The statement is trivial if the simplification of $M$ is $U_{2,2}$. Hence we can assume that $M$ has a $U_{2,3}$-minor.
    We first prove that the fiber over an element in the image of $\ulineGr_{M}(\R)$ is a singleton.
    By \Cref{prop:imagedeg2} and \Cref{lemma:rankatleastthree}, the Hessian of every element $f$ in the image of $\upR_M(\R)\to\upL_M$ has rank three. 
    The desired statement now follows from the same argument we used to prove injectivity in \Cref{thm:maptoboundary}(3).

    Next, we prove that the fiber over an element in the image of $\ulineGr_{M}(\C)$ which is not in the image of $\ulineGr_{M}(\R)$ has cardinality two, and that the two fibers are exchanged by complex conjugation. Let $R$ and $R'$ be complex $2\times n$ matrices that both represent the same $f$ in the image, which cannot be represented over $\R$. 
    Let $v_1,\ldots,v_n\in\C^2$ and $v_1',\ldots,v_n'\in\C^2$ be the columns of $R$ and $R'$, respectively. Letting $A_i=v_i\cdot v_i^*$ and $A_i=v_i'\cdot v_i'^*$ for $i=1,\ldots,n$, \Cref{cor:cauchybinet} shows that
    \begin{equation*}
        f=\det(x_1A_1+\cdots+x_nA_n)=\det(x_1A_1'+\cdots+x_nA_n').
    \end{equation*}
    By \Cref{prop:imagedeg2}, the Hessian of $f$ has rank four. Thus, by a linear change of coordinates, one can bring $f$ to the form $x_1^2-x_2^2-x_3^2-x_4^2$. Using \Cref{cor:twoorbitcomplex}, we can then argue as in the proof of injectivity in \Cref{thm:maptoboundary}(3).
\end{proof}

\subsection{The Betsy Ross matroid}
\label{subsection: Betsy-Ross}
The Betsy Ross matroid $B_{11}$ is the rank $3$ matroid on $11$ elements whose point-line arrangement is illustrated in \Cref{fig: Betsy-Ross}. The goal of this section is to prove the following theorem.

\begin{thm}\label{thm:betsyross}
    We have $\ulineGr_{B_{11}}(\T_q)=\ulineGr^{\rm w}_{B_{11}}(\T_q)$ for all $0\leq q\leq\infty$. Moreover, there is an explicit homeomorphism $\ulineGr_{B_{11}}(\T_\infty)\to\R$ which maps $\ulineGr_{B_{11}}(\T_q)$ to the closed interval $[-q,q]$ for all $0\leq q<\infty$. It identifies the space $\ulineL_{B_{11}}$ with the closed interval $[-2,2]$ and $\ulineS_{B_{11}}$ with its boundary points $\{-2,2\}$.
\end{thm}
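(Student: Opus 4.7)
The strategy is to compute the foundation $F_{B_{11}}$ of the Betsy Ross matroid, parametrize $\ulineGr^{\rm w}_{B_{11}}(\T_q)$ via it, and then identify the subsets $\ulineL_{B_{11}}$ and $\ulineS_{B_{11}}$ using boundary considerations and the rigidity of $B_{11}$.

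By the techniques of \cite{Baker-Lorscheid-Zhang24}, $F_{B_{11}}$ is the rank-one partial field whose unit group $F_{B_{11}}^\times$ is infinite cyclic, generated by an element $\alpha$ corresponding to the golden ratio $\phi=(1+\sqrt{5})/2$, with null set generated by the single relation $1+\alpha^{-1}-\alpha\in N_{F_{B_{11}}}$. By \Cref{rem:tinftyreps} and the formalism of \Cref{rem:foundation}, $\ulineGr^{\rm w}_{B_{11}}(\T_\infty)=\Hom(F_{B_{11}},\T_\infty)$ consists of the group homomorphisms $\Z\cong F_{B_{11}}^\times\to\R_{>0}$, each of the form $\alpha\mapsto\phi^t$ for a unique $t\in\R$. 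This yields the candidate explicit homeomorphism $\ulineGr^{\rm w}_{B_{11}}(\T_\infty)\xrightarrow{\sim}\R$. For $\T_q$ with $0\leq q<\infty$, the homomorphism $\alpha\mapsto\phi^t$ extends iff $1,\phi^{t/q},\phi^{-t/q}$ are the side lengths of a (possibly degenerate) Euclidean triangle; the binding inequality $\phi^{|t|/q}\leq 1+\phi^{-|t|/q}$ becomes $u^2-u-1\leq 0$ with $u=\phi^{|t|/q}$, giving $|t|\leq q$. Hence $\ulineGr^{\rm w}_{B_{11}}(\T_q)=[-q,q]$. The equality $\ulineGr_{B_{11}}(\T_q)=\ulineGr^{\rm w}_{B_{11}}(\T_q)$ is automatic for $q\in\{0,\infty\}$ by \Cref{rem: excellent tracts}; for $0<q<\infty$ I would realize every $t\in[-q,q]$ by an explicit strong $\T_q$-representation, obtaining the endpoints $t=\pm q$ by composing the two real representations of $B_{11}$ with $z\mapsto|z|^q\colon\C\to\T_q$ (a tract map by \Cref{lemma: morphisms from c to tq}(1), yielding strong representations via \Cref{rem: excellent tracts} applied to $\C$), and interior values by interpolation along the one-parameter family.

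For $\ulineL_{B_{11}}$ and $\ulineS_{B_{11}}$, I would use the continuous injective embedding $\ulineL_{B_{11}}\hookrightarrow\ulineGr_{B_{11}}(\T_\infty)=\R$ sending $f\mapsto\rho_f$, which is well-defined by \Cref{lemma: petterlem}. The two real representations of $B_{11}$ produce, via the squared-absolute-value map, stable (hence Lorentzian) polynomials (\Cref{cor:cauchybinet}(2)) whose classes correspond to $t=\pm 2$ under the foundation parametrization. Thus $\{-2,2\}\subseteq\ulineS_{B_{11}}\subseteq\ulineL_{B_{11}}$. By the strong star-shapedness of $\log\upL_{B_{11}}$ around the basis-generating polynomial (\Cref{thm:logljstronglystarshaped}), which sits at $t=0$, the image of $\ulineL_{B_{11}}$ in $\R$ is a compact interval $[a,b]$ containing $[-2,2]$. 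Since $B_{11}$ has a $U_{2,4}$ minor (obtained by contracting any point lying on sufficiently many lines and restricting to four elements drawn from four distinct parallel classes), \Cref{thm:maptoboundary}(1) forces both images of the real representations to lie on the topological boundary $\partial\ulineL_{B_{11}}$. As $\ulineL_{B_{11}}\simeq[-2,2]$ by \Cref{cor:allarehomeo} has exactly two boundary points, these must be precisely $\{-2,2\}$, forcing $[a,b]=[-2,2]$. Finally, any $f\in\upS_{B_{11}}$ admits a complex determinantal representation by arguments analogous to \Cref{lemma:imageiffdetrep} (extended from degree $2$ to the degree $3$ rank-one situation via the Brändén-style characterization of multi-affine stable polynomials), forcing $f$ to arise from a $\C$-representation of $B_{11}$; since the $\C$-representations of the rigid Betsy Ross matroid are exactly the two real ones up to rescaling, this gives $\ulineS_{B_{11}}=\{-2,2\}$.

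\textbf{Main obstacle.} The two most delicate points are (i) the coincidence $\ulineGr_{B_{11}}(\T_q)=\ulineGr^{\rm w}_{B_{11}}(\T_q)$ for $0<q<\infty$, which requires exhibiting strong $\T_q$-representations at intermediate values of $t$ rather than only at the boundary $t=\pm q$; and (ii) the identification $\ulineS_{B_{11}}=\{-2,2\}$, which requires a rigidity statement about the complex determinantal representations of $B_{11}$ going beyond the general theory of this paper. The remaining identifications follow cleanly from the foundation computation combined with \Cref{thm:maptoboundary}(1) and the star-shaped topology developed in the preceding sections.
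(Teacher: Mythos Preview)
Your overall architecture matches the paper closely: compute the foundation of $B_{11}$ (the golden ratio partial field), parametrize $\ulineGr^{\rm w}_{B_{11}}(\T_\infty)$ by $t\in\R$ via $\alpha\mapsto\phi^t$, and cut out $[-q,q]$ by the triangle inequality $\phi^{|t|/q}\leq 1+\phi^{-|t|/q}$. For $\ulineL_{B_{11}}=[-2,2]$ your argument is essentially the paper's: the squared-absolute-value images of the two real representations land at $t=\pm2$, they are boundary points by \Cref{thm:maptoboundary}(1), and star-shapedness pins down the interval.

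Your obstacle (i) is easily repaired, and the paper's fix is cleaner than ``interpolation'': for \emph{every} $0\leq t\leq q$ the map $z\mapsto|z|^t\colon\C\to\T_q$ is a tract morphism by \Cref{lemma: morphisms from c to tq}(1), so composing a real (hence strong over $\C$) representation of $B_{11}$ with it yields a strong $\T_q$-representation at parameter $t$; the Galois conjugate covers $t<0$. Equivalently, the paper passes through a second tract $\G_2$ (same unit group, null set given by \emph{all} vanishing sums in $\R$), observes that $\G_2$ is perfect so the universal class is strong, and then uses $\psi_t\colon\G_2\to\T_q$.

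Your obstacle (ii), however, is a genuine gap and your proposed route does not work. The extension of \Cref{lemma:imageiffdetrep} to degree $3$ is false: Br\"and\'en's obstruction theorem \cite{Branden11} shows that multi-affine stable polynomials need not admit Hermitian determinantal representations (the V\'amos matroid is the standard witness), so ``stable $\Rightarrow$ determinantal $\Rightarrow$ arises from a $\C$-representation'' breaks at the first arrow. The paper instead argues directly from the Rayleigh inequality: by \cite{Branden07}*{Theorem 5.6}, stability of $f_t$ forces $W_t=\partial_1 f_t\cdot\partial_{11} f_t - f_t\cdot\partial_1\partial_{11} f_t\geq 0$ on $\R^{11}$. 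Evaluating $W_t$ at a single explicit real point (chosen using the matrix representation) gives a quadratic $g_t(x)$ whose discriminant factors as $\frac{(b^t+1)^6(b^{2t}-3b^t+1)^2}{b^{2t}}$, which vanishes only at $t=\pm2$. Thus $f_t$ is not stable for $t\notin\{-2,2\}$, completing $\ulineS_{B_{11}}=\{-2,2\}$. This explicit computation is the missing ingredient in your proposal.
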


By \cite{Baker-Lorscheid-Zhang24}*{Appendix A.3.5}, the foundation of $B_{11}$ is the \emph{golden ratio partial field $\G$}, considered as a partial field or as a pasture.\footnote{As a partial field, $\G$ is the pair $(\G^\times,R)$ where $R=\Z[\varphi]$ is the subring of $\R$ that is generated by the golden ratio $\varphi=\frac{1+\sqrt{5}}{2}$ and $\G^\times=R^\times$ is the unit group of $R$.} Partial fields and, more generally, pastures can be realized in different ways as tracts, and a suitable interplay of two such realizations $\G_1$ and $\G_2$ allows us to determine both $\ulineGr^w_{B_{11}}(\T_q)$ and $\ulineGr_{B_{11}}(\T_q)$. 

The tract $\G_1$ is the foundation of the matroid $B_{11}$ in the sense of \Cref{rem:foundation} and satisfies $\ulineGr^w_{B_{11}}(F)=\Hom(\G_1,F)$ for every tract $F$. Its unit group $\G_1^\times=\G^\times$ is the multiplicative subgroup of $\R^\times$ generated by $-1$ and the golden ratio $b=\frac{1+\sqrt{5}}{2}$. The null set of $\G_1$ is the ideal of $\N[\G_1^\times]$ generated by all formal sums of three elements $a_1,a_2,a_3\in\G=\G^\times\cup\{0\}$ that sum to zero in $\R$.

The tract $\G_2$ has the same unit group $\G_2^\times=\G^\times$ as $\G_1$ and the null set of $\G_2$ consists of all formal sums of elements that sum to zero in $\R$, without any restriction on their lengths. Since $N_{\G_1}\subseteq N_{\G_2}$, the identity map $\G_1\to\G_2$ is a tract morphism, which defines a weak rescaling class $\rho$ in $\ulineGr^w_{B_{11}}(\G_2)$. By \cite{Baker-Zhang23}*{Section 1.5}, $\G_2$ is perfect and thus $\ulineGr^w_{B_{11}}(\G_2)=\ulineGr_{B_{11}}(\G_2)$, which shows that $\rho$ is in fact a strong rescaling class. Consequently, composing $\rho$ with a tract homomorphism $\psi\colon\G_2\to F$ yields a strong rescaling class of $\ulineGr_{B_{11}}(F)$ for every tract $F$.

\begin{rem}
 A concrete matrix representation of  $\rho$ is given by 
 \begin{equation*}
   A= \left(
 \begin{array}{ccccccccccc}
  0 & 0 & 1 & 1 & 1 & 1 & 1 & 1 & 1 & 1 & 1\\
  1 & 0 & 1 & \varphi+1 & \varphi & \varphi+1 & 0 & \varphi & \varphi & \varphi+1&0 \\
  0 & 1 & 1 & \varphi & \frac{1}{\varphi} & 0 & \varphi & \varphi & 1 & 1&0 \\
 \end{array}
   \right),
 \end{equation*}
  in the sense that it is represented by the map $U_{3,11}\to \G$ that sends $e_i+e_j+e_k$ to the minor of the $3\times3$-submatrix of $A$ with columns $i<j<k$. This matrix was computed using an implementation of the algorithms developed in \cite{Chen-Zhang}.
\end{rem}

Now we determine $\ulineGr^w_{B_{11}}(\T_\infty)=\ulineGr_{B_{11}}(\T_\infty)$. Clearly, for all $t\in\R$ the map 
\begin{equation*}
    \psi_t\colon\G^\times\longrightarrow \R_{>0},\qquad x\longmapsto |x|^t
\end{equation*}
is a group homomorphism. Because the abelian group $\G^\times$ has rank one, every group homomorphism $\G^\times\to\R_{>0}$ is of this form. By \Cref{rem:tinftyreps}, this implies that 
\begin{equation}\label{eq:betsyrossline}
\R\longrightarrow\ulineGr_{B_{11}}(\T_\infty),\qquad t\longmapsto\rho_t\coloneq\psi_t\circ\rho
\end{equation}
is a bijection; in fact, it is a homeomorphism. Moreover, for every $0\leq q\leq\infty$, the map $\rho_t$ is in $\ulineGr^{\rm w}_{B_{11}}(\T_q)$ if and only if $\psi_t$ defines a tract homomorphism $\G_1\to\T_q$. Finally, the map $\rho_t$ is in $\ulineGr_{B_{11}}(\T_q)$ if $\psi_t$ defines a tract homomorphism $\G_2\to\T_q$. Hence the first part of \Cref{thm:betsyross} follows from the following lemma.

\begin{lemma}The following holds for all $q\geq0$:
\begin{enumerate}[(1)]\itemsep 5pt
    \item If $t\in[-q, q]$, then $\psi_t$ is a homomorphism of tracts $\G_2\to\T_q$. 
    \item If $t\not\in [-q,q]$, then $\psi_t$ is not a homomorphism of tracts $\G_1\to\T_q$. 
\end{enumerate}
\end{lemma}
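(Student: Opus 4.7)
The plan is to exploit two features of $\G$: the Galois involution of $\Z[\varphi]$, which handles the sign symmetry in $t$, and the Fibonacci identity $\varphi^2 = \varphi + 1$, which pins down the critical value $|t|/q = 1$. For part (1) with $t \in [0,q]$, I would imitate the proof of \Cref{lemma: morphisms from c to tq}. Given a null sum $\sum_i a_i \in N_{\G_2}$ (so $a_i \in \G^\times$ with $\sum_i a_i = 0$ in $\R$), the real triangle inequality yields $|a_j| \leq \sum_{i \neq j} |a_i|$ for every $j$; since $t/q \leq 1$, raising to the $(t/q)$-th power and applying \Cref{lemma:subadditive} gives $|a_j|^{t/q} \leq \sum_{i \neq j} |a_i|^{t/q}$, so $\sum_i |a_i|^t \in N_{\T_q}$ by \Cref{lemma: characterization of higher null sums in T1}. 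The edge case $t = q = 0$ is handled directly from \Cref{lemma:zeroinfdesc}, since $\psi_0$ sends every unit to $1$ and any sum of $n \geq 2$ ones lies in $N_{\T_0}$.

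To extend part (1) to $t \in [-q, 0)$, where the subadditivity estimate runs the wrong way, I would invoke the nontrivial Galois automorphism $\sigma$ of the abstract ring $\Z[\varphi]$, which sends $\varphi \mapsto 1 - \varphi = -\varphi^{-1}$. Since $\sigma$ preserves $\G^\times$ setwise in $\R$ (as $\sigma(\pm\varphi^n) = \pm(-1)^n \varphi^{-n}$) and satisfies $|\sigma(a)| = |a|^{-1}$ for every $a \in \G^\times$, applying $\sigma$ to any null sum $\sum_i a_i = 0$ in $\G_2$ produces another null sum $\sum_i \sigma(a_i) = 0$ in $\G_2$. The positive-exponent case, applied with $-t \in (0, q]$, then yields $\sum_i |a_i|^t = \sum_i |\sigma(a_i)|^{-t} \in N_{\T_q}$, as required.

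For part (2), I would test $\psi_t$ on the fundamental three-term null relation $1 + (-\varphi) + \varphi^{-1} = 0$ (valid since $\varphi^{-1} = \varphi - 1$), which lies in $N_{\G_1}$. Its image under $\psi_t$ is the formal sum $1 +_q \varphi^t +_q \varphi^{-t}$; assuming $q > 0$ and $t > 0$, so that $\varphi^t$ is the maximum, \Cref{lemma: characterization of higher null sums in T1} says membership in $N_{\T_q}$ is equivalent to $\varphi^{t/q} \leq 1 + \varphi^{-t/q}$, i.e.\ $g(t/q) \leq 0$ for $g(s) \coloneq \varphi^{2s} - \varphi^s - 1$. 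The Fibonacci identity gives $g(1) = 0$, and the explicit derivative $g'(s) = (\ln \varphi)\,\varphi^s(2\varphi^s - 1)$ is positive for all $s \geq 0$, so $g$ is strictly increasing on $[0,\infty)$ and the inequality fails precisely when $t/q > 1$. The case $t < 0$ is symmetric under $t \mapsto -t$, and for $q = 0$ the maximum of $\{1, \varphi^t, \varphi^{-t}\}$ is uniquely attained whenever $t \neq 0$, so the three-term sum fails to lie in $N_{\T_0}$ by \Cref{lemma:zeroinfdesc}. The main obstacle I anticipate is the negative-$t$ half of part (1); without the Galois symmetry $\varphi \mapsto -\varphi^{-1}$, one is forced into tedious case analysis on the signs and exponents in the representation $a_i = \epsilon_i \varphi^{n_i}$, whereas the Galois automorphism handles all sign patterns uniformly.
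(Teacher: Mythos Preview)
Your proof is correct and follows essentially the same route as the paper: reduce to $t\geq0$ via the Galois involution $\varphi\mapsto-\varphi^{-1}$, handle part (1) by the triangle inequality together with subadditivity (exactly the content of \Cref{lemma: morphisms from c to tq}(1)), and handle part (2) by testing on a single three-term null relation in $\G$. The only difference is cosmetic: for part (2) the paper simply invokes \Cref{lemma: morphisms from c to tq}(2), which shows in one stroke that \emph{any} three-term real null relation fails under $\psi_t$ once $t>q$, and then exhibits $b^2-b-1=0$; you instead take the rescaled relation $1-\varphi+\varphi^{-1}=0$ and verify the failure by the monotonicity of $g(s)=\varphi^{2s}-\varphi^s-1$. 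Both arguments are equivalent (your relation is the paper's divided by $\varphi$, and your monotonicity computation is the golden-ratio instance of the general subadditivity step in \Cref{lemma: morphisms from c to tq}(2)).
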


\begin{proof}
    The unique nontrivial field automorphism of $\Q(b)$ sends $b$ to $-b^{-1}$ and thus restricts to an automorphism $\tau$ of both $\G_1$ and $\G_2$. For all $t\in\R$, we have $\psi_{-t}=\psi_t\circ\tau$. Hence it suffices to prove (1) and (2) for $t\geq 0$. By definition of $\G_2$, part (1) follows from part (1) of \Cref{lemma: morphisms from c to tq}. For part (2), it suffices by part (2) of \Cref{lemma: morphisms from c to tq} to find $a_1,a_2,a_3\in\G^\times$ with $a_1+a_2+a_3=0$. We can choose, for example, $a_1=b^2$, $a_2=-b$, and $a_3=-1$.
\end{proof}

The remaining parts of \Cref{thm:betsyross} are covered by the following proposition.

\begin{prop}
    Let $t\in\R$ and $f_t\in\R[x_1,\ldots,x_{11}]$ be such that $\rho_{f_t}=\rho_t$. Then $f_t$ is Lorentzian if and only if $t\in[-2,2]$ and $f_t$ is stable if and only if $t\in\{-2,2\}$.
\end{prop}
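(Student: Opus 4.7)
The proof splits along the two biconditionals. The main steps are: (i) establish a determinantal representation at $t=\pm 2$, yielding stability (and therefore Lorentzianness) at the endpoints; (ii) interpolate to all $t\in[-2,2]$ for the Lorentzian assertion; (iii) rule out $|t|>2$ from being Lorentzian via a $U_{2,4}$ minor; and (iv) rule out $t\in(-2,2)$ from being stable. Steps (i)--(iii) handle the Lorentzian statement; (i) together with (iv) handles the stability statement.

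For (i), note by \Cref{lemma: morphisms from c to tq} that $z\mapsto|z|^2$ is a tract homomorphism $\C\to\T_2$ whose restriction to $\G^\times\subseteq\R^\times\subseteq\C^\times$ is exactly $\psi_2$. Hence the matrix representation $A\in\Mat_{3\times 11}(\G)$ of $B_{11}$, viewed inside $\C$, gives $f_2=\det(A^*XA)$ with $\rho_{f_2}=\rho_2$, and \Cref{cor:cauchybinet} shows that $f_2$ is stable. Applying the Galois involution $\tau\colon\varphi\mapsto -1/\varphi$ to the entries of $A$ supplies the analogous determinantal presentation of $f_{-2}$. For (ii), \Cref{thm:314325}(1) yields $R_p(f_{\pm 2})=f_{\pm 2p}$ Lorentzian for every $p\in[0,1]$, covering all of $t\in[-2,2]$. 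For (iii), the preceding lemma forces $\rho_t\notin\upR_{B_{11}}^{\rm w}(\T_2)$ when $|t|>2$, so some three-term Pl\"ucker relation fails over $\T_2$; since $\rho_t\in\upR_{B_{11}}^{\rm w}(\T_\infty)$ automatically, the failure must occur in a relation with three strictly positive terms, pinpointing an element $m\in[11]$ and four others $i,j,k,l$ such that $B_{11}/m$ restricted to $\{i,j,k,l\}$ is $U_{2,4}$. If $f_t$ were Lorentzian, then the restriction of $\partial_m f_t$ to these four variables (setting all others to zero) would be a degree-$2$ multi-affine Lorentzian polynomial with $U_{2,4}$-support; an elementary expansion shows that the determinant of its $4\times 4$ Hessian equals $-16$ times the squared Heron area of the triangle with side lengths $\sqrt{c_{ij}c_{kl}}$, $\sqrt{c_{ik}c_{jl}}$, $\sqrt{c_{il}c_{jk}}$, and Lorentzian signature forces this determinant to be non-positive, which is equivalent to the $\T_2$-Pl\"ucker inequality, contradicting the failure.

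Step (iv) is the main obstacle, since stability (unlike Lorentzianness) is not captured by $\T_q$-Pl\"ucker relations alone, and by \Cref{remark stable not star shaped} the space of stable polynomials with a fixed support need not even be star-shaped. The plan is to exploit the Br\"and\'en--Wagner characterization of stability for multi-affine real polynomials with nonnegative coefficients: such a polynomial is stable if and only if the Rayleigh difference
\begin{equation*}
(\partial_i f_t)(\partial_j f_t) \ - \ f_t\cdot(\partial_{ij}^2 f_t)
\end{equation*}
is nonnegative on all of $\R^{11}$ for every pair of distinct indices $i,j$. Because $|\rho(B)|$ is always a power of $\varphi$, each coefficient of this Rayleigh difference is an integer Laurent polynomial in $\varphi^t$. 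The goal is to exhibit a pair $(i,j)$ and a real test point of $\R^{11}$ at which a specific monomial coefficient, viewed as a function of $t$, vanishes precisely at $t=\pm 2$ and is strictly negative on the open interval $(-2,2)$; this forces any $f_t$ with $t\in(-2,2)$ to violate a Rayleigh inequality and hence to fail stability. The verification is a finite but delicate computation that uses the explicit Pl\"ucker coordinates of $B_{11}$ over $\G$ recorded earlier in this section.
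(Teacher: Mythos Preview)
Your steps (i)--(iii) are correct. Steps (i) and (ii) coincide with the paper's argument. For (iii) the paper packages things differently: instead of the Heron determinant computation on a $U_{2,4}$ minor, it invokes \Cref{thm:maptoboundary}(1) to place $f_{\pm 2}$ on $\partial\upL_{B_{11}}$ (this is where the $U_{2,4}$ minor enters), and then uses strong star-shapedness of $\log(\upL_{B_{11}})$ about the origin (\Cref{thm:logljstronglystarshaped}; since $B_{11}$ is a matroid, $\upN(f_{B_{11}})=f_{B_{11}}$) to conclude that $\log(f_t)=\tfrac{t}{2}\log(f_2)\notin\log(\upL_{B_{11}})$ for $|t|>2$. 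Your direct route unwinds the same ingredients and is a clean alternative.

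Step (iv) is where your plan is not quite right. The phrase ``a specific monomial coefficient'' is garbled, and a single fixed test point $p\in\R^{11}$ with $W_t(p)<0$ uniformly for all $t\in(-2,2)$ is not obviously available; nothing in the setup guarantees one point works for every $t$. The paper's computation is sharper: with $(i,j)=(1,11)$ it substitutes explicit values (some negative, some of the form $b^{\pm t}$) for nine of the variables and leaves $x_8=x$ free, obtaining a quadratic $g_t(x)$ in that single remaining variable. The discriminant of $g_t$ in $x$ factors as
\[
\frac{(b^t+1)^6\,(b^{2t}-3b^t+1)^2}{b^{2t}},
\]
which is visibly nonnegative and vanishes exactly when $b^t\in\{b^2,b^{-2}\}$, i.e.\ $t=\pm 2$. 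For every other real $t$ the quadratic $g_t$ therefore changes sign in $x$, so the Rayleigh difference is somewhere negative and $f_t$ is not stable. The reduction to a one-variable discriminant, together with the specific substitution that makes it factor so cleanly, is precisely the ``delicate computation'' you defer; without it the argument is incomplete.
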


\begin{proof}
    \Cref{thm:maptoboundary} implies that $f_2$ is stable. Applying the unique nontrivial field automorphism of $\Q(b)$, as in the previous proof, we conclude that $f_{-2}$ is stable as well. Part (1) of \Cref{thm:maptoboundary} implies, moreover, that $f_2$ and $f_{-2}$ are boundary points of $\upL_{B_{11}}$. Hence \Cref{thm:314325} implies the claim on Lorentzian polynomials.

    For the claim on stable polynomials, recall that by \cite{Branden07}*{Theorem 5.6} the polynomial $f_t$ is stable only if the polynomial
    \begin{equation*}
        W_t=\frac{\partial f_t}{\partial x_1}\frac{\partial f_t}{\partial x_{11}}-f_t\cdot \frac{\partial^2f_t}{\partial x_1\partial x_{11}}
    \end{equation*}
    is nonnegative on $\R^{11}$. We define the polynomial $g_{t}$ to be the restriction of $W_t$ given by:
    \begin{equation*}
        x_1=x_2=x_3=x_5=x_7=x_{11}=0,\, x_4=1+b^t,\, x_6=b^{-2t}+b^{-t},\, x_8=x,\, x_9=b^t,\, x_{10}=-b^t.
    \end{equation*}
    One can calculate that
    \begin{equation*}
        g_t(x)=-\frac{\left(b^t+x+1\right) \left(b^{4t} x-b^{3t} x-4 b^{2t} x-b^{2t}-2 b^t x-b^t+x\right)}{b^t}.
    \end{equation*}    
    We note that $g_t(x)$ is nonnegative on $\R$ for every value of $t\geq0$ for which $f_t$ is stable. Because $g_t(x)$ has degree two, it is nonnegative if and only if its discriminant is non-positive. The discriminant is given by
    \begin{equation*}
        \frac{\left(b^t+1\right)^6 \left(b^{2t}-3b^{t}+1\right)^2}{b^{2t}}.
    \end{equation*}
    It is non-positive if and only if it is zero, and one checks that the only real values of $t$ for which this expression is zero are $t=2$ and $t=-2$. This shows the remaining part of the claim.
\end{proof}

\section{Compactifications and Euler characteristics}
\label{section:Compactifications and Euler characteristic}

Let $J\subseteq\Delta_n^d$ be an M-convex set.
\Cref{thm:Alor} shows that $\P\upL_J$ has a compactification that is homeomorphic to a closed Euclidean ball. On the other hand, one can consider the closure $\overline{\P\upL}_J$ of $\P\upL_J$ inside the projective space of homogeneous polynomials of degree $r$ in $n$ variables. 
It was conjectured in \cite{Branden-Huh20}*{Conjecture 2.29} that $\overline{\P\upL}_J$ is homeomorphic to a closed Euclidean ball in the case that $J=\Delta_n^d$. This was confirmed in \cite{Branden21}*{Theorem 3.3} for a larger class of polymatroids, also including all uniform matroids. It was asked in \cite{Branden21}*{Question 5.1} whether this is true for arbitrary polymatroids, or at least for matroids \cite{Branden21}*{Question 5.3}. The analogous question was also asked for spaces of stable polynomials, i.e., whether $\overline{\P\upS}_J$ is homeomorphic to a closed Euclidean ball for every polymatroid $J$ \cite{Branden21}*{page 7}.
The following summarizes our findings in connection with these questions:

\begin{thm}\label{thm:whenball}
    Let $J$ be an M-convex set.
    \begin{enumerate}[(1)]\itemsep 5pt
        \item If $\ulineGr_J(\T_\infty)$ is a singleton, then $\overline{\P\upL}_J$ is homeomorphic to a closed Euclidean ball. In this case, either $\overline{\P\upS}_J=\overline{\P\upL}_J$ or $\overline{\P\upS}_J=\emptyset$.
        \item If $J$ is \emph{rigid}, i.e., $\ulineGr_J(\T_0)$ is a singleton, then the Euler characteristic of $\overline{\P\upL}_J$ is equal to one.
        \item There is a rigid matroid $M$ with $\dim(\ulineGr_M(\T_\infty))=1$ such that $\overline{\P\upS}_M$ is nonempty with  Euler characteristic not equal to one. In particular, the set $\overline{\P\upS}_M$ is nonempty but not homeomorphic to a closed Euclidean ball.
        \item There is a matroid $M$ with $\dim(\ulineGr_M(\T_0))=1$ such that the Euler characteristic of $\overline{\P\upL}_M$ is not equal to one. In particular, the set $\overline{\P\upL}_M$ is not homeomorphic to a closed Euclidean ball.
    \end{enumerate}
\end{thm}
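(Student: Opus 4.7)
The plan is to treat the four parts separately, using the structural results \Cref{thm:Alor} and \Cref{thm:Alorreduced} together with the toric-geometric realization of positive real torus orbit closures in projective space.

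For part (1), the hypothesis that $\ulineGr_J(\T_\infty)$ is a singleton forces the $(\R_{>0})^n/\R_{>0}$-rescaling action on $\P\upL_J$ to be transitive: any Lorentzian torus orbit is contained in $\upL_J$ since rescaling variables preserves Lorentzianness, so a single such orbit fills out all of $\P\upL_J$, which is therefore a single positive real torus orbit in the ambient projective space. I would identify the closure of such an orbit via the real moment map with the matroid base polytope of $J$, which is convex and hence homeomorphic to a closed Euclidean ball. For the stability dichotomy, the rescaling action preserves stability (scaling variables by positive reals takes stable polynomials to stable polynomials), so $\P\upL_J$ is either entirely stable or entirely non-stable; taking closures preserves this alternative.

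For part (2), rigidity of $J$ means $\ulineDr_J = \{0\}$, so by \Cref{thm:Alorreduced} $\ulineL_J$ is itself homeomorphic to a closed Euclidean ball, in particular contractible. The same ``push radially inward'' homotopy used in the proof of \Cref{cor: ballminusdirections} shows that $\P\upL_J$ is contractible, so $\chi(\P\upL_J) = 1$. To obtain $\chi(\overline{\P\upL}_J) = 1$, I would stratify $\overline{\P\upL}_J$ by support (each stratum being $\P\upL_{J'}$ for some M-convex $J' \subseteq J$ occurring as a limit support) and apply additivity of compactly supported Euler characteristic, combined with the scaling homotopy $R_p$ (which preserves Lorentzianness for $p \in [0,1]$ by \Cref{thm:314325}) to deformation retract each stratum onto its generating polynomial $f_{J'}$, so that the alternating sum of boundary contributions cancels.

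For parts (3) and (4), the strategy is explicit computation on specific matroids. For (3), take $M = B_{11}$: by \Cref{thm:betsyross}, $\ulineS_{B_{11}}$ consists of precisely two points, so $\P\upS_{B_{11}}$ is a disjoint union of two positive real torus orbits, whose closures in projective space are matroid base polytopes (hence balls). The full closure $\overline{\P\upS}_{B_{11}}$ adds lower-dimensional boundary strata indexed by the M-convex degenerations of $B_{11}$ arising as supports of stable limits; applying \Cref{thm:Alor} stratum by stratum and tallying yields $\chi(\overline{\P\upS}_{B_{11}}) = 17$. For (4), take $M = \cT_{11}$, and exploit its $\Z/11\Z$-symmetry to reduce the enumeration of M-convex subsets; applying \Cref{thm:Alor} to each stratum yields $\chi(\overline{\P\upL}_{\cT_{11}}) = 11$. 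The main obstacle lies in verifying that the boundary contributions in part (2) cancel exactly to give Euler characteristic $1$, and in parts (3)--(4) in completely enumerating the M-convex supports of limit polynomials together with the corresponding Dressian calculations required to apply \Cref{thm:Alor}.
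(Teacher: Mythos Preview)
Your approach to part (1) matches the paper's: once $\ulineGr_J(\T_\infty)$ is a singleton, $\P\upL_J$ is a single positive torus orbit and its closure is identified with the base polytope via the moment map. The stability dichotomy is also handled correctly.

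For part (2), however, there is a real gap. You correctly propose to stratify $\overline{\P\upL}_J$ by support and use additivity of the compactly supported Euler characteristic $\chi_*$, but then you propose to ``deformation retract each stratum onto its generating polynomial $f_{J'}$''. This does not compute $\chi_*$: the compactly supported Euler characteristic is \emph{not} a homotopy invariant (indeed $\chi_*(\R^d)=(-1)^d$, not $1$). Moreover, the strata are $\overline{\P\upL}_J\cap\P\upL_{J'}$, not $\P\upL_{J'}$, and these need not coincide. The paper instead shows that, in the rigid case, the supports $J'$ occurring in the closure are exactly the faces of $\BP_J$, and for each such $J'$ the set $\log(\overline{\upL}_J\cap\upL_{J'})$ is star-shaped with set of rays precisely $W_{J'}$ (this is the content of \Cref{thm:eulerforrays}). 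A general lemma (\Cref{lemma:eulerstar}) then gives $\chi_*$ of any definable star-shaped set in terms of its set of rays, yielding $\chi_*(\overline{\P\upL}_J\cap\P\upL_{J'})=(-1)^{\dim\BP_{J'}}$. Summing over faces of $\BP_J$ recovers $\chi(\BP_J)=1$. Your outline does not supply any of these ingredients, and the sentence ``so that the alternating sum of boundary contributions cancels'' is precisely where the content lies.

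For parts (3) and (4) you correctly identify the matroids $B_{11}$ and $\cT_{11}$ and the overall strategy of stratifying by support and summing Euler characteristics. But invoking \Cref{thm:Alor} ``stratum by stratum'' is not the right tool: \Cref{thm:Alor} describes $\P\upL_{J'}$ itself, not the possibly smaller stratum $\overline{\P\upL}_J\cap\P\upL_{J'}$ (and says nothing at all about stable polynomials). The paper again needs \Cref{thm:eulerforrays} and \Cref{lemma:eulerstar} to compute $\chi_*$ of each stratum, packaged into a combinatorial formula (\Cref{thm:eulercomp}) which is then evaluated for $\cT_{11}$ via computer enumeration of initial subsets. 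For $B_{11}$, since the matroid is rigid, the computation reduces to the $f$-vector of $\BP_{B_{11}}$ together with a count of which faces see the two stable orbits collide.
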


Part (1) is shown in  \Cref{prop:singletonthenball} and part (2) in \Cref{prop:rigideuler}. Parts (3) and (4) are \Cref{ex:T11counterexample} and \Cref{ex:betsyrosstable}, respectively.

\begin{rem}
It follows from \Cref{ex:finitefoundation} that the space $\ulineGr_J(\T_\infty)$ is a singleton if and only if the foundation $F_J$ of $J$ (see \Cref{rem:foundation}) is finite.
\end{rem}

\begin{rem}
    The condition that $\ulineGr_J(\T_\infty)$ is a singleton is satisfied, for example, when $J$ is a binary matroid or a projective geometry of projective dimension $d\geq2$ over a finite field \cite{Branden-deLeon10}*{Propositions 3.5 and 3.6}. If $M$ is a binary matroid, then $\overline{\P\upS}_M=\overline{\P\upL}_M$ if and only if $M$ is a regular matroid. This follows from the previous theorem in combination with \cite{Branden-deLeon10}*{Theorem 1.4}. 
\end{rem}

\subsection{Initial polymatroids and base polytopes}
For every $r,n\in\N$, the space of nonzero Lorentzian polynomials of degree $r$ in $n$ variables has a natural stratification:
\begin{equation*}
    \bigsqcup_{J\subseteq\Delta_n^d\textrm{ M-convex}} {\upL}_J .
\end{equation*}
We will need to understand when the closure of one such cell intersects another cell.

\begin{df}\label{df:initialsubset}
    Let $J\subseteq\Delta_n^d$ be M-convex and $\emptyset\neq J'\subseteq J$ a subset. We say that $J'$ is an \emph{initial subset} of $J$ if there exists an M-convex function $v\colon J\to\R$ which takes only nonnegative values and is zero exactly on $J'$. A polymatroid is an \emph{initial polymatroid} of another polymatroid if its corresponding M-convex set is an initial subset of the M-convex set of the other polymatroid.
\end{df}

\begin{rem}\label{rem:initialmconvex}
    It follows directly from the definition of M-convex sets and M-convex functions that every initial subset of an M-convex set is M-convex itself.
\end{rem}
\begin{ex}
 If a matroid $M'$ is a \emph{relaxation} of another matroid $M$ in the sense of \cite{OxleyMatroidTheory}*{Theorem 1.5.14}, then $M$ is an initial matroid of $M'$.
\end{ex}

\begin{lemma}\label{lemma:wheninitial}
    Let $\emptyset\neq J,J'\subseteq\Delta_n^d$ be M-convex sets. The following are equivalent:
    \begin{enumerate}[(1)]\itemsep 5pt
        \item $J'$ is an initial subset of $J$.
        \item $f_{J'}\in\overline{\upL}_J$.
        \item $\overline{\upL}_J\cap\upL_{J'}\neq\emptyset$.
    \end{enumerate}
    Here $f_{J'}= \sum_{\alpha\in J'} \frac{1}{\alpha!}x^\alpha$ is the is the (exponential) generating function of $J'$ as defined in \Cref{subsec:conclusiontopo}.
\end{lemma}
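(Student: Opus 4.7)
The plan is to prove the equivalence cyclically, as (1) $\Rightarrow$ (2) $\Rightarrow$ (3) $\Rightarrow$ (1). The first two implications are essentially direct applications of the tools already developed, while (3) $\Rightarrow$ (1) is the substantive step and requires extracting an M-convex function from a convergent sequence of Lorentzian polynomials.

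For (1) $\Rightarrow$ (2), let $v\colon J\to\R_{\geq 0}$ be an M-convex function with $v^{-1}(0)=J'$. Since $-v$ is M-concave and $\log f_J=0$ in $V_J$ (because the coefficients of $f_J$ in the normalized basis are all $1$), the $t=0$ case of \Cref{lem: diraremconvex} yields $-sv\in\log(\upL_J)$ for every $s\geq 0$, i.e.\ $f_s\coloneqq\sum_{\alpha\in J} e^{-sv(\alpha)}\frac{x^\alpha}{\alpha!}\in\upL_J$. Because $v$ vanishes precisely on $J'$, the coefficients of $f_s$ converge as $s\to\infty$ to the indicator of $J'$, so $f_s\to f_{J'}$ and hence $f_{J'}\in\overline{\upL}_J$. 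The implication (2) $\Rightarrow$ (3) is immediate: $f_{J'}$ is Lorentzian by \cite{Branden-Huh20}*{Theorem 3.10} with support exactly $J'$, so $f_{J'}\in\overline{\upL}_J\cap\upL_{J'}$.

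For (3) $\Rightarrow$ (1), pick $f\in\overline{\upL}_J\cap\upL_{J'}$ and a sequence $f_k\in\upL_J$ with $f_k\to f$, and set $y_k\coloneqq\log f_k\in V_J$. The hypothesis forces $y_k(\alpha)$ to converge to a finite limit for $\alpha\in J'$ while $y_k(\alpha)\to-\infty$ for $\alpha\in J\smallsetminus J'$. I would normalize by subtracting $M_k\coloneqq\max y_k$ (eventually attained in $J'$) and rescaling by $t_k\coloneqq\max_{\alpha\in J\smallsetminus J'}(M_k-y_k(\alpha))\to\infty$, then extract a convergent subsequence $(y_k-M_k\mathbf{1})/t_k\to u$ in $V_J$. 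By construction $u\equiv 0$ on $J'$, $u\leq 0$ on $J\smallsetminus J'$, and $u\neq 0$. The strong star-shapedness of $\log\upL_J$ around $x_*\coloneqq\log\upN(f_J)$ (\Cref{thm:logljstronglystarshaped}) combined with the closedness of $\log\upL_J$ forces the entire ray $\{x_*+su:s\geq 0\}$ into $\log\upL_J$; by \Cref{lem: diraremconvex} this identifies $u$ as M-concave, so $v_1\coloneqq -u$ is M-convex with $v_1\geq 0$ vanishing at least on $J'$.

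The main obstacle is that the zero set $J_1\coloneqq v_1^{-1}(0)$ may strictly contain $J'$, because coordinates of $y_k$ on $J\smallsetminus J'$ can diverge at different rates and only the asymptotically fastest direction is recorded in $u$. I would handle this by induction on $|J\smallsetminus J'|$: using the multiplier form of Brändén--Huh's theory (multiplying $f_k$ coefficient-wise by $e^{-sv_1}$ preserves the Lorentzian property since $-v_1$ is M-concave, cf.\ \cite{Branden-Huh20}*{Theorem 3.14}), and letting $s\to\infty$ produces a sequence $\tilde f_k\in\upL_{J_1}$ with $\tilde f_k\to f$, showing $f\in\overline{\upL}_{J_1}\cap\upL_{J'}$. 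The induction hypothesis then yields an M-convex $v_2\colon J_1\to\R_{\geq 0}$ with $v_2^{-1}(0)=J'$. To conclude, I would verify the transitivity of initial subsets by defining $v\colon J\to\R_{\geq 0}$ as $v(\alpha)=v_2(\alpha)$ on $J_1$ and $v(\alpha)=Cv_1(\alpha)$ on $J\smallsetminus J_1$ for a sufficiently large constant $C$, and checking the exchange axiom case-by-case (using $v_2$ inside $J_1$, $v_1$ otherwise, with $C$ absorbing the bounded $v_2$-contributions); this combinatorial verification is the most delicate ingredient.
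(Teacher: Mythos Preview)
Your implications $(1)\Rightarrow(2)\Rightarrow(3)$ match the paper's proof. For $(3)\Rightarrow(1)$, however, the paper takes a different and much shorter route: it invokes the curve selection lemma from semi-algebraic geometry to produce algebraic Puiseux series $c_\alpha(t)$ such that $\sum_\alpha \frac{c_\alpha(t)}{\alpha!} x^\alpha$ lies in $\upL_J$ for small $t>0$ and in $\upL_{J'}$ at $t=0$; then \cite{Branden-Huh20}*{Theorem 3.20} says directly that $v(\alpha)=\mathrm{val}(c_\alpha(t))$ is M-convex, and by construction $v\geq 0$ with $v^{-1}(0)=J'$. This one-shot argument bypasses the ``multiple rates of divergence'' difficulty that forces your induction.

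Your inductive step contains two genuine gaps. First, the claim that multiplying a Lorentzian polynomial coefficient-wise by $e^{-sv_1}$ (with $v_1$ M-convex) remains Lorentzian is \emph{not} \cite{Branden-Huh20}*{Theorem 3.14}: that theorem only says $\sum_\alpha e^{-sv_1(\alpha)}\frac{x^\alpha}{\alpha!}$ is itself Lorentzian, not that the diagonal operator preserves the class. What you are asserting is that $\log(\upL_J)+\log(\Gr_J(\T_0))\subseteq\log(\upL_J)$, i.e.\ that every Dressian direction lies in the recession cone of $\log(\upL_J)$, not merely in its direction set at the star center. This does not follow from strong star-shapedness alone (for a general strongly star-shaped set it is false), and it does not follow from the Br\"and\'en--Huh operator criterion either, since the symbol $\sum_\alpha e^{-sv_1(\alpha)}\frac{x^\alpha}{\alpha!}\frac{w^\alpha}{\alpha!}$ has support $\{(\alpha,\alpha):\alpha\in J\}$, which is not M-convex in $\Delta_{2n}^{2d}$. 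Second, your transitivity step---gluing $v_1$ and $v_2$ into a single M-convex $v$ via $v=v_2$ on $J_1$ and $v=Cv_1$ elsewhere---runs into trouble precisely when the $v_1$-exchange inequality holds with equality: then the $C$-scaling gives no slack, and you would need the exchange element chosen for $v_1$ to also work for your (arbitrary) extension of $v_2$, which you have no control over. The paper's Puiseux-series approach sidesteps both issues entirely.
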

\begin{proof}
  It follows from \Cref{thm:314325} that (1) implies (2). It is clear that (2) implies (3). Now assume (3). The curve selection lemma from semi-algebraic geometry \cite{Basu-Pollack-Roy}*{Theorem 3.19} implies that there are algebraic Puiseux series $c_\alpha(t)$ for $\alpha\in J$ such that the polynomial
  \begin{equation*}
    \sum_{\alpha\in J} \frac{c_\alpha(t)}{\alpha!}x^\alpha
  \end{equation*}
  is in $\upL_J$ for  for sufficiently small $t>0$, and in $\upL_{J'}$ for $t=0$. This means that the function
  \begin{equation*}
        v\colon J\to\R,\qquad \alpha\mapsto{\rm val}(c_\alpha(t))
  \end{equation*}
  takes only nonnegative values and is zero exactly on $J'$. From \cite{Branden-Huh20}*{Theorem 3.20}, it follows that $v$ is M-convex, which proves (1).
\end{proof}
\begin{rem}
 Spaces of stable polynomials are less well-behaved with respect to initial polymatroids. For example, the non-Fano matroid $F_7^-$ is a relaxation of the Fano matroid $F_7$. By \cite{Choe-Oxley-Sokal-Wagner04}*{Example 11.5}, we have $\upS_{F_7^-}\neq\emptyset$, but $\upS_{F_7}=\emptyset$ by \cite{Branden07}. An example of a relaxation $M'$ of a matroid $M$ such that $\upS_{M'}=\emptyset$ but $\upS_{M}\neq\emptyset$ is given in \cite{Kummer-Sawall}*{Example 4.10}.
\end{rem}

We now take a more careful look at the set $\overline{\upL}_J\cap\upL_{J'}$ in the case where it is nonempty.
\begin{lemma}\label{lemma:cellstarshaped}
    Let $J'$ be an initial subset of $J$. Then the set $\log(\overline{\upL}_J\cap\upL_{J'})\subseteq\R^{J'}$ is star-shaped with respect to the origin.
\end{lemma}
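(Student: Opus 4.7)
The plan is to identify the ray from the origin to a point $\log(f) \in \R^{J'}$ with the curve $t \mapsto R_t(f)$ in the space of polynomials, where $R_t$ denotes the coefficient-wise $t$-th power operator from \Cref{subsec:conclusiontopo}. Under the coefficient-wise logarithm, $t \cdot \log(f)$ equals $\log(R_t(f))$ for $t > 0$, and equals $\log(f_{J'})$ at $t = 0$ (since $R_0$ replaces each nonzero coefficient by $1$ on the support $J'$). So the star-shapedness of $\log(\overline{\upL}_J \cap \upL_{J'})$ with respect to the origin reduces to verifying that, for every $f \in \overline{\upL}_J \cap \upL_{J'}$ and every $t \in [0,1]$, the polynomial $R_t(f)$ lies in $\overline{\upL}_J \cap \upL_{J'}$.

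The case $t = 0$ is handled separately: here $R_0(f) = f_{J'}$, which is Lorentzian with support $J'$ (in particular lies in $\upL_{J'}$), and lies in $\overline{\upL}_J$ by the implication (1)$\Rightarrow$(2) of \Cref{lemma:wheninitial} using the hypothesis that $J'$ is initial in $J$.

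For $t \in (0, 1]$, I would first note that $R_t(f)$ is Lorentzian by \Cref{thm:314325}(1), and that its support equals that of $f$, namely $J'$, since $t > 0$; thus $R_t(f) \in \upL_{J'}$. To see that $R_t(f) \in \overline{\upL}_J$, pick a sequence $f_n \in \upL_J$ converging to $f$. Again by \Cref{thm:314325}(1), each $R_t(f_n)$ is Lorentzian, and its support remains $J$ because $t > 0$, so $R_t(f_n) \in \upL_J$. The map $R_t$ is continuous on the closed cone of homogeneous polynomials with nonnegative coefficients (for $t > 0$ this is clear coefficient-by-coefficient, since $c \mapsto c^t$ is continuous on $[0,\infty)$), hence $R_t(f_n) \to R_t(f)$, establishing the claim.

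The essential input is the closure property of Lorentzian polynomials under $R_t$ for $t \in [0,1]$, recorded in \Cref{thm:314325}(1); once that is in hand, the rest is a routine continuity argument together with one application of \Cref{lemma:wheninitial} to pin down the origin in the set. I do not expect a genuine obstacle — the only mild subtlety is that the scaling $t \to 0$ does not correspond to continuous action of $R_t$ (supports collapse), but this boundary case is precisely the content of the initial-subset hypothesis and is resolved by \Cref{lemma:wheninitial}.
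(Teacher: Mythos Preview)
Your proposal is correct and follows essentially the same route as the paper's proof: pick an approximating sequence $f_n\in\upL_J$, apply the coefficient-wise power operator $R_t$ (using \Cref{thm:314325}(1)), and pass to the limit using continuity of $c\mapsto c^t$ on $[0,\infty)$. You are a bit more explicit about the endpoint $t=0$, invoking \Cref{lemma:wheninitial} to place $f_{J'}$ in the set; the paper treats only $0<p\leq1$ and leaves membership of the origin implicit.
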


\begin{proof}
    Let $f=\sum_{\alpha\in J'}\frac{c_\alpha}{\alpha!}x^\alpha\in \upL_{J'}$, and let $(f_i)_{i\in\N}\subseteq\upL_J$ be a sequence converging to $f$. Writing $f_i=\sum_{\alpha\in J}\frac{c_{i,\alpha}}{\alpha!}x^\alpha\in \upL_{J}$, it follows from \Cref{thm:314325} that for all $0\leq p\leq 1$ and all $i\in\N$, the polynomial $f_{i,p}=\sum_{\alpha\in J}\frac{c_{i,\alpha}^px^\alpha}{\alpha!}$ is Lorentzian as well. For fixed $0< p\leq 1$, the limit of the sequence $(f_{i,p})_{i\in\N}\subseteq\upL_J$ is the polynomial $\sum_{\alpha\in J'}\frac{c^p_\alpha}{\alpha!}x^\alpha\in \upL_{J'}$. This shows the claim. 
\end{proof}

For understanding the topology of a star-shaped set, we need to understand the rays it contains.

\begin{lemma}\label{lemma:rayinclosextends}
    Let $J'$ be an initial subset of $J$.
     If $\nu'\in\R^{J'}$ is such that $-\lambda\cdot\nu'\in\log(\overline{\upL}_J\cap\upL_{J'})$ for all $\lambda\geq0$, then for every $N\in\N$, there is an M-convex function $\nu\colon J\to\R$ which extends $\nu'$ and satisfies $\nu(\beta)\geq N$ for all $\beta\in J\smallsetminus J'$.
\end{lemma}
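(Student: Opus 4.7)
The proof plan is to apply the semi-algebraic curve selection lemma to the hypothesis, in the spirit of \Cref{lemma:wheninitial}, and then invoke \cite{Branden-Huh20}*{Theorem 3.20} to extract M-convexity from the valuations of the resulting Puiseux arcs. The novelty compared with \Cref{lemma:wheninitial} lies in exploiting that the hypothesis provides an entire one-parameter family of points in $\log(\overline{\upL}_J\cap\upL_{J'})$, not just a single point, which is what allows us to control the values of $\nu$ off $J'$.

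First, I would deduce from the hypothesis that $\nu'$ is itself M-convex on $J'$. Writing $p_\lambda=\sum_{\alpha\in J'}e^{-\lambda\nu'(\alpha)}\tfrac{x^\alpha}{\alpha!}$, the hypothesis says $p_\lambda=R_\lambda(p_1)$ lies in $\upL_{J'}$ for every $\lambda\geq0$, so \Cref{thm:314325}(2) applied on $J'$ yields that $\rho_{p_1}$ is a $\T_0$-representation of $J'$; by \Cref{lemma:mconvexist0}, this is equivalent to $\nu'$ being M-convex on $J'$.

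Given $N\in\N$, I would pick a sufficiently large $\Lambda\geq 1$ and apply curve selection (\cite{Basu-Pollack-Roy}*{Theorem 3.19}) to $p_\Lambda\in\overline{\upL}_J$. This produces algebraic Puiseux series $c_\alpha(t)\in\R\{\{t\}\}$ for $\alpha\in J$ such that $\sum_{\alpha\in J}\tfrac{c_\alpha(t)}{\alpha!}x^\alpha\in\upL_J$ for small $t>0$, with $c_\alpha(0)=e^{-\Lambda\nu'(\alpha)}$ for $\alpha\in J'$ and $c_\alpha(0)=0$ for $\alpha\in J\smallsetminus J'$. By \cite{Branden-Huh20}*{Theorem 3.20}, the valuation $\mu(\alpha):=\operatorname{val}(c_\alpha(t))$ is an M-convex function on $J$ that vanishes on $J'$ and is strictly positive on $J\smallsetminus J'$; in particular, $\mu$ is an M-convex witness of $J'$ being an initial subset of $J$. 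Rescaling via the Lorentzian-preserving operator $R_{1/\Lambda}$ from \Cref{thm:314325}(1) turns the leading coefficients on $J'$ into exactly $e^{-\nu'(\alpha)}$ and replaces $\mu$ by $\mu/\Lambda$, while keeping the arc inside $\upL_J$.

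The desired extension is then built by combining $\nu'$ on $J'$ with a large multiple of the valuation $\mu$ off $J'$: define $\nu$ to coincide with $\nu'$ on $J'$ and set $\nu(\beta)$ equal to an appropriate linear combination involving $\mu(\beta)$ (and a suitably chosen extension of $\nu'$), chosen large enough so that $\nu|_{J\smallsetminus J'}\geq N$, which is possible since $\mu$ is bounded below by a positive constant on the finite set $J\smallsetminus J'$. The principal obstacle, and the delicate point of the proof, is verifying that the assembled $\nu$ is genuinely M-convex on $J$ (M-convexity being a strictly stronger condition than having Lorentzian $f_\nu$). The verification proceeds by a case analysis on exchange pairs: pairs entirely within $J'$ are handled by M-convexity of $\nu'$ on $J'$, using that $\mu$ vanishes there; pairs involving at least one element of $J\smallsetminus J'$ are controlled by the dominant $M\mu$-contribution together with the M-convexity of $\mu$ on $J$. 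The full-strength hypothesis (for every $\lambda\geq0$) is indispensable here, as it ensures that the Puiseux arcs at different scales can be combined consistently and rules out obstructions to M-convex extension from $J'$ to $J$ that could otherwise arise from a single-point assumption.
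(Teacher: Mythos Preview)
There is a genuine gap in your argument, and it lies exactly where you anticipated: the verification that your piecewise-assembled $\nu$ is M-convex on $J$. Your construction takes $\nu=\nu'$ on $J'$ and (roughly) $\nu=M\mu$ on $J\smallsetminus J'$ for large $M$, with $\mu$ the valuation function obtained from a single curve-selection arc. But for an exchange pair $\alpha\in J'$, $\beta\in J\smallsetminus J'$, the witness index $l$ that works for $\mu$ need not work for $\nu'$, and vice versa. Concretely, the $\mu$-witness $l_1$ may only give $\mu(\beta)=\mu(\beta+e_k-e_{l_1})$ (equality, not strict), in which case no amount of scaling by $M$ absorbs a possible deficit $\nu'(\alpha)-\nu'(\alpha-e_k+e_{l_1})<0$; and the $\nu'$-witness $l_2$ may send $\beta+e_k-e_{l_2}$ outside $J$ or to a point where $\mu$ jumps up. The ``appropriate linear combination (and a suitably chosen extension of $\nu'$)'' you allude to is exactly the thing to be constructed, so this step is circular.

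The paper avoids this entirely by not trying to glue $\nu'$ and $\mu$ by hand. Instead, it transfers the full strength of the hypothesis (valid for every $\lambda\geq0$) to a single statement over a real closed field of Puiseux-type series: the polynomial $g=\sum_{\alpha\in J'}t^{\nu'(\alpha)}\tfrac{x^\alpha}{\alpha!}$ is Lorentzian and lies in the closure of $\upL_J$ over that field. Given $N$, one then intersects a ball of radius $t^q$ (with $q>N$ and $q>\nu'(\alpha)$ for all $\alpha\in J'$) around $g$ with $\upL_J$; nonemptiness yields a single Lorentzian polynomial over Puiseux series with coefficients $c_\alpha(t)$ satisfying $\operatorname{val}(c_\alpha)=\nu'(\alpha)$ on $J'$ and $\operatorname{val}(c_\alpha)\geq q>N$ off $J'$. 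Now \cite{Branden-Huh20}*{Theorem~3.20} applies \emph{once} to this polynomial and gives M-convexity of $\nu(\alpha):=\operatorname{val}(c_\alpha)$ directly, with no case analysis. The point you were missing is that the hypothesis for all $\lambda$ should be packaged as a statement about a single element over a nonstandard real closed field, rather than consumed at one real value $\Lambda$.
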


\begin{proof}
    Let $\nu'\in\R^{J'}$ be such that $-\lambda\cdot\nu'\in\log(\overline{\upL}_J\cap\upL_{J'})$ for all $\lambda\geq0$. This implies that the polynomial with coefficients in the field of Puiseux series
    \begin{equation*}
        g=\sum_{\alpha\in J'}\frac{t^{\nu'(\alpha)}}{\alpha!}x^\alpha\in \upL_{J'}
    \end{equation*}
    is Lorentzian and belongs to the closure of the set of Lorentzian polynomials with support $J$ over the field of Puiseux series. Now let $\epsilon>0$, and let $q\in\N$ be larger than $N$ and every $\nu'(\alpha)$ for $\alpha\in J'$. We consider the intersection of the ball of radius $t^q$ around $g$ in the max-norm on the coefficients with the set of Lorentzian polynomials over the Puiseux series with support $J$. By assumption, this intersection is nonempty. Therefore, there exist Puiseux series $c_\alpha(t)$ for $\alpha\in J$ such that ${\rm val}(c_\alpha(t))=\nu'(\alpha)$ if $\alpha\in J'$ and ${\rm val}(c_\alpha(t))\geq q$ otherwise, and such that $\sum_{\alpha\in J}\frac{c_{\alpha}(t)x^\alpha}{\alpha!}$ is Lorentzian. This implies that the function $\nu\colon J\to \R$ defined by $\nu(\alpha)={\rm val}(c_\alpha(t))$ has the desired properties.
\end{proof}

Before we continue, we note a useful consequence of \Cref{lemma:rayinclosextends}. For $d,n\in\N$ and a tract $F$, we define the \emph{Polygrassmannian} $\PolyGr(d,n)(F)$ to be the projectivization of the set all functions $\rho\colon\Delta_n^d\to F$ whose support is M-convex and that are a strong $F$-representation of their support. If $d\leq n$, then we define $\Gr(d,n)(F)$ to be the subset of $\PolyGr(d,n)(F)$ consisting of projective equivalence classes of functions whose support is contained in  $[0,1]^n$. If $F=\T_q$ for some $0\leq q\leq\infty$, then these spaces inherit a topology from the Euclidean topology on $\R_{\geq0}^{\Delta^d_n}$, see \cite{BHKL0}*{Section~10.1} for equivalent definitions and further properties.
We demonstrate that the connection between Lorentzian polynomials and M-convex functions can be used to recover some basic properties of Dressians \cite{Branden-Huh20}*{Lemma 3.27}.

\begin{cor}\label{cor:closofmax}
    For every $d,n\in\N$, the space $\Gr(d,n)(\T_0)$ is the closure of its maximal cell $\Gr_{U_{d,n}}(\T_0)$. The same is true for polymatroids, namely $\PolyGr(d,n)(\T_0)$ is the closure of its maximal cell $\Gr_{\Delta^d_n}(\T_0)$.
\end{cor}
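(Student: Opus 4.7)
The plan is to deduce both closure statements from the dictionary between Lorentzian polynomials and $\T_0$-representations (\Cref{thm:314325} combined with \Cref{lemma:mconvexist0}), together with the extension result \Cref{lemma:rayinclosextends}. I focus on the matroid case; the polymatroid case proceeds along the same lines.

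Fix a matroid $M$ of rank $d$ on $[n]$ and take $\rho \in \Gr_M(\T_0)$. By \Cref{lemma:mconvexist0}, the function $\nu := -\log\rho \colon \cB(M) \to \R$ is a valuated matroid. The first step is to verify that $\cB(M)$ is an initial subset of $\binom{[n]}{d}$ in the sense of \Cref{df:initialsubset}; I would do this by producing an M-convex function $\mu \colon \binom{[n]}{d} \to \R_{\geq 0}$ whose zero locus is exactly $\cB(M)$. The natural candidate is the corank $\mu(B) := d - r_M(B)$, which vanishes precisely on $\cB(M)$, with the valuated-matroid exchange axiom for $\mu$ verified by a matroid-intersection argument on the rank function of $M$. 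Once this is granted, \Cref{lemma:wheninitial} gives $\upL_M \subseteq \overline{\upL}_{U_{d,n}}$.

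Next, I would import $\rho$ into the Lorentzian world via \Cref{thm:314325}(2): for every $\lambda \geq 0$, the polynomial $R_\lambda\bigl(\sum_{B \in \cB(M)} e^{-\nu(B)} x^B\bigr)$ is Lorentzian with support $\cB(M)$, so the ray $\{-\lambda\nu : \lambda \geq 0\}$ lies in $\log(\upL_M) \subseteq \log(\overline{\upL}_{U_{d,n}} \cap \upL_M)$. Applying \Cref{lemma:rayinclosextends} with $J = \binom{[n]}{d}$ and $J' = \cB(M)$, for each $N \in \N$ I obtain a valuated matroid $\nu_N \colon \binom{[n]}{d} \to \R$ extending $\nu$ with $\nu_N(B) \geq N$ for $B \notin \cB(M)$. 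Setting $\rho_N := e^{-\nu_N}$, \Cref{lemma:mconvexist0} together with \Cref{rem: excellent tracts} places $\rho_N$ in $\Gr_{U_{d,n}}(\T_0)$; and $\rho_N \to \rho$ in the quotient topology on $\Gr(d,n)(\T_0)$, since $\rho_N$ agrees with $\rho$ on $\cB(M)$ while $\rho_N(B) \leq e^{-N} \to 0$ on $\binom{[n]}{d} \setminus \cB(M)$. This places $\rho$ in $\overline{\Gr_{U_{d,n}}(\T_0)}$, and ranging over $M$ establishes the first claim.

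The main obstacle is the initial-subset step. Even in the matroid case, the M-convexity of the corank function requires a genuine matroid argument to locate the correct exchange element, and in the polymatroid setting the analogue is more serious: the naive indicator function of an M-convex $J \subseteq \Delta_n^d$ need not be M-convex, as the singleton $J = \{(1,1,0)\} \subseteq \Delta_3^2$ already shows. For the polymatroid statement I would therefore either appeal to general existence results for M-convex extensions in discrete convex analysis, or reduce to the matroid case by lifting a polymatroid on $[n]$ to a matroid on a larger ground set obtained by replacing each element with parallel copies and transporting the matroid construction back through the quotient.
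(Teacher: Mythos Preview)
Your overall strategy---pass to Lorentzian polynomials, invoke \Cref{lemma:rayinclosextends} to extend the valuation, and take the limit---is exactly the paper's route. The difference lies in how you handle the step you call the ``main obstacle.''

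You work to show that $\cB(M)$ is an initial subset of $\binom{[n]}{d}$ by exhibiting the corank function, and you flag the polymatroid analogue as a genuine difficulty. The paper bypasses this entirely. For the polymatroid statement, recall that Lorentzian polynomials are \emph{by definition} limits of strictly Lorentzian polynomials, and those have full support $\Delta_n^d$; hence $f_J\in\overline{\upL}_{\Delta_n^d}$ for every M-convex $J$, and \Cref{lemma:wheninitial} then gives that $J$ is initial. For the matroid statement, the paper cites Br\"and\'en's result from \cite{Branden21} that every multi-affine Lorentzian polynomial is a limit of multi-affine Lorentzians with full support, which yields the same conclusion inside $\binom{[n]}{d}$. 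So no combinatorial construction is needed in either case, and your polymatroid obstacle dissolves.

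There is also a mis-citation in your argument: you write ``\Cref{lemma:wheninitial} gives $\upL_M\subseteq\overline{\upL}_{U_{d,n}}$,'' but that lemma only gives $f_M\in\overline{\upL}_{U_{d,n}}$, not the containment of all of $\upL_M$. You need the full containment to place the ray $\{-\lambda\nu\}$ inside $\log(\overline{\upL}_{U_{d,n}}\cap\upL_M)$ before invoking \Cref{lemma:rayinclosextends}. That containment is precisely what the definition of Lorentzian (polymatroid case) and Br\"and\'en's theorem (matroid case) supply---the same inputs that resolve the initial-subset step. Once you make this substitution, the rest of your argument goes through verbatim and matches the paper's.
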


\begin{proof}
    Let $\rho\colon\Delta^d_n\to\R_{\geq0}$ be a $\T_0$-representation of some polymatroid, and let $J$ be the support of $\rho$. Since $J$ is M-convex, the generating polynomial $f_J$ is Lorentzian. By definition, it is a limit of Lorentzian polynomials with full support $\Delta_n^d$. By \Cref{lemma:wheninitial}, $J$ is an initial subset of $\Delta^d_n$. By \Cref{lem: diraremconvex}, the function $-(\log\circ\rho)\colon J\to\R$ satisfies the hypotheses of \Cref{lemma:rayinclosextends}. Thus, for every $N\in\N$, there exists an M-concave function $\nu_N\colon\Delta_n^d\to\R$ which extends $\log\circ\rho$ and satisfies $\nu_N(\alpha)\leq-N$ for all $\alpha\in\Delta_n^d\smallsetminus J$. This shows that $\rho_N=\exp\circ\nu_N$ is in $\upR_{\Delta_n^d}(\T_0)$  and satisfies
    \begin{equation*}
        \rho_N(\alpha)=\begin{cases}
            \rho_N(\alpha)=\rho(\alpha)\textrm{ if }\alpha\in J,\\
            \rho_N(\alpha)\leq e^{-N}\textrm{ otherwise.}
        \end{cases}
    \end{equation*}
    It follows that $\rho=\lim_{N\to\infty}\rho_N$, which yields the claim for $\PolyGr(d,n)(\T_0)$. The claim for $\Gr(d,n)(\T_0)$ can be shown in the same way using the result from \cite{Branden21} that every multiaffine Lorentzian polynomial is a limit of multiaffine Lorentzian polynomials with full support.
\end{proof}

Initial polymatroids of a given polymatroid can be understood in terms of regular polymatroid subdivisions. We assume some familiarity with the theory of regular subdivisions.  As references we recommend \cite{TriangulationsBook}*{Section 5} or \cite{Joswig21}*{Section 1.2}.

\begin{df}
    Let $\emptyset\neq J\subseteq\Delta_n^d$ be an M-convex set. The \emph{base polytope} $\BP_J$ of $J$ is the convex hull of $J$ in $\R^n$. A \emph{regular polymatroid subdivision} of $J$ (or of $\BP_J$) is a regular subdivision of $J$ that is induced by an M-convex function $\nu\colon J\to\R$ (i.e., the subdivision equals the subset of $\BP_J$ where $\nu$ is not differentiable).
\end{df}

\begin{rem}\label{rem:polymatroidsubdivisions}
Let $\nu\colon J\to\R$ be an M-convex function.
    \begin{enumerate}[(1)]\itemsep 5pt
        \item  Let $\ell\colon\R^n\to\R$ be a linear function  and $a,b>0$. Then the M-convex functions $\nu$ and $a\cdot\nu+b\cdot\ell|_J$ induce the same regular polymatroid subdivision, see e.g. \cite{TriangulationsBook}*{Proposition 5.4.1}.
        \item Let $J'\subseteq J$ be the set of vertices of a cell\footnote{By a \emph{cell} of a regular polymatroid subdivision we mean one of the (not necessarily maximal) polytopes that comprise the regular polymatroid subdivision (which is a compact polyhedral complex).} in the regular polymatroid subdivision induced by $\nu$. Then there is a linear function $\ell\colon\R^n\to\R$ such that $\nu+\ell|_J$ takes only nonnegative values and is zero exactly on $J'$. This shows that the initial subsets of $J$ are exactly those subsets of $J$ that correspond to a cell in a regular polymatroid subdivision of $J$.  In particular, it follows  from \Cref{rem:initialmconvex} that subsets of $J$ which correspond to a cell in a regular polymatroid subdivision of $J$ are themselves M-convex.
        \item Let $\rho\colon J\to\T_0$ be a $\T_0$-representation of $J$ and let $t=(t_1,\ldots,t_n)\in(\R_{>0})^n$. By \Cref{lemma:mconvexist0}, the function $\nu=-\log(\rho)\colon J\to\R$ is M-convex. If $\ell\colon\R^n\to\R$ is the linear function that maps the $i$-th unit vector to $-\log(t_i)$, then $$-\log(t.\rho)=\nu+\ell|_J,$$ where $t.\rho$ denotes the action of $\rho$ defined in \Cref{eq:rescalingaction}. Thus $-\log(\rho)$ and $-\log(t.\rho)$ induce the same regular polymatroid subdivision (by part (1) of this remark), and we can speak of the regular polymatroid subdivision induced by an element of $\ulineGr_J(\T_0)$. By \Cref{lemma:mconvexist0}, every regular polymatroid subdivision of $J$ is induced by an element of $\ulineGr_J(\T_0)$ in this sense.
    \end{enumerate}
\end{rem}
The easiest special case where the intersection $\overline{\upL}_J\cap\upL_{J'}$ is always nonempty is when the base polytope of the initial subset $J'$ corresponds to a face of the base polytope of $J$.
\begin{lemma}\label{lemma:closoftorusorbit}
    Let $\emptyset\neq J'\subseteq J\subseteq\Delta_n^d$ be M-convex sets, and let $f\in\upL_J$. We consider the orbit
    \begin{equation*}
        \upO_f=\{f(\epsilon_1x_1,\ldots,\epsilon_nx_n)\mid \epsilon_1,\ldots,\epsilon_n>0\}\subseteq\upL_J.
    \end{equation*}
    The following are equivalent:
    \begin{enumerate}[(1)]\itemsep 5pt
        \item $\BP_{J'}$ is a face of $\BP_J$.
        \item The closure of $\upO_f$ intersects $\upL_{J'}$.
    \end{enumerate}
    In this case, the intersection of the closure of $\upO_f$ with $\upL_{J'}$ is
    \begin{equation*}
        \upO_g=\{g(\epsilon_1x_1,\ldots,\epsilon_nx_n)\mid \epsilon_1,\ldots,\epsilon_n>0\},
    \end{equation*}
    where $g$ is the sum over all terms in $f$ that correspond to points in $J'$.
    
    In particular, $\overline{\upL}_J\cap\upL_{J'}$ is nonempty whenever $\BP_{J'}$ is a face of $\BP_J$
\end{lemma}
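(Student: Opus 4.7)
\textbf{The plan} is to prove the three pieces of the lemma in sequence: (1) $\Rightarrow$ (2) by writing down an explicit limit (which simultaneously yields $\upO_g\subseteq\overline{\upO_f}\cap \upL_{J'}$), then the reverse inclusion $\overline{\upO_f}\cap \upL_{J'}\subseteq\upO_g$ via a linearity argument in log coordinates, and finally (2) $\Rightarrow$ (1) by a compactness-plus-induction argument on subsequential limits of the scaling parameters.

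\textbf{For (1) $\Rightarrow$ (2):} Assume $\BP_{J'}$ is a face of $\BP_J$, and pick a linear functional $\ell\in(\R^n)^*$ with $\ell|_{J'}\equiv 0$ and $\ell|_{J\smallsetminus J'}>0$. Setting $t_k=(e^{-k\ell(e_1)},\dotsc,e^{-k\ell(e_n)})$, the coefficient of $x^\alpha/\alpha!$ in $t_k\cdot f$ is multiplied by $e^{-k\ell(\alpha)}$, which equals $1$ for $\alpha\in J'$ and tends to $0$ for $\alpha\in J\smallsetminus J'$. Hence $t_k\cdot f\to g$, so $g\in\overline{\upO_f}\cap\upL_{J'}$. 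Since $\overline{\upO_f}$ is $\R_{>0}^n$-invariant (the action is a homeomorphism), the entire orbit $\upO_g$ is contained in $\overline{\upO_f}\cap\upL_{J'}$.

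\textbf{For the reverse inclusion:} Let $h\in\overline{\upO_f}\cap\upL_{J'}$, written $h=\lim_k(e^{v_{k,1}},\dotsc,e^{v_{k,n}})\cdot f$ for some $v_k\in\R^n$. For $\alpha\in J'$, convergence of $f_\alpha e^{\langle v_k,\alpha\rangle}\to h_\alpha$ gives $\langle v_k,\alpha\rangle\to\log(h_\alpha/f_\alpha)$. The limit vector $(\log(h_\alpha/f_\alpha))_{\alpha\in J'}$ therefore lies in the closed linear subspace $\{(\langle v,\alpha\rangle)_{\alpha\in J'}:v\in\R^n\}\subseteq\R^{J'}$, so there exists $v\in\R^n$ with $\langle v,\alpha\rangle=\log(h_\alpha/f_\alpha)$ for all $\alpha\in J'$, yielding $h=(e^{v_1},\dotsc,e^{v_n})\cdot g\in\upO_g$.

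\textbf{For (2) $\Rightarrow$ (1):} Start from any $h\in\overline{\upO_f}\cap\upL_{J'}$ realized by $v_k\in\R^n$ as above; after replacing each $v_k$ by its orthogonal projection to $V_J=\mathrm{span}(J)\subseteq\R^n$ (which preserves every $\langle v_k,\alpha\rangle$ for $\alpha\in J$), we may assume $v_k\in V_J$, and the sequence still has $\langle v_k,\alpha\rangle$ bounded on $J'$ and $\to-\infty$ on $J\smallsetminus J'$. We induct on $|J\smallsetminus J'|$, the case $J=J'$ being trivial. The $-\infty$ condition forces $\|v_k\|\to\infty$, so after passing to a subsequence $v_k/\|v_k\|\to\hat v$ with $\|\hat v\|=1$, and dividing the hypotheses by $\|v_k\|$ we obtain $\langle\hat v,\alpha\rangle=0$ on $J'$ and $\langle\hat v,\alpha\rangle\leq 0$ on $J\smallsetminus J'$. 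Let $J^{(1)}=\{\alpha\in J:\langle\hat v,\alpha\rangle=0\}\supseteq J'$; then $\BP_{J^{(1)}}$ is the face of $\BP_J$ where $\langle\hat v,\cdot\rangle$ is maximized, and $J^{(1)}$ is M-convex (the maximizer of a linear functional on an M-convex set is M-convex by a standard exchange argument). Since $J$ spans $V_J$ and $\hat v\in V_J$ is nonzero, $\langle\hat v,\cdot\rangle$ does not vanish identically on $J$, so $J^{(1)}\subsetneq J$. The sequence $v_k$ still satisfies the hypotheses for the pair $(J^{(1)},J')$, so by induction $\BP_{J'}$ is a face of $\BP_{J^{(1)}}$, and hence of $\BP_J$. \textbf{The main obstacle} is precisely this last step: extracting the subsequential direction $\hat v$ is straightforward, but one must exploit the spanning condition $\mathrm{span}(J)=V_J$ to guarantee $J^{(1)}\subsetneq J$ so that the induction strictly decreases $|J\smallsetminus J'|$. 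The final ``in particular'' clause then follows by applying (1) $\Rightarrow$ (2) to any $f\in\upL_J$ (nonempty since $J$ is M-convex).
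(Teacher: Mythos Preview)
Your proof is correct, and your approach to the easy direction $(1)\Rightarrow(2)$ matches the paper's. The interesting differences are in the other two parts.

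For the reverse inclusion $\overline{\upO_f}\cap\upL_{J'}\subseteq\upO_g$, the paper invokes the curve selection lemma and the existence of semi-algebraic sections to replace the converging sequence by a one-parameter family $f(c_1(t)x_1,\dotsc,c_n(t)x_n)$ with Puiseux series $c_i(t)$, then reads off $h=g(c_1^{\mathrm{in}}x_1,\dotsc,c_n^{\mathrm{in}}x_n)$ from the leading coefficients. Your argument is more elementary and arguably cleaner: the image of the linear map $\R^n\to\R^{J'}$, $v\mapsto(\langle v,\alpha\rangle)_{\alpha\in J'}$, is closed, so the limit vector of log-ratios lies in it. This sidesteps the semi-algebraic machinery entirely.

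For $(2)\Rightarrow(1)$, the paper's Puiseux-series trick produces a \emph{single} linear functional $\beta=(\mathrm{val}(c_i(t)))_i$ that exposes $\BP_{J'}$ as a face of $\BP_J$ in one stroke: the valuation of the $\alpha$-coefficient is $\langle\alpha,\beta\rangle$, which must be zero on $J'$ and strictly positive on $J\smallsetminus J'$. Your compactness-and-induction argument instead peels off one face at a time via subsequential directions $\hat v$, at the cost of having to re-project onto $\mathrm{span}(J^{(1)})$ at each step to ensure strict progress. Both arguments are sound; yours is self-contained and avoids external input, while the paper's is shorter once one is willing to import the curve selection lemma.
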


\begin{proof}
    Assuming (1), there exists $\beta\in\R^n$ such that the linear form $\langle-,\beta\rangle$ takes its minimum $\gamma$ on $\BP_J$ exactly on $\BP_{J'}$. Then we have
    \begin{equation}\label{eq:facelimit}
        \lim_{\epsilon\to0} f(\epsilon^{\beta_1-\gamma/r}x_1,\ldots,\epsilon^{\beta_n-\gamma/r}x_n)=g\in\upL_{J'},
    \end{equation}
    which proves (2). By replacing each $x_i$ by $\epsilon_ix_i$ for some $\epsilon_i>0$ in \Cref{eq:facelimit}, we further see that the intersection of the closure of $\upO_f$ with $\upL_{J'}$ contains $\upO_g$. 
    
    Now assume (2), and let $h\in\upL_{J'}$ be in the closure of $\upO_f$. By the curve selection lemma  \cite{Basu-Pollack-Roy}*{Theorem 3.19} and the existence of semi-algebraic sections \cite{ScheidererBook}*{Proposition 4.5.9},     there are algebraic Puiseux series $c_i(t)$ for $i\in[n]$, which are positive for small enough $t>0$, such that 
    \begin{equation*}
        h=\lim_{t\to0}f(c_1(t)x_1,\ldots c_n(t)x_n).
    \end{equation*}
    Letting $\beta=({\rm val}(c_1(t)),\ldots,{\rm val}(c_n(t)))\in\R^n$, this implies that the linear form $\langle-,\beta\rangle$ takes its minimum on $\BP_J$ exactly on $\BP_{J'}$, where it is zero. This shows (1). Furthermore,
    \begin{equation*}
        h=\lim_{t\to0}f(c_1(t)x_1,\ldots c_n(t)x_n)=g(c_1^{\rm in}x_1,\ldots,c_n^{\rm in}x_n),
    \end{equation*}
    where $c_i^{\rm in}\in\R$ is the coefficient of smallest degree in $c_i(t)$. This shows that $h\in \upO_g$.
\end{proof}

\begin{cor}
    Let $\emptyset\neq J'\subseteq J\subseteq\Delta_n^d$ be M-convex sets such that $\BP_{J'}$ is a face of $\BP_J$. For $\nu'\in\R^{J'}$, the following are equivalent:
    \begin{enumerate}[(1)]\itemsep 5pt
        \item $-\lambda\cdot\nu'\in\log(\overline{\upL}_J\cap\upL_{J'})$ for all $\lambda\geq0$.
        \item $\nu'$ extends to an M-convex function $\nu\colon J\to\R$.
    \end{enumerate}
\end{cor}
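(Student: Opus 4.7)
The plan is to prove the two implications separately. Direction $(1)\Rightarrow(2)$ will follow almost immediately from \Cref{lemma:rayinclosextends} once I verify that $J'$ is automatically an initial subset of $J$. Direction $(2)\Rightarrow(1)$ requires an explicit construction: I will produce a one-parameter family of Lorentzian polynomials in $\upL_J$ converging to the desired polynomial in $\upL_{J'}$, using a supporting linear form for the face $\BP_{J'}$ to suppress the terms indexed by $J\smallsetminus J'$.

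For $(2)\Rightarrow(1)$, I would fix $\lambda\geq 0$ and an M-convex extension $\nu\colon J\to\R$ of $\nu'$. Since $\BP_{J'}$ is a face of $\BP_J$, there exist $\beta\in\R^n$ and $c\in\R$ such that the affine linear form $\ell(\alpha)\coloneq\langle\alpha,\beta\rangle-c$ vanishes on $\BP_{J'}$ and is strictly positive on $\BP_J\smallsetminus \BP_{J'}$. For each $s\geq 0$, set
\[
\mu_s \ \coloneq \ \lambda\nu + s\,\ell\big|_J.
\]
Any affine linear form on $J$ is M-convex (the exchange inequality holds with equality for every choice of index), the addition of a linear form to an M-convex function preserves M-convexity, and nonnegative rescaling preserves M-convexity; hence each $\mu_s$ is M-convex on $J$. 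By \Cref{lem: diraremconvex}, the polynomial
\[
f_s \ \coloneq \ \sum_{\alpha\in J} e^{-\mu_s(\alpha)}\,\frac{x^\alpha}{\alpha!}
\]
then lies in $\upL_J$. As $s\to\infty$, the coefficients indexed by $\alpha\in J\smallsetminus J'$ decay to zero (since $\ell(\alpha)>0$ there), while the coefficients indexed by $\alpha\in J'$ are independent of $s$ and equal $e^{-\lambda\nu'(\alpha)}$. Consequently the limit
\[
f_\infty \ \coloneq \ \sum_{\alpha\in J'} e^{-\lambda\nu'(\alpha)}\,\frac{x^\alpha}{\alpha!}
\]
belongs to $\overline{\upL}_J$. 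Since $\lambda\nu'$ is M-convex on $J'$, a further application of \Cref{lem: diraremconvex} yields $f_\infty\in\upL_{J'}$. As $\log f_\infty=-\lambda\nu'$, this establishes $(1)$.

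For $(1)\Rightarrow(2)$, the restriction $\ell|_J$ constructed above is a nonnegative M-convex function on $J$ vanishing precisely on $J'$, so $J'$ is an initial subset of $J$ in the sense of \Cref{df:initialsubset}. The hypothesis $(1)$ is then exactly the assumption of \Cref{lemma:rayinclosextends}, and applying that lemma (with any $N\in\N$, say $N=0$) produces an M-convex function $\nu\colon J\to\R$ extending $\nu'$, which is what $(2)$ asserts.

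The only minor technical point is keeping track of M-convexity under the operations used to define $\mu_s$, and this reduces to the elementary observation that adding a scalar multiple of an affine linear form to an M-convex function preserves the exchange inequality index-for-index. The genuine combinatorial work — promoting a ray in $\log(\overline{\upL}_J\cap\upL_{J'})$ to an M-convex function on all of $J$ — is already packaged inside \Cref{lemma:rayinclosextends}, so no additional heavy lifting is required.
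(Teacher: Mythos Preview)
Your proof is correct and follows essentially the same approach as the paper. For $(1)\Rightarrow(2)$ both you and the paper invoke \Cref{lemma:rayinclosextends}; for $(2)\Rightarrow(1)$ the paper first notes that the polynomial with coefficients $e^{-\lambda\nu(\alpha)}$ lies in $\upL_J$ (via \Cref{thm:314325}) and then appeals to \Cref{lemma:closoftorusorbit} to pass to the face, whereas you unroll that last step by hand using the supporting linear form $\ell$ and take the limit $s\to\infty$ directly—this is exactly the construction inside the proof of \Cref{lemma:closoftorusorbit}, so the arguments coincide. (A small remark: your second invocation of \Cref{lem: diraremconvex} to place $f_\infty$ in $\upL_{J'}$ is correct but unnecessary, since $f_\infty$ is already a limit of Lorentzian polynomials and hence Lorentzian with support $J'$.)
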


\begin{proof}
 It follows from \Cref{lemma:rayinclosextends} that (1) implies (2). Assume (2) holds, and let $\nu\colon J\to\R$ be an M-convex function that extends $\nu'$. For every $q>0$, it follows from \Cref{thm:314325} that the polynomial $f=\sum_{\alpha\in J}\frac{q^{\nu(\alpha)}}{\alpha!}x^\alpha$ is Lorentzian. Hence, by \Cref{lemma:closoftorusorbit}, the polynomial 
 $g=\sum_{\alpha\in J'}\frac{q^{\nu'(\alpha)}}{\alpha!}x^\alpha$
 is in the closure of $\upL_J$. This establishes (1).
\end{proof}

If the reduced Dressian $\ulineDr_J=-\log(\ulineGr_J(\T_0))$ of $J$ has only finitely many rays, we have a good understanding of the rays in $\log(\overline{\upL}_J\cap\upL_{J'})$.

\begin{thm}\label{thm:eulerforrays}
    Let $J\subseteq\Delta_n^d$ be an M-convex set such that $\log(\ulineGr_J(\T_0))$ has only finitely many rays. Let $\emptyset\neq J'\subseteq J$ be an initial subset, and let $W_{J'}$ be the linear subspace of $\R^{J'}$ corresponding to the image under the orbit of the generating polynomial under the rescaling action of $(\R^\times)^n$. Finally, let $0\neq\nu'\in\R^{J'}$. Then $-\nu'$ generates a ray of $\log(\overline{\upL}_J\cap\upL_{J'})$ if and only if one of the following holds:
    \begin{enumerate}[(1)]\itemsep 5pt
        \item $\BP_{J'}$ is a face of $\BP_J$ and $\nu'$ is the restriction of an M-convex function $\nu\colon J\to\R$.
        \item $\BP_{J'}$ is not a face of $\BP_J$ and $\nu'\in W_{J'}$.
    \end{enumerate}
\end{thm}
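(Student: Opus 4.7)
My plan is to first reformulate the geometric ray condition as a statement about M--convex extensions, then handle the two cases separately; the case (2) implication from ``ray'' to ``$\nu'\in W_{J'}$'' will be the main obstacle and will consume the finiteness hypothesis.

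\textbf{Reformulation.} I would first establish the following combinatorial characterization: $-\nu'\in\log(\overline{\upL}_J\cap\upL_{J'})$ if and only if for every $M>0$ there exists an M--convex function $\mu\colon J\to\R$ extending $\nu'$ with $\mu|_{J\smallsetminus J'}\geq M$. The ``only if'' is \Cref{lemma:rayinclosextends}. For the ``if'', given such $\mu$ the function $\rho=e^{-\mu}$ is a $\T_0$--representation of $J$ by \Cref{lemma:mconvexist0}, so by \Cref{thm:314325}(2) the polynomial $f_{\mu}=\sum_{\alpha\in J}\frac{e^{-\mu(\alpha)}}{\alpha!}x^{\alpha}$ is Lorentzian; taking $M\to\infty$ the coefficients on $J\smallsetminus J'$ vanish and $f_{\mu}$ converges to $\sum_{\alpha\in J'}\frac{e^{-\nu'(\alpha)}}{\alpha!}x^{\alpha}\in\overline{\upL}_J\cap\upL_{J'}$. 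Applying this to every $\lambda\nu'$ and using $\lambda$--rescaling, the condition ``$-\nu'$ generates a ray'' becomes precisely the existence of M--convex extensions of $\nu'$ with arbitrarily large values on $J\smallsetminus J'$.

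\textbf{Case (1).} The ``only if'' is immediate from the reformulation with $N=0$. For the ``if'', given an M--convex extension $\nu_0$ of $\nu'$ and a linear functional $\ell$ on $\R^n$ whose restriction to $\BP_J$ attains its minimum on exactly $\BP_{J'}$ (with constant value $c$), the function $\mu_M:=\nu_0+M(\ell-c)|_J$ is M--convex (adding a linear function preserves the exchange inequality since the linear part cancels on both sides), extends $\nu'$, and satisfies $\mu_M|_{J\smallsetminus J'}\to\infty$ as $M\to\infty$.

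\textbf{Case (2), ``if''.} If $\nu'=\ell|_{J'}$ for some linear $\ell$ on $\R^n$, pick an M--convex witness $v\colon J\to\R$ of initialness (i.e.\ $v|_{J'}=0$ and $v|_{J\smallsetminus J'}>0$), which exists by \Cref{df:initialsubset}. Then $\ell|_J + Mv$ is M--convex (the exchange inequality for the sum is inherited from $v$, since $\ell|_J$ contributes $0$ to every exchange), extends $\nu'$, and has arbitrarily large values on $J\smallsetminus J'$.

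\textbf{Case (2), ``only if'' (the main obstacle).} Assume the ray condition and that $\BP_{J'}$ is not a face of $\BP_J$. By the finiteness hypothesis, the Dressian fan of M--convex functions on $J$ has finitely many maximal polyhedral cones $C^{(S_1)},\dots,C^{(S_r)}$, one per regular polymatroid subdivision. The family of M--convex extensions $\mu_M$ provided by the reformulation meets one such cone $C^{(S)}$ for a cofinal set of $M$; the polyhedron $P:=C^{(S)}\cap\{\mu:\mu|_{J'}=\nu'\}$ is nonempty and contains points with every coordinate $\mu(\alpha)$, $\alpha\in J\smallsetminus J'$, unbounded, so by elementary polyhedron theory its recession cone contains a direction $v\in C^{(S)}$ with $v|_{J'}=0$ and $v|_{J\smallsetminus J'}>0$. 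Choose $\nu_0\in P$; then $\nu_0+\epsilon v\in C^{(S)}$ for all $\epsilon\geq 0$ and lies in the relative interior of the unique cone $C^{(S')}$ of the fan containing the segment $[\nu_0,\nu_0+\epsilon v]$ for small $\epsilon>0$. Since $v|_{J'}=0$ and $v|_{J\smallsetminus J'}>0$, the regular polymatroid subdivision $S'$ has $\BP_{J'}$ as a cell, and the affineness of $\nu_0+\epsilon v$ on the vertex set $J'$ of this cell yields $\nu'=(\nu_0+\epsilon v)|_{J'}\in W_{J'}$. The delicate point, and the one I expect to be the main obstacle, is that $\BP_{J'}$ must appear as an undivided cell of $S'$: if it were further subdivided in $S'$, then at the relative boundary of $\BP_{J'}$ sitting in the interior of $\BP_J$ (nonempty precisely because $\BP_{J'}$ is not a face) the cells of $S'$ covering $\BP_{J'}$ would share facets with cells containing vertices from $J\smallsetminus J'$, and the M--convex exchange inequalities produced by letting the values on $J\smallsetminus J'$ grow unboundedly would force every internal exchange inequality on $J'$ to be an equality; together these equalities close up, via the connectivity of the base--exchange graph of the initial subset $J'$ and the non--face geometry, to force $\nu'$ to be affine. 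This closing--up argument is the step I anticipate needing the most care, as it is where the non--face hypothesis is essentially used to rule out extra rays beyond the lineality $W_{J'}$.
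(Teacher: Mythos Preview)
Your reformulation and the arguments for Case~(1) and for the ``if'' direction of Case~(2) are correct and match the paper's treatment (the paper packages Case~(1) into a separate corollary using \Cref{lemma:closoftorusorbit}, but the content is the same).

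The gap is in the ``only if'' direction of Case~(2). Your pigeonhole step, placing infinitely many extensions $\mu_M$ into one cone $C^{(S)}$, is right and is where the finiteness hypothesis enters. But your completion is circular: you assert that the subdivision $S'$ induced by $\nu_0+\epsilon v$ has $\BP_{J'}$ as an undivided cell, and then deduce that $(\nu_0+\epsilon v)|_{J'}=\nu'$ is affine. However, $\BP_{J'}$ being a cell of the regular subdivision induced by a function $f$ \emph{means} precisely that $f|_{J'}$ is affine and is separated from the rest by a supporting linear functional, so you are assuming what you want to prove. The proposed rescue via ``internal exchange inequalities on $J'$ forced to equalities as $M\to\infty$'' does not work either: the exchange inequalities internal to $J'$ involve only $\nu'$ and are independent of $M$, so nothing is being forced.

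The correct completion is much shorter and uses only the structure of $C^{(S)}$. Since $\log(\ulineGr_J(\T_0))$ has finitely many rays, $C^{(S)}=W_J+\R_{\geq0}v_k$ for some ray generator $v_k$ (or $C^{(S)}=W_J$, handled the same way with $t=0$). Take two of your pigeonholed extensions $\mu_{M_1},\mu_{M_2}\in C^{(S)}$ with $M_2$ so large that $\mu_{M_2}(\beta)>\mu_{M_1}(\beta)$ for all $\beta\in J\smallsetminus J'$, and write $\mu_{M_i}=\omega_i+t_i v_k$ with $\omega_i\in W_J$, $t_i\geq0$. Restricting to $J'$ gives $(t_2-t_1)\,v_k|_{J'}=(\omega_1-\omega_2)|_{J'}\in W_{J'}$. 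If $t_1\neq t_2$ then $v_k|_{J'}\in W_{J'}$, whence $\nu'=\omega_1|_{J'}+t_1 v_k|_{J'}\in W_{J'}$. If $t_1=t_2$ then $\omega:=\mu_{M_2}-\mu_{M_1}\in W_J$ is linear, vanishes on $J'$, and is strictly positive on $J\smallsetminus J'$, forcing $\BP_{J'}$ to be a face of $\BP_J$, a contradiction. This is exactly the paper's argument, phrased there as: pigeonhole two of the $\nu_i$ onto the same ray of $\ulineDr_J$, write $\lambda\nu_i=\nu_j+\omega$ with $\omega\in W_J$, restrict to $J'$ to get $(\lambda-1)\nu'=\omega|_{J'}$, and rule out $\lambda=1$ via the non-face hypothesis.
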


\begin{proof}
    The case where $\BP_{J'}$ is a face of $\BP_J$ was settled in the preceding corollary. Thus, we may assume that $\BP_{J'}$ is not a face of $\BP_J$. If $\nu'\in W_{J'}$, then it follows from \Cref{lemma:closoftorusorbit} that $-\nu'$ generates a ray of $\log(\overline{\upL}_J\cap\upL_{J'})$. Conversely, assume that $-\nu'$ generates a ray of $\log(\overline{\upL}_J\cap\upL_{J'})$. By \Cref{lemma:rayinclosextends}, there exists an M-convex function $\nu_0\colon J\to\R$ extending $\nu'$. For every $i\in\N$, we recursively define $\nu_i\colon J\to\R$ to be an M-convex extension of $\nu'$ such that $\nu_i(\beta)>\max_{\gamma\in J}(v_{i-1}(\gamma))$ for all $\beta\in J\smallsetminus J'$. This is possible by \Cref{lemma:rayinclosextends}. Because $\log(\ulineGr_J(\T_0))$ has only finitely many rays, there are $i>j$, $\lambda>0$, and $\omega\in W_J$ such that
    \begin{equation*}
        \lambda\cdot \nu_i= \nu_j + \omega.
    \end{equation*}
    This show that the restriction of $\omega$ to $J'$, which lies in $W_{J'}$, is equal to $(\lambda-1)\cdot\nu'$. Thus, it remains to show that $\lambda\neq1$. Assume for the sake of a contradiction that $\lambda=1$, i.e., that $\omega=\nu_i-\nu_j$. By construction, $\nu_i(\beta)>\nu_j(\beta)$ for every $\beta\in J\smallsetminus J'$. This shows that $\omega$ is nonnegative on $J$ and is zero exactly on $J'$. Because $\omega\in W_{J}$, this implies that $\BP_{J'}$ is a face of $\BP_J$, contradicting our previous assumption.
\end{proof}

\subsection{Compactly supported Euler characteristic}
Building upon the results from the previous subsection, we will compute the Euler characteristic of $\overline{\P\upL}_M$ for some matroids $M$. In particular, we will see an example of a matroid $M$ for which $\chi(\overline{\P\upL}_M)\neq1$, which shows in particular that $\overline{\P\upL}_M$ is not homeomorphic to a ball.

To this end, for any M-convex $J$, we consider the decomposition
\begin{equation*}
    \overline{\P\upL}_J=\bigsqcup_{J'} (\overline{\P\upL}_J \cap \P\upL_{J'}),
\end{equation*}
where the disjoint union is over all initial subsets $J'$ of $J$. 

For computing $\chi(\overline{\P\upL}_J)$ in terms of the cells $\overline{\P\upL}_J \cap \P\upL_{J'}$, we will make use of some concepts and results from tame topology (see \cite{vanDries98} for an introduction to this topic). 

We say that a subset $S\subseteq\R^n$ is \emph{definable} if it is of the form
\begin{equation*}
    \{x\in\R^n\mid \exists y\in\R^k: P(x,y,e^x,e^y)=0\},
\end{equation*}
where $P$ is a real polynomial in $2(n+k)$ variables, and where $x=(x_1,\ldots,x_n)$, $y=(y_1,\ldots,y_k)$, $e^x=(e^{x_1},\ldots,e^{x_n})$, $e^y=(e^{y_1},\ldots,e^{y_k})$. A map $f\colon S\to\R^m$ from a definable set $S\subseteq\R^n$ is \emph{definable} if its graph is definable. It was shown by Wilkie \cite{wilkie} that the definable sets form an \emph{o-minimal structure}. This implies in particular that:
\begin{enumerate}[(1)]\itemsep 5pt
    \item Every semi-algebraic set and every semi-algebraic map is definable.
    \item Boolean combinations of definable sets are definable.
    \item If $A$ is definable, then so are $\R\times A$ and $A\times \R$.
    \item The image of a definable set under a definable map is definable.
\end{enumerate}

\begin{ex}
Note that the map $\R^n\to\R^n,\, x\mapsto e^x$ is definable but not semi-algebraic.
\end{ex}

\begin{rem}\label{rem:quantifyer elimination}
    Let $S\subseteq\R^{n+1}$ be a definable set. Then the set
    \begin{equation*}
        \{x\in\R^n\mid \exists t\in\R: (x,t)\in S\}
    \end{equation*}
    is the image of $S$ under the definable map 
    \begin{equation*}
    \pi\colon\R^{n+1}\to\R^n,\qquad (x,t)\mapsto x,
    \end{equation*}
    and thus is again definable by property (4) of o-minimality. Similarly, the set
    \begin{equation*}
        \{x\in\R^n\mid \forall t\in\R: (x,t)\in S\}=\R^{n}\smallsetminus \pi(\R^{n+1}\smallsetminus S)
    \end{equation*}
    is definable. We say that \emph{definable sets satisfy quantifier elimination}.
\end{rem}
\begin{thm}[§4.2 in \cite{vanDries98}]\label{thm:eulerchar}
    There is a unique integer-valued map $\chi_*$ on the set of definable sets that satisfies the following conditions:
    \begin{enumerate}[(1)]\itemsep 5pt
        \item $\chi_*(\R^n)=(-1)^n$,
        \item If $S,T\subseteq\R^n$ are disjoint definable sets, then $\chi_*(S\cup T)=\chi_*(S)+\chi_*(T)$,
        \item If $S\subseteq\R^n$ is a definable set and $f\colon S\to\R^m$ is an injective definable map, then $\chi_*(S)=\chi_*(f(S))$.
        \item If $S\subseteq\R^n$ and $T\subseteq\R^n$ are definable sets, then $\chi_*(S\times T)=\chi_*(S)\cdot \chi_*(T)$.
    \end{enumerate}
\end{thm}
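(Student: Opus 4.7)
The plan is to invoke the cell decomposition theorem from o-minimal geometry, which states that any finite family of definable subsets of $\R^n$ admits a common finite partition into definable cells, and that every $d$-dimensional cell is definably homeomorphic to an open box $(0,1)^d$, and hence to $\R^d$. Given this, I would first define $\chi_*(C) = (-1)^{\dim C}$ on cells, and then extend additively to an arbitrary definable set $S$ by choosing a cell decomposition of $S$ and summing over its cells.

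The bulk of the proof is verifying well-definedness and the four properties. For well-definedness, any two cell decompositions of $S$ admit a common refinement (by cell decomposition applied to the finite family consisting of all cells of both decompositions), so it reduces to showing that subdividing a single cell $C$ into subcells preserves the sum $(-1)^{\dim C}$. I would prove this by induction on $\dim C = d$: using that a cell decomposition of $(0,1)^d$ consists of cells that fiber over a cell decomposition of $(0,1)^{d-1}$, the inductive step follows from the $n = 1$ case, namely that the open interval decomposes as $\bigsqcup \{p_i\} \sqcup \bigsqcup (a_j, b_j)$ with alternating count giving $1 \cdot k - 1 \cdot (k+1) = -1 = (-1)^1$ when there are $k$ points and $k+1$ intervals (or similar boundary cases). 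Property~(1) holds by construction. Property~(2) (additivity) follows by taking a cell decomposition compatible with both $S$ and $T$. Property~(4) (multiplicativity) follows because if $S = \bigsqcup_i C_i$ and $T = \bigsqcup_j D_j$ are cell decompositions, then $\{C_i \times D_j\}$ is a cell decomposition of $S \times T$, and
\[
(-1)^{\dim C_i} \cdot (-1)^{\dim D_j} = (-1)^{\dim(C_i \times D_j)}.
\]
For property~(3), given a definable bijection $f \colon S \to T$, I would apply cell decomposition to the graph of $f$ to partition $S$ into cells on each of which $f$ restricts to a definable homeomorphism onto a cell of $T$; then the o-minimal fact that definably homeomorphic cells have the same dimension gives $\chi_*(S) = \chi_*(T)$ after additive summation.

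Uniqueness is the easier direction: any functional $\chi_*$ satisfying~(1)--(3) is determined on cells (each cell being definably homeomorphic to $\R^m$ for some $m$), and additivity then pins it down on all definable sets via cell decomposition.

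The main obstacle is property~(3), because the hypothesis assumes only a definable bijection, not a definable homeomorphism; the proof therefore rests on the nontrivial o-minimal dimension theorem that a definable bijection $S \to T$ between definable sets satisfies $\dim S = \dim T$, which is proven via the cell decomposition theorem and an induction on dimension (using that the image of a $d$-dimensional cell under a definable map cannot generically exceed dimension $d$, together with the symmetric statement applied to $f^{-1}$). Once this dimension invariance is established, the cell-by-cell verification outlined above proceeds directly.
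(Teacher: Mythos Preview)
The paper does not give its own proof of this theorem; it is quoted as a known result from van den Dries's book on tame topology (the bracketed citation in the theorem heading indicates this). Your sketch is essentially the argument found there: define $\chi_*$ on cells, extend additively via cell decomposition, prove well-definedness via common refinements, and then derive the four properties.

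One small correction to your outline of property~(3): decomposing the graph of $f$ into cells does not by itself produce pieces of $S$ on which $f$ restricts to a homeomorphism onto a cell of $T$ --- the projections of graph-cells to $S$ and $T$ need not be cells, and the restriction of $f$ to such a piece need not even be continuous. The standard fix is to first invoke the o-minimal theorem that every definable map is continuous on each piece of some cell decomposition of its domain; applying this to both $f$ and $f^{-1}$ and passing to a common refinement yields a partition of $S$ into cells on each of which $f$ is a definable homeomorphism onto its image. From there your identified key input (invariance of dimension under definable bijections), together with the already-established well-definedness of $\chi_*$, finishes the argument.
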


\begin{rem}\label{rem:topologicaleuler}
\begin{enumerate}[(1)]\itemsep 5pt
    \item[]
    \item Let $S\subseteq\R^n$ be a compact definable set. 
    Because $S$ can be triangulated by means of definable homeomorphisms, see \cite{vanDries98}*{§8.2}, it follows that $\chi_*(S)$ is equal to the topological Euler characteristic $\chi(S)$ of $S$, defined as the alternating sum
    \begin{equation*}
        \chi(S)=b_0-b_1+b_2-b_3+\cdots,
    \end{equation*}
    where $b_i$ is the rank of the $i$th singular homology group of $S$.
    \item If $S\subseteq\R^n$ is definable but not compact, then $\chi_*(S)$ might differ from $\chi(S)$. For example $\chi(\R^n)=1$ because $\R^n$ is contractible, but $\chi_*(\R^n)=(-1)^n$ by part (1) of \Cref{thm:eulerchar}.
\end{enumerate}
\end{rem}

Applying these considerations to our situation, we obtain the following formula.

\begin{lemma}\label{eq:PLJeulerchar}
    Let $J$ be an M-convex set. Then 
    \begin{equation*} 
    \chi(\overline{\P\upL}_J)=\sum_{J'} \chi_*(\overline{\P\upL}_J \cap \P\upL_{J'})=-\sum_{J'} \chi_*(\overline{\upL}_J \cap \upL_{J'})=-\sum_{J'} \chi_*(\log(\overline{\upL}_J \cap \upL_{J'})),
\end{equation*}
where the sums are over all initial subsets $J'$ of $J$. 
\end{lemma}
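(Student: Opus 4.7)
The plan is to verify the three equalities in turn, treating each as an exercise in o-minimal Euler characteristics. The running observation is that every set appearing in the formula is definable: $\upL_J$ is semi-algebraic (the Lorentzian condition is a conjunction of polynomial inequalities on coefficients, given by minors of the Hessians $\mathcal{H}_{\partial^\alpha f}$ for $\alpha\in\Delta^{d-2}_n$), and the operations of closure, finite intersection, and projective quotient preserve definability. Throughout I will use the properties of $\chi_*$ from Theorem \ref{thm:eulerchar}.

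For the first equality, $\overline{\P\upL}_J$ is compact (closed in the compact space $\P\upH(d,n)$) and definable, so Remark \ref{rem:topologicaleuler}(1) gives $\chi(\overline{\P\upL}_J)=\chi_*(\overline{\P\upL}_J)$. The decomposition $\overline{\P\upL}_J=\bigsqcup_{J'}\overline{\P\upL}_J\cap\P\upL_{J'}$ already displayed above is a finite disjoint union of definable sets indexed by the finitely many initial subsets of $J$ (nonempty summands must correspond to initial $J'$ by Lemma \ref{lemma:wheninitial}; conversely, being an initial subset is the hypothesis we impose). Induction on the number of summands using property (2) of $\chi_*$ yields the first equality.

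For the second equality, fix an initial subset $J'$. Since $J'\neq\emptyset$, the zero polynomial is not in $\upL_{J'}$, so the scaling action of $\R_{>0}$ on $\upL_{J'}$ is free. Picking any $\alpha_0\in J'$ and using the coefficient $c_{\alpha_0}$ as a trivialization, we get a definable homeomorphism $\upL_{J'}\to\P\upL_{J'}\times\R_{>0}$. Because $\overline{\upL}_J$ is $\R_{>0}$-invariant, this restricts to a definable homeomorphism $\overline{\upL}_J\cap\upL_{J'}\to(\overline{\P\upL}_J\cap\P\upL_{J'})\times\R_{>0}$. Properties (3) and (4) of $\chi_*$ then give $\chi_*(\overline{\upL}_J\cap\upL_{J'})=\chi_*(\overline{\P\upL}_J\cap\P\upL_{J'})\cdot\chi_*(\R_{>0})$, and $\chi_*(\R_{>0})=\chi_*(\R)=-1$ since $\log\colon\R_{>0}\to\R$ is a definable bijection and $\chi_*(\R)=-1$ by property (1). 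Summing over $J'$ and multiplying by $-1$ gives the second equality.

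For the third equality, the coefficient-wise logarithm $\log\colon\R_{>0}^{J'}\to\R^{J'}$ is a definable bijection (it is definable in Wilkie's o-minimal structure, and its inverse $\exp$ is too). Since $\overline{\upL}_J\cap\upL_{J'}$ is naturally a definable subset of $\R_{>0}^{J'}$, property (3) of $\chi_*$ applied to the restriction of $\log$ gives $\chi_*(\overline{\upL}_J\cap\upL_{J'})=\chi_*(\log(\overline{\upL}_J\cap\upL_{J'}))$; summing over initial $J'$ completes the proof. There is no substantive obstacle here—the argument is essentially bookkeeping—the only point needing mild care is confirming the semi-algebraicity of $\upL_J$ itself, which is standard once the Lorentzian condition is written in terms of signs of characteristic polynomial coefficients (or of principal minors) of the Hessians.
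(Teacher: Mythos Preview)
Your proof is correct and follows essentially the same approach as the paper: additivity of $\chi_*$ for the first equality, a product decomposition $(\overline{\P\upL}_J\cap\P\upL_{J'})\times\R_{>0}\cong\overline{\upL}_J\cap\upL_{J'}$ together with $\chi_*(\R_{>0})=-1$ for the second, and definability of the coordinatewise logarithm for the third. The only cosmetic difference is that the paper trivializes the $\R_{>0}$-action by normalizing the sum of coefficients to $1$ rather than fixing a single coefficient $c_{\alpha_0}$, but this is immaterial.
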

\begin{proof}
    The first equality follows from part (2) of \Cref{thm:eulerchar}. For proving the second equality, we first note that $\overline{\P\upL}_J \cap \P\upL_{J'}$ can be identified with the subset of $\overline{\upL}_J \cap \upL_{J'}$ consisting of all polynomials for which the sum of the coefficients is equal to one. Under this identification we obtain the homeomorphism
    \begin{equation*}
        \left(\overline{\P\upL}_J \cap \P\upL_{J'}\right)\times\R_{>0}\to\overline{\upL}_J \cap \upL_{J'},\qquad (f,\lambda)\mapsto \lambda\cdot f.
    \end{equation*}
    Because $\chi_*(\R_{>0})=\chi_*(\R)=-1$ by part (1) of \Cref{thm:eulerchar}, the second equality of the claim now follows from part (4) of \Cref{thm:eulerchar}. Finally, the third equality follows from part (3) of \Cref{thm:eulerchar}.
\end{proof}

The sets $\log(\overline{\upL}_J \cap \upL_{J'})$ are star-shaped by \Cref{lemma:cellstarshaped}. Thus we can use \Cref{eq:PLJeulerchar} and the following lemma to compute the Euler characteristic of $\chi(\overline{\P\upL}_J)$.

\begin{lemma}\label{lemma:eulerstar}
    Let $X\subseteq\R^n$ be a closed definable set that is star-shaped with respect to the origin. Let  $ S\subseteq\R^n$ be the unit sphere. Consider the set of rays contained in $X$:
    \begin{equation*}
        S(X)=\{\nu\in  S\mid \forall t\geq0: t\cdot\nu\in X\}.
    \end{equation*}
    Then $S(X)$ is a definable set satisfying $\chi_*(S(X))+\chi_*(X)=1$.
\end{lemma}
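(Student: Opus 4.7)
My plan is to decompose $X$ along rays through the origin and apply the additivity and multiplicativity of $\chi_*$ (\Cref{thm:eulerchar}).

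First, definability of $S(X)$ follows immediately from the quantifier elimination property of definable sets (\Cref{rem:quantifyer elimination}) applied to the definition of $S(X)$. Next, introduce the definable ``radius'' function $\rho\colon S\to [0,\infty]$ given by $\rho(\nu)=\sup\{t\geq 0 \mid t\nu\in X\}$; since $X$ is closed and star-shaped with respect to the origin, for each $\nu\in S$ we have $\{t\geq 0\mid t\nu\in X\}=[0,\rho(\nu)]$ when $\rho(\nu)<\infty$, and equals $[0,\infty)$ precisely when $\nu\in S(X)$. Using quantifier elimination, the sets $S(X)$, $A_+\coloneqq\{\nu\in S\mid 0<\rho(\nu)<\infty\}$ and $A_0\coloneqq\{\nu\in S\mid \rho(\nu)=0\}$ are definable, and $\rho|_{A_+}\colon A_+\to(0,\infty)$ is a definable function.

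Now I would pass to polar coordinates. The map $(t,\nu)\mapsto t\nu$ is an injective definable map from
\[\tilde X \ = \ \{(t,\nu)\in(0,\infty)\times S \mid t\nu\in X\}\]
onto $X\smallsetminus\{0\}$, so by property (3) of $\chi_*$ we have $\chi_*(X\smallsetminus\{0\})=\chi_*(\tilde X)$. Split $\tilde X=\tilde X_B\sqcup \tilde X_+$ according to whether $\nu\in S(X)$ or $\nu\in A_+$. For the ``infinite ray'' part, $\tilde X_B=(0,\infty)\times S(X)$ as a definable product, so by properties (1) and (4) of \Cref{thm:eulerchar}, $\chi_*(\tilde X_B)=\chi_*((0,\infty))\cdot\chi_*(S(X))=-\chi_*(S(X))$.

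For the ``finite ray'' part, further decompose $\tilde X_+=\tilde X_+^\circ\sqcup\Gamma$, where $\tilde X_+^\circ=\{(t,\nu)\mid \nu\in A_+,\ 0<t<\rho(\nu)\}$ and $\Gamma=\{(\rho(\nu),\nu)\mid \nu\in A_+\}$ is the graph of $\rho|_{A_+}$. The definable map $(t,\nu)\mapsto(t/\rho(\nu),\nu)$ is a bijection $\tilde X_+^\circ\to(0,1)\times A_+$, so $\chi_*(\tilde X_+^\circ)=-\chi_*(A_+)$; and the projection $\Gamma\to A_+$ is a definable bijection, so $\chi_*(\Gamma)=\chi_*(A_+)$. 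Hence $\chi_*(\tilde X_+)=0$. Combining everything,
\[\chi_*(X) \ = \ \chi_*(\{0\})+\chi_*(\tilde X_+)+\chi_*(\tilde X_B) \ = \ 1-\chi_*(S(X)),\]
which is the desired identity. The only real subtlety is verifying that $\rho$ and the associated sets are definable, but this is a routine application of quantifier elimination for the o-minimal structure of \cite{wilkie}; the geometric content of the argument is the simple cancellation $\chi_*((0,\rho(\nu)])=\chi_*((0,1))+\chi_*(\{1\})=0$ for each finite ray, contrasted with $\chi_*((0,\infty))=-1$ for each infinite ray.
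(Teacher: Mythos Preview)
Your proof is correct, but takes a genuinely different route from the paper's argument. The paper constructs an explicit definable injection $\phi\colon X\to \upB\smallsetminus S(X)$ into the closed unit ball (via $\phi(x)=x/(1-\psi(x)+\|x\|)$ with $\psi(x)=\inf\{t>0\mid x/t\in X\}$), then shows that the closure $Y$ of $\phi(X)$ is the disjoint union $\phi(X)\sqcup S(X)$. Since $Y$ is compact, star-shaped, and definable, $\chi_*(Y)=1$, and additivity yields the claim. Your approach instead passes to polar coordinates and computes $\chi_*$ fiber by fiber: each finite ray $(0,\rho(\nu)]$ contributes $\chi_*=0$, each infinite ray $(0,\infty)$ contributes $-1$, and the origin contributes $1$. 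This is more direct and avoids the analysis of the closure of $\phi(X)$ and the semi-continuity of $\psi$ that the paper needs. The paper's approach, on the other hand, is motivated by and parallels the construction in \Cref{sec:starshaped} (where an actual homeomorphism $X\cong \upB\smallsetminus S(x_*,X)$ is obtained under the stronger hypothesis of being \emph{strongly} star-shaped), so there is a thematic continuity in reusing the map $\phi$ even when only the Euler characteristic is at stake.
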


\begin{proof}
    By \Cref{rem:quantifyer elimination}, definable sets satisfy quantifier elimination, and thus    the set $S(X)$ is definable. For the same reason, the following map is definable:
    \begin{equation*}
        \psi\colon X\to\R_{\geq0},\qquad x\mapsto\inf\{t>0\mid \frac{x}{t}\in X\}.
    \end{equation*}
    Furthermore, by \Cref{lemma: cont}, we have for all $x\in X$:
    \begin{enumerate}[(1)]\itemsep 5pt
        \item $\psi(x)\leq1$.
        \item If $\psi(x)>0$, then $\frac{x}{\psi(x)}\in X$.
        \item $\psi(x)=0$ if and only if $x=0$ or $x\neq0$ and $\frac{x}{\|x\|}\in S(X)$.
    \end{enumerate}
    The proof of \Cref{lemma: cont} also shows that $\psi$ is lower semi-continuous, as this part of the proof only uses that $X$ is closed and star-shaped.
    
    Let $\upB\subseteq\R^n$ be the closed unit ball, and consider the map
    \begin{equation*}
        \phi\colon X\to \upB\smallsetminus S(X),\qquad x\mapsto\frac{x}{1-\psi(x)+\|x\|}.
    \end{equation*}
    The map is well-defined because for all $x\in X$ we have $0\leq\psi(x)\leq1$ and $x\neq0$ whenever $\psi(x)=1$. Its norm is lower semi-continuous, because $\psi$ is. For fixed $x\in X$, the map
    \begin{equation*}
        \phi_x\colon[0,1]\to \R x,\qquad \lambda\mapsto\phi(\lambda\cdot x)
    \end{equation*}
    is continuous, its norm is monotonically increasing, and it satisfies $\phi_x(0)=0$ and $\phi_x(1)=x$. This shows that the image $\phi(X)$ is star-shaped with respect to the origin. Furthermore, $\phi$ is definable, and a short calculation shows that it is injective. Thus $\chi_*(X)=\chi_*(\phi(X))$. 
    
    We claim that the closure $Y$ of $\phi(X)$ is the disjoint union of $\phi(X)$ and $S(X)$. A point $x\in X$ is mapped by $\phi$ to $ S$ if and only if $\psi(x)=1$. Thus, by (3), we cannot have $\phi(x)=\frac{x}{\|x\|}\in S(X)$. This shows that $\phi(X)$ and $S(X)$ are disjoint. Now let $x\in S(X)$, meaning that $\|x\|=1$ and $t\cdot x\in X$ for all $t\geq0$. We have
    \begin{equation*}
        \lim_{t\to\infty}\phi(tx)=\lim_{t\to\infty}\frac{tx}{1+{t}}=x.
    \end{equation*}
    This shows that $S(X)$ is contained in the closure of $\phi(X)$. 
    
    Conversely, let $(x_i)_{i\in\N}$ be a sequence in $X$ such that $(\phi(x_i))_{i\in\N}$ converges. If it is unbounded, then $(\phi(x_i))_{i\in\N}$ converges to a point in $S(X)$. Otherwise, it converges to a point in $\phi(X)$ by semi-continuity and star-shapedness. Thus we have shown that the closure $Y$ of $\phi(X)$ is the disjoint union of $\phi(X)$ and $S(X)$. As $Y$ is a compact star-shaped definable set, we have
    \begin{equation*}
        1=\chi_*(Y)=\chi_*(\phi(X))+\chi_*(S(X))=\chi_*(X)+\chi_*(S(X)).\qedhere
    \end{equation*}
\end{proof}

\begin{cor}
    For every M-convex set $\emptyset\neq J\subseteq\Delta_n^d$, we have $\chi_*(\P\upL_J)=\chi_*(\Dr_J)$.
\end{cor}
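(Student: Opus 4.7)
The strategy is to apply the Euler characteristic formula for strongly star-shaped sets from \Cref{lemma:eulerstar} to both sides of the desired equality. First, by \Cref{thm:logljstronglystarshaped}, the triple $(\log(\upN_t(f_J)),\log(\upL_J),V_J)$ is strongly star-shaped for any $t>0$. Using \Cref{lemma: starshapedlineality2}(1), the image in $V_J/\R\mathbf{1}$ is again strongly star-shaped, and since the coordinate-wise logarithm map identifies $\P\upL_J$ with $\log(\upL_J)/\R\mathbf{1}\subseteq V_J/\R\mathbf{1}$, we may view $\log(\P\upL_J)$ as a closed, star-shaped, definable subset of a finite-dimensional real vector space. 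Definability follows because $\upL_J$ is semi-algebraic and the coordinate-wise logarithm is definable. After translating so that the base point $\log(\upN_t(f_J))$ lies at the origin (which preserves both the star-shape and the Euler characteristic, the latter by part (3) of \Cref{thm:eulerchar}), we are in the setting of \Cref{lemma:eulerstar}.

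Next, I would identify the set of rays. By \Cref{lem: diraremconvex}, a direction $\nu\in V_J/\R\mathbf{1}$ satisfies $\log(\upN_t(f_J))+s\nu\in\log(\upL_J)$ for all $s\geq 0$ if and only if $\nu$ is M-concave; combined with \Cref{lemma: starshapedlineality2}(2), this identifies $S(\log(\P\upL_J))$ with $\Dr_J\cap\partial\upB$, where $\upB$ is the closed unit ball in $V_J/\R\mathbf{1}$ with respect to the chosen norm. \Cref{lemma:eulerstar} then yields
\[
\chi_*(\P\upL_J) \ = \ \chi_*(\log(\P\upL_J)) \ = \ 1-\chi_*(\Dr_J\cap\partial\upB).
\]

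Finally, I would compute $\chi_*(\Dr_J)$ by exploiting that $\Dr_J\subseteq V_J/\R\mathbf{1}$ is a cone with apex at the origin: if $\nu$ is M-concave, then so is $t\nu$ for every $t\geq 0$. Hence the scaling map $(\Dr_J\cap\partial\upB)\times\R_{>0}\to \Dr_J\smallsetminus\{0\}$, $(\nu,t)\mapsto t\nu$, is a definable bijection, so parts (2)--(4) of \Cref{thm:eulerchar} combined with $\chi_*(\R_{>0})=\chi_*(\R)=-1$ give
\[
\chi_*(\Dr_J) \ = \ 1 \ + \ \chi_*(\Dr_J\cap\partial\upB)\cdot\chi_*(\R_{>0}) \ = \ 1 \ - \ \chi_*(\Dr_J\cap\partial\upB).
\]
Comparing the two displayed equalities yields $\chi_*(\P\upL_J)=\chi_*(\Dr_J)$. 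The argument is essentially formal, so there is no real obstacle; the main thing to be careful about is the translation to bring the star-shape base point to the origin, and the verification of definability so that the machinery of \Cref{thm:eulerchar} applies.
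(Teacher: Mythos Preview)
Your proof is correct and follows essentially the same strategy as the paper: apply \Cref{lemma:eulerstar} and identify the ray set with the Dressian via \Cref{lem: diraremconvex}. The paper's execution is slightly more direct in two respects. First, it works in $\R^J$ with $\log(\upL_J)$ and $\log(\upR_J(\T_0))$ rather than their projective images, reducing the claim to $\chi_*(\log(\upL_J))=\chi_*(\log(\upR_J(\T_0)))$ via the factorizations $\upL_J\cong\P\upL_J\times\R_{>0}$ and $\upR_J(\T_0)\cong\Gr_J(\T_0)\times\R_{>0}$. Second, and more to the point, it uses the \emph{origin} as the base point for both sets: $\log(\upL_J)$ is star-shaped (not strongly) at $0$ by part (1) of \Cref{thm:314325}, and \Cref{lemma:eulerstar} only requires ordinary star-shapedness. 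This avoids your translation step entirely; then \Cref{lem: diraremconvex} with $t=0$ (so $\upN_0(f_J)=f_J$ and $\log(f_J)=0$) shows both sets have the same rays, and one application of \Cref{lemma:eulerstar} to each finishes the proof. Your separate cone computation for $\chi_*(\Dr_J)$ is of course equivalent to applying \Cref{lemma:eulerstar} to the cone $\log(\upR_J(\T_0))$.
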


\begin{proof}
    It suffices to prove that $\chi_*(\log(\upL_J))=\chi_*(\log(\upR_J(\T_0)))$. 
    By \Cref{lemma:mconvexist0} and \Cref{lem: diraremconvex}, the star-shaped definable sets $\log(\upL_J)$ and $\log(\upR_J(\T_0))$ contain the same rays. The claim therefore follows from the preceding lemma.
\end{proof}

\begin{rem}\label{rem:eulerdressian}
 The preceding corollary, together with \Cref{thm:petterballness},
imply that for all $d,n$ we have
  $\sum_{J} \chi_*(\Dr_J)=1$,
  where the sum is over all nonempty M-convex sets $J\subseteq\Delta_n^d$ or over all matroids $J$ of rank $d$ on $[n]$.
  We do not know how to prove this combinatorial statement without the detour to Lorentzian polynomials.
\end{rem}

\begin{prop}\label{prop:singletonthenball}
    If $\emptyset\neq J\subseteq\Delta_n^d$ is an M-convex set such that $\ulineGr_J(\T_\infty)$ is a singleton, then $\overline{\P\upL}_J$ is homeomorphic to the base polytope $\BP_J$. In particular, it is homeomorphic to a closed ball.
\end{prop}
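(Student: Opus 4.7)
My approach is to show that the hypothesis reduces $\upL_J$ to a single torus orbit, and then to identify the closure of this orbit with the base polytope $\BP_J$ via the classical moment map for projective toric varieties.

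First, I will argue that $\upL_J = \upR_J(\T_\infty)$. By \Cref{lemma: petterlem} together with \Cref{rem: excellent tracts}, we have $\upL_J \subseteq \upR_J(\T_\infty)$. The hypothesis that $\ulineGr_J(\T_\infty)$ is a singleton says precisely that $\upR_J(\T_\infty)$ is a single orbit of $\R_{>0} \cdot (\R_{>0})^n$ (scaling and variable rescaling). Since $\upL_J$ is nonempty and invariant under this action, we must have $\upL_J = \upR_J(\T_\infty)$. Fixing any $f_0 \in \upL_J$, this realizes $\P\upL_J$ as a single projective torus orbit $\{[t.f_0] : t \in (\R_{>0})^n\}$.

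Next, I will use \Cref{lemma:closoftorusorbit} to stratify the closure. Since $\upL_J$ is a single $(\R_{>0})^n$-orbit, that lemma gives that $\overline{\upL}_J \cap \upL_{J'}$ is nonempty if and only if $\BP_{J'}$ is a face of $\BP_J$, and in that case it is the $(\R_{>0})^n$-orbit of the ``face restriction'' $g_{J'}$ of $f_0$ (the sum of the terms of $f_0$ supported on $J'$). This yields a disjoint-union decomposition
\[
\overline{\P\upL}_J \;=\; \bigsqcup_{F \preceq \BP_J} \P\upL_{J \cap F},
\]
indexed by the faces $F$ of $\BP_J$, with each stratum a single projective torus orbit.

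Finally, I will identify $\overline{\P\upL}_J$ with $\BP_J$ via the moment map
\[
\mu\bigl(\bigl[\textstyle\sum c_\alpha x^\alpha/\alpha!\bigr]\bigr) \;=\; \Bigl(\sum c_\alpha \alpha\Bigr)\Big/\Bigl(\sum c_\alpha\Bigr),
\]
which is continuous on $\P\upH(d,n)_{\geq 0}$ and takes values in $\R^n$. On each stratum, writing points as $[t.g_F]$ and passing to log-coordinates $u = \log t$, $\mu$ becomes the gradient of the log-sum-exp function $u \mapsto \log\sum_{\alpha \in J\cap F} c^0_\alpha e^{\langle u, \alpha\rangle}$, which is strictly convex in the directions transverse to the stabilizer; its image is precisely the relative interior of $F$. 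The main obstacle is then gluing these face-by-face diffeomorphisms into a global homeomorphism $\mu\colon \overline{\P\upL}_J \to \BP_J$. This is a special case of the classical theorem that the nonnegative real part of the projective toric variety associated to a finite lattice point configuration is homeomorphic to its convex hull via the moment map (in the spirit of Atiyah's and Birkhoff's work on moment maps and doubly stochastic matrices). Compactness of $\overline{\P\upL}_J$ upgrades the continuous bijection obtained from the per-stratum analysis to the desired homeomorphism.
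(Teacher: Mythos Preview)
Your proposal is correct and follows essentially the same approach as the paper: reduce $\overline{\P\upL}_J$ to the closure of a single $(\R_{>0})^n$-orbit in projective space, and then invoke the moment-map homeomorphism between such an orbit closure and the convex hull of its weights. The paper compresses your last two steps into a one-line citation of \cite{fulton}*{\S4.2}, whereas you spell out the stratification via \Cref{lemma:closoftorusorbit} and the log-sum-exp gradient argument; the content is the same.
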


\begin{proof}
    The assumptions imply that $\overline{\P\upL}_J$ is the orbit closure of the generating polynomial under the action of $\R_{>0}^n$ by rescaling the variables. Thus the claim follows directly from the proposition in \cite{fulton}*{§4.2}.
\end{proof}

If $\ulineGr_J(\T_0)$ is a singleton, but $\ulineGr_J(\T_\infty)$ is not, we are not able to conclude that $\overline{\P\upL}_J$ is homeomorphic to a closed ball, but we can at least show that it has the same Euler characteristic as a closed ball:

\begin{prop}\label{prop:rigideuler}
    Let $\emptyset\neq J\subseteq\Delta_n^d$ be a rigid M-convex set, that is, the space $\ulineGr_J(\T_0)$ is a singleton. Then $\chi(\overline{\P\upL}_J)=1$.
\end{prop}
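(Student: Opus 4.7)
The plan is to combine the cell decomposition from \Cref{eq:PLJeulerchar} with the ray characterization in \Cref{thm:eulerforrays} and the Euler calculus in \Cref{lemma:eulerstar}, reducing the claim to the classical Euler relation for the face lattice of the base polytope $\BP_J$.

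The first key step is to identify the initial subsets of $J$ under the rigidity hypothesis. The rescaling action \Cref{eq:rescalingaction} of $(\R_{>0})^n$ on $\upR_J(\T_0)$ translates, via \Cref{lemma:mconvexist0}, to adding a function of the form $\alpha\mapsto\sum_i t_i\alpha_i$ to an M-convex function on $J$. Hence rigidity forces every M-convex function $\nu\colon J\to\R$ to be the restriction to $J$ of such a linear function on $\R^n$ (note that constants are also of this form, since $\mathbf{1}=\tfrac1d\sum_i\alpha_i$ on $J$). By elementary polytope theory, a nonempty subset $J'\subseteq J$ is then an initial subset in the sense of \Cref{df:initialsubset} if and only if $J'=J\cap F$ for some nonempty face $F$ of $\BP_J$; in particular $\BP_{J'}=F$ is automatically a face, so we are always in situation (1) of \Cref{thm:eulerforrays}.

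For each such $J'$, I would then compute $\chi_*(\log(\overline{\upL}_J\cap\upL_{J'}))$. By \Cref{lemma:cellstarshaped} this set is closed in $\R^{J'}$ and star-shaped from the origin, so \Cref{lemma:eulerstar} reduces the computation to that of its ray set. Combining \Cref{thm:eulerforrays}(1) with Step~1, the ray set is exactly $W_{J'}$ (note that $\mathbf{1}_{J'}\in W_{J'}$), so $S(\log(\overline{\upL}_J\cap\upL_{J'}))$ is homeomorphic to the unit sphere in $W_{J'}$, giving
\[
\chi_*(\log(\overline{\upL}_J\cap\upL_{J'})) \;=\; 1-\bigl(1+(-1)^{\dim W_{J'}-1}\bigr) \;=\; (-1)^{\dim W_{J'}}.
\]
Since $J'$ lies in the affine hyperplane $\{\alpha:\sum_i\alpha_i=d\}$, which misses the origin, the linear span of $J'$ in $\R^n$ has dimension one more than $\dim F$, so $\dim W_{J'}=\dim F+1$.

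Substituting into \Cref{eq:PLJeulerchar} then yields
\[
\chi(\overline{\P\upL}_J) \;=\; -\sum_{J'\text{ initial}}(-1)^{\dim W_{J'}} \;=\; \sum_{F\text{ nonempty face of }\BP_J}(-1)^{\dim F} \;=\; 1,
\]
where the last equality is the Euler relation for a nonempty convex polytope (equivalently, the Euler characteristic of $\partial\BP_J\cong S^{\dim\BP_J-1}$ is $1+(-1)^{\dim\BP_J-1}$). The main obstacle I expect is Step~1: carefully translating the singleton condition on $\ulineGr_J(\T_0)$ into the statement that every M-convex function on $J$ is affine linear on $\R^n$, together with the complementary verification that every face of $\BP_J$ is genuinely realized as $\BP_{J'}$ for some initial $J'$, so that no terms are lost from the Euler sum. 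Once that bijection between initial subsets and nonempty faces of $\BP_J$ is firmly established, everything else is mechanical Euler-characteristic bookkeeping.
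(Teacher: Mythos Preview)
Your proposal is correct and follows essentially the same route as the paper's proof: identify initial subsets with nonempty faces of $\BP_J$ via rigidity, use \Cref{thm:eulerforrays} to see that the ray cone of each cell $\log(\overline{\upL}_J\cap\upL_{J'})$ is exactly $W_{J'}$, apply \Cref{lemma:eulerstar} to obtain $\chi_*(\log(\overline{\upL}_J\cap\upL_{J'}))=(-1)^{\dim W_{J'}}$, and conclude via \Cref{eq:PLJeulerchar} and the Euler relation for $\BP_J$. The only cosmetic difference is the order of presentation; the paper first computes the ray set and Euler contribution and then invokes rigidity to identify initial subsets with faces, whereas you do these in the reverse order.
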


\begin{proof}
    We use the notation from \Cref{thm:eulerforrays}. Because $J$ is rigid, it follows that $\log(\upR_J(\T_0))=W_J$. Thus, for every initial subset $J'$ of $J$, the union of rays contained in $\log(\overline{\upL}_J\cap\upL_{J'})$ is exactly the set $W_{J'}$. \Cref{lemma:eulerstar} now implies that
    \begin{equation*}
        \chi_*(\log(\overline{\P\upL}_J\cap\P\upL_{J'}))=-\chi_*(\log(\overline{\upL}_J\cap\upL_{J'}))=-\chi_*(W_{J'})=(-1)^{\dim(W_{J'})-1}.
    \end{equation*}
    This shows that $\chi(\overline{\P\upL}_J)=\sum_{J'} (-1)^{\dim(W_{J'})-1}$, where the sum is over all initial subsets $J'$ of $J$. Because $J$ is rigid, every M-convex function $J\to\R$ is the restriction of a linear function $\R^n\to\R$, see \Cref{rem:polymatroidsubdivisions}. This implies that the initial subsets $J'$ of $J$ are exactly those nonempty subsets of $J$ for which $\BP_{J'}$ is a face of $\BP_J$.
    Moreover, the dimension of $\BP_{J'}$ is $\dim(W_{J'})-1$. It follows that $\chi(\overline{\P\upL}_J)$ is the Euler characteristic of $\BP_J$, which implies the claim.
\end{proof}

The following theorem will allow us to compute the Euler characteristic $\chi(\overline{\P\upL}_J)$ in the case that the polyhedral fan $\Dr_J=-\log \ulineGr_J(\T_0)$ consists of finitely many rays. Here and in the following, we refer to the \emph{restriction map} $\res_{J,J'}:\log \ulineGr_J(\T_0)\to\log \ulineGr_{J'}(\T_0)$ that is given by $\rho\mapsto\rho|_{J'}$.

\begin{thm}\label{thm:eulercomp}
    Let $\emptyset\neq J\subseteq\Delta_n^d$ be an M-convex set such that $\log\ulineGr_J(\T_0)$ consists of $m$ rays. For $i=0,\ldots,n$, let $g_i$ be the number of initial subsets $J'$ of $J$ such that $\BP_{J'}$ has dimension $i$ and is not a face of $\BP_{J}$. In addition, for $i=0,\ldots,n$ and $j=0,\ldots,m$, let $f_{ij}$ be the number of initial subsets $J'$ of $J$ such that $\BP_{J'}$ has dimension $i$, is a face of $\BP_{J}$, and such that the image of $\res_{J,J'}:\log \ulineGr_J(\T_0)\to\log \ulineGr_{J'}(\T_0)$ consists of $j$ rays. Then
    \begin{equation*}
        \chi(\overline{\P\upL}_J)=\sum_{i=0}^n(-1)^i\cdot\left( g_i +\sum_{j=0}^mf_{ij}\cdot\left( 1-j \right)\right).
    \end{equation*}
\end{thm}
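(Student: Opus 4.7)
The plan is to use \Cref{eq:PLJeulerchar},
\[
\chi(\overline{\P\upL}_J) \ = \ -\sum_{J'} \chi_*\bigl(\log(\overline{\upL}_J \cap \upL_{J'})\bigr),
\]
where the sum ranges over all initial subsets $J'$ of $J$, and to compute each stratum Euler characteristic using \Cref{thm:eulerforrays}. Write $X_{J'}\coloneq\log(\overline{\upL}_J\cap\upL_{J'})\subseteq\R^{J'}$ and let $i=\dim\BP_{J'}$.

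The first step is the observation that $\chi_*(X_{J'})$ depends only on the set $R_{J'}\subseteq\R^{J'}$ of rays through the origin contained in $X_{J'}$. Indeed, $X_{J'}$ is closed, definable and star-shaped at the origin (\Cref{lemma:cellstarshaped}), so \Cref{lemma:eulerstar} gives $\chi_*(X_{J'})=1-\chi_*(S(X_{J'}))$; applying the same lemma to the closed cone $R_{J'}$, which satisfies $S(R_{J'})=S(X_{J'})$, yields $\chi_*(X_{J'})=\chi_*(R_{J'})$. This reduces the problem to a combinatorial computation on the cone of rays.

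The second step is to describe $R_{J'}$ explicitly via \Cref{thm:eulerforrays}. If $\BP_{J'}$ is not a face of $\BP_J$, then $R_{J'}=W_{J'}$, a linear subspace of dimension $i+1$, so $\chi_*(R_{J'})=(-1)^{i+1}$, contributing $-\chi_*(X_{J'})=(-1)^i$ to $\chi(\overline{\P\upL}_J)$; this accounts for the $g_i\cdot(-1)^i$ terms. If $\BP_{J'}$ is a face, then $R_{J'}=-C_{J'}$, where $C_{J'}\subseteq\R^{J'}$ is the image under $\res_{J,J'}$ of the cone of M-convex functions on $J$; the hypothesis that $\log\ulineGr_J(\T_0)$ is a union of $m$ rays forces $C_{J'}$, modulo $W_{J'}$, to be a union of $j$ distinct rays (the linear image of a union of rays is again a union of rays). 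Consequently $R_{J'}=H_1\cup\dotsb\cup H_j$, where each $H_k$ is a closed half-subspace of dimension $i+2$ containing $W_{J'}$ as its boundary hyperplane, and $H_k\cap H_{k'}=W_{J'}$ for all $k\neq k'$, since distinct image rays modulo $W_{J'}$ are linearly independent.

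A routine inclusion-exclusion then finishes the computation: each $H_k\cong W_{J'}\times\R_{\geq0}$ has $\chi_*(H_k)=\chi_*(W_{J'})\cdot\chi_*(\R_{\geq0})=0$, while all multiple intersections equal $W_{J'}$ with $\chi_*(W_{J'})=(-1)^{i+1}$, and the identity $\sum_{k=2}^j(-1)^{k-1}\binom{j}{k}=1-j$ gives $\chi_*(R_{J'})=(1-j)(-1)^{i+1}$; hence the contribution to $\chi(\overline{\P\upL}_J)$ is $(1-j)(-1)^i$ per face initial subset with $j$ image rays. Grouping the contributions by $i$ and $j$ yields the claimed formula. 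The main subtlety is the second step: one must verify under the Dressian hypothesis that the image of $\res_{J,J'}$ is genuinely one-dimensional with $j$ pairwise transverse rays, so that the half-subspaces $H_k$ meet cleanly in $W_{J'}$; without this, the inclusion-exclusion would have to contend with higher-dimensional cells of the image fan and the simple $(1-j)(-1)^i$ formula would fail.
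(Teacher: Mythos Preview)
Your proposal is correct and follows essentially the same route as the paper: reduce to the cone of rays via \Cref{lemma:eulerstar} and \Cref{thm:eulerforrays}, then read off the Euler characteristic. The paper states the resulting values for $-\chi_*(\log(\overline{\upL}_J\cap\upL_{J'}))$ in the two cases rather tersely, whereas you carry out the computation of $\chi_*(R_{J'})$ explicitly by inclusion--exclusion on the half-subspaces $H_k$; this is a presentational difference, not a different strategy. Two small remarks: your justification ``distinct image rays modulo $W_{J'}$ are linearly independent'' is not literally true (antipodal rays can occur), but the conclusion $H_k\cap H_{k'}=W_{J'}$ still holds since distinct rays in the quotient force $s=t=0$ in $s\tilde v_k-t\tilde v_{k'}\in W_{J'}$; and the case $j=0$ (where $R_{J'}=W_{J'}$) should be noted separately, though your final formula $(1-j)(-1)^{i+1}$ already covers it.
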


\begin{proof}
    We use the formula
    \begin{equation*}
    \chi(\overline{\P\upL}_J)=-\sum_{J'} \chi_*(\log(\overline{\upL}_J \cap \upL_{J'})),
\end{equation*}
where the sum is over all nonempty initial subsets of $J$. 
By \Cref{lemma:eulerstar} and \Cref{thm:eulerforrays}, we find, as in the proof of \Cref{prop:rigideuler}, that 
\begin{equation*}
    -\chi_*(\log(\overline{\upL}_J \cap \upL_{J'}))=(-1)^{\dim(W_{J'})-1}=(-1)^{\dim \BP_{J'}}
\end{equation*}
in the case that $\BP_{J'}$ is a face of $\BP_{J}$ and
\begin{equation*}
    -\chi_*(\log(\overline{\upL}_J \cap \upL_{J'}))=-((-1)^{\dim(W_{J'})}+j\cdot(-1)^{\dim(W_{J'})+1})=(-1)^{\dim \BP_{J'}}(1-j)
\end{equation*}
otherwise, where $j$ is the number of rays of the image of $\log \ulineGr_J(\T_0)\to\log \ulineGr_{J'}(\T_0)$.
This implies the claim.
\end{proof}
\begin{rem}\label{rem:howtocomputegiandfij}
    Given an M-convex set $J$, one can compute the polyhedral fan $\log \ulineGr_J(\T_0)$, for example, using the software \texttt{gfan} \cite{gfan}. In the case that $J$ is a matroid, this is described in detail in \cite{Brandt-Speyer22}*{Algorithm 1}\footnote{With minor adaptions, this algorithm can also treat the case of a general M-convex set $J$ but in this paper we will perform such computations only in the case that $J$ is a matroid.}.
    If $\log \ulineGr_J(\T_0)$ consists only of finitely many rays, then one can compute the numbers $g_i$ and $f_{ij}$ from \Cref{thm:eulercomp} as follows. We first compute all initial subsets of $J$. To this end, we have to compute all faces of the polytope $\BP_J$ and all faces of the regular subdivisions of $\BP_J$ defined by a generator of each ray of $\log \ulineGr_J(\T_0)$. This can be done, for example, using the software \texttt{polymake} \cite{polymake}, where the command \texttt{SubdivisionOfPoints} allows to compute the regular subdivisions. Finally, for every initial subset $J'$, we record the dimension of $\BP_{J'}$ and, if $\BP_{J'}$ is a face of $\BP_{J}$, then we also record the number of rays of the image of $\res_{J,J'}:\log \ulineGr_J(\T_0)\to\log \ulineGr_{J'}(\T_0)$. 
\end{rem}

\begin{ex}
    Let $M=U_{2,4}$ be the uniform matroid of rank 2 on 4 elements, whose base polytope is an octahedron. It has three non-trivial regular matroid subdivisions corresponding to the three rays of $\log \ulineGr_J(\T_0)$. 
    None of these subdivisions subdivides any of the proper faces. It follows that $f_{00}=6$, $f_{10}=12$, $f_{20}=8$, and $f_{33}=1$. All other $f_{ij}$ are zero. Each of the three non-trivial subdivisions consists of two pyramids that intersect at a quadrilateral, hence each contains two additional faces of dimension three and one of dimension two. Thus $g_0=g_1=0$, $g_2=3$, and $g_3=6$. By \Cref{thm:eulercomp}, we obtain
    \begin{equation*}
        \chi(\overline{\P\upL}_M)=6-12+(3+8)-(6+1\cdot(1-3))=1.
    \end{equation*}
\end{ex}

\begin{df}
    For every $n\geq4$, the \emph{elliptic matroid} $\cT_n$ is defined to be the matroid of rank $3$ on the ground set $[n]$ for which a three element subset $\{i,j,k\}\subseteq[n]$ is a non-basis if and only if $i+j+k\equiv0 \pmod n$.
\end{df}

\begin{rem}
 The matroid $\cT_n$ can be realized in the projective plane as the subgroup of points on an elliptic curve generated by a torsion point of order $n$, hence the name. The complex representation spaces of $\cT_n$ have been studied in \cites{Kuhne-Roulleau24,Borisov-Roulleau24}.
\end{rem}

Computer experiments suggest that $\log\ulineGr_{\cT_p}(\T_0)$ is one-dimensional, consisting of precisely $\frac{p-1}{2}$ rays, whenever $p \geq 7$ is a prime number. We compute the Euler characteristic for some small prime numbers.

\begin{ex}
    The matroid $M=\cT_5$ has the two non-bases $\{0,1,4\}$ and $\{0,2,3\}$. This matroid is binary and therefore rigid. Thus $\chi(\overline{\P\upL}_M)=1$ by \Cref{prop:rigideuler}.
\end{ex}

\begin{ex}
    The matroid $M=\cT_7$ has the five non-bases $\{0,1,6\}$, $\{0,2,5\}$, $\{0,3,4\}$, $\{1,2,4\}$, and $\{3,5,6\}$. It coincides with the matroid $P_7$ from \cite{OxleyMatroidTheory}*{page 644}. The fan $\log\ulineGr_{M}(\T_0)$ consists of three rays, and we calculate that
    \begin{equation*}
        (f_{ij})_{i,j}=
        \begin{pmatrix}
            30&0&0&0\\
            150&0&0&0\\
            281&0&0&0\\
            222&0&0&24\\
            68&0&0&36\\
            6&0&0&13\\
            0&0&0&1
        \end{pmatrix}.
    \end{equation*}
    The vector $(g_i)_{i}$ is equal to $(0,0,72,267,294,111,12)$. From \Cref{thm:eulercomp}, we can now calculate that $\chi(\overline{\P\upL}_M)=1$.
\end{ex}

\begin{figure}
 \includegraphics{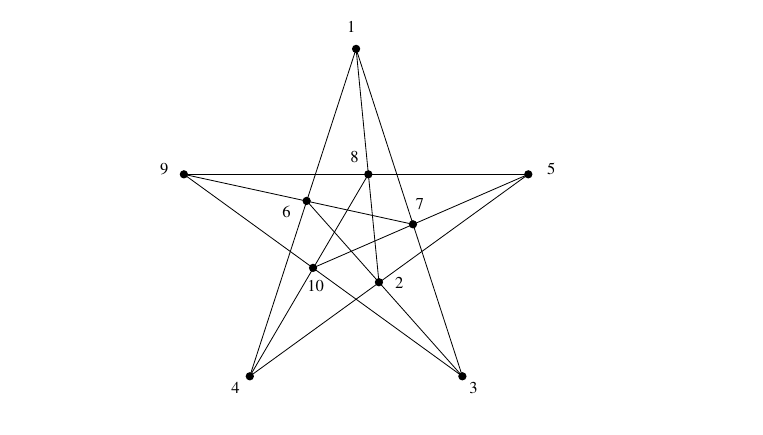}
        \caption{The deletion of $\cT_{11}$ by $11$ is a configuration of ten points and ten lines in the plane such that each point is contained in
three lines and each line contains three points. Its Dressian was studied in \cite{Brandt-Speyer22}.}
        \label{fig:10star}
 \end{figure}

\begin{ex} \label{ex:T11counterexample}
    Finally, we consider the matroid $M=\cT_{11}$. The fan $\log\ulineGr_{M}(\T_0)$ consists of five rays, and we calculate that
    \begin{equation*}
        (f_{ij})_{i,j}=
        \begin{pmatrix}
            150&0&0&0&0&0\\
            1620&0&0&0&0&0\\
            5510&0&0&0&0&0\\
            8680&0&0&1120&0&0\\
            8260&0&0&1960&0&784\\
            5720&0&0&975&0&1747\\
            2820&0&0&205&0&1445\\
            915&0&0&15&0&645\\
            175&0&0&0&0&165\\
            15&0&0&0&0&22\\
            0&0&0&0&0&1\\
        \end{pmatrix}.
    \end{equation*}
    The vector $(g_i)_{i}$ is equal to
    \begin{equation*}
\left(\begin{array}{ccccccccccc}
0& 0& 3360& 21590& 51250& 63330& 47005& 22050& 6405& 1040& 70
\end{array}\right).
    \end{equation*}
 From \Cref{thm:eulercomp}, we can now calculate that $\chi(\overline{\P\upL}_M)=11$. In particular, the space $\overline{\P\upL}_M$ is not homeomorphic to a ball. This answers \cite{Branden21}*{Questions 5.1 and 5.3} in the negative.
 \end{ex}

We believe that the spaces $\overline{\P\upL}_M$ and $\ulineGr_M(\T_0)$ for $M=\cT_n$ deserve further attention. We formulate a conjecture and a question in this regard.

\begin{conj}\label{conj: reduced Dressian of elliptic matroids}
Let $n\geq7$ be an integer. 
    \begin{enumerate}[(1)]\itemsep 5pt
    \item The reduced Dressian $\ulineDr_{\cT_n}$ is one-dimensional if and only if $n$ is prime.  
    \item When $n=p$ is prime, the one-dimensional fan $\ulineDr_{\cT_p}$ consists of precisely $\frac{p-1}{2}$ rays.
    \end{enumerate}
\end{conj}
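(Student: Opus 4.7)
The conjecture combines a classification of rays over $\Z/p\Z$ with a dimension bound that separates the prime from the composite case. I would attack it using the affine symmetry of $\cT_n$ and the correspondence between rays of $\ulineDr_J$ and coarsest regular matroid subdivisions of the base polytope $\BP_J$ (\Cref{rem:polymatroidsubdivisions}).

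\textbf{Setup.} The affine group $\mathrm{Aff}(\Z/n\Z) = \Z/n\Z \rtimes (\Z/n\Z)^\times$ acts as matroid automorphisms on $\cT_n$, since $i+j+k \equiv 0 \pmod n$ is preserved under $i \mapsto u \cdot i + a$ for $u \in (\Z/n\Z)^\times, a \in \Z/n\Z$. This action descends to $\ulineDr_{\cT_n}$, permuting its rays. For $n = p$ prime, the relevant orbit structure on $(\Z/p\Z)^\times / \{\pm 1\}$ has exactly $(p-1)/2$ elements, which matches the predicted count and suggests that rays are parametrized by the quotient $(\Z/p\Z)^\times / \{\pm 1\}$.

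\textbf{Part (2), producing rays.} For each $c \in (\Z/p\Z)^\times$, I would define
\[
\nu_c \colon \cB(\cT_p) \longrightarrow \R, \qquad \{i,j,k\} \longmapsto -\phi\bigl(c\cdot(i+j+k) \bmod p\bigr),
\]
where $\phi \colon \Z/p\Z \smallsetminus \{0\} \to \R$ is a carefully chosen cyclic concave function (for example, $\phi(s) = \min(s, p-s)$ after identifying $s$ with its representative in $\{1,\ldots,p-1\}$, possibly re-centered). The key observation is that in an exchange $x \mapsto y$ on a basis $B$, the sum $\sum B$ shifts by $y - x$; thus M-concavity of $\nu_c$ reduces to a cyclic concavity inequality for $\phi$ plus a matroid compatibility check that the exchanged triples remain bases (i.e., do not fall onto $s = 0$). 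The multiplication $i \mapsto u \cdot i$ sends $\nu_c$ to $\nu_{cu^{-1}}$ and negation sends $\nu_c$ to $\nu_{-c}$, so the $(p-1)/2$ classes produce $(p-1)/2$ candidate rays. Distinctness would follow by comparing the induced matroid subdivisions: the maximal cells of the subdivision for $\nu_c$ are matroids whose non-bases include exactly those triples with $c \cdot (i+j+k)$ in a certain sub-interval, and different $c$'s give combinatorially non-isomorphic subdivisions.

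\textbf{Part (2), upper bound.} This is the crux. I would approach the classification by two complementary routes:
(i) a direct analysis of coarsest matroid subdivisions of $\BP_{\cT_p}$, using that the $\mathrm{Aff}(\Z/p\Z)$-action on the set of rays has orbits of size dividing $2p \cdot (p-1)$, and that any ray fixed by a large subgroup must come from a function depending only on $i+j+k$;
(ii) computing the foundation $F_{\cT_p}$ from the matroid presentation and applying \Cref{prop:limitscolimits} to reduce $\ulineGr^{\mathrm w}_{\cT_p}(\T_0)$ to a combinatorial limit of foundation-Hom sets, analogous to the strategy underlying \Cref{cor:directlimit}. Either route must show that the only nontrivial M-concave functions modulo the lineality space $W_{\cT_p}$ are the $\nu_c$.

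\textbf{Part (1), composite case.} Let $n = dm$ with $1 < d, m < n$. Composing with the quotient $\pi \colon \Z/n\Z \twoheadrightarrow \Z/d\Z$, I expect to build a two-dimensional cone in $\ulineDr_{\cT_n}$ as follows. A non-basis of $\cT_n$ maps to a (possibly non-)basis of $\cT_d$, and one can pull back any M-concave function $\mu$ from $\cT_d$ to obtain a valuated matroid structure on $\cT_n$ that depends only on $\pi$-images. Independently, the subgroup $d\Z/n\Z \subseteq \Z/n\Z$ provides a parallel class structure on the fibers of $\pi$, and one can add an auxiliary weight that varies across these fibers while preserving M-concavity (this is analogous to how parallel extensions enlarge the Dressian). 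Together with the rays from Part (2) which also exist for composite $n$, this forces $\dim \ulineDr_{\cT_n} \geq 2$.

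\textbf{Main obstacle.} The hard part is the upper bound of Part (2): ruling out exotic rays that are not of the form $\nu_c$. Naive dihedral-orbit arguments are insufficient because an extreme ray need not be fixed by the full cyclic group; it only must lie in a single $\mathrm{Aff}(\Z/p\Z)$-orbit whose size divides the count. A promising strategy is to use the Kapranov--Payne picture from \Cref{prop: amoebas and tropicalization as subset of rational point sets} applied to the realization of $\cT_p$ as the $p$-torsion subgroup on an elliptic curve in $\P^2$: the stable reduction of $(E, E[p])$ over a valuation ring should identify the tropical rays with specific toric degenerations indexed by $(\Z/p\Z)^\times/\{\pm 1\}$. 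Turning this geometric intuition into a combinatorial proof that covers \emph{all} Dressian rays (including those not realizable over $\C$) is the principal challenge, and may require first proving the rigidity statement $\ulineGr_{\cT_p}(\T_0) = \ulineGr_{\cT_p}(\C)/(\C^\times)^p$ — which is itself a nontrivial tropicalization result.
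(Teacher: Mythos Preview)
The statement you are attempting to prove is labeled as a \emph{conjecture} in the paper, and the paper does not offer a proof. The only evidence the authors provide is the sentence immediately following the conjecture: ``Griffin Edwards has verified the conjecture computationally for all $n$ with $7 \leq n \leq 26$.'' There is therefore no argument in the paper against which your proposal can be compared.

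Your write-up is honest about this: it is a proof \emph{strategy}, not a proof, and you correctly flag the upper bound in Part~(2) as the crux and the main obstacle. A few of your suggestions are reasonable heuristics (the $\mathrm{Aff}(\Z/p\Z)$-symmetry, the pull-back construction for composite $n$), but none of the steps is actually carried out. In particular: (i)~your candidate functions $\nu_c$ are not verified to be M-concave, and the ``cyclic concavity plus matroid compatibility'' sketch hides the genuine difficulty that the exchange axiom must hold for \emph{every} pair of bases, not just those differing by a single cyclic shift; (ii)~the composite-case argument asserts the existence of a two-dimensional cone without exhibiting two linearly independent M-concave functions outside the lineality space; and (iii)~the proposed rigidity statement $\ulineGr_{\cT_p}(\T_0) = \ulineGr_{\cT_p}(\C)/(\C^\times)^p$ would itself be a substantial result (tropical Grassmannian equals Dressian for this matroid), and you give no indication of how to prove it. As it stands, the proposal is a plausible outline of where one might look, but it does not constitute a proof of the conjecture, and the paper does not claim one either.
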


Griffin Edwards has verified the conjecture computationally for all $n$ with $7 \leq n \leq 26$. 

\begin{question}\label{question: Euler characteristic of PL_M for elliptic matroids}
    Let $p\geq11$ be a prime number and $M=\cT_p$. What is the Euler characteristic $\chi(\overline{\P\upL}_M)$? In particular, is it equal to $p$?
\end{question}

Note that $p=7$ and $p=11$ are the first prime numbers satisfying the assumptions of \cite{Borisov-Roulleau24}*{Theorem 1} and \cite{Borisov-Roulleau24}*{Theorem 2}, respectively. This might explain the failure of \Cref{conj: reduced Dressian of elliptic matroids} and \Cref{question: Euler characteristic of PL_M for elliptic matroids} for small primes $p$.
 
In \cite{Branden21}*{page 7}, it was also asked whether the closure of $\P\upS_J$ is homeomorphic to a closed Euclidean ball for every M-convex set $J$. We give a negative answer to this question as well:

\begin{ex}\label{ex:betsyrosstable}
    Let $M=B_{11}$ be the Betsy Ross matroid from \Cref{subsection: Betsy-Ross}. Because $M$ is rigid, every initial matroid corresponds to a face of the base polytope. 
    The $f$-vector of the base polytope of $M$ is
    \begin{equation*}
        \left(
\begin{array}{ccccccccccc}
 140 & 1410 & 5010 & 9355 & 10774 & 8257 & 4295 & 1470 & 305 & 32 & 1 \\
\end{array}
\right).
    \end{equation*}
    We have seen in \Cref{subsection: Betsy-Ross} that $\P\upS_M$ consists of two orbits under rescaling of the variables. By \Cref{lemma:closoftorusorbit}, the closure of each orbit is the disjoint union of cells corresponding to faces $F$ of the base polytope that are, under the $\log$ map, homeomorphic to a real vector space of dimension $\dim(F)$. For each face $F$, these cells are themselves orbits. Thus the two cells are either disjoint or equal to one other. If $M_F$ is the initial matroid corresponding to the face $F$, then the two cells are equal if and only if $\res_{M,M_F}:\ulineGr_M(\T_\infty)\to\ulineGr_{M_F}(\T_\infty)$ is not injective. The following table records the number of faces in each dimension for which this happens:
    \begin{equation*}
\left(
\begin{array}{ccccccccccc}
 140 & 1410 & 5010 & 8705 & 8770 & 5775 & 2570 & 715 & 105 & 5 & 0 \\
\end{array}
\right).
    \end{equation*}
    Therefore, we can compute the Euler characteristic $\chi(\overline{\P\upS}_M)$ of $\overline{\P\upS}_M$ as
    \begin{equation*}
        2-(140 - 1410 + 5010 - 8705 + 8770 - 5775 + 2570 - 715 + 105 - 5 + 0)=17.
    \end{equation*}
    In particular, this shows that the space $\overline{\P\upS}_M$ is not homeomorphic to a ball.
\end{ex}

\subsection{Compactifications of orbit spaces}
The next goal is to provide a natural compactification of the orbit spaces $\ulineL_J$ and $\ulineGr_J(\T_q)$. This is done via the concept of Hausdorff quotients, as introduced in \cite{hausdorffmoduli}. We first recall their construction.
\subsubsection{The Hausdorff quotient}
Let $X$ be a compact metrizable space.
Recall that for any choice of metric on $X$, the set $H(X)$ of compact subsets of $X$ equipped with the Hausdorff metric is again a compact metric space. The topology of $H(X)$ does not depend on the metric on $X$. A continuous map $f\colon X\to Y$ of compact metrizable spaces induces a continuous map $H(f)\colon H(X)\to H(Y)$. We will make frequent use the following easy lemma:
\begin{lemma}\label{lem:limitdescription}
    Let $(K_i)_{i\in\N}$ be a sequence of compact subsets $K_i$ of $X$ that converges to the compact set $K\subseteq X$ in the Hausdorff metric. For each $i\in\N$, let $U_i$ be a dense subset of $K_i$. Then $K$ is the set of all $x\in X$ for which there is a sequence $(x_i)_{i\in\N}$ with $x_i\in U_i$ that converges to $x$.
\end{lemma}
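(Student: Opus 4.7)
The plan is to prove both inclusions of the described set equality separately, using the definition of the Hausdorff metric together with the density of each $U_i$ in $K_i$. To set things up, I would fix a metric $d$ on $X$ compatible with its topology, and let $L \subseteq X$ denote the set of all $x \in X$ for which there exists a sequence $(x_i)_{i\in\N}$ with $x_i \in U_i$ converging to $x$. The goal is then to show $K = L$.

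For the inclusion $K \subseteq L$, the key observation is that the density of $U_i$ in $K_i$ allows me to approximate any point of $K_i$ arbitrarily well by a point of $U_i$. Given $x \in K$, Hausdorff convergence $K_i \to K$ (in the form $\sup_{y \in K}\inf_{z \in K_i} d(y,z) \to 0$) produces for each sufficiently large $i$ a point $y_i \in K_i$ with $d(x, y_i) < 1/i$. Density then yields $x_i \in U_i$ with $d(x_i, y_i) < 1/i$, and a triangle inequality gives $d(x, x_i) < 2/i$, so $x_i \to x$ as required.

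For the reverse inclusion $L \subseteq K$, I would start from a sequence $(x_i)$ with $x_i \in U_i$ converging to a point $x \in X$. Since $U_i \subseteq K_i$, the other half of Hausdorff convergence (in the form $\sup_{y \in K_i}\inf_{z \in K} d(y,z) \to 0$) produces points $z_i \in K$ with $d(x_i, z_i) \to 0$. Then $z_i \to x$ as well, and closedness of $K$ forces $x \in K$.

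The main obstacle here is essentially nil; this is a standard fact about Hausdorff convergence of compact subsets of a metric space, and the density hypothesis on the $U_i$ enters only in the first inclusion. The only care required is to invoke each side of the Hausdorff distance separately, and to note that the conclusion is independent of the choice of compatible metric $d$, which is guaranteed because $X$ is compact metrizable.
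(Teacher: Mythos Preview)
Your proof is correct. The paper does not actually give a proof of this lemma; it is stated as an ``easy lemma'' and left to the reader. Your argument supplies exactly the standard two-inclusion proof one would expect, using each half of the Hausdorff distance in turn together with density and the triangle inequality.
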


Given a topological group $G$ acting continuously on $X$, we recall from \cite{hausdorffmoduli} the construction of the {Hausdorff quotient} $X/_H G$. We consider the map
\begin{equation*}
    e\colon X\longrightarrow H(X),\qquad x\longmapsto\overline{Gx}.
\end{equation*}
Let $\mathbb{U}$ be the set of open, dense, and $G$-invariant subsets of $X$. Then the \emph{Hausdorff quotient} of $X$ by $G$, denoted $X/_H G$, is defined as
\begin{equation*}
    X/_HG=\bigcap_{U\in\mathbb{U}}\overline{e(U)}.
\end{equation*}
Because $X/_HG$ is a closed subspace of $H(X)$, it is compact and metrizable.
\begin{df}
    A set $U\in\mathbb{U}$ is \emph{stable} if $e|_U\colon U\to H(X)$ is continuous. A set $U\in\mathbb{U}$ is \emph{semi-stable} if $X/_H G=\overline{e(U)}$.
\end{df}
\begin{rem}
    Being stable implies being semi-stable \cite{hausdorffmoduli}*{Remark 1.2}.
\end{rem}
In certain situations, the Hausdorff limit interacts nicely with the Hilbert quotient of a projective variety.
\begin{thm}[\cite{hausdorffmoduli}*{Theorem 7}]
    Let $X$ be a complex projective variety equipped with the action of a group scheme $G$. Assume that every point of the Hilbert quotient $X/\!\!/\!\!/ G$ is a reduced closed subscheme of $X$. Then, with respect to the analytic topology on $X/\!\!/\!\!/ G$, there is a natural homeomorphism
    \begin{equation*}
        X/\!\!/\!\!/ G \longrightarrow X/_H G
    \end{equation*}
    which maps a reduced closed subscheme to its underlying set. 
\end{thm}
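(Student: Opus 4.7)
The plan is to exhibit the natural set-theoretic map $\Phi\colon X/\!\!/\!\!/G\to H(X)$ sending the class of a reduced closed subscheme $Z\subseteq X$ to its underlying analytic set $Z(\C)$, and then to verify that $\Phi$ is a continuous injection whose image is $X/_HG$. Since the Hilbert quotient $X/\!\!/\!\!/G$ is proper (hence compact in the analytic topology) and $X/_HG$ is a metric space, any such continuous bijection will automatically be a homeomorphism, and this is the clean conclusion we aim for.

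\emph{Step 1 (construction and injectivity).} Every point of $X/\!\!/\!\!/G$ is by hypothesis a reduced closed subscheme $Z$ of $X$, and we define $\Phi([Z])=Z(\C)$. Injectivity is immediate from reducedness: a reduced closed subscheme is determined by its underlying analytic subset. On the open locus $U_0\subseteq X/\!\!/\!\!/G$ parametrizing honest orbit closures, $\Phi$ agrees with the map $e\colon X\to H(X)$, $x\mapsto\overline{Gx}$, composed with the quotient $X\to X/\!\!/\!\!/G$, which is well-defined since $e$ is $G$-invariant.

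\emph{Step 2 (continuity, the main obstacle).} This is the technical heart of the argument. We need to show that if a sequence $[Z_i]\to[Z]$ converges in $X/\!\!/\!\!/G$ in the analytic topology, then $Z_i(\C)\to Z(\C)$ in the Hausdorff metric on $H(X)$. Equivalently, on every analytic disc $T\subseteq X/\!\!/\!\!/G$, the universal family $\mathcal{Z}\to T$ pulled back from the Hilbert scheme should have the property that $t\mapsto \mathcal{Z}_t(\C)$ is continuous into $H(X)$. The half-continuity that a limit of set-theoretic subvarieties is contained in the flat limit is classical; for the reverse inclusion I would invoke the reducedness hypothesis together with a version of the Bishop--Harvey--Shiffman continuity theorem for analytic cycles, which guarantees that flat limits of reduced families of subschemes coincide set-theoretically with Hausdorff limits of their underlying subsets. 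Making this implication precise, and controlling potential embedded components in the flat limit via reducedness, is the delicate point.

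\emph{Step 3 (image identification).} Once continuity is established, $\Phi(X/\!\!/\!\!/G)$ is compact in $H(X)$. The restriction of $\Phi$ to the stable locus $U_0$ is exactly $e|_{U_0}$ pushed to the quotient, so $e(U_0)\subseteq\Phi(X/\!\!/\!\!/G)$; by compactness $\overline{e(U_0)}\subseteq\Phi(X/\!\!/\!\!/G)$. On the other hand, for every open dense $G$-invariant $U\subseteq X$, any point of $X/\!\!/\!\!/G$ is a limit of points coming from $U$ (because $U_0\cap\pi(U)$ is dense in $X/\!\!/\!\!/G$, where $\pi\colon X\to X/\!\!/\!\!/G$), so continuity of $\Phi$ gives $\Phi(X/\!\!/\!\!/G)\subseteq\overline{e(U)}$. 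Intersecting over all such $U$ yields $\Phi(X/\!\!/\!\!/G)=X/_HG$. Combined with injectivity from Step~1 and the compact-Hausdorff argument, this produces the desired homeomorphism $X/\!\!/\!\!/G\xrightarrow{\sim} X/_HG$, with the explicit description that a reduced subscheme is sent to its underlying set.
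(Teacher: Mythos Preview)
This theorem is not proved in the paper at all: it is quoted verbatim from \cite{hausdorffmoduli} (Theorem~7 there) and used as a black box to deduce the corollary about Chow quotients of Grassmannians. There is therefore no ``paper's own proof'' to compare your attempt against.

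As for your sketch on its own merits: the overall architecture (define $\Phi$, prove injectivity from reducedness, prove continuity, identify the image, then invoke compact--Hausdorff) is the natural one, and Step~1 and the final compact--Hausdorff step are fine. But two places are genuinely incomplete. In Step~2 you correctly flag the continuity of $t\mapsto\mathcal Z_t(\C)$ as the crux and gesture at Bishop--Harvey--Shiffman; however, those results are about analytic cycles, not Hilbert points, and the passage from flat limits of subschemes to Hausdorff limits of their supports requires an argument (e.g.\ via the Hilbert--Chow morphism and the fact that the cycle map is continuous in the analytic topology) that you have not supplied. In Step~3 you invoke a map $\pi\colon X\to X/\!\!/\!\!/G$, but no such morphism exists globally: the Hilbert quotient is defined as the closure in the Hilbert scheme of the image of a \emph{rational} map $x\dashrightarrow\overline{Gx}$, defined only on the locus where orbit closures have the generic Hilbert polynomial. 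Your density argument needs to be rephrased accordingly, working with that open locus rather than an everywhere-defined $\pi$.
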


As a corollary, we find that the Hausdorff quotient of the complex Grassmannian with respect to the action of the algebraic torus is the underlying topological space of the Chow quotient:
\begin{cor}\label{cor:chowhausdorff}
    Consider the action of $G=(\C^\times)^n$ on the complex Grassmannian $\Gr(d,n)(\C)$. There is a natural homeomorphism
    \begin{equation*}
        \Gr(d,n)(\C)/\!\!/ G\longrightarrow \Gr(d,n)(\C)/_H G
    \end{equation*}
    from the Chow quotient to the Hausdorff quotient which sends a cycle to its underlying set.
\end{cor}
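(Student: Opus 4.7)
The plan is to reduce the statement directly to the theorem cited immediately above, which already gives, for any complex projective $G$-variety whose Hilbert quotient consists of reduced closed subschemes, a natural homeomorphism from the Hilbert quotient to the Hausdorff quotient sending a subscheme to its underlying set. Thus the work splits into two pieces: identifying the Chow quotient with the Hilbert quotient, and checking the reducedness hypothesis.

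First, I would identify the Chow quotient $\Gr(d,n)(\C)/\!\!/ G$ with the Hilbert quotient $\Gr(d,n)(\C)/\!\!/\!\!/ G$. Both are compactifications of the open stratum $\ulineGr_{U_{d,n}}(\C)$ obtained by closing up the image of the map that sends a torus orbit to (the cycle of, respectively the subscheme structure on) its closure in $\Gr(d,n)(\C)$. For the natural action of $G=(\C^\times)^n$ on the Grassmannian, the closure of a generic torus orbit is a reduced, irreducible, normal projective toric variety, and by the theory developed in \cite{chowquotientsI} every limiting cycle appearing in the Chow quotient is a finite union of such toric varieties, one for each maximal cell of the associated regular matroid subdivision of the hypersimplex, each appearing with multiplicity one. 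In particular, the associated subschemes are reduced and the underlying cycles carry the same combinatorial data as the corresponding points of the Hilbert quotient. This yields a natural bijection between the Chow quotient and the Hilbert quotient that sends a cycle to the corresponding reduced subscheme and respects the analytic topology.

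Second, with this identification in hand, the hypothesis of the cited theorem is satisfied: every point of $\Gr(d,n)(\C)/\!\!/\!\!/ G$ is a reduced closed subscheme of $\Gr(d,n)(\C)$. Applying the theorem gives a natural homeomorphism
\[
 \Gr(d,n)(\C)/\!\!/\!\!/ G \ \stackrel\sim\longrightarrow \ \Gr(d,n)(\C)/_H G
\]
that maps a reduced subscheme to its underlying set. Composing with the bijection of the previous paragraph (cycle $\leftrightarrow$ associated reduced subscheme, and subscheme $\leftrightarrow$ its support) produces the desired homeomorphism $\Gr(d,n)(\C)/\!\!/ G\to \Gr(d,n)(\C)/_H G$, which by construction sends a cycle to its underlying set.

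The main obstacle is the first step: cleanly justifying that torus orbit closures in the Grassmannian, together with all their Chow-theoretic degenerations, are reduced subschemes and that Chow and Hilbert quotients agree. This is not a routine statement about arbitrary $G$-varieties, but for the specific case of a torus acting on a Grassmannian, it is classical and follows from the toric/combinatorial structure worked out in \cite{chowquotientsI}, so the argument amounts to invoking those results rather than redeveloping them.
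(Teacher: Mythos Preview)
Your proposal is correct and follows essentially the same route as the paper: verify reducedness of the points of the Hilbert quotient (the paper cites \cite{chowquotientsI}*{Lemma 1.5.3} for this), apply the preceding theorem to get $X/\!\!/\!\!/ G \cong X/_H G$, and identify the Hilbert and Chow quotients (the paper cites \cite{chowquotientsI}*{Theorem 1.5.2}). The only difference is that where you sketch the toric and combinatorial reasons behind these facts, the paper simply invokes the precise statements from Kapranov; in particular, the identification of Chow and Hilbert quotients is an isomorphism of varieties by \cite{chowquotientsI}*{Theorem 1.5.2}, which is a cleaner way to obtain the homeomorphism than arguing directly that your set-theoretic bijection respects the analytic topology.
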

\begin{proof}
    Let us denote $X=\Gr(d,n)(\C)$.
    By \cite{chowquotientsI}*{Lemma 1.5.3}, every point in $X/\!\!/\!\!/ G$ is a reduced closed subscheme of $X$. 
    Thus the preceding theorem gives a natural homeomorphism $X/\!\!/\!\!/ G \to X/_H G$. Furthermore, by \cite{chowquotientsI}*{Theorem 1.5.2}, the natural morphism $X/\!\!/\!\!/ G \to X/\!\!/ G$ is an isomorphism.
\end{proof}

\subsubsection{Compactifications of \texorpdfstring{$\ulineL_J$}{ulineLJ}  and \texorpdfstring{$\ulineGr_J(\T_q)$}{ulineGrJ(T2)}}
Let $J$ be a polymatroid on $[n]$ of rank $r$, and let $U$ denote either $\P\upL_J$ or $\Gr_J(\T_q)$ for some arbitrary but fixed $q>0$. 
We consider $U$ as a subset of $\P\R_{\geq0}[x_1,\ldots,x_n]$, which we identify with the set of polynomials in $\R[x_1,\ldots,x_n]_r$ whose coefficients sum to one. On the latter space, we consider the Euclidean metric $d$ and we let $X$ denote the closure of $U$.
The group $G=\R_{+}^n$ acts on $X$ by rescaling. 
For $J'\subseteq J$ another M-convex set, we write $U_{J'}=\P\upL_{J'}$ or $U_{J'}=\Gr_{J'}(\T_q)$, respectively.

\begin{rem}\label{rem:orbitclosures}
    For $p\in X$, we can explicitly describe the orbit closure of $p$. Namely, for every face $F$ of the base polytope of $J$, we denote by $p_F$ the polynomial obtained from $p$ by removing all terms that do not correspond to elements of $F$. Then $\overline{Gp}$ is the union of $Gp_F$ over all faces $F$.   
\end{rem}
\begin{lemma}
    Let $K\subseteq U$ be a compact subset.
    For every $\epsilon>0$, there exists $\delta>0$ such that for all $p_1,p_2\in K$ with $d(p_1,p_2)<\delta$ and all $q_1$ in the orbit closure of $p_1$, there is a point $q_2$ in the orbit closure of $p_2$ with $d(q_1,q_2)<\epsilon$.
\end{lemma}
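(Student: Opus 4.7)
The lemma asserts that the set-valued map $\Phi\colon K \to H(X)$, $p\mapsto \overline{Gp}$, is uniformly continuous. Since $K$ is compact and $H(X)$ is metrizable, it suffices to prove that $\Phi$ is continuous, which amounts to verifying upper and lower hemi-continuity pointwise. The key input is the description $\overline{Gp} = \bigcup_F Gp_F$ from \Cref{rem:orbitclosures}, where $F$ ranges over the finitely many faces of the base polytope $\BP_J$. Because $K$ is compact and $K \subseteq U$ consists of polynomials with support equal to $J$, the coefficients of $p \in K$ on $J$ admit a uniform positive lower bound; in particular, $p_F$ is nonzero and varies continuously in $p$ for every face $F$.

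Lower hemi-continuity is the easier direction: given $p_k \to p_0$ in $K$ and a point $q_0 = t \cdot (p_0)_F \in \overline{Gp_0}$, the points $q_k := t \cdot (p_k)_F \in \overline{Gp_k}$ converge to $q_0$ by continuity of truncation and of the renormalized $G$-action on nonzero polynomials. The main obstacle is upper hemi-continuity: given $p_k \to p_0$ in $K$ and $q_k \in \overline{Gp_k}$ with $q_k \to q_0$ in $X$, we must show $q_0 \in \overline{Gp_0}$. Since there are only finitely many faces, after passing to a subsequence we may write $q_k = t_k \cdot (p_k)_F$ for a fixed face $F$. As the diagonal subgroup $\R_{+} \subseteq G$ acts trivially up to normalization, we may assume $s_k := \log t_k$ lies in the hyperplane $H = \{s \in \R^n : \sum_i s_i = 0\}$. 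If $(s_k)$ is bounded, a subsequence converges to some $s_\infty \in H$ and $q_0 = \exp(s_\infty) \cdot (p_0)_F \in \overline{Gp_0}$.

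The delicate case is when $(s_k)$ is unbounded. Here we pass to a subsequence with $s_k/\|s_k\| \to u \in H$ and observe that the vertices $\alpha \in F \cap J$ maximizing $\langle u, \alpha\rangle$ form the vertex set of a proper sub-face $F' \subsetneq F$ of $\BP_J$. A direct estimate of the normalized coefficients $c_{k,\alpha} \exp(\langle s_k, \alpha\rangle)/Z_k$ shows that the coefficients outside $F'$ vanish in the limit, so $q_0$ is supported on $F' \cap J$. Decomposing $s_k = a_k u + v_k$ with $a_k \to \infty$ and $v_k \perp u$, one finds that $q_0$ coincides with the limit of $\exp(v_k) \cdot (p_0)_{F'}$, and a finite induction on the dimension of the ambient space containing the residual vectors $v_k$ eventually produces a face $F'' \subseteq F'$ and a vector $s_\infty'$ such that $q_0 = \exp(s_\infty') \cdot (p_0)_{F''} \in G(p_0)_{F''} \subseteq \overline{Gp_0}$. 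This asymptotic analysis is the heart of the argument: conceptually, it reflects the fact that the orbit closure $\overline{G(p_0)_F}$ behaves like the positive real locus of a projective toric variety, whose boundary orbits are indexed by the faces of the moment polytope $F$.
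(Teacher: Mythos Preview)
Your approach is correct but takes a genuinely different and more laborious route than the paper. The paper avoids the asymptotic analysis entirely via a uniform compact parametrization of orbit closures: letting $\overline{G}$ be the orbit closure in $\P\R_{\geq0}[x_1,\dotsc,x_n]$ of the polynomial with support $J$ and all coefficients equal, the map $\psi\colon\overline{G}\times K\to X$ given by coefficient-wise multiplication followed by renormalization is continuous and satisfies $\psi(\overline{G}\times\{p\})=\overline{Gp}$ for every $p\in K$ (by \Cref{rem:orbitclosures}); Heine--Cantor on the compact set $\overline{G}\times K$ then yields the lemma in one line, taking $q_2=\psi(g,p_2)$ whenever $q_1=\psi(g,p_1)$. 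Your inductive peeling-off of faces is essentially a hands-on reconstruction of the toric stratification of $\overline{G}$, which the paper's parametrization packages all at once. Two minor points on your write-up: with the normalization $s_k\in H$ the face $F'$ need not be \emph{strictly} contained in $F$ (this fails when $u$ is orthogonal to the affine span of $F$), though this is harmless since your induction is on the ambient dimension of the residuals $v_k$ rather than on $\dim F$; and the replacement of $(p_k)_{F'}$ by $(p_0)_{F'}$ deserves a sentence of justification --- it works because the coefficient ratios $c_{k,\alpha}/c_{0,\alpha}\to 1$ uniformly over the finitely many $\alpha$ and the normalized action depends only on these ratios, so the two sequences differ by a quantity tending to $0$ regardless of whether $v_k$ stays bounded.
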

\begin{proof}
    We denote by $\overline{G}$ the orbit closure in $\P\R_{\geq0}[x_1,\ldots,x_n]$ of the polynomial with support $J$ and all coefficients equal to each other. We consider the map
    \begin{equation*}
        \psi\colon \overline{G}\times K\longrightarrow X
    \end{equation*}
    defined by coefficient-wise multiplication and then dividing by the sum of the coefficients. By \Cref{rem:orbitclosures}, for every $p\in K$ the image $\psi(\overline{G}\times\{p\})$ is exactly the orbit closure of $p$. By the Heine--Cantor theorem, the map $\psi$ is uniformly continuous. This means that for every $\epsilon>0$, there exists $\delta>0$ such that for all $(g_1,p_1),(g_2,p_2)\in\overline{G}\times K$ with $d(g_1,g_2)+d(p_1,p_2)<\delta$, we have $d(\psi(g_1,p_1),\psi(g_2,p_2))<\epsilon$. Given  $p_1,p_2\in K$ with $d(p_1,p_2)<\delta$ and a point $q_1$ in the orbit closure of $p_1$, it follows that $q_1=\psi(g,p_1)$ for some $g\in\overline{G}$ and $d(q_1,\psi(g,p_2))<\epsilon$, which proves the claim.
\end{proof}
\begin{cor}\label{cor:stable}
    The open subset $U$ of $X$ is stable.
\end{cor}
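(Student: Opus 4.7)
The plan is to deduce stability directly from the preceding lemma by exploiting the symmetry of the Hausdorff metric. Recall that we need to show the map $e|_U\colon U\to H(X)$, $p\mapsto\overline{Gp}$, is continuous at every $p\in U$. Fix $p\in U$ and a sequence $(p_i)_{i\in\N}$ in $U$ with $p_i\to p$; the goal is to show $\overline{Gp_i}\to\overline{Gp}$ in the Hausdorff metric.

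The key observation is that $K=\{p_i:i\in\N\}\cup\{p\}$ is a compact subset of $U$, so the preceding lemma applies to it. Given $\epsilon>0$, let $\delta>0$ be as provided by the lemma for this $K$, and choose $N$ so that $d(p_i,p)<\delta$ for all $i\geq N$. For such $i$, applying the lemma with $(p_1,p_2)=(p,p_i)$ shows that every $q\in\overline{Gp}$ is within $\epsilon$ of some point of $\overline{Gp_i}$; applying it with the roles reversed, i.e.\ $(p_1,p_2)=(p_i,p)$, shows every $q\in\overline{Gp_i}$ is within $\epsilon$ of some point of $\overline{Gp}$. Together these give $d_H(\overline{Gp_i},\overline{Gp})\leq\epsilon$, which is precisely continuity of $e|_U$ at $p$.

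There is essentially no obstacle remaining: the substantive analytic content (uniform continuity of the rescaling map on $\overline{G}\times K$ via Heine--Cantor) was already isolated in the preceding lemma, and the corollary is just the formal statement that a one-sided compact uniform estimate, when applied symmetrically, yields continuity in the Hausdorff distance. The only care required is to ensure that the ambient compact set $K$ we feed into the lemma lies inside $U$ (not merely $X$), which is automatic since a convergent sequence in $U$ together with its limit remains in $U$.
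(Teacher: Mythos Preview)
Your argument is correct and is essentially the same as the paper's: the paper simply says the corollary follows from the preceding lemma together with local compactness of $U$, and your use of the compact set $K=\{p_i\}\cup\{p\}$ is just a concrete instance of that local-compactness step. The symmetric application of the lemma to bound the Hausdorff distance is exactly what the paper has in mind.
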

\begin{proof}
    This follows from the previous lemma, together with the fact that $U$ is locally compact.
\end{proof}
\begin{lemma}\label{lem:open}
    The map $e\colon X\to H(X),\, x\mapsto\overline{Gx}$ restricted to $U$ is an open mapping onto its image.
\end{lemma}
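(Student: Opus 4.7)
The plan is to verify openness of $e|_U$ onto its image by a sequential argument, leveraging the density of each orbit in its closure via Lemma \ref{lem:limitdescription}. Since $X$ is compact metrizable and $H(X)$ is metrizable, openness onto the image is equivalent to the following: for every open $V \subseteq U$, every $v \in V$, and every sequence $(u_n)\subseteq U$ with $e(u_n) \to e(v)$ in $H(X)$, we have $e(u_n) \in e(V)$ for all sufficiently large $n$.

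Given such data, I would apply Lemma \ref{lem:limitdescription} to the sequence of compact sets $K_n = \overline{Gu_n}$, with the dense subsets $Gu_n \subseteq K_n$, converging to $K = \overline{Gv}$. Since $v \in K$, the lemma produces $w_n \in Gu_n$ with $w_n \to v$ in $X$. Because $U$ is open in $X$ (the positivity of the coefficients indexed by $J$ is an open condition on $X$) and $V$ is open in $U$ with $v \in V$, the convergence $w_n \to v$ forces $w_n \in V$ for all large $n$.

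Finally, $w_n \in Gu_n$ gives $Gw_n = Gu_n$, hence $\overline{Gw_n} = \overline{Gu_n}$, so $e(u_n) = e(w_n) \in e(V)$ eventually, which is what we need. The argument is essentially formal once Lemma \ref{lem:limitdescription} is in hand: the only substantive point is that one must produce representatives in the actual orbits $Gu_n$ rather than merely in their closures, and this is exactly what the density characterization of Hausdorff limits delivers. I do not anticipate any real obstacle beyond this translation step, and stability of $U$ (Corollary \ref{cor:stable}) is not required for openness, though it fits naturally alongside this proposition.
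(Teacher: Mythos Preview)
Your proof is correct and follows essentially the same idea as the paper's. The paper argues directly with $\epsilon$-balls in the Hausdorff metric: if $d_H(\overline{Gy},\overline{Gx})<\epsilon$ with $B_\epsilon(x)\subseteq V$, then $\overline{Gy}$ meets $B_\epsilon(x)$, and since $G$-orbits are closed in $U$ this intersection point lies in $Gy$, giving $e(y)\in e(V)$. Your sequential version via Lemma~\ref{lem:limitdescription} accomplishes the same thing---producing a representative in the orbit $Gu_n$ (not merely its closure) near $v$---just packaged differently.
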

\begin{proof}
    Let $V\subseteq U$ be an open subset, and let $x\in V$. We have to show that $e(x)=\overline{Gx}$ has an open neighborhood in $e(U)$ which is contained in the image of $V$ under $e$. Let $\epsilon>0$ be such that the open ball $B$ of radius $\epsilon$ around $x$ is contained in $V$. We claim that the open ball of radius $\epsilon$ around $\overline{Gx}$ is contained in the image of $V$. Indeed, if the orbit closure $\overline{Gy}$ of $y\in U$ has Hausdorff distance smaller than $\epsilon$ to $\overline{Gx}$, then in particular it intersects $B$ nontrivially. Because orbits of $G$ are closed in $U$, there exists $y'\in Gy\cap B$. This shows that $e(y)=e(y')$ is in $e(B)$.
\end{proof}
\begin{cor}\label{cor:compactification}
    The Hausdorff quotient $X/_HG$ is a compactification of $U/G$.
\end{cor}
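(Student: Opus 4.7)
The strategy is to show that the continuous map $e|_U : U \to H(X)$ descends through the orbit space and exhibits $X/_H G$ as a compactification of $U/G$. Observe first that $X/_H G$ is a closed subspace of the compact metrizable space $H(X)$, and hence itself compact and metrizable. By Corollary \ref{cor:stable}, the restriction $e|_U$ is continuous, and it is manifestly $G$-invariant, so it factors through a continuous map $\bar e : U/G \to H(X)$.

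The main geometric input is that every $G$-orbit in $U$ is closed in $U$: indeed, if $\overline{Gx} = \overline{Gy}$ in $X$ for some $x,y \in U$, intersecting both sides with $U$ will then yield $Gx = Gy$. For $U = \P\upL_J$ this closedness follows from Lemma \ref{lemma:closoftorusorbit} applied with $J' = J$ (the only face of $\BP_J$ equal to $\BP_J$ itself being the trivial one), and for $U = \Gr_J(\T_q)$ the analogous statement can be deduced by a parallel argument on the rescaling parameters. Combined with Lemma \ref{lem:open}, which asserts that $e|_U$ is open onto its image, this establishes that $\bar e : U/G \to e(U)$ is a homeomorphism.

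It remains to verify that $e(U) \subseteq X/_H G$ and that this inclusion is open and dense. Density follows immediately from the defining formula $X/_H G = \bigcap_{V \in \mathbb{U}} \overline{e(V)}$, once we note that $U \in \mathbb{U}$: this gives $X/_H G \subseteq \overline{e(U)}$. For the inclusion $e(U) \subseteq X/_H G$, given any $V \in \mathbb{U}$, the intersection $V \cap U$ is open, $G$-invariant, and dense in $U$, so every $x \in U$ is a limit of points in $V \cap U$, and continuity of $e|_U$ places $e(x)$ in $\overline{e(V)}$. To see openness, I will verify that $e(U) = \{ K \in X/_H G : K \cap U \neq \emptyset \}$, the right-hand side being open in $X/_H G$ because $U$ is open in $X$. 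Given $K = \lim_k \overline{Gx_k} \in X/_H G$ meeting $U$ at some $y$, Lemma \ref{lem:limitdescription} furnishes $y = \lim y_k$ with $y_k \in \overline{Gx_k}$; since $U$ is open, $y_k \in U$ eventually, and orbit closedness in $U$ then forces $y_k \in Gx_k$, so $\overline{Gy_k} = \overline{Gx_k}$; continuity of $e|_U$ finally gives $K = \overline{Gy} = e(y) \in e(U)$.

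The main obstacle I anticipate is the verification of orbit closedness when $U = \Gr_J(\T_q)$, since Lemma \ref{lemma:closoftorusorbit} is formulated only for Lorentzian polynomials. One route is to deduce it from an equivariant refinement of the homeomorphism in Corollary \ref{cor:allarehomeo}; a direct route is to mimic the proof of Lemma \ref{lemma:closoftorusorbit} using the curve selection lemma together with the explicit structure of $\upR_J(\T_q) \subseteq \R_{>0}^J$ and the fact that a convergent sequence of rescalings whose limit has full support $J$ must have bounded exponents (after projectivizing). Once this is in hand, the remaining steps are a routine application of the general theory of Hausdorff quotients.
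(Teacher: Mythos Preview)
Your approach is essentially the paper's: use Corollary~\ref{cor:stable} for continuity of $e|_U$, Lemma~\ref{lem:open} for openness onto the image, and deduce that $U/G \to e(U)$ is a homeomorphism with $\overline{e(U)} = X/_H G$ (the paper invokes stability $\Rightarrow$ semi-stability for this last identity, which is exactly your density/inclusion argument unpacked).

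The ``main obstacle'' you flag is not one. For both $U = \P\upL_J$ and $U = \Gr_J(\T_q)$, the set $U$ sits inside $\R_{>0}^J/\R_{>0}$, and the coordinate-wise logarithm carries this homeomorphically to $\R^J/\R\mathbf{1}$, under which every $G$-orbit becomes an affine coset of the linear subspace $W_J/\R\mathbf{1}$. Affine subspaces are closed, so $G$-orbits are closed in $\R_{>0}^J/\R_{>0}$ and a fortiori in $U$. No appeal to Lemma~\ref{lemma:closoftorusorbit}, an equivariant upgrade of Corollary~\ref{cor:allarehomeo}, or the curve selection lemma is needed; the paper simply uses orbit closedness without comment (in the proof of Lemma~\ref{lem:open}) for this reason.

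Your additional verification that $e(U)$ is \emph{open} in $X/_H G$ is correct and yields a slightly stronger statement than the paper records, which only asserts that $U/G$ embeds densely.
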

\begin{proof}
    By \Cref{cor:stable} and \Cref{lem:open}, the map that sends an orbit $x\in U/G$ to the point in $H(X)$ that corresponds to its closure is a homeomorphism onto its image $V$. By stability of $U$, the closure of $V$ is $X/_HG$.
\end{proof}
We will denote the compactification $X/_HG$ by $\HC(\ulineL_J)$ resp. $\HC(\ulineGr_J(\T_q))$ when $U=\P\upL_J$ and $U=\Gr_J(\T_q)$, respectively.
 Our compactification is compatible with the Chow quotient of complex Grassmannians:
\begin{thm}\label{thm:chowhausdorffmap}
    Let $J=U_{d,n}$ and $q>0$.
    \begin{enumerate}[(1)]\itemsep 5pt
        \item   The map
    \begin{equation*}
        \Gr(d,n)(\C) /\!\!/ (\C^\times)^n \longrightarrow \HC(\ulineGr_J(\T_q)),
    \end{equation*}
    which sends a cycle in the Chow quotient to the image of its underlying set under the coordinate-wise map $z\mapsto|z|^q$, is continuous.
    \item Similarly, the map
    \begin{equation*}
        \Gr(d,n)(\C) /\!\!/ (\C^\times)^n \longrightarrow \HC({\ulineL}(d,n)_\sqfree),
    \end{equation*}
    which sends a cycle in the Chow quotient to the image of its underlying set under the coordinate-wise absolute square map, is continuous.
    \end{enumerate}
\end{thm}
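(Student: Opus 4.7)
The plan is to reduce the continuity claim to a single continuity check on the space $H(\Gr(d,n)(\C))$ of compact subsets. By Corollary~\ref{cor:chowhausdorff}, the Chow quotient $\Gr(d,n)(\C)/\!\!/(\C^\times)^n$ is homeomorphic to the Hausdorff quotient $\Gr(d,n)(\C)/_H(\C^\times)^n$, via the map sending a cycle to its underlying set. So it suffices, for $X$ the closure of $U=\Gr_J(\T_q)$ (resp.\ of $\P\upL(d,n)_\sqfree$) inside the affine slice $\{f \in \P\R_{\geq 0}[x_1,\ldots,x_n]_d : \sum\text{coefficients}=1\}$, to construct a continuous map $\Gr(d,n)(\C)/_H(\C^\times)^n \to X/_H G$ (with $G=\R_{>0}^n$) whose value on a compact subset $K\subseteq\Gr(d,n)(\C)$ is the image of $K$ under the coefficient-wise map.

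The first step is to define the continuous map
\[
\Phi\colon \Gr(d,n)(\C) \longrightarrow X,\qquad (p_S)\longmapsto\frac{\sum_S \lvert p_S\rvert^q\, x^S}{\sum_S \lvert p_S\rvert^q}
\]
(resp.\ with $\lvert p_S\rvert^2$). For part (1), Lemma~\ref{lemma: morphisms from c to tq}(1) applied with $t=q$ ensures the image of $\Phi$ lies in $\Gr_J(\T_q)\subseteq X$; for part (2), the example from the introduction shows the image lies in $\P\upL(d,n)_\sqfree\subseteq X$, as $\sum_S\lvert p_S\rvert^2 x^S$ is stable and hence Lorentzian. The second step is to check equivariance: the $(\C^\times)^n$-action on Plücker coordinates corresponds under $\Phi$ to the $G$-action on coefficients via the surjective group homomorphism $t\mapsto(\lvert t_i\rvert^q)$ (resp.\ $(\lvert t_i\rvert^2)$). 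Since the orbit closure $\overline{(\C^\times)^n\cdot p}$ is compact and $\Phi$ is continuous, $\Phi(\overline{(\C^\times)^n\cdot p})$ is closed and contains the dense subset $G\cdot\Phi(p)$, giving $\Phi(\overline{(\C^\times)^n\cdot p}) = \overline{G\cdot\Phi(p)}$.

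For the third step, recall that any continuous map $\Phi\colon Y\to Z$ of compact metrizable spaces induces a continuous map $H(\Phi)\colon H(Y)\to H(Z)$, $K\mapsto\Phi(K)$. Applied to our $\Phi$, the equivariance from step two shows that $H(\Phi)$ sends the image $e_{\Gr}(\Gr(d,n)(\C))\subseteq H(\Gr(d,n)(\C))$ of $\Gr$-orbit closures to the image $e_X(\Phi(\Gr(d,n)(\C)))\subseteq H(X)$ of $G$-orbit closures of points in $U$. Since the Hausdorff quotient $X/_H G$ is defined by intersecting closures of images of dense open $G$-invariant subsets, and since by semi-stability $X/_H G=\overline{e_X(U)}$ (Corollary~\ref{cor:stable}), continuity of $H(\Phi)$ together with density of $\Gr(d,n)(\C)/_H(\C^\times)^n = \overline{e_{\Gr}(\Gr(d,n)(\C))}$ forces the restriction of $H(\Phi)$ to land in $X/_H G$. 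Composing with the Chow-to-Hausdorff homeomorphism from Corollary~\ref{cor:chowhausdorff} yields the desired continuous map, and by construction its value on a cycle is $\Phi$ applied to the underlying set.

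The main obstacle will be the bookkeeping in step three: verifying that $H(\Phi)$ indeed restricts appropriately to the Hausdorff quotients, rather than merely to some subspace of $H(X)$. The verification hinges on the fact that $X/_H G$ is characterized as the closure of $e_X(U)$ (using stability of $U$ from Corollary~\ref{cor:stable}), so once one knows that $H(\Phi)$ sends the dense subset $e_{\Gr}(\Gr(d,n)(\C))$ into $e_X(U)\subseteq X/_H G$, closedness of $X/_H G$ in $H(X)$ and continuity of $H(\Phi)$ complete the argument.
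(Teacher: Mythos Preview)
Your approach is exactly the paper's (very terse) proof fleshed out: invoke Corollary~\ref{cor:chowhausdorff} to pass to the Hausdorff quotient, use that $\Phi$ sends $(\C^\times)^n$-orbit closures to $\R_{>0}^n$-orbit closures, and apply functoriality of $H(-)$. The overall strategy is correct.

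There is one slip in Step~3. You assert that $H(\Phi)$ sends $e_{\Gr}(\Gr(d,n)(\C))$ into $e_X(U)$, i.e.\ that $\Phi(p)\in U$ for every $p\in\Gr(d,n)(\C)$. This fails whenever some Pl\"ucker coordinate of $p$ vanishes: then $\Phi(p)$ has support a proper sub-matroid of $U_{d,n}$ and lies in $X\smallsetminus U$. Relatedly, the equality $\Gr(d,n)(\C)/_H(\C^\times)^n=\overline{e_{\Gr}(\Gr(d,n)(\C))}$ is unjustified and in fact false: an orbit closure $\overline{(\C^\times)^n p}$ for $p$ outside the big cell is typically not a point of the Chow quotient at all (e.g.\ for $d=2$, $n=4$, a single orbit closure of non-uniform type is not one of the three boundary cycles).

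The fix is to replace $\Gr(d,n)(\C)$ by the big cell $\Gr_J(\C)$ throughout Step~3. Then $\Phi(\Gr_J(\C))\subseteq U$ holds tautologically, and since $\Gr_J(\C)$ is dense, open, and $(\C^\times)^n$-invariant, the definition of the Hausdorff quotient as an intersection over $\mathbb{U}$ already gives the inclusion
\[
\Gr(d,n)(\C)/_H(\C^\times)^n \ \subseteq \ \overline{e_{\Gr}(\Gr_J(\C))}.
\]
Continuity of $H(\Phi)$ and closedness of $X/_HG=\overline{e_X(U)}$ (Corollary~\ref{cor:stable}) then finish the argument exactly as you intended. Note that you only ever need this inclusion, not the equality you wrote.
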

\begin{proof}
    This follows from \Cref{cor:chowhausdorff} and the fact that the $q$th power of the coordinate-wise absolute value map sends every $(\C^\times)^n$-orbit closure to an $\R_{>0}^n$-orbit closure.
\end{proof}
Next, we study the boundary points of $X/_HG$. Recall that $U$ denotes either $\P\upL_J$ or $\Gr_J(\T_q)$ and that $X$ is the closure of $U$ considered as a subset of $\P\R_{\geq0}[x_1,\ldots,x_n]$. Then, by definition, the elements of $X/_HG$ are compact subsets of $X$ and the set of orbit closures of elements of $U$ is an open dense subset of $X/_HG$. We will see in the following \Cref{thm:bondarypoints} that the remaining points of $X/_HG$ are not orbit closures themselves but finite unions of such.

We let $P=\BP_J\subseteq\R^n$ be the base polytope of $J$. Let $\mathscr{P}$ be a regular polymatroid subdivision of $P$, whose maximal cells correspond to initial polymatroids $J_1,\ldots,J_s$. We denote by $Y(\mathscr{P})$ the set of all $K\in X/_H G$ for which there exist $x_i\in U_{J_i}$, for $i=1,\ldots,s$, such that $K=\cup_{i=1}^s\overline{Gx_i}$. 

\begin{thm}\label{thm:bondarypoints}The following holds true.
    \begin{enumerate}[(1)]\itemsep 5pt
        \item For $x\in X$ there exists $K\in X/_HG\subseteq H(X)$ such that $x\in K$. 
        \item We have $X/_HG=\cup_{\mathscr{P}} Y(\mathscr{P})$, where the union is over all regular polymatroid subdivisions  $\mathscr{P}$ of $P$.
        \item We have $Y(\mathscr{P})\neq\emptyset$ for every regular polymatroid subdivision  $\mathscr{P}$ of $P$. 
    \end{enumerate}
\end{thm}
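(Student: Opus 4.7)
The plan is to attack the three parts in the order (1), (3), (2), using the star-shapedness machinery from Sections~\ref{sec:starshaped}--\ref{sec:lortopo} together with the explicit degeneration polynomials $f_{t\nu}$ and $\T_0$-representations $\rho_{t\nu}$ associated to an M-convex function $\nu$.

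For part (1), I would simply approximate $x \in X = \overline U$ by a sequence $x_i \in U$. The sequence $\overline{Gx_i}$ lives in the compact space $H(X)$, so after passing to a Hausdorff-convergent subsequence with limit $K$, the fact that $X/_H G$ is closed in $H(X)$ and contains $e(U)$ forces $K \in X/_H G$. Applying \Cref{lem:limitdescription} to the dense subsets $G x_i \subseteq \overline{Gx_i}$ yields $x \in K$.

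For part (3), fix a regular polymatroid subdivision $\mathscr{P}$ of $\BP_J$ with maximal cells $J_1,\dotsc,J_s$, and choose an M-convex function $\nu\colon J \to \R$ inducing it. Then $\rho_{t\nu} = (e^{-t\nu(\alpha)})_{\alpha \in J}$ lies in $\Gr_J(\T_0) \subseteq \Gr_J(\T_q)$ for every $t>0$ by \Cref{thm:314325} and \Cref{lemma:mconvexist0}, and the associated Lorentzian polynomial $f_{t\nu}$ lies in $\P\upL_J$; so $\rho_{t\nu}$ (resp.\ $f_{t\nu}$) is a well-defined element of $U$. Extract a Hausdorff-convergent subsequence of $K_t = \overline{G\cdot \rho_{t\nu}}$ as $t \to \infty$, with limit $K \in X/_H G$. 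For each maximal cell $J_i$, by \Cref{rem:polymatroidsubdivisions} there is a linear function $\ell_i\colon \R^n \to \R$ with $\nu = \ell_i$ on $J_i$ and $\nu > \ell_i$ on $J \smallsetminus J_i$; rescaling $\rho_{t\nu}$ by $g_{t,i} \in G$ with $\log g_{t,i} = t\ell_i$ produces coefficients $e^{-t(\nu(\alpha)-\ell_i(\alpha))}$ which (after normalization) converge to the $\T_0$-representation supported by constants on $J_i$, i.e.\ to a specific point $g_i \in U_{J_i}$. This shows $\overline{G g_i} \subseteq K$, hence $\bigcup_i \overline{G g_i} \subseteq K$. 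For the reverse inclusion, any $y \in K$ is the limit of some $h_n \cdot \rho_{t_n \nu}$ by \Cref{lem:limitdescription}; setting $\ell_n = t_n^{-1}\log h_n$ and normalizing by the maximum log-coefficient, the argmax set stabilizes in a face $J^*$ of $\mathscr P$, and the limit has all log-coefficients on $J^*$ equal to $0$ up to affine rescaling, so $y$ is proportional to $g \cdot f_{J^*}$ for some $g \in G$ with $J^* \subseteq J_i$; \Cref{lemma:closoftorusorbit} then gives $y \in \overline{G g_i}$.

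For part (2), given $K \in X/_H G$, write $K = \lim_{i\to\infty}\overline{Gx_i}$ with $x_i \in U$. Passing to the coefficient-wise logarithm modulo the lineality $W_J$, the points $\log(x_i)$ lie in the strongly star-shaped set $\log(U)/W_J \subseteq V_J/W_J$ by \Cref{thm:logljstronglystarshaped} (respectively \Cref{prop:repstar}). Either this sequence stays bounded, in which case a subsequence converges in $U/G$ and $K = \overline{Gx_\infty}$ for some $x_\infty \in U$, corresponding to the trivial subdivision $\mathscr{P}=\{\BP_J\}$; or, passing to a subsequence, $t_i = \|\log x_i\|_{V_J/W_J} \to \infty$ and $\log(x_i)/t_i \to -\nu$ for some $\nu$, which must lie in $\ulineDr_J$ by strong star-shapedness (\Cref{cor: ballminusdirections} and the identification of boundary directions with the Dressian). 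By \Cref{rem:polymatroidsubdivisions}, $\nu$ induces a regular polymatroid subdivision $\mathscr{P}$, and the same argmax/normalization analysis as in part~(3) identifies $K$ as a union of orbit closures $\overline{Gy_j}$ with $y_j \in U_{J_j}$, giving $K \in Y(\mathscr P)$.

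The main obstacle will be the asymptotic analysis in part~(2): the remainder $\log(x_i) - t_i(-\nu)$ need not be bounded, so a naive argmax extraction may only pick out a subset of the cells of $\mathscr P$ and miss the orbit closures over other maximal cells. To handle this, I would iterate the extraction via a Puiseux-style curve selection (cf.\ the arguments in \Cref{lemma:wheninitial} and \Cref{lemma:rayinclosextends}), producing a finite tower of asymptotic directions that all refine the same top-level subdivision $\mathscr P$ and collectively contribute one orbit closure $\overline{Gy_j}$ for each maximal cell $J_j$; verifying that this tower indeed collapses into a single $\mathscr P$ (and not a proper coarsening) is the delicate combinatorial step.
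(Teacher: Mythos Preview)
Your argument for part~(1) is correct and essentially identical to the paper's.

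For part~(3) you pursue the same degeneration idea as the paper (send the M-convex function $\nu$ to infinity along a ray), but your direct verification of the ``reverse inclusion'' $K\subseteq\bigcup_i\overline{Gg_i}$ is not fully justified: you assert that after normalizing, ``the argmax set stabilizes in a face $J^*$ of $\mathscr{P}$'', but passing to a subsequence only gives that the support of the limit is \emph{some} initial subset of $J$, and you have not argued why it must be a cell of the particular subdivision $\mathscr{P}$ induced by $\nu$. The paper avoids this by deducing part~(3) from the machinery of part~(2) rather than arguing directly.

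The substantive divergence is in part~(2), and here you have correctly located the real difficulty: extracting a single real-valued direction $-\nu$ from $\log(x_i)/t_i$ only records the leading asymptotic scale, and the remainder can encode a strictly finer subdivision than the one $\nu$ induces. Your proposed fix --- iterate the extraction into a tower of asymptotic directions --- is in principle workable, but you have not carried it out, and the step you flag (showing the tower collapses to a single regular polymatroid subdivision) is exactly the heart of the matter.

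The paper takes a genuinely different route that sidesteps the tower entirely. It passes to the ultrapower $\R^*=\R^{\N}/\cF$ and views the sequence $(y_i)$ as a single point $y^*\in(\R^*)^J$; the nonstandard valuation $v\colon\R^*\to\Gamma\cup\{\infty\}$ then assigns to each coordinate a value in a \emph{non-Archimedean} ordered group, so that all asymptotic scales are recorded simultaneously by the single function $\rho\colon\alpha\mapsto v(y^*_\alpha)$. The paper shows $\rho$ is M-convex (with values in a real closed field $R\supseteq\Gamma$), and Tarski's principle guarantees the regular subdivision it induces over $R$ is already realizable by a real-valued M-convex function. A separate short lemma --- which your sketch omits --- shows that for each cell $J'$ the intersection $K\cap U_{J'}$ is a single $G$-orbit, and one then checks directly that the cells with $K\cap U_{J'}\neq\emptyset$ are exactly the cells of this subdivision. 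The nonstandard device does precisely what your tower would have to do, but in one step.
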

\begin{proof}[Proof of \Cref{thm:bondarypoints}(1)]
    Let $y\in X$, and let $(y_i)_{i\in\N}$ be a sequence in $U$ converging to $y$. Then the sequence $(\overline{Gy_i})_{i\in\N}$ in $X/_HG$ has a subsequence converging to some point $x$. Then $x$ contains $y$ by \Cref{lem:limitdescription}.
\end{proof}
\begin{lemma}
    Let $K\in X/_HG$ and let $J'\subseteq J$ be an M-convex set such that $K\cap U_{J'}\neq\emptyset$. Then $K\cap U_{J'}$ is a $G$-orbit.
\end{lemma}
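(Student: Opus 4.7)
The plan is to realize $K$ as a Hausdorff limit of orbit closures coming from $U$ and then extract, by a coefficient-ratio computation in projective coordinates, a rescaling $t \in G$ that carries $x$ to $x'$. First, by Corollary~\ref{cor:stable} the set $U$ is stable, hence semi-stable, so $X/_H G = \overline{e(U)}$; in particular every $K \in X/_H G$ is a Hausdorff limit $\overline{Gy_i} \to K$ for some sequence $y_i \in U$. Given $x, x' \in K \cap U_{J'}$, I would apply Lemma~\ref{lem:limitdescription} with the dense subsets $Gy_i \subseteq \overline{Gy_i}$ to obtain $h_i, h_i' \in G = \R_{>0}^n$ with $z_i := h_i y_i \to x$ and $z_i' := h_i' y_i \to x'$ in $X \subseteq \P\R_{\geq 0}[x_1,\ldots,x_n]$. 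Setting $t_i := h_i' h_i^{-1} \in G$, the identification $z_i' = t_i \cdot z_i$ holds in the projectivization.

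Next, I would exploit the support hypothesis $\supp(x) = \supp(x') = J'$. For $\alpha \in J'$, we have $(z_i)_\alpha \to x_\alpha > 0$, so $(z_i)_\alpha > 0$ for $i \gg 0$, and a direct calculation of the rescaling action in projective coordinates gives
\[
\frac{(z_i')_\alpha}{(z_i')_\beta} \;=\; t_i^{\alpha - \beta}\cdot \frac{(z_i)_\alpha}{(z_i)_\beta}
\qquad\text{for all } \alpha, \beta \in J' \text{ and } i \gg 0.
\]
Passing to the limit yields $t_i^{\alpha - \beta} \to \lambda_{\alpha, \beta} := (x'_\alpha x_\beta)/(x'_\beta x_\alpha) \in \R_{>0}$ for every pair $\alpha, \beta \in J'$.

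To produce the group element, let $L := \mathrm{span}_\R\{\alpha - \beta \mid \alpha, \beta \in J'\} \subseteq \R^n$. The convergence above says precisely that the class $\log(t_i) + L^\perp$ converges in $\R^n / L^\perp$. I would choose any lift $v \in \R^n$ of this limit and set $t := (e^{v_1}, \ldots, e^{v_n}) \in G$. Then $t^{\alpha - \beta} = \lambda_{\alpha, \beta}$ for all $\alpha, \beta \in J'$, which after a short computation gives $(tx)_\alpha / (tx)_\beta = x'_\alpha / x'_\beta$. Since both $tx$ and $x'$ have support $J'$ and are normalized to coefficient sum $1$, this forces $tx = x'$. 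Combined with the straightforward observation that $K$ is $G$-invariant (as a Hausdorff limit of $G$-invariant sets) and that $G$ preserves the support stratum $U_{J'}$, this yields $K \cap U_{J'} = Gx$, a single $G$-orbit.

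The main subtlety I anticipate is that $G = \R_{>0}^n$ is noncompact, so the sequence $t_i$ itself need not converge; only its class modulo $L^\perp$ does. The subspace $L^\perp$ is morally the infinitesimal stabilizer of any point of $U_{J'}$ under the $G$-action, and the support condition $\supp(x) = \supp(x') = J'$ is precisely what guarantees that knowing $\log t_i$ modulo $L^\perp$ suffices to reconstruct the projective relation $x' = tx$.
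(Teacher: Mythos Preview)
Your proof is correct and follows essentially the same approach as the paper: both realize $K$ as a Hausdorff limit of orbit closures $\overline{Gy_i}$, apply the limit-description lemma, and rest on the convergence of the coefficient ratios $t_i^{\alpha-\beta}$ for $\alpha,\beta\in J'$. The only difference is the endgame: the paper arranges $y_i\to y\in K\cap U_{J'}$, asserts (via the same implicit ratio computation) that $g_iy\to z$ whenever $g_iy_i\to z\in U_{J'}$, and concludes using that $Gy$ is closed in $U_{J'}$, whereas you lift the limit of $\log t_i$ in $\R^n/L^\perp$ to produce an explicit $t\in G$ with $tx=x'$, thereby sidestepping the closedness step.
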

\begin{proof}
    Let $(y_i)_{i\in\N}$ be a sequence in $U$ such that the corresponding sequence of orbit closures converges to $K$ in the Hausdorff metric. By \Cref{lem:limitdescription}, we can assume without loss of generality that $(y_i)_{i\in\N}$ converges to a point $y\in K\cap U_{J'}$. We will prove that $K\cap U_{J'}=Gy$. If $g\in G$, then $(gy_i)_{i\in\N}$ converges to $gy$. Thus, by \Cref{lem:limitdescription}, we see that $Gy\subseteq K\cap U_{J'}$. Conversely, let $z\in K\cap U_{J'}$. By \Cref{lem:limitdescription}, there is a sequence $(g_i)_{i\in\N}$ in $G$ such that $(g_iy_i)_{i\in\N}$ converges to $z$. Then the sequence $(g_iy)_{i\in\N}$ in $Gy$ also converges to $z$. Since $Gy$ is a closed subset of $U_{J'}$, it follows that $z\in Gy$.
\end{proof}
In light of \Cref{rem:orbitclosures} and the preceding lemma, in order to prove part (2) of \Cref{thm:bondarypoints}, it remains to show that for all $K\in X/_HG$, the set of $J'\subseteq J$ such that $K\cap U_{J'}\neq\emptyset$ corresponds exactly to the cells in a regular polymatroid subdivision of $P$. To this end, we will associate to a sequence $(y_i)_{i\in\N}$ in $U$ an M-convex function on $J$ with values in a certain real closed field $R$. M-convex functions with values in a real closed field $R$ other than $\R$ are defined in the same way as over $\R$, and it follows either from Tarski's principle, which says that a first-order sentence in the language of ordered fields holds in a given real closed field if and only if it holds in $\R$ \cite{Prestel84}*{Section 5}, or in the same way as over $\R$ that parts (1) and (2) of \Cref{rem:polymatroidsubdivisions} remain valid in this more general setup. Similarly, Tarski's principle implies that every  regular polymatroid subdivision of $P$ that is induced by an M-convex function with values in $R$ can also be realized by an M-convex function with values in $\R$. In the following, we will introduce the real closed field that we will be using.

\subsubsection{Interlude on the \texorpdfstring{$\aleph_1$}{aleph1}-saturation}\label{sssec:aleph1}
We first recall some basic properties of the so-called $\aleph_1$-saturation $\R^*$ of $\R$, see \cite{presteldelzell}*{§2.2}. Let $\cF$ be a non-principal ultrafilter on $\N$, i.e., an ultrafilter containing the filter of cofinite subsets of $\N$, and let $\R^*$ be the ultrapower $\R^{\N}/\cF$ of $\R$ with respect to $\cF$. This is the set of equivalence classes of sequences in $\R$, where two sequences $(x_i)_{i\in\N}$ and $(y_i)_{i\in\N}$ are defined to be equivalent if the set of indices $i\in\N$ for which $x_i=y_i$ is in $\cF$.
By \cite{presteldelzell}*{Theorem 2.2.7 and 2.2.8}, the ultrapower $\R^*$ is a real closed field. Here sum and product are defined as the equivalence class of the component-wise sum and product of two representing sequences, respectively. 
Moreover, one can consider $\R$ as a subfield of $\R^*$ via the map $\R\to\R^*$ that sends $a$ to the class of the constant sequence $(a)_{i\in\N}$. The total order on $\R^*$ is given by 
\begin{equation}\label{eq:saturationorder}
[(x_i)_{\in\N}]\geq 0 \Longleftrightarrow \{i\in\N\mid x_i\geq0\}\in\cF.
\end{equation}

Next recall, for example from \cite{ERA}, that the \emph{convex hull} $\fo$ of $\R$ in $\R^*$ is the set of all elements $x\in \R^*$ such that there are $a,b\in\R$ with $a\leq x\leq b$. It is clear that $\fo$ is a valuation ring of $\R^*$ which contains $\R$. The maximal ideal $\fm$ of $\fo$ consists of all $x\in\R^*$ such that for all $\epsilon\in\R$ with $\epsilon>0$, we have $|x|<\epsilon$. Finally, we denote by $\kappa=\fo/\fm$ the corresponding residue field, and by $\pi\colon\fo\to\kappa$ the natural homomorphism.

\begin{lemma}\label{lem:res}
 The restriction of $\pi$ to $\R$ is an isomorphism  onto $\kappa$.
\end{lemma}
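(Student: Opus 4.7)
The plan is to verify the two parts of being an isomorphism: injectivity of $\pi|_\R$ and surjectivity of $\pi|_\R$. Both rely only on the definitions of $\fo$ and $\fm$ together with the order-completeness of $\R$.

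For injectivity, I would observe that $\pi|_\R$ is a ring homomorphism from the field $\R$, so it suffices to check that its kernel $\R\cap\fm$ is trivial. If $a\in\R$ is nonzero, then $|a|>0$ is a positive real, and by the very definition of $\fm$ (elements whose absolute value is smaller than every positive real) we cannot have $|a|<|a|$, so $a\notin\fm$. Hence $\R\cap\fm=\{0\}$.

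For surjectivity, let $x\in\fo$ be arbitrary. By definition of $\fo$, there exist $a_0,b_0\in\R$ with $a_0\leq x\leq b_0$, so the set
\[
A \ = \ \{a\in\R \mid a\leq x\}
\]
is nonempty and bounded above in $\R$. Set $a=\sup A\in\R$, using the order-completeness of $\R$. I claim $x-a\in\fm$, which will give $\pi(x)=\pi(a)=\pi|_\R(a)$ as required. Suppose not; then there exists $\varepsilon\in\R_{>0}$ with $|x-a|\geq\varepsilon$. In the case $x-a\geq\varepsilon$, we would have $a+\varepsilon\leq x$, so $a+\varepsilon\in A$, contradicting that $a$ is an upper bound of $A$. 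In the case $x-a\leq-\varepsilon$, every element of $A$ is bounded above by $x\leq a-\varepsilon$, so $a-\varepsilon$ would be an upper bound of $A$ strictly smaller than the supremum $a$, again a contradiction.

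There is no real obstacle here: the only delicate point is that passing from $|x-a|\geq\varepsilon$ to one of the two cases requires knowing that $\R^*$ is totally ordered (which it is, as a real closed field), and that the inequalities involved can be compared to the real number $\varepsilon$ inside the ordered field $\R^*$. Both follow immediately from the ultrapower construction recalled in \Cref{sssec:aleph1}, in particular from \Cref{eq:saturationorder}.
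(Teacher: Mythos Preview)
Your proof is correct. The injectivity step is immediate, and the surjectivity step is the standard argument that order-completeness of $\R$ forces every finite element of $\R^*$ to be infinitesimally close to a real number; your case analysis using the supremum is clean and complete.

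The paper's own proof simply cites a general result from the literature (namely \cite{ERA}*{Proposition 2.5.3}) together with the fact that $\R$ has no free Dedekind cut. Your argument is essentially the specialization of that general result to $\R$: the absence of free Dedekind cuts is precisely the existence of suprema for bounded subsets, which is exactly what you invoke. So the two approaches are the same idea at different levels of abstraction---the paper defers to a reference, while you give a self-contained elementary proof. Your version has the advantage of not requiring the reader to chase citations; the paper's has the advantage of signaling that this is an instance of a known general phenomenon about ordered field extensions.
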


\begin{proof}
 This follows from \cite{ERA}*{Proposition 2.5.3} and the fact that $\R$ has no free Dedekind cut, see \cite{ERA}*{§2.9}.
\end{proof}

In light of \Cref{lem:res}, we may identify $\kappa$ with $\R$. The \emph{value group} of $\R^*$ is the abelian group $\Gamma=(\R^*)^{\times}/\fo^{\times}$. The relation $a\leq b$ defined by $a^{-1}b\in \fo$ for $a,b\in\Gamma$ makes $\Gamma$ a totally ordered group. Since $\R^*$ is real closed, $\Gamma$ is divisible. We will use additive notation for the group $\Gamma$. 

By Hahn's embedding theorem \cite{hahn}, we can embed $\Gamma$ into the additive group  of a real closed field $R$, namely the field of Hahn series over $\R$, whose value group is the divisible closure of the group of Archimedean equivalence classes of $\Gamma$. Because $\Gamma$ is divisible, it is a $\Q$-linear subspace of $R$. For $f\in (\R^*)^\times$, we let $v(f)$ denote the residue class of $f$ in $\Gamma\subseteq R$. We further define $v(0)=\infty$. The map $v\colon \R^*\to R\cup\{\infty\}$ is a valuation on $\R^*$ that is compatible with the ordering of $\R^*$, meaning that $v(a)>v(b)$ for positive $a,b\in\R^*$ implies $a<b$.
\begin{rem}\label{rem:limits}
Let $(x_i)_{i\in\N}$ be a sequence in $\R$ and let $x^*=[(x_i)_{i\in\N}]\in\R^*$ be its corresponding class. If $(x_i)_{i\in\N}$ is convergent, then $\pi(x^*)=\lim_{i\to\infty}(x_i)_{i\in\N}$. Conversely, if $x^*\in\fo$, then $(x_i)_{i\in\N}$ has a subsequence converging to $\pi(x^*)$. Furthermore, we have $v(x^*)>0$ if and only if $\pi(x^*)=0$.
If $x\not\in\fo$, then $(x_i)_{i\in\N}$ is unbounded.
\end{rem}

In the proof of \Cref{thm:bondarypoints} we will need the following two lemmas.

\begin{lemma}\label{lemma:tqimpliesmconvex}
    Let $J\subseteq\Delta^d_n$ be an  M-convex set.
    Let $(y_m)_{m\in\N}$ be a sequence in $\upR_J(\T_q)$, for some $q>0$, and let $y^*=[(y_m)_{m\in\N}]$ be its equivalence class in $(\R^*)^J$. The map \begin{equation*}\rho\colon J\longrightarrow R,\qquad \alpha\longmapsto v(y^*(\alpha))\end{equation*} is M-convex. 
\end{lemma}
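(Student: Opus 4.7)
The plan is to push the $\T_q$-Plücker relations for the $y_m$ through the ultrapower construction and then through the valuation $v$, obtaining the $3$-term $\T_0$-Plücker relations for $\rho$; the desired M-convexity will then follow from the standard correspondence of \Cref{lemma:mconvexist0}.

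First, since each $y_m$ lies in $\upR_J(\T_q)$, it satisfies the $3$-term Plücker relations over $\T_q$. By \Cref{lemma: definition of triangular hyperfield}, this amounts to a triangle inequality on the three products $B_1^{(m)}, B_2^{(m)}, B_3^{(m)}$ of $y_m$-values appearing in each such relation, namely $(B_r^{(m)})^{1/q} \leq (B_s^{(m)})^{1/q} + (B_t^{(m)})^{1/q}$ for each cyclic choice of $\{r,s,t\} = \{1,2,3\}$. Because the $(1/q)$-th root is a semi-algebraic function on $\R_{>0}$, these inequalities are first-order in the ordered field $\R$, and hence by the ultrapower construction of $\R^*$ they pass to the corresponding elements $B_r^* = [(B_r^{(m)})_m] \in \R^*$ built from $y^* = [(y_m)_m]$.

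Next, I apply the valuation $v \colon \R^* \to R \cup \{\infty\}$ of \Cref{sssec:aleph1}. Two properties are needed. Since $v$ is an ultrametric compatible with the order, $0 < a \leq b + c$ in $\R^*$ forces $v(a) \geq \min(v(b), v(c))$. And since extracting a $q$-th root preserves the order on $\R^*_{>0}$ (the ultralimit of the monotonic function $x \mapsto x^{1/q}$ remains monotonic), it preserves the induced order on valuations as well. Combining these two observations with the transferred triangle inequality above yields $v(B_r^*) \geq \min_{s \neq r} v(B_s^*)$ for each $r$; equivalently, the minimum of $v(B_1^*), v(B_2^*), v(B_3^*)$ is attained at least twice. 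Because $v$ is additive on products, $v(B_r^*) = \rho(\gamma_r) + \rho(\delta_r)$ for the appropriate $\gamma_r, \delta_r \in J$, and these inequalities are precisely the $3$-term Plücker relations over $\T_0$ for $\rho$.

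Finally, each $y_m(\alpha)$ is strictly positive in $\R$, so $y^*(\alpha) > 0$ in $\R^*$ and $\rho(\alpha) = v(y^*(\alpha)) \in \Gamma \subseteq R$ is finite for every $\alpha \in J$; thus the support of $\rho$ is all of $J$, which is M-convex by hypothesis. The proof of \Cref{lemma:mconvexist0} (the equivalence of $3$-term $\T_0$-relations with M-convexity) is purely order-theoretic and carries over verbatim with $\R$ replaced by any ordered field such as $R$; alternatively one invokes Tarski's transfer principle for $R$. We conclude that $\rho \colon J \to R$ is M-convex. The main technical point is the handling of $q$-th roots on the ultrapower---especially when $q$ is irrational, in which case $x^{1/q}$ must be defined via the ultralimit of the real-valued $q$-th root function---but once this is in place the entire proof is essentially a transfer argument followed by taking valuations.
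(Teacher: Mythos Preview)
Your proof is correct and follows essentially the same route as the paper: transfer the $\T_q$ triangle inequalities to $\R^*$ via the ultrapower order, apply the order-compatible valuation $v$ to obtain the tropical three-term condition, and invoke Tarski's principle to conclude M-convexity over the real closed field $R$. The paper phrases the last step via Murota's three-term characterization of M-convex functions rather than via \Cref{lemma:mconvexist0}, but this is purely cosmetic; your concern about $q$-th roots for irrational $q$ is handled (in both arguments) by working with the coordinatewise ultralimit of the real function $x\mapsto x^{1/q}$.
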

\begin{proof}
    In order to prove that $\rho$ is M-convex, we will use a characterization of M-convex functions due to Murota \cite{Murota03}*{Theorem 6.4}: A function $f\colon J\to\R$ is M-convex if and only if for all $\alpha\in\Delta^{d-2}_n$ and all $i,j,k,l\in[n]$ such that $\{i,k\}\cap\{j,l\}=\emptyset$,
    \begin{multline}\label{eq: Murota's 3-term relations}
        f(\alpha+e_i+e_k) + f(\alpha+e_j+e_l) \\ \geq \ \min\big\{ f(\alpha+e_i+e_j)+ f(\alpha+e_k+e_l), \ f(\alpha+e_i+e_l)+ f(\alpha+e_j+e_k) \big\}.
    \end{multline}
    By Tarski's principle the same characterization applies to functions with values in the real closed field $R$.

    Now consider a sequence $(y_m)_{m\in\N}$ in $\upR_J(\T_q)$ for some $q>0$. Let $\alpha\in\Delta^{d-2}_n$ and $i,j,k,l\in[n]$ such that $\{i,k\}\cap\{j,l\}=\emptyset$. Then for all $m\in\N$ we have
    \begin{multline*}
        y_m(\alpha+e_i+e_k)^{1/q} \cdot y_m(\alpha+e_j+e_l)^{1/q} \\ \leq \  y_m(\alpha+e_i+e_j)^{1/q}\cdot y_m(\alpha+e_k+e_l)^{1/q}+ \ y_m(\alpha+e_i+e_l)^{1/q}\cdot y_m(\alpha+e_j+e_k)^{1/q} .
    \end{multline*}
    By the definition of the order on $\R^*$, see \Cref{eq:saturationorder}, this implies that 
    \begin{multline*}
        y^*(\alpha+e_i+e_k)^{1/q} \cdot y^*(\alpha+e_j+e_l)^{1/q} \\ \leq \  y^*(\alpha+e_i+e_j)^{1/q}\cdot y^*(\alpha+e_k+e_l)^{1/q}+ \ y^*(\alpha+e_i+e_l)^{1/q}\cdot y^*(\alpha+e_j+e_k)^{1/q} .
    \end{multline*}
    Now taking the valuation of both sides implies that $\rho$ satisfies \Cref{eq: Murota's 3-term relations} because the valuation $v$ on $\R^*$ is compatible with the order on $\R^*$.
\end{proof}
\begin{lemma}\label{lemma:lorentzianimpliesmconvex}
    Let $J\subseteq\Delta^d_n$ be an  M-convex set.
    Let $(g_m)_{m\in\N}$ be a sequence in $\upL_J$ and consider the equivalence class $y^*=[(y_m)_{m\in\N}]\in(\R^*)^J$ where $y_m=\rho_{g_m}\in\R^J$. The map \begin{equation*}\rho\colon J\longrightarrow R,\qquad \alpha\longmapsto v(y^*(\alpha))\end{equation*} is M-convex. 
\end{lemma}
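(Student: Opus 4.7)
The plan is to mirror the proof of \Cref{lemma:tqimpliesmconvex} as closely as possible. By Murota's 3-term characterization of M-convex functions \cite{Murota03}*{Theorem 6.4}, which transfers to the real closed field $R$ via Tarski's principle, it suffices to verify, for each $\alpha\in\Delta^{d-2}_n$ and indices $i,j,k,l\in[n]$ with $\{i,k\}\cap\{j,l\}=\emptyset$, the estimate $P_2 \leq 2(P_1+P_3)$ in $\R^*$, where
\[
P_1 \coloneq y^*(A_{ij})y^*(A_{kl}),\qquad P_2 \coloneq y^*(A_{ik})y^*(A_{jl}),\qquad P_3 \coloneq y^*(A_{il})y^*(A_{jk})
\]
with $A_{rs}=\alpha+e_r+e_s$. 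Applying the valuation $v$, and using that $v$ is compatible with the order on $\R^*$ and that $v(P_1+P_3)=\min(v(P_1),v(P_3))$, this yields exactly Murota's 3-term inequality for $\rho=v(y^*)$. By construction of $y^*$ as an ultrapower, the inequality in $\R^*$ reduces to proving the uniform estimate
\[
\rho_g(A_{ik})\,\rho_g(A_{jl})\;\leq\;2\bigl(\rho_g(A_{ij})\,\rho_g(A_{kl})+\rho_g(A_{il})\,\rho_g(A_{jk})\bigr)
\]
for every Lorentzian polynomial $g\in\upL_J$.

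For this uniform estimate, I would first reduce to the degree-$2$ case as in the proof of \Cref{lemma: petterlem}: by \cite{Branden-Huh20}*{Theorem 2.10 and Corollary 3.5}, the polynomial obtained from $\partial^\alpha g$ by setting to zero all variables outside $\{x_i,x_j,x_k,x_l\}$ is a Lorentzian polynomial of degree $2$, whose Hessian $H$ has entries $H_{rs}=\rho_g(A_{rs})$ and at most one positive eigenvalue. The main obstacle is extracting the desired quadratic estimate from this spectral condition, and my plan is to split into two cases depending on whether both of $P_1,P_3$ are positive or at least one vanishes.

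In the generic case $P_1,P_3>0$, so that all four of $H_{ij},H_{il},H_{kj},H_{kl}$ are strictly positive, I would apply the Hodge--Riemann inequality $(u^T H v)^2 \geq (u^T H u)(v^T H v)$ to test vectors $u=\alpha_1 e_i+\alpha_2 e_k$ and $v=\beta_1 e_j+\beta_2 e_l$ with positive parameters, combined with the lower bounds $u^T H u\geq 2\alpha_1\alpha_2 H_{ik}$ and $v^T H v\geq 2\beta_1\beta_2 H_{jl}$ from the nonnegativity of $H$. Grouping the four summands of $u^T H v$ as $(\alpha_1\beta_1 H_{ij}+\alpha_2\beta_2 H_{kl})+(\alpha_1\beta_2 H_{il}+\alpha_2\beta_1 H_{kj})$, applying AM--GM to each pair, and optimizing over the ratios $\alpha_1/\alpha_2$ and $\beta_1/\beta_2$ (where both AM--GM equalities can be made to hold simultaneously), should yield the sharp triangle inequality $(\sqrt{P_1}+\sqrt{P_3})^2\geq P_2$, whence $P_2\leq 2(P_1+P_3)$. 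In the degenerate case $P_1=0$ (the case $P_3=0$ being symmetric), I would instead invoke \Cref{lemma: petterlem}, which asserts $\rho_g\in\upR^{\rm w}_J(\T_\infty)$: by the description of $N_{\T_\infty}$ in \Cref{lemma:zeroinfdesc}, vanishing of $P_1$ forces $P_2=P_3$, whence $P_2\leq P_1+P_3$. Finally, the non-distinct index cases $i=k$ or $j=l$ permitted by Murota's condition force $P_1=P_3$ by inspection and $P_1>0$ whenever $P_2>0$ by M-convexity of $J$, so they reduce to a one-variable Hodge--Riemann application with $u=e_i$ and $v=H_{il}e_j+H_{ij}e_l$, which gives $H_{ii}H_{jl}\leq 2H_{ij}H_{il}$ as needed.
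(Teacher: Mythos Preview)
Your argument is correct and self-contained, but it takes a genuinely different route from the paper. The paper's own proof is a one-line citation: it observes that the same argument as in \cite{Branden-Huh20}*{Theorem 3.20} (stated there over Puiseux series) goes through verbatim over the real closed valued field $\R^*$; as an alternative, it invokes the inclusion $\upL_J\subseteq\upR_J(\T_2)$ announced for the sequel \cite{BHKL2} and then appeals to \Cref{lemma:tqimpliesmconvex}.

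Your direct Hodge--Riemann computation is more elementary and, interestingly, essentially \emph{proves} the weak $\T_2$ inclusion that the paper defers to \cite{BHKL2}: in the generic case you obtain the sharp triangle inequality $\sqrt{P_2}\le\sqrt{P_1}+\sqrt{P_3}$, which is precisely the $3$-term Pl\"ucker relation over $\T_2$, and your degenerate case via \Cref{lemma: petterlem} handles the remaining instances. The paper's approach buys brevity and avoids reproving known inequalities; yours buys independence from external references and makes explicit why the constant in $P_2\le 2(P_1+P_3)$ is harmless (since $v(2)=0$). A minor point: in the non-distinct index case your choice $u=e_i$, $v=H_{il}e_j+H_{ij}e_l$ is tailored to $i=k$; for $j=l$ one should use the symmetric choice $u=e_j$, $v=H_{kj}e_i+H_{ij}e_k$, and the M-convexity step ensuring $P_1>0$ whenever $P_2>0$ should be spelled out (it follows from the exchange axiom applied to $\alpha+2e_i$ and $\alpha+e_j+e_l$).
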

\begin{proof}
    The corresponding statement was shown in \cite{Branden-Huh20}*{Theorem 3.20} for the valued real closed field of real Puiseux series and the same proof applies to $\R^*$.

    Alternatively, the statement of the lemma follows from the previous lemma and the fact that $g_m\in\upL_J$ implies $y_m\in\upR_J(\T_2)$, for all $m\in\N$, which we will show in \cite{BHKL2}.
\end{proof}
The role that the field $\R^*$ plays in the proof of \Cref{thm:bondarypoints} can be roughly explained as follows. For $K\in X/_HG$ every point $x\in K$ is the limit of a sequence in $U$ by \Cref{lem:limitdescription}. Such a sequence can be interpreted as a point with coordinates in $\R^*$. Its valuation is M-convex by the two preceding lemmas and thus defines a regular polymatroid subdivision. 

\subsubsection{Proof of Theorem \ref{thm:bondarypoints}}
For part (2) of \Cref{thm:bondarypoints}, it remains to show that for $K\in X/_HG$, those $J'\subseteq J$ with $K\cap U_{J'}\neq\emptyset$ correspond exactly to the cells in a certain regular polymatroid subdivision of $P$.
We consider a sequence in $U$ such that the corresponding sequence of orbit closures converges to $K$ in the Hausdorff metric. Such a sequence is represented by a sequence $(y_i)_{i\in\N}$ in $\R^J$. 
Letting $y^*=[(y_i)_{i\in\N}]\in(\R^*)^J$ be its equivalence class, Lemmas \ref{lemma:tqimpliesmconvex} and \ref{lemma:lorentzianimpliesmconvex} imply that the map $\rho\colon J\to R,\, \alpha\mapsto v(y^*_\alpha)$ is M-convex. 
Let $\mathscr{P}$ be the polymatroid subdivision of $J$ induced by $\rho$.

First, we let $J'\subseteq J$ be such that there exists $z\in K\cap U_{J'}$. We will show that $J'$ defines a cell of the subdivision $\mathscr{P}$. By \Cref{lem:limitdescription}, there is a sequence $(g_i)_{i\in\N}$ in $G$ such that $(z_i)_{i\in\N}\coloneq(g_iy_i)_{i\in\N}$ converges to $z$. Let $z^*\in(\R^*)^J$ be the equivalence class of $(z_i)_{i\in\N}$. As above, the map $\rho'\colon J\to R,\, \alpha\mapsto v(z^*_\alpha)$ is M-convex and $J'$ is the set of $\alpha\in J$ where $\rho'$ attains its minimum.
If $g^*$ is the equivalence class of $(g_i)_{i\in\N}$ in $(\R^*_{>0})^n$, then for all $\alpha\in J$, we have
\begin{equation*}
 \rho'(\alpha)=\rho(\alpha)+\sum_{j=1}^nv(g^*_j)\cdot\alpha_j.
\end{equation*}
In particular, the M-convex functions $\rho$ and $\rho'$ only differ by a linear function, which shows that they induce the same subdivision of $J$. Thus, the cell defined by $J'$ is in $\mathscr{P}$.

Conversely, let $J'\subseteq J$ be the lattice points of a cell in $\mathscr{P}$. This means that there is a linear function $l\colon R^n\to R$ such that $\rho'\coloneq\rho+l|_J$ attains its minimum exactly at $J'$. We can further assume that this minimum is zero. Because $\rho$ takes its values in the $\Q$-linear subspace $\Gamma$ of $R$, we can choose $l$ to take its values on $J$ in $\Gamma$ as well. For $j\in[n]$, choose $t^*_j\in\R^*_{>0}$ with $v(t^*_j)=l(e_j)$, and let $(g_i)_{i\in\N}$ be a sequence in $G$ representing the tuple $(t^*_1,\ldots,t^*_n)\in(\R_{>0}^*)^n$. Let $(z_i)_{i\in\N}\coloneq(g_iy_i)_{i\in\N}$, and let $z^*$ be the element of $(\R^*)^J$ it represents. By construction, we have $v(z^*_\alpha)\geq0$ for all $\alpha\in J$ and $v(z^*_\alpha)=0$ if and only if $\alpha\in J'$. This means that $z^*_\alpha\in\fo$ for all $\alpha\in J$ and $\pi(z^*_\alpha)\neq0$ if and only if $\alpha\in J'$. By \Cref{rem:limits}, there is a subsequence of $(g_iy_i)_{i\in\N}$ which converges to an element with support $J'$. Such element lies in $x\cap U_{J'}$ by \Cref{lem:limitdescription}. Thus we have proven part (2) of \Cref{thm:bondarypoints}.

For part (3), let $\mathscr{P}$ be a regular polymatroid subdivision, and let $\rho\colon J\to\R$ be an M-convex function that induces $\mathscr{P}$. Let $(t_i)_{i\in\N}$ be a nullsequence with $t_i>0$ for all $i\in\N$, and define the sequence $(y_i)_{i\in\N}$ in $\Gr_J(\T_0)$ via
\begin{equation*}
 (y_i)_\alpha=t_i^{\rho(\alpha)}.
\end{equation*}
Since $X/_HG$ is compact, after passing to a subsequence if necessary, we can assume that $(\overline{Gy_i})_{i\in\N}$ converges. We claim that the limit point is in $Y(\mathscr{P})$. Indeed, letting $y^*=[(y_i)_{i\in\N}]\in(\R^*)^J$ and $t^*=[(t_i)_{i\in\N}]\in\R^*$, it follows that
\begin{equation*}
 v(y^*_\alpha)=\rho(\alpha)\cdot v(t^*)
\end{equation*}
for all $\alpha\in J$, concluding the proof of \Cref{thm:bondarypoints}.\qed

\subsubsection{Examples}\label{ssec:examplescompactification}
In some examples, we can show that $X/_HG$ is homeomorphic to a closed Euclidean ball.
\begin{ex}[Rigid polymatroids]
    If $J$ is rigid, i.e., the space $\ulineGr_J(\T_0)$ is a singleton, then $U/G$ is homeomorphic to a closed Euclidean ball whose dimension is the rank of the multiplicative group of the foundation of $J$, see \Cref{thm:ATqreduced} and \Cref{thm:Alorreduced}. Moreover, since $U/G$ is compact, we have $X/_HG=U/G$ by \Cref{cor:compactification}. 
    
    Examples of rigid matroids are binary matroids and projective geometries over finite fields, for which $X/_HG$ is a point, and the Betsy Ross matroid, for which $X/_HG$ is homeomorphic to a closed interval.
\end{ex}

The next simplest case is when $\log\ulineGr_J(\T_0)$ is one-dimensional and consists of a finite number $r$ of rays. In this case, by \Cref{thm:ATqreduced} and \Cref{thm:Alorreduced}, $U/G$ is homeomorphic to a closed Euclidean ball with $r$ points removed from its boundary --- one for each ray of $\log\ulineGr_J(\T_0)$. These rays correspond to pairwise different regular polymatroid subdivisions. If we further assume that every proper initial matroid of $M$ has a finite foundation, then \Cref{thm:bondarypoints} implies that $X/_HG$ is obtained from $U/G$ by adding $r$ points. Hence $X/_HG$ is homeomorphic to a closed Euclidean ball.

\begin{ex}\label{ex:hausdorffu24}
    Every proper initial matroid of $U_{2,4}$ is binary. Thus $X/_HG$ is homeomorphic to a two dimensional closed disc.
\end{ex}

\begin{ex}
  For the non-Fano matroid $J=F_7^-$, the space $\log\ulineGr_J(\T_0)$ consists of exactly one ray. The two maximal cells of the corresponding regular matroid subdivision are the base polytopes of the Fano matroid and another matroid which is graphic by \cite{Ferroni}*{Theorem 5.4}. In particular, both are binary. The space $X/_HG$ is homeomorphic to a closed interval, with one endpoint corresponding to the non-trivial matroid subdivision.
\end{ex}

\begin{ex}
  Let $\cT_{11}$ be the matroid from \Cref{ex:T11counterexample}. We have already seen that $\log\ulineGr_J(\T_0)$ consists of five rays. A computer calculation shows that every initial matroid of $\cT_{11}$ is binary. Therefore, the space $X/_HG$ is homeomorphic to a four-dimensional closed ball.
\end{ex}

\subsubsection{The Grothendieck--Knudson moduli space of stable rational curves}\label{ssec:stablecurves}
Recall that the Chow quotient of ${\Gr}(2,n)(\C)$ is isomorphic to the Grothendieck--Knudson moduli space $\overline{\cM}_{0,n}$ of stable rational curves with $n$ marked points \cite{chowquotientsI}*{Chapter IV}. By \Cref{thm:chowhausdorffmap}, there is a natural continuous map
\begin{equation*}
\overline{\cM}_{0,n}\longrightarrow\HC({\ulineL}(2,n)_\sqfree),
\end{equation*}
where $\HC({\ulineL}(2,n)_\sqfree)=\HC({\ulineL}_{U_{2,n}})$.
We discuss this map for $n=4$ and $n=5$. (These are the first interesting cases, as for $n<4$ source and target are a point.) We further observe that, by construction, the map is constant on orbits of the action of complex conjugation on $\overline{\cM}_{0,n}$.

The space $\overline{\cM}_{0,4}$ is the complex projective line, and by \Cref{ex:hausdorffu24} the space $\HC({\ulineL}(2,4))$ is homeomorphic to a closed disc. 
\begin{lemma}\label{lemma:stablecurvesmapisquotient1}
The map $\overline{\cM}_{0,4}\to\HC({\ulineL}(2,4)_\sqfree)$ is the quotient by the action of complex conjugation.
\end{lemma}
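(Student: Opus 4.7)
The plan is to exhibit $\overline{\cM}_{0,4} \to \HC({\ulineL}(2,4)_\sqfree)$ as a continuous surjection whose fibers are exactly the orbits of complex conjugation. Because the map $\Gr(2,4)(\C)\to\P\upL(2,4)_\sqfree$ from \Cref{eq:mapfromgrassmannian} is built from $z\mapsto|z|^2$, which is invariant under complex conjugation, the induced map factors through $Q\coloneqq\overline{\cM}_{0,4}/\mathrm{conj}$. Since $\overline{\cM}_{0,4}\cong\mathbb{CP}^1$ carries conjugation as an orientation-reversing involution with fixed locus $\mathbb{RP}^1$, the quotient $Q$ is a closed $2$-disc; by \Cref{ex:hausdorffu24}, $\HC({\ulineL}(2,4)_\sqfree)$ is also a closed $2$-disc. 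Since a continuous bijection of compact Hausdorff spaces is a homeomorphism, and continuity of the induced map $\bar f\colon Q\to \HC({\ulineL}(2,4)_\sqfree)$ is automatic from \Cref{thm:chowhausdorffmap}, it suffices to prove $\bar f$ is a bijection.

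For surjectivity, I would decompose $\HC({\ulineL}(2,4)_\sqfree)=\ulineL_{U_{2,4}}\sqcup\{p_1,p_2,p_3\}$, where $p_1,p_2,p_3$ are the three boundary points added to $\ulineL_{U_{2,4}}$ by the Hausdorff compactification (corresponding to the three regular matroid subdivisions of the octahedron $\BP_{U_{2,4}}$, each into two square pyramids). Points of the interior $\cM_{0,4}\subset\overline{\cM}_{0,4}$ parametrize, under the Chow isomorphism, torus orbits in $\ulineGr_{U_{2,4}}(\C)$; by \Cref{lemma:imageofsquaremap}(2) the induced map $\ulineGr_{U_{2,4}}(\C)\to\ulineL_{U_{2,4}}$ is surjective. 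The three boundary points of $\overline{\cM}_{0,4}$, corresponding to the three pairings of $\{1,2,3,4\}$, each parametrize a reducible stable curve whose Chow cycle is a union of two torus-orbit closures indexed precisely by the two square pyramids of the corresponding matroid subdivision; by \Cref{thm:bondarypoints}(3) this cycle maps to the unique element of $\HC({\ulineL}(2,4)_\sqfree)$ supported on that subdivision, namely $p_i$.

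For injectivity, I would use \Cref{lemma:orbitfibers} applied to $M=U_{2,4}$ to conclude that the fibers of $\ulineGr_{U_{2,4}}(\C)\to\ulineL_{U_{2,4}}$ over points in the image are exactly the orbits of complex conjugation; this handles the interior of $Q$. For the boundary circle of $Q$ (the image of $\mathbb{RP}^1$), \Cref{thm:maptoboundary}(3) gives a homeomorphism $\ulineGr_{U_{2,4}}(\R)\to\partial\ulineL_{U_{2,4}}$, and this circle already maps injectively after passing to the conjugation quotient. Finally the three boundary points of $\overline{\cM}_{0,4}$ are real, hence map to $Q$ as three distinct points, and they go to the three distinct $p_i$ by the identification in the preceding paragraph; moreover they cannot collide with interior-of-$Q$ images because the latter all lie in $\ulineL_{U_{2,4}}$.

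The main obstacle will be the last surjectivity/injectivity step for the three corner points: namely, making precise the claim that under the Kapranov/Chow identification $\overline{\cM}_{0,4}\cong\Gr(2,4)(\C)/\!\!/(\C^\times)^4$, the three boundary divisors correspond exactly to the three regular matroid subdivisions of $\BP_{U_{2,4}}$, and that the image under $\bar f$ of each such cycle is the corresponding $p_i$ in the sense of \Cref{thm:bondarypoints}. This amounts to matching the toric-degeneration description of the Chow compactification with the Hausdorff-limit description of $\HC({\ulineL}(2,4)_\sqfree)$, which can be done by tracking the Hausdorff limit of explicit one-parameter families of $(\C^\times)^4$-orbit closures degenerating to each pair of pyramids.
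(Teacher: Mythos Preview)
Your proposal is correct and follows essentially the same route as the paper: reduce to showing the map is a continuous surjection with fibers exactly the conjugation orbits, use \Cref{lemma:imageofsquaremap}(2) for surjectivity on the open stratum $\cM_{0,4}\to\ulineL_{U_{2,4}}$, use \Cref{lemma:orbitfibers} for the fiber description there, and then handle the three boundary points separately.

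A couple of minor remarks. Your paragraph on the boundary circle $\mathbb{RP}^1$ via \Cref{thm:maptoboundary}(3) is redundant: the real points of $\cM_{0,4}$ are already covered by \Cref{lemma:orbitfibers}, since a real orbit is a singleton conjugation orbit. The paper simply omits this. Second, the ``main obstacle'' you flag is lighter than you suggest: the paper dispatches it in one sentence by noting that $\overline{\cM}_{0,4}$ has exactly three extra points (all real) and $\HC({\ulineL}(2,4)_\sqfree)$ has exactly three extra points, and the map sends the former bijectively to the latter. That the three Kapranov boundary points correspond to the three matroid subdivisions of the octahedron is standard, and since each proper initial matroid of $U_{2,4}$ is binary (\Cref{ex:hausdorffu24}), each $Y(\mathscr{P})$ is a single point, so no further tracking of Hausdorff limits is needed.
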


\begin{proof}
    Since the map is continuous and both source and target are compact Hausdorff spaces, it suffices to show that the map is surjective and its fibers are exactly the orbits under complex conjugation. By \Cref{lemma:imageofsquaremap}, the map ${\cM}_{0,4}\to{\ulineL}_{U_{2,4}}$ is surjective. This establishes the surjectivity of our map, because $\overline{\cM}_{0,4}$ and $\HC({\ulineL}(2,4)_\sqfree)$ are compactifications of ${\cM}_{0,4}$ and ${\ulineL}_{U_{2,4}}$. Fibers of points in ${\ulineL}_{U_{2,4}}$ are orbits by \Cref{lemma:orbitfibers}. The compactification $\overline{\cM}_{0,4}$ has three additional points, all of them are real, and these are mapped injectively to the three additional points in the compactification of ${\ulineL}_{U_{2,4}}$.
\end{proof}
The space $\overline{\cM}_{0,5}$ is the complex projective plane blown-up at four general points. By \Cref{lemma:imageofsquaremap}, the image of the map $\overline{\cM}_{0,5}\to\HC({\ulineL}(2,5)_\sqfree)$ is the closure of $\partial\ulineL_{U_{2,5}}$ inside $\HC({\ulineL}(2,5)_\sqfree)$, which we denote by $\partial\HC({\ulineL}(2,5)_\sqfree)$.
\begin{lemma}
The map $\overline{\cM}_{0,5}\to\partial\HC({\ulineL}(2,5)_\sqfree)$ is the quotient by the action of complex conjugation.
\end{lemma}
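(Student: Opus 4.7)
The plan is to mirror the strategy of \Cref{lemma:stablecurvesmapisquotient1}: since the map in question is continuous between compact Hausdorff spaces, it suffices to establish (i) surjectivity, and (ii) that its fibers coincide with the orbits of complex conjugation on $\overline{\cM}_{0,5}$.

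For (i), I would invoke \Cref{lemma:imageofsquaremap}(3), which gives that the restriction to $\cM_{0,5}$ is already surjective onto $\partial\ulineL_{U_{2,5}}$. Since $\overline{\cM}_{0,5}$ is compact, its image in $\HC({\ulineL}(2,5)_\sqfree)$ is closed, and $\partial\HC({\ulineL}(2,5)_\sqfree)$ is by definition the closure of $\partial\ulineL_{U_{2,5}}$, so this image equals $\partial\HC({\ulineL}(2,5)_\sqfree)$. For (ii) over the dense open subset $\partial\ulineL_{U_{2,5}}$, the fibers in $\cM_{0,5}$ are orbits under complex conjugation by \Cref{lemma:orbitfibers}.

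The main work will be analyzing the boundary $\overline{\cM}_{0,5}\smallsetminus\cM_{0,5}$, which decomposes as the union of ten real irreducible divisors $D_S$ indexed by two-element subsets $S\subseteq[5]$, each isomorphic to $\overline{\cM}_{0,4}$. Using the Chow quotient identification \Cref{cor:chowhausdorff} and \Cref{thm:bondarypoints}, each $D_S$ should map into the stratum of $\partial\HC({\ulineL}(2,5)_\sqfree)$ labeled by the regular matroid subdivision of $\Delta(2,5)$ determined by $S$. Since distinct $S$ give distinct subdivisions, the images of distinct $D_S$ lie in disjoint strata, so cross-divisor fiber identifications cannot occur, and the problem reduces to analyzing one divisor at a time.

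The hard part will be verifying that the restriction of the map to a single $D_S\cong\overline{\cM}_{0,4}$ has complex conjugation orbits as fibers. My plan is to reduce this to \Cref{lemma:stablecurvesmapisquotient1} by exhibiting the restriction as controlled by the $n=4$ quotient map: one of the two torus orbit closures in the Chow cycle is rigid (parametrized by a point), while the other varies over $\overline{\cM}_{0,4}$ in the usual Kapranov manner, and on the Lorentzian side the corresponding union of orbit closures varies with that second factor exactly as the $n=4$ compactification does. Combined with the fact that complex conjugation on $\overline{\cM}_{0,5}$ preserves each $D_S$ and restricts to standard complex conjugation on $D_S\cong\overline{\cM}_{0,4}$, this will reduce (ii) on the boundary to the already established $n=4$ statement. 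The delicate check will be confirming the compatibility between the Hausdorff limit in $\HC({\ulineL}(2,5)_\sqfree)$ and the Chow limit along $D_S$, which should follow from the naturality of both compactifications under restriction to boundary divisors.
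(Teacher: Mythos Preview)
Your plan is sound but organized differently from the paper's argument. The paper does not stratify $\overline{\cM}_{0,5}$ by the ten boundary divisors; instead it gives a uniform fiber analysis. Using Kapranov's description, any Chow cycle $Z$ decomposes as $Z_1\cup\cdots\cup Z_r$ along a regular matroid subdivision of the base polytope of $U_{2,5}$, and (applying \Cref{lemma:orbitfibers} componentwise) the fiber over its image is contained in $\{\sigma^{e_1}(Z_1)\cup\cdots\cup\sigma^{e_r}(Z_r) : e_i\in\{0,1\}\}$. The paper then makes a single combinatorial observation: in every such subdivision, at most one maximal cell simplifies to $U_{2,4}$ (the others simplify to $U_{2,3}$ or smaller and hence carry only real rescaling classes), so at most one $Z_i$ is moved by $\sigma$ and the fiber is exactly $\{Z,\sigma(Z)\}$. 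Your divisor-by-divisor reduction to $n=4$ is this same observation unwound one split at a time; the ``rigid'' cell you identify is precisely the $U_{2,3}$-type one.

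One small imprecision: the images of distinct $D_S$ are not disjoint in $\HC({\ulineL}(2,5)_\sqfree)$, since $D_S\cap D_T\neq\emptyset$ whenever $S$ and $T$ are disjoint two-element subsets; these fifteen intersection points correspond to trivalent trees (two-split subdivisions) and map to deeper strata. You would need to refine the stratification or simply note that all such points are real and thus $\sigma$-fixed. The paper's uniform argument sidesteps this bookkeeping and also the ``delicate compatibility check'' you flag, since it never needs to match the $n=5$ and $n=4$ compactifications explicitly.
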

\begin{proof}
    As in \Cref{lemma:stablecurvesmapisquotient1}, we need to show that each fiber of the map is a single orbit under complex conjugation. 
    By the description in \cite{chowquotientsI}*{Section 1.2}, for every cycle $Z\in\Gr(2,5)(\C)/\!\!/(\C^\times)^5$ there is a regular matroid subdivision of the base polytope of $U_{2,5}$, corresponding to matroids $M_1,\ldots,M_r$, such that
    \begin{equation*}
        Z=Z_1\cup\cdots\cup Z_r,
    \end{equation*}
    where each $Z_i$ is the orbit closure of a point from $\Gr_{M_i}(\C)$. The fiber over the image of $Z$ is equal to
    \begin{equation*}
        \{\sigma^{e_1}(Z_1)\cup\cdots\cup\sigma^{e_r}(Z_r)\mid e_1,\ldots,e_r\in\{0,1\}\},
    \end{equation*}
    where $\sigma$ denotes complex conjugation. However, for every regular matroid subdivision of the base polytope of $U_{2,5}$, at most one of the $Z_i$ is not mapped to itself by $\sigma$. Thus the fiber over the image of $Z$ is just the orbit of $Z$.
\end{proof}
\begin{rem}
    The quotient map of $\overline{\cM}_{0,5}$ by complex conjugation was studied in detail in \cite{takayama-yoshida}, and it exhibits some beautiful combinatorics. For example, the authors define cell structures on $\overline{\cM}_{0,5}$ and its quotient which are, in a certain precise sense, dual to the Desargues graph and the Petersen graph, respectively.
\end{rem}

For $n \geq 6$, fibers of $\overline{\cM}_{0,n}\to\HC({\ulineL}(2,n)_\sqfree)$ can consist of more than one complex conjugate pair:

\begin{ex}
    Let $n\geq6$, let $a,b\in\C$ be distinct non-real complex numbers, and let $c_1,\ldots,c_n\in\R$ be pairwise distinct real numbers. We construct a stable curve $X_{a,b}$ with $n$ marked points that has two irreducible components. On the first component, we mark the three real points $c_1,c_2,c_3$, while the remaining $n-3$ points $c_4,\ldots,c_{n}$ are marked on the second component. Finally, the point $a$ on the first component is identified with the point $b$ on the second one. Then the four stable curves $X_{a,b}$ ,$X_{\bar{a},b}$, $X_{a,\bar{b}}$, and $X_{\bar{a},\bar{b}}$ represent four distinct points in $\overline{\cM}_{0,n}$ which are all mapped to the same point under the map $\overline{\cM}_{0,n}\to\HC({\ulineL}(2,n)_\sqfree)$.
\end{ex}

\begin{small}
 \bibliographystyle{plain}
 \bibliography{lorentzian}
\end{small}

\end{document}